  \numberwithin{equation}{section}
\DeclareMathOperator*{\Res}{Res}
\DeclareMathOperator{\Tr}{Tr}
\DeclareMathOperator{\supp}{supp}
\DeclareMathOperator{\sgn}{sgn}
\newcommand{\mymod}[1]{(\operatorname{mod} #1)}
\renewcommand{\d}{{\delta}}
\renewcommand{\i}{{\mathrm{i}}} 
\newcommand{\I}{1\!\!1} 
\renewcommand{\O}{{\mathcal{O}}}
\newcommand{\R}{{\mathbb{R}}}
\newcommand{\C}{{\mathbb{C}}}
\newcommand{\Q}{{\mathbb{Q}}}
\newcommand{\Z}{{\mathbb{Z}}}
\newcommand{\N}{{\mathbb{N}}}
\newcommand{\B}{{\bf B}}
\newcommand{\M}{\mathcal{M}}
\renewcommand{\Re}{{\mathfrak{Re}}}
\renewcommand{\Im}{{\mathfrak{Im}}}
\newcommand{\w}{\omega}
\renewcommand{\a}{\alpha}
\renewcommand{\b}{\beta}
\newcommand{\g}{\gamma}
\renewcommand{\l}{\lambda}
\renewcommand{\t}{\theta}
\newcommand{\s}{\sigma}
\newcommand{\del}{\delta}
\newtheorem{theorem}{Theorem}
\newtheorem{corollary}{Corollary}[theorem]
\newtheorem{lemma}[theorem]{Lemma}
\newtheorem{proposition}[theorem]{Proposition}
\theoremstyle{remark}
\newtheorem*{remark}{Remark}
\theoremstyle{theorem}
\theoremstyle{corollary}
\theoremstyle{proposition}
\theoremstyle{lemma}
\newcommand{\erfc}{\textnormal{\text{erfc}}}
\begin{document}
\title{Sharper bounds for the Chebyshev function $\theta(x)$} 
\author{Samuel Broadbent, Habiba Kadiri, Allysa Lumley, Nathan Ng, Kirsten Wilk}
%
%\address{Mathematics and Computer Science Department, University of Lethbridge, 4401 University Drive, Lethbridge, Alberta, T1K 3M4 Canada}
%%
%\email{email@uleth.ca}

\address{University of Lethbridge \\ Department of Mathematics and Computer Science \\ 4401 University Drive \\ Lethbridge, AB \ T1K 3M4 \\ Canada}
\email{sam.broadbent@uleth.ca}
\email{habiba.kadiri@uleth.ca}
\email{lumley@crm.umontreal.ca}
\email{nathan.ng@uleth.ca}
\email{kirsten.wilk@uleth.ca}

\subjclass[2000]{11N05, 11M06, 11M26}
\keywords{prime number theorem, $\psi(x)$, $\theta(x)$, explicit formula, zeros of Riemann zeta function.}
\begin{abstract}
In this article, we provide explicit bounds for the prime counting functions $\theta(x)$ for all ranges of $x$. 
The bounds for the error term for $\theta (x)- x$ are of the shape $\varepsilon x$ and $\frac{c_k x}{(\log x)^k}$, for $k=1,\ldots,5$.
%and new bounds for $\psi(x)$ which are the best known for most values of $x$.  
%This is based on the classical work of Rosser and Schoenfeld \cite{RS2}, \cite{RS1} and improves recent articles of Dusart \cite{Dus5}, \cite{Dus3}, and \cite{Dus4}.
%The main new inputs  for the bounds for $\psi(x)$ is a recent explicit zero-density result due to Kadiri, Lumley, and Ng \cite{KLN}. 
%Using these bounds for $\psi(x)$ and also B\"{u}the's bounds for $\psi(x)$ \cite{But},  we can deduce our bounds for $\theta(x)$.  
Tables of values for $\varepsilon$ and $c_k$ are provided. %ables of Moreover, we provide extensive tables with bounds for $\theta(x)$.
\end{abstract}
\maketitle 
%%
%remove anything other than RS62
\section{Introduction}
\subsection{History}
%%%
%We recall that 
%$\pi(x)$ is the prime counting function $\# \{p\le x: p \text{ prime}\}$ and that $\psi(x)$ and $\theta(x)$ are the Chebyshev functions:
%\[\theta(x) = \sum_{p\le x} \log p , \ \psi(x) = \sum_{p^k\le x, k\ge 1} \log p, \]
%%where $\Lambda(n)$ is the Von-Mangold function: $\Lambda(n)=\log p$ if $n$ is a prime power, $\Lambda(n)=0$ otherwise.
In 1852 Chebyshev \cite{Cheb1} proved that if $x$ is large enough, then 
\[
0.9212  \frac{x}{\log x} \le \pi(x) \le 1.1056 \frac{x}{\log x}\ \text{ as}\ x\to \infty,
\]
where $\pi(x)$ denotes the number of primes less than or equal to $x$. 
It was the first major step towards the prime number theorem. 
He introduced what are  now referred to as the  Chebyshev functions: 
\[
\theta(x) = \sum_{p\le x} \log p , \ \text{and}\ \psi(x) = \sum_{p^k\le x, k\ge 1} \log p, 
\]
%where $\Lambda$ is the Von Mangoldt function supported on prime powers $p^k$ and defined by $\Lambda(p^k) = \log p$. 
and he proved that for all $x \ge 30$
\[
% \label{Cheb-bnd-psi}
 Ax - \tfrac52 \log x -1 < \psi(x) < \tfrac{6}{5}Ax+ \tfrac{5}{4 \log 6}(\log x)^2 + \tfrac54 \log x +1 ,
\]
and 
\[
% \label{Cheb-bnd-theta}
Ax - \tfrac{12}{5}Ax^{\frac12} -\tfrac{5}{8\log 6} (\log x)^2 - \tfrac{15}{4}\log x - 3 < \theta(x) < \tfrac65 Ax - Ax^{\frac12} + \tfrac{5}{4 \log 6}(\log x)^2
 + \tfrac52 \log x +2
\]
where $A= \log ( 2^{\frac12}3^{\frac13}5^{\frac15} 30^{-\frac{1}{30}} ) = 0.9212 \ldots$ and $\tfrac65 A =1.1055\ldots$ \footnote{There is a typo in the definition of $A$ in \cite{Cheb1} on p. 376. We have
given a corrected definition of A.}. \\
%{\commentcolor{red} We think Chebyshev made a typo in his 
%definition of A, we changed the definition here to be accurate.}
As a consequence, there exists $x_0 > 0$ such that 
\[
 0.9212  x \le \psi(x) \le  1.1056 x \text{ for all } x \ge x_0
\]
and that there exists $x_1  > 0$ such that 
\[
 0.9212 x \le \theta(x) \le 1.1056 x \text{ for all } x \ge x_1 .
\]
Such bounds are now known as Chebyshev bounds. Over the years, many other elementary arguments have yielded improved
bounds:
% \footnote{These are the elementary bounds we are aware of. If readers know of others please let us know. } 
\begin{center}
\begin{tabular}{|c|c|c|c|c|c|c|}
\hline
 \multirow{2}{*}{Author} & \multicolumn{3}{|c|}{Bounds for $\psi(x)/x$} & \multicolumn{3}{|c|}{Bounds for $\theta(x)/x$} \\ 
	& upper & lower  & range &  upper  & lower  & range \\ \hline
 Erd\"os (1932) \cite{Erdos} & $1.38629$ & - & $x > 0$ & - & - & - \\ \hline
 Hanson (1972) \cite{Han} & - & - & - & $1.09861$ & - & $x>0$ \\ \hline
 Grimson \& Hanson  & \multirow{2}{*}{$1.09861$} & \multirow{2}{*}{-}  & \multirow{2}{*}{$x > 0$} & \multirow{2}{*}{$1.0508$} & \multirow{2}{*}{-} & \multirow{2}{*}{$x > 0$} \\ 
 (1977) \cite{GrmHan} & & &  & &  & \\ \hline
 Deshouillers & $1.07715$ & - & $x > 0$ & - & - & - \\ 
 (1977) \cite{Des} & - & $0.92129$ & $x \ge 59$ & - & - & -\\ \hline
\end{tabular}
\end{center}
%%
%\begin{center}
%\begin{tabular}{|c|c|c|c|c|c|c|}
%\hline
% \multirow{2}{*}{Author} & \multicolumn{3}{|c|}{Bounds for $\psi(x)/x$} & \multicolumn{3}{|c|}{Bounds for $\theta(x)/x$} \\ 
%	& upper & lower  & range &  upper  & lower  & range \\ \hline
% Erd\"os (1932) \cite{Erdos} & $1.38629$ & & $x > 0$ & & & \\ \hline
%Rosser (1941) \cite{Ros} & & & & $1.019$ & $0.980$& $x > e^{20}$  \\ \hline
%\multirow{3}{*}{Rosser, Schoenfeld (1962)  \cite[Thm. 9, 10, 12]{RS1} }%\\ \cite[Thm. 10]{RS1} }%\\ \cite[Thm. 12]{RS1} }
%&                   &  &                    & $1.01624$ &  & $x> 0$ \\ 
%&                   &  &                    & & $0.84$   & $x\ge 101$  \\
%& $1.03883$ &  & $x> 0$        &                   &  &             \\  \hline
% Hanson (1972) \cite{Han} & & & & $1.09861$ & & $x>0$ \\ \hline
% Deshouillers & $1.07715$ & & $x > 0$ & & & \\ 
% (1977) \cite{Des} & & $0.92129$ & $x \ge 59$ & & & \\ \hline
% Grimson \& Hanson  & \multirow{2}{*}{$1.09861$} & & \multirow{2}{*}{$x > 0$} & \multirow{2}{*}{$1.0508$} & & \multirow{2}{*}{$x > 0$} \\ 
% (1977) \cite{GrmHan} & & & & & & \\ \hline
%                      & & & & $1.001884$ & & $x > 0$ \\ 
%  Costa Pereira (1989) \cite{Cos2} & & & & & $0.985$ & $x > 11\ 927$ \\ 
%                    & & & & & $0.998$ & $x > 487\ 381$ \\ \hline
%\end{tabular}
%\end{center}
%
The Prime Number Theorem, as proven independently by de la Vall\'{e}e Poussin \cite{dlVP} and Hadamard  \cite{Had} in 1896, states that the number of primes up to $x$ satisfies 
\[
\pi(x)\sim \frac{x}{\log x}\ \text{as}\ x\to \infty.
\]
It can also be easily reformulated as 
\[ 
\psi(x) \sim x    \text{ and }  \theta(x) \sim x \text{ as }  x\to \infty.
\]
%where $\psi(x)$ and $\theta(x)$ are the Chebyshev functions
%\[
%\theta(x) = \sum_{p\le x} \log p , \ \psi(x) = \sum_{p^k\le x, k\ge 1} \log p. 
%\]
%The proof establishes the equivalent statement for the weighted version $\theta(x)$:
%\[\theta(x) = \sum_{p\le x} \log p  \sim \ \text{as}\ x\to \infty,\]
%which is also equivalent to 
%\[\psi(x) = \sum_{p^k\le x, k\ge 1} \log p \sim x\ \text{as}\ x\to \infty.\]
%%Rewriting $\psi$ using the Von-Mangold function $\Lambda(n)=\log p$ if $n$ is a prime power, $\Lambda(n)=$ otherwise:
%%\[\psi(x) = \sum_{n\le x} \Lambda(n),\]
%%then 
%%These are smoothed versions of $\pi(x)$:
%%\[\psi(x) = \sum_{p^k\le x, k\ge 1} \log p, \ \theta(x) = \sum_{p\le x} \log p.\]
The essence of the proof as suggested by Riemann is to relate $\psi(x)$ to the zeta function $\zeta(s)$.
This allows one  to use the properties of the zeros of $\zeta$, and in particular their location in the complex plane. For instance, Hadamard and de la Vall\'{e}e Poussin proved that $\zeta(s)$ does not vanish on the vertical $1$-line.
By refining this result, de la Vall\'ee Poussin proved in $1899$ that the error term in estimating $\pi(x)-\text{li}(x)$ (and $\theta(x)-x$  and $\psi(x)-x$) is asymptotically of size
%\left| \frac{\psi(x)-x}{x} \right| \ll  
$x \exp ( -c \sqrt{\log x} )$.
%%
%{\color{red} first psi - analytical, then theta, both bounds epsilon constant, then $exp(-c\sqrt{\log x})$ by Shoenfeld 76}\\
%The prime number theorem, 
%implies that 
%\[
% \psi(x) \sim x   \text{ and }
% \theta(x) \sim x \text{ as } x\to \infty.
%%\]
%We can rephrase these results explicitly: 
%for every $\varepsilon > 0$, there exists $x_0 >0$ such that 
%\[
%  (1-\varepsilon)x \le \psi(x) - \le (1+\varepsilon) x \text{ for all } x \ge x_0, 
%\]
%for every $\varepsilon' > 0$, there exists $x_1  >0$ such that 
%\[
%  (1-\varepsilon')x \le \theta(x)  \le (1+\varepsilon') x \text{ for all } x \ge x_1 ,
%\]
%and 
%for every $\varepsilon'' > 0$, there exists $x_2 >0$ such that 
%\[
%\le \left| \theta(x) - x \right| \le A x \exp ( -C  \sqrt{\log x} ) \text{ for all } x \ge x_1 ,
%\]
%These are asymptotic results and do not tell us what the value of $x_0$ is for a given $\varepsilon$.  
%Then, until the 1970's Rosser joined forces with Schoenfeld and significantly improved Rosser's earlier work.
%There have been many partial previous results of the type of Theorem \ref{MainResult:theta} spread throughout the literature.
%Rosser and Schoenfeld (RS) were pioneers in exploiting the potential of 
Between 1941 and 1976, Rosser and Schoenfeld (together or separately) developed a program of determining explicit results for the Chebyshev functions, 
%Change here. wording. 
as well as various finite sums and products over primes, including Mertens sums, and the size of the $n^{th}$ prime.
We provide a sample of the numerous inequalities they established:
\begin{eqnarray*}
 0.980x \le \theta(x) \le 1.019x, \text{ for all } x \ge e^{20} & \text{\cite[Equation (12)]{Ros}}, 
 \\ 
\frac{x}{\log x} < \pi(x) < 1.25506\frac{x}{\log x}, \ \text{for all}\ x\ge 17 & \text{\cite[Corollary 1]{RS1}},
\\ 
|\psi(x)-x| , |\theta(x)-x| < x (\log x)^{\frac{1}{2}}  \exp\big(- ( \tfrac{\log x}{R})^{\frac{1}{2}} \big), \ \text{for all}\ x\ge 2 & \text{\cite[Theorem 11]{RS1}} ,
\\  
|\psi(x)-x| , |\theta(x)-x| < 0.024 2269 \frac{x}{\log x}, \text{ for all }\ x \ge 10^8 & \text{\cite[Theorem 7]{RS2}}
%\\\left| \sum_{p\le x} \frac{\log p}{p} - \left( \log x -E\right)\right| < \frac1{2\log x} & \text{\cite[Theorem 6]{RS1}} 
\end{eqnarray*}
with $R = 17.51\ldots$. %, and $E = -\gamma - \sum_{n\ge 2} \sum_p \frac{\log p}{p^n} = -1.33258\ldots$ where $\gamma$ is Euler's constant. %\ \text{with}\ R = 17.51\ldots \\
%Finally in 
%there are positive constants
%$a,b$, and $x_0$ such that 
%\[
%a\frac{x}{\log x} \le \pi(x) \le b\frac{x}{\log x}\ \text{for all }\ x \ge x_0.
%\] 
%Here $a,b$ are computable constants which values depends on how large $x_0$ is. \\   
%In \cite[Theorem 6]{RS1} they established that for all $x\ge 319$
%\[
%\left| \sum_{p\le x} \frac{\log p}{p} - \left( \log x -E\right)\right| < \frac1{2\log x} ,
%\]
%where $E = -\gamma - \sum_{n\ge 2} \sum_p \frac{\log p}{p^n} = -1.33258\ldots$ and $\gamma$ is Euler's constant.
%
%They also established explicit bounds for Chebyshev's prime counting functions $\psi(x)$ and $\theta(x)$ 
%In a 1976 article of Schoenfeld many interesting applications from the works of Rosser and Schoenfeld were derived, such as sharp bounds for the $n$-th prime number $p_n$.  
%Other interesting applications included bounds for various quantities involving finite sums and products of prime numbers. 
%Many of these bounds are still in use but only few have been improved. 
Here is a non-exhaustive list that the interested reader can consult: Axler \cite{Axler17_2}, B\"uthe \cite{But2}, Costa-Pereira \cite{Cos2}, Dusart \cite{Dus4}, Faber and Kadiri \cite{FaKa}, and Trudgian \cite{Tru}. 
%. We are hoping to provide a more modern approach together with sharper explicit results. This would better reflect the progress in our knowledge about the zeros of the Riemann zeta functions and in our more efficient numerical tools. 
%Using our results for $\psi(x)$ we were able to improve  work of Axler and Dusart.   \\
%%%
\subsection{Main Theorem}
Our goal is to give a comprehensive and complete description of how to obtain an explicit bound for the error term for $\theta(x)$ of the form $\frac{x}{(\log x)^k}$, no matter the size of $x$ and for values of $k$ that are most widely used. 
\begin{theorem}\label{MainResult:theta}
Let $k$ be an integer with $0 \le k \le 5$.  For any fixed $X_0 \ge 1$, there exists $m_k> 0$ such that, for all $x \ge X_0 $
\begin{equation}
  \label{mklowerbound}
x \left( 1- \frac{m_k}{(\log x)^k} \right) \le \theta(x) .
\end{equation}
For any fixed $X_1 \ge 1$, there exists $M_k>0 $ such that, for all $x \ge X_1 $
\begin{equation}
  \label{Mkupperbound}
\theta(x) \le x \left( 1+ \frac{M_k}{(\log x)^k} \right).
\end{equation}

%
%
%\begin{align*}
%  x \left( 1- \frac{m_k}{(\log x)^k} \right) \le \ &\theta(x)  & &\text{ for all } x \ge X_0 
%\\ \text{and} \qquad \qquad
%  &\theta(x) \le x \left( 1+ \frac{M_k}{(\log x)^k} \right)  && \text{ for all } x \ge X_1.
%\end{align*}
In the case $k=0$ and $X_0, X_1 \ge e^{20}$, we have 
\[
 m_0 = \varepsilon(\log X_0) + 1.03883 ( X_0^{-1/2} + X_0^{-2/3} + X_0^{-4/5} ) \quad \text{ and } \quad M_0 = \varepsilon(\log X_1).
\]
See Table \ref{ThetaEpsilon} for values of $m_0$ and $M_0$, and Table \ref{Table:MasterResults} for values of $m_k$ and $M_k$, for $k \in \{1,2,3,4,5\}$.
\end{theorem}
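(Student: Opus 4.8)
The plan is to deduce Theorem~\ref{MainResult:theta} from two inputs. The first is an explicit bound for the second Chebyshev function, namely $|\psi(x)-x|\le\varepsilon(\log x)\,x$ for all $x\ge e^{20}$, where $\varepsilon$ is a fixed positive decreasing function with $(\log x)^{5}\varepsilon(\log x)\to 0$ as $x\to\infty$; this is the $\psi$-estimate produced in the body of the paper from the explicit formula (in smoothed form), a zero-free region, a zero-density estimate and the numerical verification of the Riemann Hypothesis up to a large height, and since $\varepsilon$ has the de la Vall\'ee Poussin shape $\asymp(\log x)^{B}\exp(-C\sqrt{\log x})$ the stated decay is automatic. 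The second input is the elementary identity $\psi(x)=\sum_{m\ge1}\theta(x^{1/m})$, which, together with a Chebyshev-type upper bound for $\theta(y)$ and a crude estimate of the finitely many terms with $m\ge 4$, yields the transfer inequality
\[
0\le\psi(x)-\theta(x)\le 1.03883\bigl(x^{1/2}+x^{1/3}+x^{1/5}\bigr)\qquad(x\ge e^{20}).
\]
If the body instead furnishes $\psi$-bounds already of the shape $c_k\,x/(\log x)^{k}$, one uses those in place of $\varepsilon(\log x)\,x$ below, with no change to the argument.

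I would first settle $k=0$, which simultaneously checks the closed forms quoted in the theorem. For $x\ge X_1\ge e^{20}$ one has $\theta(x)\le\psi(x)\le x\bigl(1+\varepsilon(\log x)\bigr)\le x\bigl(1+\varepsilon(\log X_1)\bigr)$ since $\varepsilon$ is decreasing, which is \eqref{Mkupperbound} with $M_0=\varepsilon(\log X_1)$. For $x\ge X_0\ge e^{20}$,
\[
\theta(x)=\psi(x)-\bigl(\psi(x)-\theta(x)\bigr)\ge x\bigl(1-\varepsilon(\log x)\bigr)-1.03883\bigl(x^{1/2}+x^{1/3}+x^{1/5}\bigr)\ge x\bigl(1-m_0\bigr),
\]
where, using $x^{-1/2}\le X_0^{-1/2}$, $x^{-2/3}\le X_0^{-2/3}$, $x^{-4/5}\le X_0^{-4/5}$ and the monotonicity of $\varepsilon$, one may take $m_0=\varepsilon(\log X_0)+1.03883\bigl(X_0^{-1/2}+X_0^{-2/3}+X_0^{-4/5}\bigr)$; this is \eqref{mklowerbound}.

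For $1\le k\le 5$, the same two displays give, for $x\ge e^{20}$, $|\theta(x)-x|\le x\,g(x)$ with $g(x)=\varepsilon(\log x)+1.03883\bigl(x^{-1/2}+x^{-2/3}+x^{-4/5}\bigr)$ on the lower side and $g(x)=\varepsilon(\log x)$ on the upper side. Writing $g(x)=(\log x)^{-k}\bigl[(\log x)^{k}g(x)\bigr]$, one may take $m_k$ (resp.\ $M_k$) to be the supremum of the bracketed factor over $x\ge X_0$ (resp.\ $x\ge X_1$); this supremum is finite because $(\log x)^{k}\varepsilon(\log x)\to 0$ and $(\log x)^{k}x^{-1/2}\to 0$, and its value is pinned down by a one-variable numerical optimisation and tabulated in Tables~\ref{ThetaEpsilon} and~\ref{Table:MasterResults}. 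When $1\le X_0<e^{20}$ one must also cover the compact range $[X_0,e^{20}]$, where $\theta$ is a step function: on a finite partition $X_0=e^{a_N}<\cdots<e^{a_0}=e^{20}$, the Chebyshev-type inequalities recalled in the introduction (Rosser--Schoenfeld, Costa--Pereira, Dusart, Hanson) give $|\theta(x)-x|\le\delta_j x$ on $[e^{a_{j+1}},e^{a_j}]$, and since $(\log x)^{k}\le a_j^{k}$ there, both \eqref{mklowerbound} and \eqref{Mkupperbound} hold once $m_k,M_k\ge\max_j\delta_j a_j^{k}$; the initial piece $1\le x<2$, where $\theta\equiv 0$, forces only the negligible $m_k\ge(\log 2)^{k}$. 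Setting $m_k$ (resp.\ $M_k$) equal to the larger of the contribution from $x\ge e^{20}$ and that from $[X_0,e^{20}]$ gives an admissible positive constant, and the proof is complete.

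The part that requires care is not this assembly but the \emph{size} of the constants. Since $(\log x)^{k}g(x)$ is not monotone, $m_k$ is governed by an interior maximum of this product, which has to be certified rigorously---by interval arithmetic, say---rather than simply read off at $x=X_0$; and on $[X_0,e^{20}]$ the partition must be taken fine enough that $\max_j\delta_j a_j^{k}$ does not swamp the contribution from large $x$. Everything genuinely analytic---the zero-free region, the density estimate, and the smoothed explicit formula (with its complementary-error-function weight $\erfc$) behind the $\psi$-estimate---is put in place beforehand, so these two numerical optimisation problems are the only real obstacles.
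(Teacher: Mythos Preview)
Your argument is correct in outline and proves the theorem as stated: the $k=0$ closed forms follow exactly as you say, and for $1\le k\le 5$ the supremum of $(\log x)^{k}g(x)$ is finite because of the de~la~Vall\'ee~Poussin decay of $\varepsilon$, so admissible constants $m_k,M_k$ exist. One imprecision: the transfer inequality $\psi(x)-\theta(x)\le 1.03883\,(x^{1/2}+x^{1/3}+x^{1/5})$ does \emph{not} follow from the identity $\psi(x)=\sum_{m\ge 1}\theta(x^{1/m})$ plus a Chebyshev upper bound for $\theta$, since that route produces all exponents $1/m$, including $1/4$. The specific exponents $\tfrac12,\tfrac13,\tfrac15$ come from Costa~Pereira's inequality $\psi(x)-\theta(x)\le \psi(x^{1/2})+\psi(x^{1/3})+\psi(x^{1/5})$, after which one applies Rosser--Schoenfeld's $\psi(y)\le 1.03883\,y$; you should cite that rather than the naive identity.

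Where your route genuinely differs from the paper is in how the constants are \emph{computed}. You use a two-range split (below and above $e^{20}$) and a single function $g$; the paper uses a four-range split $[1,e^{J_0})\cup[e^{J_0},e^{J})\cup[e^{J},e^{K})\cup[e^{K},\infty)$ with $J_0=\log(7\cdot 10^{11})$, $J=19\log 10$, $K=25000$, and a different tool on each piece: direct prime-by-prime evaluation of $\theta$ on $[1,e^{J_0})$ (Lemma~\ref{lem:numerical}); B\"uthe's explicit bounds for $(x-\psi(x))/\sqrt{x}$ combined with Costa~Pereira on $[e^{J_0},e^{J})$ (Corollary~\ref{Cor:Ck}); a fine subdivision of $[e^{J},e^{K})$ with interval-by-interval $\varepsilon(b)$ values and a sharper $\psi-\theta$ bound (Corollary~\ref{Cor:Bk}); and the explicit de~la~Vall\'ee~Poussin constants $(A,B,C)$ on $[e^{K},\infty)$ (Corollary~\ref{Cor:Ak}). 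Crucially, for the \emph{upper} bound the paper exploits B\"uthe's $\theta(x)<x$ for $x\le 10^{19}$, so the small range contributes nothing to $M_k$; your generic $|\theta(x)-x|\le\delta_j x$ on the partition would contribute unnecessarily. Your method therefore yields valid but larger constants, while the paper's decomposition is what produces the sharp values in Tables~\ref{ThetaEpsilon} and~\ref{Table:MasterResults}.
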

\pagebreak %This pagebreak exists for Table 1 Placement
Here $\varepsilon(b)$ is a positive constant associated to $\psi(x)$, defined in the next theorem. \\
The proof of Theorem \ref{MainResult:theta} is given in Section \ref{thetalemmataproofs}.
In addition, the reader will find there the first formal algorithm to automatically deduce new bounds for $\theta(x)$ every time new bounds are generated for $\psi(x)$.  In particular, we describe how the values for $m_k$ and $M_k$ depend on $\varepsilon(b)$.  We have also produced extended versions of  Tables \ref{Wedpsixvals} - \ref{Table:MasterResults}
  in \cite{BKLNW} which will be made available on 
our personal webpages and on the arXiv. 

B\"uthe's \cite[Theorem 2, (1.7)]{But2} implies that, for all $x<10^{19}$, $\theta(x)<x$ , giving $M_k=0$ for all $k$ and $x$ in this range.
In addition we did direct calculations for values of $x$ up to $7\cdot10^{11}$ (see Table \ref{Table-Dk}).
Thus Theorem \ref{psixmainthm} gives relevant values for $m_k,M_k$ for $X_0,X_1>7\cdot10^{11}>e^{27}$ as listed in Table \ref{Table:MasterResults}.
For more extensive calculations of $m_k,M_k$, we refer the reader to Tables in \cite{BKLNW}. %\footnote{where do we put this?}
For instance, we obtain from \cite[Table 14 and Table 15]{BKLNW} respectively that, for all $x\ge  10^{19}$, 
\begin{equation*}
\begin{split}
 1.9338\cdot 10^{-8} < \frac{\theta(x)-x}{x}  & < 1.9667 \cdot 10^{-8} \text{ and }
\\
%and from Table \cite[Table 15]{BKLNW} that
 \left| \frac{\theta(x)-x}{x} \right|  & <   \frac{3.79\cdot 10^{-5}}{(\log x)^2}.
\end{split}
\end{equation*}
The following result gives explicit bounds for $\psi(x)$ and is based on the articles 
\cite{But}, \cite{But2} and \cite{PT2019}. 
\begin{theorem}[B\"uthe, Platt-Trudgian]
\label{psixmainthm}
Let $b >0$.  Then there exists a positive constant $\varepsilon(b)$ such that 
\begin{equation}
   \left| \frac{\psi(x)-x}{x}\right| \le \varepsilon(b),\ \text{for all}\ x\ge e^b.
\end{equation}
\end{theorem}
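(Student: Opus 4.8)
The plan is to cover the half-line $x\ge e^b$ by finitely many subintervals, to produce on each of them an explicit majorant for $|\psi(x)-x|/x$ that is nonincreasing in $x$, and then to let $\varepsilon(b)$ be the largest value these majorants assume at the left endpoints lying in $[e^b,\infty)$. Only two substantive inputs are needed, and both would be quoted rather than reproved: B\"uthe's analytic bound from \cite{But} and \cite{But2}, which is sharpest for moderate $x$ (up to a threshold $X^{\ast}$ of size about $10^{19}$), and the Platt--Trudgian bound from \cite{PT2019}, which takes over for $x\ge X^{\ast}$.

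On the first range I would invoke the $\psi$-version of the main estimate of \cite{But2} (the same mechanism that yields \cite[Theorem 2, (1.7)]{But2}, already cited above for $\theta(x)<x$ when $x<10^{19}$). Its engine is a smoothed explicit formula: one picks a test function $F$ approximating the indicator of $[0,x]$ whose transform decays rapidly, writes $\sum_{n}\Lambda(n)F(n)$ as the pole contribution at $s=1$ minus $\sum_{\rho}\widehat{F}(\rho)$ plus harmless lower-order terms, uses the numerical verification that every nontrivial zero $\b+\i\g$ of $\zeta$ with $0<\g\le H$ has $\b=\tfrac12$ (with $H$ a few times $10^{12}$) to estimate the zeros with $|\g|\le H$, and controls the tail $\sum_{|\g|>H}$ with an explicit Riemann--von Mangoldt bound for $N(T)$. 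The outcome is a bound of the shape $|\psi(x)-x|\le c_1\sqrt{x}+c_2(\log x)^2$ for $1\le x\le X^{\ast}$; dividing by $x$ gives $|\psi(x)-x|/x\le c_1x^{-1/2}+c_2x^{-1}(\log x)^2$, which is decreasing for $x$ large, so on any subinterval $[e^b,X^{\ast}]$ its supremum sits at $x=e^b$.

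For $x\ge X^{\ast}$ I would cite \cite{PT2019}, whose estimate again combines the classical explicit formula for $\psi$ with the verification of RH up to height $H$ and an explicit zero-free region $\b\le 1-1/(R_0\log|\g|)$ beyond that height: the zeros with $|\g|\le H$ lie on the critical line and contribute $\ll\sqrt{x}\,(\log H)^2$, truncating the remaining sum at a height $T>H$ costs a term $O\bigl(x(\log x)^2/T\bigr)$, the zeros with $H<|\g|\le T$ contribute $\ll x\exp\bigl(-\log x/(R_0\log T)\bigr)(\log T)^2$, and optimising $T=T(x)$ produces $|\psi(x)-x|\le\varepsilon_0 x$ for all $x\ge X^{\ast}$ with a whole decreasing family of such constants $\varepsilon_0$, one for each lower cut-off, which is what populates Table \ref{ThetaEpsilon}. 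For the bare qualitative assertion one could sidestep all of this: the Rosser--Schoenfeld bound $|\psi(x)-x|<0.0242269\,x/\log x$ for $x\ge10^8$ from \cite[Theorem 7]{RS2}, together with the trivial boundedness of $|\psi(x)-x|/x$ on the compact set $[1,10^8]$, already gives a finite $\varepsilon(b)$ for every $b>0$, automatically positive since $\psi(x)-x$ vanishes on no subinterval.

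To assemble the pieces, for given $b>0$ take $\varepsilon(b)$ to be the maximum of the B\"uthe majorant evaluated at $x=e^b$ (when $e^b<X^{\ast}$) and the relevant Platt--Trudgian constant for the tail $[\max(e^b,X^{\ast}),\infty)$; monotonicity of each ingredient on its range makes this maximum equal to $\sup_{x\ge e^b}|\psi(x)-x|/x$, so the resulting $\varepsilon(b)$ is admissible and as sharp as the inputs allow. The step I expect to be the real obstacle --- already settled in \cite{But2} and \cite{PT2019}, and not redone here --- is the \emph{intermediate} range just above $X^{\ast}$, where the verified-RH bound has lapsed while the zero-free region alone is still far too weak to be useful: there one genuinely needs the split of the zero sum at the verification height $H$ (keeping $\b=\tfrac12$ for $|\g|\le H$, using the zero-free region only for $|\g|>H$, then optimising in $T$), and one must also check that B\"uthe's and Platt--Trudgian's ranges overlap so that no $x\ge e^b$ is left uncovered and the two constants are mutually consistent. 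The only thing the rest of the paper needs from this is the clean interface of Table \ref{ThetaEpsilon}: $\varepsilon$ is a positive, nonincreasing function of $b$, which is exactly the input Theorem \ref{MainResult:theta} consumes.
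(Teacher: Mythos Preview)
Your overall strategy---cover $[e^b,\infty)$ by ranges and quote B\"uthe's and Platt--Trudgian's theorems on each---is exactly what the paper does, and your observation that the bare existence claim already follows from \cite[Theorem~7]{RS2} is correct. So the proposal is sound.

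A few details of the range decomposition and of the methods themselves are misaligned with the paper, however. You place the handoff from B\"uthe to Platt--Trudgian at $X^{\ast}\approx 10^{19}$, but in the paper (and in Table~\ref{Wedpsixvals}) the crossover is near $b\approx 2300$--$2500$; B\"uthe's Logan-function method \cite[Theorem~1]{But} is what fills the large gap $[10^{19},e^{2300}]$, not the $\sqrt{x}$-type numerical bound you describe. The bound $|\psi(x)-x|\le c\sqrt{x}$ for $x\le 10^{19}$ comes from a separate computation \cite[Theorem~2, (1.5)--(1.7)]{But2}, not from the smoothed explicit formula; the paper uses it only via Lemma~\ref{PsiCompAdjusted}. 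Also, your sketch of the Platt--Trudgian argument omits its distinctive ingredient: after splitting the zero sum vertically at $\Re s=1-\delta$, the zeros with $\beta\ge 1-\delta$ are controlled not merely by the zero-free region but by the explicit zero-density estimate \eqref{ZD-KLN} of \cite{KLN}, which is what makes the method competitive for $b\gtrsim 2300$. None of this breaks your argument, but it explains why the actual $\varepsilon(b)$ values in Table~\ref{Wedpsixvals} look the way they do.
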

% where $m_0 = \varepsilon(\log X_0) + c_0\left( X_0^{-1/2} + X_0^{-2/3} + X_0^{-4/5} \right)$ and $M_0 = \varepsilon(\log X_1)$.
%
%
We use \cite[Theorem 2]{But}, \cite[Theorem 1]{But2} and \cite[Theorem 1]{PT2019}  to compute a more exhaustive list of values for $\varepsilon(b)$ 
which we need for our calculations of $m_k,M_k$.
%The articles \cite{But2} and \cite{PT2019} only provide values of $\varepsilon(b)$ for select values of $b$ and we require  
In particular details of the calculations of $\varepsilon(b)$ are provided in Appendix \ref{psixboundssection} and tables of values 
are given  in Table \ref{Wedpsixvals} of Appendix \ref{Section:Tables}.   
The table lists the best values obtained using either of these techniques. B\"{u}the's uses a smoothing technique as introduced in \cite{FaKa} with a weight arising from the Logan function, while Platt and Trudgian use a truncated Perron's formula combined with the zero density obtained in \cite{KLN}. B\"uthe's technique leads to better bounds when $x< e^{2300}$ while Platt and Trudgian's works better for larger values of $x$.
For instance for all $x\ge e^{3000}$, the  method from \cite{PT2019} gives $\varepsilon(b) = 4.60 \cdot 10^{-14}$, and for all $x\ge e^{46}$, method from \cite{But2} gives $\varepsilon(b) = 6.95 \cdot 10^{-9}$. 
%
%In this article, we use this new result to improve \cite{Dus4} {\color{red}(see Theorem \ref{} and Section \ref{}. }
%In addition we compare all known methods (between B\"uthe's \cite{But2} and 
%The bounds calculated  bounds for $\psi(x)$
% by using an improved zero-density result of .
%
%We now state some of the main theorems of this article.
Improved estimates for $\psi(x)$ may be used to derive 
%Change here. Wording. 
new bounds for $\theta(x)$.   %We recall the general argument. 
More precisely, we have %The first step is to write 
%\[
%  \theta(x)-x = (\psi(x)-x)+ (\theta(x)-\psi(x)) ,
%\]
%%and then apply the triangle inequality, and the fact that 
%and $\psi(x)-\theta(x) \ge 0$, then 
\[
%  \label{keyidentity}
  |\theta(x)-x| \le  |\psi(x)-x| + \psi(x)-\theta(x),
\]
where $\psi(x)-\theta(x)$ introduces an error term of size $\sqrt{x}$. 
We study this term in Section \ref{psithetaresults}, and Theorem \ref{psi-theta:GeneralThm} provides a refinement to \cite{RS2} and \cite{Dus4}.
% 
%In Section \ref{psithetaresults}, we prove there exist $a_1$ and $a_2$ such that
%\begin{equation}
%  \label{psithetadiff}
% \psi(x) - \theta(x) \le a_1x^{\frac12} + a_2x^{\frac13} ,
%\end{equation}
%for all $x \ge x_0$, with $a_1,a_2$ depending on $x_0$.
% Theorem \ref{psi-theta:GeneralThm} refined 
%%we derive a bound for $  |\theta(x)-x|$ from Theorem \ref{psitheorem} about $|\psi(x)-x|$
%and from Theorem \ref{psi-theta:GeneralThm} about $\psi(x)-\theta(x)$. 
%The bounds for $|\psi(x)-x|$ will be taken from Section \ref{psixboundssection} and from \cite[Theorem 1]{But}.
%Recall that 
%\[
% \psi(x) =\theta(x) + \theta(x^{\frac{1}{2}})+ \theta(x^{\frac{1}{3}})+ \cdots, 
%\]
%so that
%\[
%%   \label{psithetadiff} - Commented out as this label is never called until later, after it is redefined.
%  \psi(x)-\theta(x) = \sum_{k=2}^{ \lfloor \frac{\log x}{\log 2} \rfloor }
%  \theta(x^{\frac{1}{k}}). 
%\]Thus 
%
%we can derive for every $x_0 > 0$ that there exist $a_1$ and $a_2$ such that
%\begin{equation}
%  \label{psithetadiff}
% \psi(x) - \theta(x) \le a_1x^{\frac12} + a_2x^{\frac13} \quad \text{ for all } x \ge x_0.
%\end{equation}
%
% and slight improvements to the previous values for $a_1$ and $a_2$ will be given.  
%Using Theorem \ref{psitheorem} and \eqref{psithetadiff} we are able to derive new bounds for $\theta(x)$.
%Change here. below corrected. 
We give just below a non-exhaustive historical recollection of bounds of the type \eqref{mklowerbound} and \eqref{Mkupperbound}. 
The values in Tables \ref{tablek0}-\ref{tablek4} make use of explicit formula techniques.

\begin{center}
\begin{table}[!htbp] 
 \caption{Case $k=0$}
 \begin{tabular}[h]{|l|rr|rrr|}
 \hline
 Author & $m_0$ & $X_0$ & $M_0$ & $X_1$  & \\ 
 \hline
 \multirow{4}{*}{Rosser (1941) \cite{Ros} }
				& - & - & $0.0376$ & $ 1$ & \\
				& $0.0393$ & $e^{13.8}$ & $0.0376$ & $e^{13.8}$ & \\
				& $0.0328$ & $e^{15}$ & $0.0321$ & $e^{15}$ & \\
				& $0.02$ & $e^{20}$ & $0.0199$ & $e^{20}$ & \\
 \hline
\multirow{4}{*}{Rosser $\&$ Schoenfeld (1962) \cite{RS1}   }
	                              	   & $0.16$ & $101$ & - & - & \\
		                            & $0.05$ & $1\,427$ & - & - & \\
	                                    & $0.02$ & $7\,481$ & - & - & \\           
                                            & - & - & $0.01624$ & $1$ & \\
\hline
\multirow{5}{*}{Rosser $\&$ Schoenfeld (1975) \cite{RS2}	}
                  & $0.015$ & $11\ 927$ & - & - &\\ 
                  & $0.010$ & $32\ 057$ & - & - & \\
		 & $0.005$ & $89\ 387$ & - & - &\\
          	 & $0.002$  & $487\ 381$ & - & - & \\
          	 & $0.001316$ & $1\ 319\ 007$ & - & - &\\
		 & - & - & $0.001102$ & $1$ &\\
\hline
\multirow{3}{*}{Schoenfeld (1976) \cite{Sch}}
		 & $0.001303$ & $1\ 155\ 901$ & - & - &\\
		 & - & - & $0.001093$ & $1$ &\\
\hline	
\multirow{1}{*}{Platt $\&$ Trudgian (2016) \cite{PT2016}} & - & - & $7.5 \cdot 10^{-7}$ & $0$ & \\
\hline
 \end{tabular}
  \label{tablek0}
 \end{table}
\end{center}

\pagebreak %This pagebreak exists for Table 1 placement

In the case $k=0$ and $X_0=1$ of Theorem \ref{MainResult:theta} we are able to  reduce the 
%\todo{In which case? I think either we need to reference Table 1 or write the case $k=0$}
 bound $1.75 \cdot 10^{-7}$  due to Platt and Trudgian (\cite{PT2016}, see Table \ref{tablek0}) to $1.94 \cdot 10^{-8}$: 
% Change here. above. 
%
\begin{corollary}
\label{cor:k0intro}
We have 
\[
 \theta(x) \le (1+ 1.93378 \cdot 10^{-8})x \qquad  \text{for all } x \ge 0
\]
\end{corollary}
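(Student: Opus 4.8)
The plan is to split the half-line $x\ge 0$ at the point $10^{19}$: below it an unconditional bound for $\theta$ is already in the literature, while above it the estimate for $\psi$ furnished by Theorem~\ref{psixmainthm} is sharp enough to conclude directly.

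On $0\le x<10^{19}$ there is essentially nothing to prove. For $0\le x<2$ we have $\theta(x)=0\le(1+1.93378\cdot 10^{-8})x$, and for $2\le x<10^{19}$ B\"uthe's \cite[Theorem 2, (1.7)]{But2} gives the stronger inequality $\theta(x)<x$. So on this whole interval one may take $M_0=0$, and a fortiori $\theta(x)\le(1+1.93378\cdot 10^{-8})x$.

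On $x\ge 10^{19}$ I would invoke Theorem~\ref{MainResult:theta} in the case $k=0$ with $X_1=10^{19}$. Since $10^{19}>e^{20}$, the theorem gives $\theta(x)\le x\bigl(1+M_0\bigr)$ with $M_0=\varepsilon(\log 10^{19})$; recall that for the upper bound this uses only the trivial inequality $\theta(x)\le\psi(x)$ together with Theorem~\ref{psixmainthm} applied at $b=\log 10^{19}$, so the $\sqrt{x}$-sized quantity $\psi(x)-\theta(x)$ never enters (it is exactly that quantity which is responsible for the marginally larger lower-bound constant $m_0\approx 1.9667\cdot 10^{-8}$). It then remains to insert the numerical value $\varepsilon(\log 10^{19})\le 1.93378\cdot 10^{-8}$, which is read off from Table~\ref{Wedpsixvals}; since $\log 10^{19}\approx 43.75<2300$, the relevant entry is the one produced by B\"uthe's smoothing method \cite{But2} rather than by \cite{PT2019}, via the computation spelled out in Appendix~\ref{psixboundssection}. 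Combining the two ranges yields $\theta(x)\le(1+1.93378\cdot 10^{-8})x$ for all $x\ge 0$.

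The only genuine input is this last numerical fact, $\varepsilon(\log 10^{19})\le 1.93378\cdot 10^{-8}$, and certifying it is precisely the content of Theorem~\ref{psixmainthm} and the appendix calculations; everything else is bookkeeping. The single point that demands a little care is matching the cutoff to B\"uthe's range exactly — his result covers $x<10^{19}$ and Theorem~\ref{MainResult:theta} with $X_1=10^{19}$ covers $x\ge 10^{19}$, so there is no gap — and being careful not to import the $\psi-\theta$ correction, which belongs only to the lower bound, into the upper estimate.
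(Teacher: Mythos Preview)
Your proof is correct and follows essentially the same approach as the paper: split at $10^{19}$, invoke B\"uthe \cite[Theorem 2, (1.7)]{But2} below, and for $x\ge 10^{19}$ use $\theta(x)\le\psi(x)$ together with $\varepsilon(19\log 10)=1.93378\cdot 10^{-8}$ from Table~\ref{Wedpsixvals}. Your additional remarks (the trivial range $0\le x<2$, and the explanation of why the $\psi-\theta$ term does not enter the upper bound) are correct and only add clarity.
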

\begin{proof}
We combine together the fact that $\theta(x) < x$ for $0 < x \le 10^{19}$ (\cite[Theorem 2, (1.7)]{But2})
and that $M_0 = \varepsilon(19 \log 10)= 1.93378 \cdot 10^{-8}$ (from \cref{Wedpsixvals} with $k = 0$ and $X_1 = 10^{19}$).
% , 
% we obtain Thus from \cref{Wedpsixvals}, we can see that 
%  \[
%   M_0 = 1.93378 \cdot 10^{-8} \text{ for all } x \ge 0. 
%  \]
\end{proof}
%\ \newline
%
\begin{center}
 \begin{small}
\begin{table}[h]
 \caption{Case $k=1$}
 \begin{tabular}[h]{|l|rr|rrr|}
 \hline
 Author & $m_1$ & $X_0$ & $M_1$ & $X_1$  & \\ 
 \hline
 \multirow{2}{*}{Rosser (1941) \cite{Ros} } 
				& $2.85$ & $2$ & $2.85$ & $2$ & \\
				& $0.96$ & $e^{2000}$ & $0.96$ & $e^{2000}$ & \\
 \hline
\multirow{2}{*}{Rosser $\&$ Schoenfeld (1962) \cite{RS1}   }
	                                   & $0.50$ & $563$ & $0.50$ & $1$  & \\ 
	                                   & $0.47$ & $569$ & $0.47$ & $569$ & \\ 
\hline
\multirow{2}{*}{Rosser $\&$ Schoenfeld (1975) \cite{RS2}	}
		& $0.02500$ & $678\ 407$ & $0.02500$ & $678\ 407$ & \\ 
		& $0.02424$ & $758\ 699$ & $0.02424$ & $525\ 752$ &\\
\hline
\multirow{1}{*}{Schoenfeld (1976) \cite{Sch}}
		& $0.02400$ & $758\ 711$ & $0.02400$ & $758\ 711$ &\\
		%& $0.007763$ & $e^{22}\simeq 3.584\cdot 10^{9}$ & $0.007763$ & $e^{22}\simeq 3\,584\cdot 10^{9}$ &\\
\hline      
\multirow{1}{*}{Dusart (1999) \cite{Dus}} 
               & $6.788\cdot10^{-3}$ & $10\ 544\ 111$ & $6.788\cdot10^{-3}$ & $10\ 544\ 111$ &\\
\hline	      
 \multirow{1}{*}{Dusart (2010) \cite[Table 6.4-6.5]{Dus5} }%(Arxiv 2010),}  
		      & $3.888\cdot10^{-5}$ & $e^{35}$ & $3.888\cdot10^{-5}$ & $e^{35} $ & \\
		      \hline
 \end{tabular}
 \label{tablek1}
 \end{table}
\end{small}
\end{center}
% For the values of $X_0,X_1$ up to $7\cdot10^{11}$ (like $10\ 544\ 111$, etc), comparison can be done with direct calculations as recorded in Table \ref{Table-Dk}. For instance for $X_0 =X_1=10\ 544\ 111$, $m_1=$ and $M_1=$.
%\todo{In which case? I think either we need to reference Table 2 or write the case $k=1$}
%Change here. Below.
In the case $k=1$ and $X_0=e^{35}\approx 1.586\cdot10^{15}$ of Theorem  \ref{MainResult:theta}, we reduce the bound 
$3.888\cdot10^{-5}$ due to Dusart  (\cite{Dus5}, see Table \ref{tablek1}) 
to %improve Dusart's \cite{Dus5} from  $m_1 = M_1 = 3.888\cdot10^{-5}$ to 
$1.0778 \cdot 10^{-6}$.
\pagebreak %This pagebreak exists for Table 3 Placement
 %\ \newpage
\begin{center}
 \begin{small}
\begin{table}[h]
 \caption{Case $k=2$}
 \begin{tabular}[h]{|l|rr|rrr|}
 \hline
 Author & $m_2$ & $X_0$ & $M_2$ & $X_1$  & \\ 
 \hline
 \multirow{1}{*}{Rosser and Schoenfeld (1975) \cite{RS2}}
			      & $8.6853$ & $1$ & $8.6853$ & $1$ & \\
\hline
\multirow{1}{*}{Schoenfeld (1976) \cite{Sch}}
		& $8.0720$ & $1$ & $8.0720$ & $1$ &\\
\hline      
\multirow{1}{*}{Dusart (1999) \cite{Dus}} 
	      & $0.2$ & $3\ 594\ 641$ & $0.2$ & $3\ 594\ 641$ &\\
\hline	      
 \multirow{1}{*}{Dusart (2010) \cite[Table 6.4-6.5]{Dus5} }%(Arxiv 2010),}  
		      & $0.140\cdot10^{-3}$ & $e^{35}$ & $0.140\cdot10^{-3}$ & $e^{35} $ & \\
%  \multirow{4}{*}{Dusart (2010) \cite[Theorem 5.2]{Dus5} }%(Arxiv 2010),}  
% 		      & $3.965$ & $1$ & $3.965$ & $1$ & \\
% 		      & $0.2$ & $3\ 594\ 641$ & $0.2$ & $3\ 594\ 641$ & \\
% 		      & $0.05$ & $122\ 568\ 683$ & $0.05$ & $122\ 567\ 683$ & \\
% 	 	      & $0.01$ & $7\ 713\ 133\ 853$ & $0.01$ & $7\ 713\ 133\ 853$ & \\
\hline
Trudgian (2016) \cite[Lemma 1]{Tru} 
& $0.450 \cdot 10^{-3}$ & $e^{35}$ & $0.450 \cdot 10^{-3}$ & $e^{35}$ & \\ 
\hline		      
 \end{tabular}
 \label{tablek2}
 \end{table}
\end{small}
\end{center}
% 
%Change here. 
%\todo{In which case? I think either we need to reference Table 3 or write the case $k=2$} 
Note that in the case $k=2$ and $X_0 =e^{35}$,  Dusart's calculation \cite{Dus5} of $0.140\cdot10^{-3}$ was based on assuming Wedenevski-Gourdon's verification of the Riemann Hypothesis up to height $10^{13}$ \cite{Wed}
while Trudgian's \cite{Tru} $0.450 \cdot 10^{-3}$ is ``worse'' as based on Platt's rigorous verification at the lower height of $3\cdot10^{10}$ \cite{Pla}.
We improve both of these results by obtaining $m_2 = M_2 = 5.9771 \cdot 10^{-5}$ for $X_0=X_1=e^{35}$. 
\begin{center}
 \begin{small}
\begin{table}[h]
 \caption{Case $k=3$}
 \begin{tabular}[h]{|l|rr|rrr|}
 \hline
 Author & $m_3$ & $X_0$ & $M_3$ & $X_1$  & \\ 
 \hline
 \multirow{1}{*}{Rosser and Schoenfeld (1975) \cite{RS2} }
	      & $11\ 762$ & $1$ & $11\ 762$ & $1$ & \\
\hline
\multirow{1}{*}{Schoenfeld (1976) \cite{Sch}}
		& $10\ 644$ & $1$ & $10\ 644$ & $1$ &\\
\hline      
\multirow{1}{*}{Dusart (1999) \cite{Dus}} 
	      & $515$ & $1$ & $515$ & $1$ &\\
\hline	      
%  \multirow{4}{*}{Dusart (2010) \cite[Table 6.4-6.5]{Dus5} }%(Arxiv 2010),}  
%                        & $20.83$ & $1$  & $20.83$ & $1$ & \\
% 		      & $10$ & $32\ 321$ & $10$ & $32\ 321$ & \\
% 		      & $1$ & $89\ 967\ 803$ & $1$ & $89\ 967\ 803$ & \\
% 		      & $0.78$ & $158\ 822\ 621$ & $0.78$ & $158\ 822\ 621$ & \\
 \multirow{1}{*}{Dusart (2010) \cite[Table 6.4-6.5]{Dus5} }%(Arxiv 2010),}  
		      & $0.35$ & $e^{30}$ & $0.35$ & $e^{30}$ & \\
% \hline
% Axler (2016) \cite[Proposition 3.2]{Axler16}  
%  & $0.35$ & $e^{30}$ & $0.35$ & $e^{30}$ & \\ 
% \hline
% \multirow{1}{*}{Dusart (2018) \cite[Theorem 4.2]{Dus4}}% Dusart (Ramanujan 2018) 
%  & $0.50$ & $767\ 135\ 587$ & $0.50$ & $767\ 135\ 587$ & \\
% \hline
% Axler (2018) \cite[Theorem 2.4]{Axler17_2} 
%  & $0.15$ & $19\, 035\, 709\, 163>e^{23}$ & $0.15$ & $1$ & \\ 
\hline		      
 \end{tabular}
 \label{tablek3}
 \end{table}
\end{small}
\end{center}
In the case $k=3$ and $X_0=X_1= e^{30}$ of Theorem  \ref{MainResult:theta}, we obtain $  m_3 = M_3 = 0.0244$.
Observe that this improves Dusart's \cite{Dus5} bound of 0.35 (see Table \ref{tablek3} below). 
In addition, we recover Axler's \cite[Theorem 1.1]{Axler17_2}: 
for $ x \ge 19\, 035\, 709\, 163>e^{23}$, $  m_3 = 0.15$ and for $ x \ge 1$, $ M_3 = 0.15$.
In particular,  $0.15$ is attained at the prime $p_{841\,508\,302}=19\,035\,709\,163$. 
%\todo[inline]{In which case? I think either we need to reference Table 4 or write the case $k=3$}
%\ \newline 
%
\begin{center}
 \begin{small}
\begin{table}[h]
\caption{Case $k=4$}
 \begin{tabular}[h]{|l|rr|rrr|}
 \hline
 Author  & $m_4$ & $X_0$ & $M_4$ & $X_1$  & \\ 
 \hline
 \multirow{1}{*}{Rosser and Schoenfeld (1975) \cite{RS2}}
				& $ 1.8559 \cdot 10^{7}$ & $1$ & $ 1.8559 \cdot 10^{7}$ & $1$ & \\
\hline
\multirow{1}{*}{Schoenfeld (1976) \cite{Sch}}
& $16\,570\,000$ & $1$ & $16\,570\,000$ & $1$ &\\ 
\hline      
\multirow{1}{*}{Dusart (1999) \cite{Dus}} 
& $1\, 717\, 433$ & $1$ & $1\, 717\, 433$ & $1$ &\\ 
\hline	      
 \multirow{1}{*}{Dusart (2010) \cite[Tables 6.4-6.5]{Dus5} }%(Arxiv 2010),}  
& $1\,300$ & $1$ & $1\,300$ & $1$ & \\ 
\hline
% \multirow{1}{*}{Dusart (2018) \cite[Theorem 4.2]{Dus4}}% Dusart (Ramanujan 2018) 
% & $151.3$ & $1$ & $151.3$ & $1$ & \\ 
% \hline
\end{tabular}
 \label{tablek4}
\end{table}
\end{small}
\end{center}
In the case $k=4$ and $X_0=X_1=1$ of Theorem  \ref{MainResult:theta}, we have $m_4=M_4=151.3$: 
as noticed by Dusart \cite{Dus4}, the value $151.2235\ldots$ is a max attained at the prime $1\,423$.
For $X_0=X_1=7\cdot10^{11}$, we find $m_4=M_4=57.184$. 
%
% {\commentcolor{red} VALUES FOR $X_0,X_1$.}
% The values for $X_0$ and $X_1$ are $e^{j}$ with $j$ ranging through various integers in the interval $[20,10000]$. 
% From this we obtain the following.  In the range $x \le 7.0 \cdot 10^{11}$
%
%
%
%% \begin{proof}
%%  We use  \cref{k0upper-k0implower} with $b=19\log 10$, as we have $\theta(x) < x$ for $0 < x \le 10^{19}$ from  \cite[Theorem 2, (1.7)]{But2}.
%%  Then we apply Theorem \ref{psixmainthm} with $b_0 = 19\log 10$ for $\varepsilon(19\log 10)=1.93378 \cdot 10^{-8}$ given in Table \ref{Wedpsixvals}.
%% \end{proof}
%
%% This is proven in Section \ref{thetalemmataproofs}, following directly from bounds on $\psi(x)$.
\subsection{The conjectural size of $\theta(x)$.}  
In this article, we have attempted to establish the best-known Chebyshev-type bounds for $\theta(x)$. Note that de la Vall\'{e}e Poussin's proof of the prime number theorem \cite{dlVP} actually yields
$\theta(x) = x + O (x \exp(-c (\log x)^{\frac{1}{2}}) )$
for some $c >0$.
% Furthermore, Dusart \cite[Theorem 1.1]{Dus3} explicitly showed that if there is a zero-free region of the form $\Re(s) \ge 1 - \frac{1}{R \log| \Im(s)|}$ 
%  for some $R > 0$, then
%\[
% |\theta(x) -x| \le \sqrt{\frac{8}{\pi R^{\frac12}}} x (\log x)^{\frac14} \exp\left(-\sqrt{\frac{\log x}{R}}\right)
% \ \text{for all}\ x \ge \exp\left( R \left( \max\! \left( 8.36,\frac{8}{R} \right) \right)^2 \right).
%\]
Furthermore, Platt and Trudgian \cite{PT2019} have established for $x_0 \ge 1000$, 
there exist positive constants $A,B,C$ such that for all $x \ge e^{x_0}$
\begin{equation}
    |\theta(x) -x| \le  A ( \tfrac{\log x}{R} )^{B} \exp\left( -C  (\tfrac{\log x}{R})^{\frac{1}{2}} \right).
\end{equation}
%We should also mention what are the actual conjectured sizes of these functions.
Recently, B\"{u}the \cite{But} has shown that under partial RH (true when $1\le |\gamma|\le T$), then  
\begin{equation}
 \label{thetaRH}
 |\theta(x) -x| \le \frac{1}{8 \pi} \sqrt{x} (\log x)^2 
\end{equation}
for all $x\ge 599$ satisfying $4.92 (\frac{x}{\log x})^{\frac{1}{2}} \le T$. 
For instance, for Platt's $T = 3.061 \cdot 10^{10}$, \eqref{thetaRH} holds for $599 \le x \le 1.89 \cdot 10^{21}$. 
Conditionally on RH being true, Schoenfeld \cite[Theorem 10]{Sch} has shown that \eqref{thetaRH} holds for all $x \ge 599$.
% On the Riemann hypothesis, it has been established by Schoenfeld \cite[Theorem 10]{Sch} that 
% \begin{equation}
%   \label{thetaRH}
%   |\theta(x) -x| \le \frac{1}{8 \pi} \sqrt{x} \log^2 x \text{ for all } x \ge 599. 
% \end{equation}
These are effective versions of a theorem of von Koch \cite{Koch}. To date, the constant $\frac{1}{8 \pi}$ has not been improved. 
% Recently, B\"{u}the \cite{But} has shown that under partial RH (for $1\le |\gamma|\le T$), then ...
% \\the condition that $4.92(\frac{x}{\log x})^{\frac{1}{2}} \le H_0$, \eqref{thetaRH} holds for $x \le 1.89 \cdot 10^{21}$ if we take Platt's $H_0=3.061 \cdot 10^{10}$; 
% %for $x \le 2.1\cdot10^{22}$ if we take J. Franke, Th. Kleinjung, J. B\"uthe, and A. Jost's $H_0=10^{11}$; 
% and finally for  $x \le 1.4 \cdot 10^{25}$
% if we take Wedeniwski's  $H_0=2.445 \cdot 10^{12}$.
% % In particular the numerical verification in [Pla15] ($T \approx 3.061 \cdot 10^{10}$ ) gives these bounds for 
% % $x \le 1.89 \times 10^{21}$, the result in [FKBJ] ($T = 10^{11}$) gives them for $x \le 2.1?10^{22}$ and the result in [Gou04] ($T \approx 2.445 \times10^{12}$) gives them for $x \le 1.4 \times 10^{25}$.
% \\
% Furthermore, Vinogradov \cite{Vin} and Korobov's \cite{Kor} improvements to the zero-free region, yield 
% \begin{equation}
%  \label{thetavino}
%   \theta(x) = x + O \Big( x  \exp \Big( -c \frac{(\log x)^{\frac{2}{3}}}{ (\log \log x)^{\frac13} }  \Big) \Big).
% \end{equation}
% {\commentcolor{red} Explicit versions of \eqref{thetadlvp} and \eqref{thetavino} (see Ford 2001?). 
% % (Gourdon's??) 
% }
% {\commentcolor{red} FIX WORDING. REWRITE.}
It may be asked, what is the true size of the error term on the 
right hand side of \eqref{thetaRH}. The explicit formula of Riemann tells us that on the Riemann hypothesis
\[
   \frac{\theta(x)-x}{\sqrt{x}} = -1 -  2 \Re \Big( \sum_{\gamma >0} \frac{x^{i \gamma}}{\frac{1}{2}+i \gamma} 
   \Big) + \cdots 
\]
where $\frac{1}{2}+i \gamma$ ranges through the non-trivial zeros of zeta. 
It is known under the Linear Independence Hypothesis (LI) \footnote{LI is the conjecture that the positive ordinates 
of the zeros of $\zeta(s)$ are linearly independent over $\mathbb{Q}$.}
that the distribution of values of $(\theta(x)-x)/\sqrt{x}$ is the same as that of the 
%Moreover, it may be shown that $(\theta(e^y)-e^y) e^{-y/2}$ possesses a limiting distribution $\mu$ (a probability measure on $\mathbb{R}$).   Under the assumption that the positive ordinates of the zeros of the zeta function are linearly independent over $\mathbb{Q}$, it may be shown that $\mu(B) = P(X \in B)$ where $X$ is 
random variable
$
  X = 2 \Re \big( \sum_{ \gamma >0}  \frac{ X_{\gamma}}{|\frac{1}{2}+i \gamma|} \big)$
where the $X_{\gamma}$ are independent random variables, uniformly distributed on the unit circle. 
By giving sharp estimates for the probability of the tail of $X$, it may be shown 
that $\exp(-c_2\sqrt{V} e^{\sqrt{2 \pi V}}) \le P(x \ge V) \le \exp(-c_1 \sqrt{V} e^{\sqrt{2 \pi V}})$, for some $c_1, c_2 > 0$.
 This suggests that 
\[
%    \label{thetaconjecture} 
   \limsup_{x \to \infty} \frac{\theta(x)-x}{\sqrt{x} (\log \log x)^2} = \frac{1}{2 \pi}
   \text{ and }
    \liminf_{x \to \infty} \frac{\theta(x)-x}{\sqrt{x} (\log \log x)^2} = -\frac{1}{2 \pi}.
\]
This conjecture in the case of $\psi(x)$ is due to Montgomery \cite{Mont}.  
\section{Bounding $\psi(x)-\theta(x)$.}
\label{psithetaresults}
In this section we give bounds for $\psi(x)-\theta(x)$ of the shape
\begin{equation}
  \label{psithetageneral}
  \psi(x)-\theta(x) \le a_1 x^{\frac{1}{2}} + a_2 x^{\frac{1}{3}} \text{ for all } x \ge x_0
\end{equation}
where $a_1$ and $a_2$ depend on $x_0$. 
Rosser and Schoenfeld \cite[Theorem 6]{RS2} established this with $a_1=1.001102$, $a_2=3$, 
and $x_0 = 1$.
Recently, Dusart \cite[Corollary 4.5]{Dus4} improved this bound to $a_1 = 1 + 1.47 \cdot 10^{-7}$ and $a_2 = 1.78$ for $x_0=1$.
% As we are interested in applying such a bound for various other values of $x_0$, $a_1$ and $a_2$ can be reduced.
As we apply such a bound to various other values of $x_0$, we are able to reduce the values of $a_1$ and $a_2$.
\begin{proposition}%\cite[Proposition 4.4]{Dus4}
\label{Prop:7}
%  For $x > 0$ we have
% \begin{equation}
%  \psi(x) - \theta (x) - \theta (x^{\frac12}) < 1.777745 x^{\frac13}.
% \end{equation}
% Let $j \in \mathbb{Z}_{\ge 0}$.  
% Let $x_0 \ge 2^{10}$. 
Let $x_0 \ge 2^{9}$. Let $\a > 0$ exist such that $\theta(x) \le (1+ \a)x$ for $x > 0$. Then for $x \ge x_0$, 
% \[
%  \psi(x) - \theta (x) - \theta (x^{\frac12}) < \delta x^{\frac13}
% \]
\begin{equation}
\label{Prop9:equ1}
 \sum_{k=3}^{\big\lfloor \frac{\log x}{\log 2} \big\rfloor} \theta(x^{\frac{1}{k}}) \le \eta x^{\frac13}
\end{equation}
where 
\begin{equation}
 \label{defn:eta}
  \eta = \eta(x_0) =(1+\a) \max \bigl(  f(x_0), f(2^{\big\lfloor \frac{\log x_0}{\log 2} \big\rfloor+1})   \bigr)
\end{equation}
with %$b_0 = \lfloor \frac{\log x_0}{\log 2} \rfloor$ and
%\footnote{ $\lceil t \rceil$  denotes the least integer greater than or equal to $t$.}, and
\begin{equation} 
  \label{fndefn}
 f(x) := \sum_{k=3}^{\big\lfloor \frac{\log x}{\log 2} \big\rfloor} x^{\frac{1}{k}-\frac{1}{3}}.
\end{equation}
\end{proposition}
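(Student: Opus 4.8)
The plan is to reduce the statement to a monotonicity property of $f$ evaluated at powers of $2$. Since $\theta(y)\le(1+\a)y$ holds for every $y>0$, applying this to $y=x^{1/k}$ and summing over $3\le k\le\lfloor\log x/\log 2\rfloor$ gives
\[
\sum_{k=3}^{\lfloor\log x/\log 2\rfloor}\theta(x^{1/k})\le(1+\a)\sum_{k=3}^{\lfloor\log x/\log 2\rfloor}x^{1/k}=(1+\a)\,x^{\frac13}f(x).
\]
Hence it suffices to prove that $f(x)\le\max\bigl(f(x_0),f(2^{N_0+1})\bigr)$ for all $x\ge x_0$, where $N_0:=\lfloor\log x_0/\log 2\rfloor\ge 9$.

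First I would record that $f$ is non-increasing inside each dyadic block: for $2^n\le x<2^{n+1}$ one has $f(x)=\sum_{k=3}^n x^{1/k-1/3}$, the $k=3$ summand being constant and the summands with $k\ge 4$ being decreasing in $x$ because $1/k-1/3<0$. Consequently $f(x)\le f(x_0)$ for $x_0\le x<2^{N_0+1}$, while for $x$ in any later block $[2^n,2^{n+1})$ with $n\ge N_0+1$ we get $f(x)\le f(2^n)$. So the claim reduces to showing that $n\mapsto f(2^n)$ is non-increasing for all $n\ge 10$ (this range suffices because $N_0+1\ge 10$).

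For the monotonicity of $f(2^n)$, comparing $f(2^{n+1})$ and $f(2^n)$ term by term (the $k=3$ terms cancel, the $k=n+1$ term is new) yields
\[
f(2^{n+1})-f(2^n)=2^{1-(n+1)/3}+\sum_{k=4}^{n}2^{n(1/k-1/3)}\bigl(2^{1/k-1/3}-1\bigr),
\]
and every summand in the sum is negative. Retaining only its $k=4$ term gives
\[
f(2^{n+1})-f(2^n)\le 2^{-n/12}\Bigl(2^{2/3-n/4}-\bigl(1-2^{-1/12}\bigr)\Bigr),
\]
which is $\le 0$ as soon as $2^{n/4}\ge 2^{2/3}/(1-2^{-1/12})$, i.e. for every $n\ge 20$. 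For the ten remaining exponents $10\le n\le 19$ I would simply evaluate $f(2^{11}),\dots,f(2^{20})$ and check that the values decrease (equivalently, retain a few more terms of the negative sum on that range). Together these show $f(2^n)$ is non-increasing for $n\ge 10$, which finishes the proof.

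The genuine obstacle is precisely this last monotonicity: the cheap one-term estimate decays only like $2^{-n/4}$ while the newly appearing term has size $2^{1-(n+1)/3}$, so it cannot close the gap for small $n$, and one has no alternative but to pair the asymptotic bound with an explicit check on a short initial range (which is also where the hypothesis $x_0\ge 2^9$ becomes essential, since $n\mapsto f(2^n)$ actually peaks at $n=9$). The remaining ingredients — the passage through $\theta(y)\le(1+\a)y$, the within-block monotonicity, and tracking which block supplies the maximum of $f$ — are all routine.
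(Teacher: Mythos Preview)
Your proposal is correct and follows essentially the same route as the paper: bound $\theta(x^{1/k})\le(1+\alpha)x^{1/k}$, reduce to bounding $f$, use that $f$ is non-increasing on each dyadic block, and then show $n\mapsto f(2^n)$ is eventually decreasing by retaining only the $k=4$ term for $n\ge 20$ and checking the finitely many smaller $n$ numerically. The paper factors the difference slightly differently (writing $f(2^n)=1+u_n$ and pulling out $2^{-(n+1)/3}$) and verifies the numerical range down to $n=9$ rather than $n=10$, but the argument is the same; note only that your numerical check must include $f(2^{10})$ as well, not just $f(2^{11}),\dots,f(2^{20})$.
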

\begin{proof}
Let $x \ge x_0 \ge 2^{9}$. 
% We have  $\psi(x) - \theta(x) -  \theta (x^{\frac12})  = 
% \sum_{k=3}^{\big\lfloor \frac{\log x}{\log 2} \big\rfloor} \theta (x^{\frac{1}{k}})$.
% Observe that this sum vanishes for $x<8$ and thus we may assume $x \ge 8$. 
% Now $j \in \mathbb{Z}_{\ge 0}$ and  assume $x \ge e^j$.  
Bounding each $\theta(x^{\frac1k})$ term of \eqref{Prop9:equ1} by $(1+\a)x^{\frac1k}$ yields
\[
 \frac{\psi(x) - \theta(x) -  \theta (x^{\frac12})}{x^{\frac13}} \le (1+\a) \sum_{k=3}^{\big\lfloor \frac{\log x}{\log 2} \big\rfloor}
 x^{\frac{1}{k} - \frac{1}{3}}. 
\]
% , so that 
% \begin{align*}
%   R(x) & \le 1.000081 \sum_{k=3}^{L(x)} x^{\frac{1}{k}} = 1.000081 x^{\frac{1}{3}}  \Big(1 + \sum_{k=4}^{L(x)} x^{\frac{1}{k}-\frac{1}{3}} \Big). 
% \end{align*}
% Trivially, we have 
% \[
%    R(x) \le 1.000081 x^{\frac{1}{3}}  \Big(1 + \frac{\frac{\log x}{\log 2} -3}{x^{\frac{1}{12}}} \Big).  
% \]
% Since the expression in brackets is decreasing with $x$, we obtain for $x \ge e^{1000}$
% \[
%    R(x)  \le 1.000081 c_0 x^{\frac{1}{3}} = 1.000081 (1+9.26966\cdot10^{-34})x^{\frac13} = 1.000082x^{\frac13} 
% \]
Next, we divide the interval $[x_0, \infty)$ as follows.
If $x_0$ is not a power of $2$, then $2^ {\big\lfloor \frac{\log x_0}{\log 2} \big\rfloor + 1}$ is the least power of 2 in $[x_0, \infty)$.
Thus,  we have 
\[
  [x_0, \infty) = [x_0,2^{\big\lfloor \frac{\log x_0}{\log 2} \big\rfloor+1}) \cup \bigcup_{n=\big\lfloor \frac{\log x_0}{\log 2} \big\rfloor+1}^{\infty} [2^n,2^{n+1}).
\]
%If $x \in [2^n, 2^{n+1})$,  then $L(x)=n$ and $R(x) = f_n(x)$. 
% Let $x \in [2^n, 2^{n+1})$ for arbitrary fixed $n$. 
Observe that $f(x)$ decreases on $[2^n, 2^{n+1})$ and thus $f(x) \le f(2^n)$ for every $x \in [2^n, 2^{n+1})$.
Note that $f(2^n) = 1+u_n$ where $u_n =  \sum_{k=4}^n 2^{\frac{n}{k} -\frac{n}{3}}$. 
% We now show that $u_n \le u_{n-1}$ for $n \ge 10$.
We now show that $u_{n+1} < u_n$ for $n \ge 9$.
% that is,  $u_n-u_{n-1} \le 0$ for $n \ge 10$. 
We have 
% \begin{equation}
%   \label{undifference}
%  u_n-u_{n-1}
% % &= \sum_{k=4}^{n-1} \Bigl( 2^{\frac{n}{k}-\frac{n}{3}} - 2^{\frac{n-1}{k}-\frac{n-1}{3}} \Bigr) + 2^{\frac{n}{n}-\frac{n}{3}} \\
%  = \sum_{k=4}^{n-1} 2^{\frac{n}{k}-\frac{n}{3}} (1-2^{\frac13 -\frac{1}{k}} ) + 2^{1-\frac{n}{3}} 
%  = 2^{-\frac{n}{3}} \Big( 2- \sum_{k=4}^{n-1} 2^{\frac{n}{k}} ( 2^{\frac13 - \frac1k} -1 )  \Big).
% \end{equation}
\begin{equation}
\label{undifference}
 u_{n+1} - u_n
 = \sum_{k=4}^{n} 2^{\frac{n+1}{k}-\frac{n+1}{3}} (1-2^{\frac13 -\frac{1}{k}} ) + 2^{1-\frac{n+1}{3}}
  = 2^{-\frac{n+1}{3}} \Big( 2- \sum_{k=4}^{n} 2^{\frac{n+1}{k}} ( 2^{\frac13 - \frac1k} -1 )  \Big).
\end{equation}
%Thus it suffices to show that the sum in brackets is $\ge 2$, for $n \ge 10$.
%Calling $\varphi_n(t) =  2^{\frac{n+1}{t}}(2^{\frac13-\frac{1}{t}}-1)$, a straightforward calculus calculation shows that 
%$\varphi_n(t)$ decreases with $t\ge4$ as long as $n \ge 17$. 
Observe that if $n \ge 20$, then 
\[
  \sum_{k=4}^{n} 2^{\frac{n+1}{k}} ( 2^{\frac13 - \frac1k} -1 )
  > 2^{\frac{n+1}{4}} (2^{\frac{1}{3}-\frac{1}{4}}-1) \ge 2^{\frac{21}{4}} (2^{\frac{1}{12}}-1)
  > 2
\]
and it follows that $u_{n+1}-u_n < 0$ for $n \ge 20$.  Finally, a numerical calculation verifies that the right hand side of \eqref{undifference}  is negative for $9 \le n \le 19$.  Therefore it follows that $f(2^n) > f(2^{n+1})$ for $n \ge 9$. 
Therefore $f(x) \le f(2^{\big\lfloor \frac{\log x_0}{\log 2} \big\rfloor+1})$ on $[2^{\big\lfloor \frac{\log x_0}{\log 2} \big\rfloor+1},\infty)$. 
Similarly, we see that $f(x) \le f(x_0)$ for $x \in [x_0,2^{\big\lfloor \frac{\log x_0}{\log 2} \big\rfloor+1})$, since $f(x)$ decreases on
 $[2^{\big\lfloor \frac{\log x_0}{\log 2} \big\rfloor}, 2^{\big\lfloor \frac{\log x_0}{\log 2} \big\rfloor+1})$.
%  and $x_0 \in [2^{\big\lfloor \frac{\log x_0}{\log 2} \big\rfloor},
% 2^{\big\lfloor \frac{\log x_0}{\log 2} \big\rfloor+1})$.
In summary,  $
  f(x) \le \max \bigl(  f(x_0), f(2^{\big\lfloor \frac{\log x_0}{\log 2} \big\rfloor+1})   \bigr)$.
% By a computer calculation with Maple we obtain the values for $\delta_j$ in the table. 
%From these calculations, it seems that 
%$f(x_0) > f(2^{j_0})$ for $j \le 100$.
\end{proof}

We now apply the case where $x_0 =e^{b}$ to obtain the following corollary.
\begin{corollary}
  \label{deltabcorr}
 Let $b \ge 7$. Assume $x \ge e^b$. Then we have
 \[
  \psi(x) - \theta(x) - \theta(x^{\frac12}) \le \eta x^{\frac13}
 \]
where
\begin{equation}
  \label{deltab}
  \eta = (1+1.93378 \cdot 10^{-8}) \max \Big( f(e^b) , f(2^{\big\lfloor \frac{b}{\log 2} \big\rfloor+1})  \Big)
%   b_0  = \Big\lfloor \frac{b}{\log 2} \Big\rfloor.
\end{equation}
and $f$ is defined by \eqref{fndefn}.
\end{corollary}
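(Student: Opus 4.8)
The plan is to specialize Proposition~\ref{Prop:7} to $x_0 = e^b$, feeding it the Chebyshev-type bound from Corollary~\ref{cor:k0intro} as the required constant $\a$. First I would check the two hypotheses of Proposition~\ref{Prop:7}. Since $b \ge 7$ we have $x_0 = e^b \ge e^7 = 1096.6\ldots > 512 = 2^{9}$, so the condition $x_0 \ge 2^{9}$ holds. For the constant, Corollary~\ref{cor:k0intro} gives $\theta(x) \le (1 + 1.93378 \cdot 10^{-8})x$ for all $x \ge 0$, so one may take $\a = 1.93378 \cdot 10^{-8} > 0$.

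With these choices, Proposition~\ref{Prop:7} yields, for every $x \ge e^b$,
\[
 \sum_{k=3}^{\big\lfloor \frac{\log x}{\log 2} \big\rfloor} \theta(x^{\frac1k}) \le \eta x^{\frac13},
 \qquad
 \eta = (1+\a)\max\bigl(f(x_0),\, f(2^{\big\lfloor \frac{\log x_0}{\log 2}\big\rfloor + 1})\bigr).
\]
Because $x_0 = e^b$ gives $\log x_0 = b$, we have $\big\lfloor \frac{\log x_0}{\log 2}\big\rfloor = \big\lfloor \frac{b}{\log 2}\big\rfloor$ and $f(x_0) = f(e^b)$, so substituting reproduces exactly the constant $\eta$ displayed in \eqref{deltab}.

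It then remains to identify the left-hand side of the claimed inequality. Using the identity $\psi(x) = \sum_{k\ge 1}\theta(x^{1/k})$ (which follows directly from the definitions of $\psi$ and $\theta$), together with the fact that $\theta(y)=0$ for $y<2$, so that $\theta(x^{1/k}) = 0$ whenever $k > \frac{\log x}{\log 2}$, I would write
\[
 \psi(x) - \theta(x) = \sum_{k\ge 2}\theta(x^{1/k}) = \theta(x^{\frac12}) + \sum_{k=3}^{\big\lfloor \frac{\log x}{\log 2}\big\rfloor}\theta(x^{\frac1k}).
\]
Subtracting $\theta(x^{1/2})$ and invoking the bound above completes the proof. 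There is no real obstacle here; the only points needing a moment's care are the numerical verification $e^7 > 2^{9}$ (so that the hypothesis of Proposition~\ref{Prop:7} is met for every $b \ge 7$) and checking that the truncation index $\big\lfloor \frac{\log x}{\log 2}\big\rfloor$ appearing in the expansion of $\psi(x)-\theta(x)$ coincides with the one in Proposition~\ref{Prop:7}.
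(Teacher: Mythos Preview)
Your proof is correct and follows exactly the same approach as the paper: apply Proposition~\ref{Prop:7} with $x_0=e^b$ (noting $e^7>2^9$) and $\a=1.93378\cdot10^{-8}$ from Corollary~\ref{cor:k0intro}. You have simply spelled out in more detail the identification $\psi(x)-\theta(x)-\theta(x^{1/2})=\sum_{k\ge 3}\theta(x^{1/k})$, which the paper leaves implicit.
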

\begin{proof}
 We apply Proposition \ref{Prop:7} with $\a=1.93378 \cdot 10^{-8}$ from Corollary \ref{cor:k0intro}
 and $x_0 =e^b$ where we observe that $x_0=e^b \ge e^7 >2^9$. 
 %   from Corollary \ref{k0expupper} 
\end{proof}

The next result is a general version of \cite[Theorem 6, eq (5.3)]{RS2} and \cite[Corollary 4.5]{Dus4}.
The two inputs we take are a Chebyshev Bias constant $x_1$ such that $\theta(x) < x$ for $x \le x_1$ and 
a bound for $|\psi(x)-x|$ for $x \ge y_0$ for every $y_0 >0$.  These are used in conjunction with \cref{deltabcorr} to prove Theorem \ref{psi-theta:GeneralThm}.
\begin{proposition}\label{psi-theta:Prop2}
Let $b \ge 7$ and assume that for fixed $b$, there exists a positive constant $\varepsilon(b)$ such that 
\begin{equation}
  \label{psibound}
 |\psi(x)-x| \le \varepsilon(b) x \text{ for all } x \ge e^b. 
\end{equation}
Assume there exists $x_1  \ge e^{7}$ such that 
\[
%   \label{x1defn}
  \theta(x) < x \text{ for all } x \le x_1. 
\]
If $b \le 2 \log x_1$, then we have
\begin{equation}
  \label{thetasqrtxbd1}
 \theta(x^\frac12) < (1+ \varepsilon(\log x_1))x^{\frac12} \text{ for }  x \ge e^b. 
\end{equation}
If $b > 2 \log x_1$, then we have 
\[
 \theta(x^\frac12) < (1+\varepsilon(b/2))x^{\frac12}  \text{ for }  x \ge e^b. 
\]
\end{proposition}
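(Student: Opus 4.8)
The plan is to substitute $y = x^{1/2}$, so that both claims reduce to a statement of the form $\theta(y) < (1+C)y$ for all $y \ge e^{b/2}$, with $C = \varepsilon(\log x_1)$ in the first case and $C = \varepsilon(b/2)$ in the second. Throughout I would use the elementary identity $\psi(y) - \theta(y) = \sum_{j \ge 2,\ p^j \le y} \log p$, which is $\ge \log 2 > 0$ as soon as $y \ge 4$; since $e^{b/2} \ge e^{7/2} > 4$, this gives $\theta(y) < \psi(y)$ strictly on the entire range in play, so strictness in the conclusion comes for free once $\psi(y)$ is bounded from above. I would also note that by Theorem \ref{psixmainthm} the quantity $\varepsilon(\cdot)$ is defined at every positive argument, and the bound $|\psi(x)-x| \le \varepsilon(b')x$ holds for $x \ge e^{b'}$ for any $b' > 0$; in particular one may take $b' = \log x_1$ (legitimate as $\log x_1 \ge 7$) or $b' = b/2$.

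First I would handle the case $b \le 2\log x_1$. Here $e^{b/2} \le x_1$, so I split $[e^{b/2},\infty) = [e^{b/2}, x_1] \cup (x_1,\infty)$. On $[e^{b/2}, x_1]$ the Chebyshev-bias hypothesis gives $\theta(y) < y \le (1 + \varepsilon(\log x_1))y$, using $\varepsilon(\log x_1) > 0$. On $(x_1,\infty)$ I apply the $\psi$-bound with parameter $\log x_1$ to get $\psi(y) \le (1+\varepsilon(\log x_1))y$, and combine with $\theta(y) < \psi(y)$ to obtain $\theta(y) < (1+\varepsilon(\log x_1))y$. The two subranges together yield \eqref{thetasqrtxbd1} after unwinding $y = x^{1/2}$.

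Next, the case $b > 2\log x_1$. Now $e^{b/2} > x_1 \ge e^{7}$, so every $y \ge e^{b/2}$ satisfies $y > 4$ and $y \ge e^{b/2}$, whence the $\psi$-bound with parameter $b/2$ gives $\psi(y) \le (1+\varepsilon(b/2))y$ for all $y \ge e^{b/2}$. Together with $\theta(y) < \psi(y)$ this is precisely $\theta(y) < (1+\varepsilon(b/2))y$, i.e. the second claimed bound.

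There is no genuine obstacle here; the argument is essentially bookkeeping. The only points requiring care are: (i) obtaining strict inequality, which I get from $\psi(y) - \theta(y) \ge \log 2$ on the relevant range rather than from the non-strict $\theta \le \psi$; and (ii) verifying that the thresholds produced by applying the $\psi$-bound ($x_1$ in the first case, $e^{b/2}$ in the second) are consistent with the hypothesis on $b$, which is exactly what the dichotomy $b \le 2\log x_1$ versus $b > 2\log x_1$ arranges.
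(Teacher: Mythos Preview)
Your proof is correct and follows essentially the same approach as the paper: split into the two cases $b \le 2\log x_1$ and $b > 2\log x_1$, use the Chebyshev-bias hypothesis $\theta(y)<y$ on $[e^{b/2},x_1]$ in the first case, and bound $\theta(y)$ by $\psi(y) \le (1+\varepsilon(\cdot))y$ elsewhere via the $\psi$-hypothesis at the appropriate threshold. Your treatment of strict inequality (via $\psi(y)-\theta(y)\ge \log 2$ for $y\ge 4$) is actually slightly more careful than the paper's, which writes $\theta(x^{1/2}) \le \psi(x^{1/2}) \le (1+\varepsilon(\log x_1))x^{1/2}$ without explicitly addressing the strictness claimed in \eqref{thetasqrtxbd1}.
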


\begin{proof}
We bound $\theta(x^{\frac12})$ by cases depending on the range of $x$ we are considering. \\
{\it Case 1: $e^b \le  x_1^2$}.   If $e^b \le x \le x_1^2$, then  $x^{\frac12} \le x_1$, and thus 
\[
 \theta(x^\frac12) < x^{\frac12} \text{ for }e^b \le x \le x_1^2. 
\]
On the other hand, if $x^{\frac12} > x_1=e^{\log x_1}$, then we have by \eqref{psibound}
\[
 \theta(x^\frac12) \le  \psi(x^{\frac12}) \le (1 + \varepsilon(\log x_1)) x^{\frac12}, 
\]
since $\log x_1 \ge 7$.  The last two inequalities for $\theta(x^{\frac{1}{2}})$ combine to establish \eqref{thetasqrtxbd1}.  \\
{\it Case 2: $e^b  >  x_1^2$}.  As in the above subcase, we have for $x \ge e^b$
%
%bound $\theta(x^\frac12)$ by $\psi(x^\frac12)$, subsequently bounding $\psi(x^\frac12)$ by 
% $(1+\varepsilon(b/2))x^{\frac12}$
% since $x^\frac12 > e^{\frac{b}{2}} > x_1 \ge e^7$. 
% Thus we have
\[
 \theta(x^\frac12) 
 \le \psi(x^\frac12) \le 
  (1 +\varepsilon(b/2))x^{\frac12}, 
\]
since  $x^\frac12 > e^{\frac{b}{2}} > x_1 \ge e^7$.
\end{proof}
Using the previous general results we obtain the following explicit bounds for $\psi(x)-\theta(x)$ of the shape
\eqref{psithetageneral}.  This generalizes Rosser and Schoenfeld's \cite[Theorem 6]{RS2} and Dusart's \cite[Corollary 4.5]{Dus4} results.
% Note that either of the previous results can be used without the other if desired.
%
\begin{theorem}\label{psi-theta:GeneralThm}
Let $\alpha > 0$ exist such that 
%  Let $b$ and $\alpha$ exist such that
 \[
  \theta(x) \le (1+\alpha)x \text{ for all }x > 0.
 \]
 Assume for every $b \ge 7$ there exists a positive constant $\varepsilon(b)$ such that 
%  and for every $b$ there exists a $\varepsilon(b)$ such that 
 \[
  \psi(x) - x \le \varepsilon(b)x \text{ for all } x \ge e^b.
 \]
 Assume there exists $x_1 \ge e^7$ such that 
 \begin{equation}
  \label{defn:x1}
  \theta(x) < x \text{ for }x \le x_1.
 \end{equation}
 Let $b \ge 7$.  
 Then, for all $x\ge e^b$ we have 
 \begin{equation*}
 \psi(x) - \theta(x) < a_1 x^{\frac12} + a_2 x^{\frac13},
 \end{equation*}
 where
 \begin{equation*}
 a_1 = 
 \begin{cases}
  1+ \varepsilon(\log x_1) &\ \text{if}\ b \le 2\log x_1,\\
  1 + \varepsilon(b/2) &\ \text{if}\ b > 2\log x_1,
 \end{cases}
 \end{equation*}
 and
 \begin{equation*}
  a_2 = (1+\a) \max \Big( f(e^b) , f(2^{\lfloor \frac{b}{\log 2} \rfloor+1})  \Big).
 \end{equation*}
\end{theorem}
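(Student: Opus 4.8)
The plan is to assemble \Cref{psi-theta:GeneralThm} directly from the three preceding ingredients: the decomposition of $\psi(x)-\theta(x)$ into the square-root term $\theta(x^{1/2})$ plus the higher-power tail $\sum_{k\ge 3}\theta(x^{1/k})$, the bound on that tail from \Cref{deltabcorr} (equivalently \Cref{Prop:7} with $x_0=e^b$), and the bound on $\theta(x^{1/2})$ from \Cref{psi-theta:Prop2}. Concretely, for $x\ge e^b$ with $b\ge 7$ we start from the identity
\[
 \psi(x)-\theta(x) = \theta(x^{1/2}) + \sum_{k=3}^{\lfloor \log x/\log 2\rfloor} \theta(x^{1/k}),
\]
which is immediate from the definitions of $\psi$ and $\theta$ (a prime power $p^k\le x$ with $k\ge 2$ contributes $\log p$, and $p^k\le x \iff p\le x^{1/k}$, while $k$ ranges only up to $\lfloor\log x/\log 2\rfloor$ since the smallest prime is $2$).

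Next I would invoke \Cref{Prop:7} with $x_0=e^b$: since $b\ge 7$ gives $e^b>2^9$, the hypothesis $x_0\ge 2^9$ is met, and the hypothesis $\theta(x)\le(1+\alpha)x$ for $x>0$ is exactly one of the standing assumptions of the theorem, so
\[
 \sum_{k=3}^{\lfloor \log x/\log 2\rfloor} \theta(x^{1/k}) \le (1+\alpha)\max\Big(f(e^b),\, f(2^{\lfloor b/\log 2\rfloor+1})\Big)\, x^{1/3} = a_2 x^{1/3}.
\]
Then I would apply \Cref{psi-theta:Prop2}, whose hypotheses (the existence of $\varepsilon(b)$ for each $b\ge 7$, and of $x_1\ge e^7$ with $\theta(x)<x$ for $x\le x_1$) are again precisely the standing assumptions here; it yields $\theta(x^{1/2}) < a_1 x^{1/2}$ for $x\ge e^b$, with $a_1 = 1+\varepsilon(\log x_1)$ when $b\le 2\log x_1$ and $a_1 = 1+\varepsilon(b/2)$ when $b>2\log x_1$ — matching the two cases in the statement. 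Adding the two displayed bounds gives $\psi(x)-\theta(x) < a_1 x^{1/2} + a_2 x^{1/3}$ for all $x\ge e^b$, as claimed.

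There is really no serious obstacle: the theorem is a packaging of the earlier results, and the only points requiring a word of care are (i) checking the bookkeeping in the prime-power identity so that the tail sum genuinely starts at $k=3$ and is capped at $\lfloor\log x/\log 2\rfloor$, and (ii) verifying that the case split on $b$ versus $2\log x_1$ in \Cref{psi-theta:Prop2} is transcribed consistently into the formula for $a_1$. One should also note in passing that $\theta(x)\le(1+\alpha)x$ for all $x>0$ forces $\alpha>0$ to be an admissible constant in $\eta$ of \Cref{Prop:7}, and that $x_1\ge e^7$ ensures $\log x_1\ge 7$ so that $\varepsilon(\log x_1)$ is defined. With those checks in hand the proof is a two-line concatenation.
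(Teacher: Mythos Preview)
Your proposal is correct and follows essentially the same approach as the paper: decompose $\psi(x)-\theta(x)=\theta(x^{1/2})+\sum_{k\ge 3}\theta(x^{1/k})$, bound the tail via \Cref{Prop:7} with $x_0=e^b$, and bound $\theta(x^{1/2})$ via \Cref{psi-theta:Prop2} with its two-case output for $a_1$. Your write-up is in fact slightly more careful than the paper's own proof, which contains a typographical slip (it cites \Cref{psi-theta:Prop2} instead of \Cref{Prop:7} for the tail bound) and omits the explicit checks that $e^b>2^9$ and $\log x_1\ge 7$.
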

\begin{proof}
 We have $\psi(x) - \theta(x) = \theta (x^{\frac12}) + \sum_{k=3}^{\lfloor \frac{\log x}{\log 2} \rfloor} \theta (x^{\frac{1}{k}})$. For any $b \ge 7$, 
  setting $x_0 = e^b$ in Proposition \ref{psi-theta:Prop2}, we bound $\sum_{k=3}^{\big\lfloor \frac{\log x}{\log 2} \big\rfloor} \theta (x^{\frac{1}{k}})$ by
 $\eta x^\frac13$ as defined in \eqref{defn:eta}. We bound $\theta (x^{\frac12})$ with Proposition \ref{psi-theta:Prop2} by taking either 
 $a_1 = 1+ \varepsilon(\log x_1)$ for $b \le 2\log x_1$ or $a_1 = 1 + \varepsilon(b/2)$ for $b > 2\log x_1$.
%   for
%  \begin{align*}
%   \psi(x) - \theta(x) &= \theta (x^{\frac12}) + \sum_{k=3}^{\big\lfloor \frac{\log x}{\log 2} \big\rfloor} \theta (x^{\frac{1}{k}}) \\
%   \psi(x) - \theta(x) &< a_1 x^\frac12 + \delta x^\frac13
%  \end{align*}
\end{proof}

Using the best known values for $x_1$, $\alpha$ and $\varepsilon(b)$, we have the following Corollary:
\begin{corollary}\label{psi-theta:ExplicitCor}
 Let $b \ge 7$. Then for all $x\ge e^b$ we have 
 \begin{equation}
 \psi(x) - \theta(x) < a_1 x^{\frac12} + a_2 x^{\frac13},
 \end{equation}
 where 
 \begin{equation}
 \label{def-a1}
 a_1 = a_1(b)=
 \begin{cases}
 1+ 1.93378 \cdot 10^{-8} &\ \text{if}\ b \le 38\log 10,\\
 1 + \varepsilon(b/2) &\ \text{if}\ b >   38\log 10,
 \end{cases}
 \end{equation}
 \begin{equation}\label{def-a2}
 a_2 = a_2(b)= (1+1.93378 \cdot 10^{-8}) \max \Big( f(e^b) , f(2^{\lfloor \frac{b}{\log 2} \rfloor+1})  \Big),
 \end{equation}
where $f$ is defined by \eqref{fndefn} and values for $\varepsilon(b/2)$ are from Table \ref{Wedpsixvals}.
\end{corollary}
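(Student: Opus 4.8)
The plan is to specialize Theorem \ref{psi-theta:GeneralThm} by plugging in the concrete best-known numerical inputs, so the proof is essentially a verification that each hypothesis of that theorem is met and a bookkeeping of the resulting constants. First I would invoke Corollary \ref{cor:k0intro} to supply the admissible value $\alpha = 1.93378 \cdot 10^{-8}$, i.e. $\theta(x) \le (1+\alpha)x$ for all $x > 0$. Next I would take the Chebyshev-bias input $x_1 = 10^{19}$: by B\"uthe's \cite[Theorem 2, (1.7)]{But2}, $\theta(x) < x$ for all $x \le 10^{19}$, and one checks $x_1 = 10^{19} \ge e^7$ so the hypothesis \eqref{defn:x1} holds. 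For the $\psi$-bound I would use Theorem \ref{psixmainthm} (B\"uthe, Platt--Trudgian), which for every $b \ge 7$ furnishes the required $\varepsilon(b)$ with $\psi(x) - x \le \varepsilon(b)x$ on $x \ge e^b$; the numerical values are those tabulated in Table \ref{Wedpsixvals}.

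With these three inputs in hand, Theorem \ref{psi-theta:GeneralThm} immediately yields, for $b \ge 7$ and all $x \ge e^b$, the inequality $\psi(x) - \theta(x) < a_1 x^{1/2} + a_2 x^{1/3}$ with $a_1 = 1 + \varepsilon(\log x_1)$ if $b \le 2\log x_1$ and $a_1 = 1 + \varepsilon(b/2)$ if $b > 2\log x_1$, and $a_2 = (1+\alpha)\max\bigl(f(e^b), f(2^{\lfloor b/\log 2\rfloor + 1})\bigr)$. It then remains only to translate these into the stated closed forms. Since $x_1 = 10^{19}$ we have $2\log x_1 = 38\log 10$, so the case split $b \le 2\log x_1$ versus $b > 2\log x_1$ becomes exactly $b \le 38\log 10$ versus $b > 38\log 10$, matching \eqref{def-a1}. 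In the first case $a_1 = 1 + \varepsilon(\log 10^{19}) = 1 + \varepsilon(19\log 10)$, and by Corollary \ref{cor:k0intro} (equivalently the $k=0$ entry of Table \ref{Wedpsixvals}) this equals $1 + 1.93378\cdot 10^{-8}$; in the second case $a_1 = 1 + \varepsilon(b/2)$ as written, with the value drawn from Table \ref{Wedpsixvals}. Substituting $\alpha = 1.93378 \cdot 10^{-8}$ into the formula for $a_2$ gives \eqref{def-a2} verbatim.

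The only genuinely substantive point — and hence what I expect to be the main ``obstacle'', though it is really just a consistency check — is making sure the two numerical facts about B\"uthe's work are the ones actually needed: that $\theta(x) < x$ holds all the way up to $10^{19}$ (so that $x_1 = 10^{19}$ is legitimate and $\log x_1 \ge 7$), and that the value $\varepsilon(19\log 10) = 1.93378 \cdot 10^{-8}$ recorded in Table \ref{Wedpsixvals} is the same constant that appears in Corollary \ref{cor:k0intro}, so that $a_1$ and $\alpha$ are stated with a single coherent numerical value. Everything else is a direct application of Theorem \ref{psi-theta:GeneralThm} together with arithmetic identities like $2\log(10^{19}) = 38\log 10$, which need no further comment. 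I would therefore write the proof as: cite Corollary \ref{cor:k0intro} for $\alpha$; cite \cite[Theorem 2, (1.7)]{But2} for $x_1 = 10^{19} \ge e^7$; cite Theorem \ref{psixmainthm} and Table \ref{Wedpsixvals} for $\varepsilon(b)$; apply Theorem \ref{psi-theta:GeneralThm}; and observe $2\log x_1 = 38\log 10$ and $\varepsilon(\log x_1) = 1.93378\cdot 10^{-8}$ to obtain \eqref{def-a1} and \eqref{def-a2}.
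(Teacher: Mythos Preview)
Your proposal is correct and follows essentially the same route as the paper: apply Theorem \ref{psi-theta:GeneralThm} with $\alpha = 1.93378\cdot 10^{-8}$ from Corollary \ref{cor:k0intro}, $x_1 = 10^{19}$ from B\"uthe \cite[Theorem 2, (1.7)]{But2}, and the $\varepsilon(b)$ values from Table \ref{Wedpsixvals}, then unwind $2\log x_1 = 38\log 10$ and $\varepsilon(19\log 10) = 1.93378\cdot 10^{-8}$. The paper's only additional remark is a practical note that when $b/2$ is not an entry in Table \ref{Wedpsixvals} one uses the nearest tabulated value below $b/2$.
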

\begin{proof}
 We apply Theorem \ref{psi-theta:GeneralThm} with $x_0 = e^b$ and $\a=1.93378 \cdot 10^{-8}$ from Corollary \ref{cor:k0intro}. Thus we get 
  $a_2 =  (1+1.15177 \cdot 10^{-8}) \max \Big( f(e^b), f(2^{\big\lfloor \frac{b}{\log 2} \big\rfloor+1}) \Big)$. In Proposition \ref{psi-theta:Prop2}, we take 
  $x_1=10^{19}$ from the work of B\"uthe \cite[Equation (1.7)]{But2}, and get $\varepsilon(\log x_1) = 1.93378 \cdot 10^{-8}$ from Table \ref{Wedpsixvals}.
  Thus for $b \le 38\log 10$ we have $a_1 = 1+1.93378 \cdot 10^{-8}$ and for $b > 38\log 10$ we take $\varepsilon(b/2)$ from Table \ref{Wedpsixvals}.
  In the case that $b/2$ is not included in the table, we bound $b/2$ by the greatest value smaller than $b/2$.
\end{proof}

We have the following values for $a_2$.
% {\color{red} Updated Aug 27 - Sam}

\begin{center}
\begin{tabular}{|*{8}{c|}}
 \hline
 $b$        & $20$     & $25$     & $30$     & $35$      & $40$      & $43$      & $50$ \\ \hline
 $a_2$ & $1.4263$ & $1.2196$ & $1.1211$ & $1.07086$ & $1.04320$ & $1.03253$ & $1.01718$ \\ \hline
\end{tabular}
\\
\begin{tabular}{|*{6}{c|}}
 \hline
 $b$        & $100$                     & $150$                     & $200$                     & $250$                     & $300$ \\ \hline
 $a_2$ & $1 + 2.421 \cdot 10^{-4}$ & $1 + 3.749 \cdot 10^{-6}$ & $1 + 7.712 \cdot 10^{-8}$ & $1 + 2.024 \cdot 10^{-8}$ & $1 + 1.936 \cdot 10^{-8}$ \\ \hline
\end{tabular}
\end{center}

% For various values of $b$, we have the following values for $a_2$.
% \begin{center}
% \begin{tabular}{|*{8}{c|}}
%  \hline
%  $b$   & $20$     & $25$     & $30$     & $35$      & $40$      & $43$      & $50$ \\ \hline
%  $a_2$ & $1.4263$ & $1.2196$ & $1.1211$ & $1.07086$ & $1.04320$ & $1.03252$ & $1.01718$ \\ \hline
% \end{tabular}
% \\
% \begin{tabular}{|*{6}{c|}}
%  \hline
%  $b$   & $100$                     & $150$                     & $200$                     & $250$                     & $300$ \\ \hline
%  $a_2$ & $1 + 2.421 \cdot 10^{-4}$ & $1 + 3.740 \cdot 10^{-6}$ & $1 + 6.930 \cdot 10^{-8}$ & $1 + 1.242 \cdot 10^{-8}$ & $1 + 1.154 \cdot 10^{-8}$ \\ \hline
% \end{tabular}
% \end{center}
% In addition to the computed values above, we have numerically verified the following bounds, as in these cases, $\delta_b \ge 1.7778$
% \begin{center}
% \begin{tabular}{|*{4}{c|}}
%  \hline
%  $b$ & $5$ & $10$ & $15$ \\ \hline
%  ${\delta'_b}$ & $1.7778$ & $1.7395$ & $1.4733$  \\ \hline
% \end{tabular}
% \end{center}
\section{Bounds for $\theta(x)-x$ of the form $\frac{x}{(\log x)^k}$}\label{thetalemmataproofs}
%%
% We prove {\commentcolor{red} \quad Include conditions on all Lemmas?}
% \begin{theorem}\label{GeneralAxler}
%  Let constants exist such that the conditions of Lemmas \ref{lemma1}, \ref{lemma2}, \ref{trivialupper}, \ref{LemmaxN_2}, \ref{But2Lem1}, and
%  \ref{lem} are met, then if $[X_0,\infty) = [G',G'')\cup[E',E'']\cup[D',D'']\cup[B',B'']\cup[A',\infty)$ and $[X_1,\infty) = (C',C'']\cup[B',B'']\cup[A',\infty)$,
%  we have 
%  \begin{align}
%  && \left( 1- \frac{m_k}{\log^kx} \right) x < \ &\theta(x) &&  \text{for all } x \ge X_0 \\
%  &&  &\theta (x) < \left( 1 + \frac{M_k}{\log^k x} \right)x &&  \text{for all }x > X_1 
%  \end{align}
%  for $k \in \N$ where $m_k > \max{(A_k, B_k, D_k, E_k, G_k)}$, $M_k > \max{(A_k, B_k, C_k)}$ and \\
%  $[A',\infty),[B',B''],(C',C''],[D',D''],[E',E''],[G',G'')$ are the ranges acquired respectively from Lemmas \ref{lemma1},
%  \ref{lemma2}, \ref{trivialupper}, \ref{LemmaxN_2}, \ref{But2Lem1}, and Section \ref{Lowernumeric}.
% \end{theorem}
\noindent
In this section we prove Theorem \ref{MainResult:theta}, our main theorem for giving estimates for $\theta(x)-x$ of size $\frac{x}{(\log x)^k}$. 
More precisely, we prove that for any $k =0,\ldots, 5 $ and any $X_0, X_1 \ge 1$, there exists $m_k, M_k>0$ such that 
\begin{equation}
 \label{thetalb}
 x \left( 1- \frac{m_k}{(\log x)^k} \right) \le \theta(x) \qquad \text{ for all } x \ge X_0 
\end{equation}
and 
\begin{equation}
 \label{thetaub}
 \theta(x) \le x \left( 1+ \frac{M_k}{(\log x)^k} \right) \qquad \text{ for all } x \ge X_1.
\end{equation}
% \begin{align}
%   && \left( 1- \frac{m_k}{(\log x)^k} \right) x < \ &\theta(x) &&  \text{for all } x > X_0  \label{thetalb} \\
%   && &\theta(x) < \left( 1 + \frac{M_k}{(\log x)^k} \right)x &&  \text{for all } x > X_1 \label{thetaub}
%  \end{align}
% \begin{theorem}\label{GeneralAxler}
%  Let $k \in \N$ and fix $X_0, X_1  \ge 1$. Then there exists $m_k(X_0), M_k(X_1) > 0$ such that
%  \begin{align}
%   && \left( 1- \frac{m_k}{\log^k x} \right) x < \ &\theta(x) &&  \text{for all } x > X_0  \label{thetalb} \\
%   && &\theta(x) < \left( 1 + \frac{M_k}{\log^k x} \right)x &&  \text{for all } x > X_1 \label{thetaub}
%  \end{align}
% \end{theorem}
The values for $X_0, X_1, m_k$, and $M_k$ may be found in Table \ref{ThetaEpsilon} for $k = 0$, and in Table \ref{Table:MasterResults} for $k=1,\ldots,5$.
Theorem \ref{MainResult:theta} is a generalization of Axler's \cite[Theorem 1]{Axler17_2}. 
% 
% \subsection{Theorem 2 for $k=1,\ldots,5$}\label{Section:knotzero}
% \quad \\
% Our strategy for proving Theorem \ref{MainResult:theta} for $k \in \lbrace1,2,3,4,5\rbrace$ is to follow and generalize the proof of 
%   Axler \cite[Theorem 1]{Axler17_2}.
% Our strategy for proving Theorem \ref{MainResult:theta} for $k \in \lbrace1,2,3,4,5\rbrace$ is to follow the result of Axler \cite{Axler17_2}
% which gives the values {\commentcolor{red}the last row of the above Table.}
%We prove a lower bound on $\theta(x)$ for $x \ge X_0$ and an upper bound for $x \ge X_1$.
We separate the cases $k=0$ and $k =1,\ldots, 5 $ (with $k=0$ being treated at the end of this section).
%For $k=0$, ???\footnote{comment for k=0}
For $k =1,\ldots, 5 $, we subdivide the interval $[1, \infty)$ (for the range of  $x$) as follows: 
\[
  [1,\infty) = [1,e^{J_0}) \cup[e^{J_0},e^{J}) \cup [e^J, e^K) \cup [e^{K}, \infty).
\]
We now explain how the values of $J_0$, $J$ and $K$ are chosen. 
%depending on certain explicit results.
For shorthand, we respectively call $[1,e^{J})$, $[e^J, e^K)$, and $[e^{K}, \infty)$ the ``small'', ``middle'', and ``large'' ranges of $x$.
\begin{itemize}
 \item In the large range of $x$, we apply bounds for $\theta(x)$ of the shape $x (\log x)^{c}  \exp\left(-C \sqrt{\log x} \right).$
 \item 
 % In the large range of $x$, our bounds follow from a classical de la Vall\'{e}e-Poussin bound on $\theta(x)$.
% \eqref{thetalb} and \eqref{thetaub}.
In the middle range of $x$, we subdivide the interval $[e^J, e^K)$ into smaller consecutive intervals $[e^{b},e^{b'})$. 
In each such subinterval, 
we make use of bounds for $\psi(x)$ of the shape $|\psi(x)-x| \le \varepsilon x$ and for the difference $\psi(x)-\theta(x)$ (established respectively in Appendix \ref{psixboundssection} and Section \ref{psithetaresults}).
% $\psi(x)-\theta(x)$ from Section \ref{psithetaresults} and the bounds
% for $\psi(x)$ from Section \ref{psixboundssection} and  from \cite{But}.
By numerical experimentation we choose
\[K=25\,000.\]
%{\color{red} IS THIS STILL TRUE? (This value of $K$ actually depends on Mossinghoff-Trudgian \cite{TrudMoss}'s zero-free region and on Wedenevski \cite{Wed}'s partial verifications of the Riemann Hypothesis.)}
\item In the small range of $x$, upper bounds for $\theta(x)$ are the result of direct calculations, 
namely that $\theta(x) < x$ is known for all $x<e^J$. 
Here B\"uthe's \cite[Theorem 2]{But2} allows us to take 
\[J=19\log 10=43.74\ldots.\]
% easy to obtain due to the Chebyshev Bias phenomenon \cite{RubSar}. % which asserts that $\theta(x) < x$ 
% for $x < x_0$ where $x_0 \approx e^{316}$ \cite{BH}, \cite{PT2016}.
\item In the small range of $x$, we subdivide $[ 1, e^J )$ at $e^{J_0}$ to obtain lower bounds for $\theta(x)$:
\begin{itemize}
% $[X_0,e^J) = [X_0,e^{J_0}) \cup [ e^{J_0}, e^J )$ with $J_0=27$.
\item We do direct calculations up to $e^{J_0}$ where we use
\[J_0 = \log(7 \cdot 10^{11}).\]
\item For $x \in [ e^{J_0}, e^J )$ we use a little known comparison of $\psi(x)$ with $\theta(x)$ due to Costa Pereira \cite{Cos},
together with numerical bounds for $(\psi(x)-x)/\sqrt{x}$, computed by B\"{u}the \cite{But2}. 
\end{itemize}
\end{itemize}
\subsection{Upper and lower bounds for $\theta(x)$ in the large range $x \ge  e^{K}$} \label{theta:xlarge}
% ($x \in [A', \infty)$)}
%%%%%%%%%%%%%%%%%%%%%%%%%%%%%%%%%%%%%%%%%%%%%%%%%%%%%%%%%%%
% For $x \ge e^{K}$, bounds for $\theta(x)$ are derived from the large $x$ bounds for
% $\theta(x)$ which arise from the classical zero-free region.
The following lemma derives a bound of the form $x/(\log x)^k$ for $\theta(x)-x$ 
from the classical de la Vall\'{e}e Poussin-Hadamard bound. 
\begin{lemma}\label{lemma1}
Suppose there exists $c_1, c_2, c_3, c_4 > 0$ such that
  \begin{equation}
\label{equ:1}
%  && |\theta(x) - x| < c_1 (\log x)^{c_2}x \exp (-c_3 \sqrt{\log x} ) &&  \text{for all } x \ge c_4
  |\theta(x) - x| \le  c_1 x (\log x)^{c_2}  \exp (-c_3 (\log x)^{\frac{1}{2}} ) \ \text{for all}\ x \ge c_4.   
  \end{equation}
% Let $k \in [1,5]$ and let $x_2 \ge \max \left(c_4, \exp ( \tfrac{4(c_2+k)^2}{c_3^2} )\right)$.
Let $k >0$ and let $b  \ge \max ( \log c_4, \log ( \tfrac{4(c_2+k)^2}{c_3^2} ) )$.
Then for all $x \ge e^b $ we have
\begin{equation}
|\theta(x) - x| \le \frac{A_k(b ) x}{(\log x)^k}, 
\end{equation}
where 
\begin{equation}
  \label{Akdefn}
 A_k(b ) = c_1 \cdot   b ^{c_2+k} e^{-c_3 \sqrt{b }}.
\end{equation}
\end{lemma}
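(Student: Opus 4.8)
The plan is to show that on the range $x \ge e^b$, the function $g(x) := (\log x)^{c_2+k}\exp(-c_3(\log x)^{1/2})$ is decreasing, so that its supremum over $[e^b,\infty)$ is attained at $x = e^b$, namely $g(e^b) = b^{c_2+k} e^{-c_3\sqrt b}$. Combining this with hypothesis \eqref{equ:1} (valid since $e^b \ge c_4$) will give, for all $x \ge e^b$,
\[
 |\theta(x)-x| \le c_1 x (\log x)^{c_2}\exp(-c_3(\log x)^{1/2})
  = \frac{c_1 x}{(\log x)^k}\cdot g(x) \le \frac{c_1 b^{c_2+k}e^{-c_3\sqrt b}}{(\log x)^k}\, x,
\]
which is exactly the claimed bound with $A_k(b)$ as in \eqref{Akdefn}.

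First I would substitute $t = (\log x)^{1/2}$, so that $t \ge \sqrt b$ corresponds to $x \ge e^b$, and $g$ becomes $h(t) := t^{2(c_2+k)} e^{-c_3 t}$. Taking the logarithmic derivative, $h'(t)/h(t) = \tfrac{2(c_2+k)}{t} - c_3$, which is negative precisely when $t > \tfrac{2(c_2+k)}{c_3}$. Hence $h$ is decreasing on $\bigl(\tfrac{2(c_2+k)}{c_3},\infty\bigr)$. The hypothesis $b \ge \log\bigl(\tfrac{4(c_2+k)^2}{c_3^2}\bigr)$ is equivalent to $e^b \ge \tfrac{4(c_2+k)^2}{c_3^2}$, i.e. $\sqrt b \ge \tfrac{2(c_2+k)}{c_3}$, which is exactly the condition ensuring we are on the decreasing branch for all $t \ge \sqrt b$. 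Therefore $h(t) \le h(\sqrt b)$ for all such $t$, i.e. $g(x) \le g(e^b) = b^{c_2+k} e^{-c_3\sqrt b}$ for all $x \ge e^b$.

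The only mildly delicate point is keeping track of the two lower-bound conditions on $b$: the term $\log c_4$ is needed solely to be in the domain of validity of \eqref{equ:1}, while the term $\log\bigl(\tfrac{4(c_2+k)^2}{c_3^2}\bigr)$ is precisely what puts us past the critical point $t_\ast = 2(c_2+k)/c_3$ of $h$. Since $b$ is assumed to exceed the maximum of the two, both are in force simultaneously, and no further analysis is required — the rest is the elementary monotonicity computation above. I do not anticipate a genuine obstacle here; the main thing to get right is the bookkeeping of the change of variables and verifying that the stated threshold for $b$ matches the sign change of $h'$.
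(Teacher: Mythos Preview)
Your approach is the same as the paper's: write $|\theta(x)-x| \le \frac{c_1 x}{(\log x)^k}\, g(x)$ with $g(x) = (\log x)^{c_2+k}\exp\bigl(-c_3\sqrt{\log x}\bigr)$, show $g$ is decreasing on $[e^b,\infty)$, and evaluate at $x=e^b$. The substitution $t=\sqrt{\log x}$ is just an explicit version of the paper's ``by calculus''.

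There is, however, a false equivalence in your monotonicity check. You correctly find that $h(t) = t^{2(c_2+k)}e^{-c_3 t}$ decreases once $t > 2(c_2+k)/c_3$, but then write ``$e^b \ge \tfrac{4(c_2+k)^2}{c_3^2}$, i.e.\ $\sqrt{b} \ge \tfrac{2(c_2+k)}{c_3}$''. These are not the same: the first says $b \ge \log\bigl(\tfrac{4(c_2+k)^2}{c_3^2}\bigr)$, the second says $b \ge \tfrac{4(c_2+k)^2}{c_3^2}$. It is the second, stronger, condition that your argument actually requires, and the lemma's stated hypothesis only gives the first. The paper's own proof makes the same slip (it asserts ``$g(x)$ decreases when $x \ge \tfrac{4(c_2+k)^2}{c_3^2}$'', whereas the correct threshold is $\log x \ge \tfrac{4(c_2+k)^2}{c_3^2}$), so this is evidently a typo in the lemma: the second term in the $\max$ should be $\tfrac{4(c_2+k)^2}{c_3^2}$ rather than its logarithm. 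With that correction your argument goes through verbatim.
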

\begin{proof}
We denote $g(x) = (\log x)^{c_2+k} \exp (-c_3 (\log x )^{\frac{1}{2}} )$. 
By (\ref{equ:1}), $ |\theta(x) - x| < \frac{c_1 g(x) x}{(\log x)^k}$ for all $x \ge c_4$.
It suffices to bound $g$: by calculus, $g(x)$ 
% has critical points at $x = 1$ and $x = \exp \left( \frac{4(c_2+k)^2}{c_3^2} \right)$. It follows that $g$ 
decreases when $x\ge  \tfrac{4(c_2+k)^2}{c_3^2}$. 
Therefore  $ |\theta(x) - x| \le  \frac{c_1 g(e^b ) x}{(\log x)^k}$.
Note that $c_1g(e^b) =  A_k(b )$ and the condition on $b$ follows from the conditions $e^b \ge c_4$
and $e^b \ge \tfrac{4(c_2+k)^2}{c_3^2}$. 

%, completing the proof.
\end{proof}
The current best explicit version of \eqref{equ:1} is due to Platt and Trudgian. Details are given 
in Corollary \ref{thetaxlargeABC} in Appendix \ref{psixboundssection}.
\begin{theorem}\cite[Theorem 1]{PT2019} \label{thetaxlarge}
Let $x_0 \ge 1000$ and 
let $R$ be a formal constant such that there exists a zero-free region of the form $\Re (s) \ge 1 - \frac{1}{R  \log | \Im (s)|}$ for $|\Im (s)| \ge 2$.  There exist positive constants $(A,B,C)$ such that for all $x \ge e^{x_0}$ 
\[
 |\theta (x) - x| < A x ( \tfrac{\log x}{R} )^{B} \exp ( -C  (\tfrac{\log x}{R})^{\frac{1}{2}}  ).
\]
\end{theorem}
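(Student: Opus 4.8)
The final statement is Theorem \ref{thetaxlarge}, quoted from Platt--Trudgian \cite{PT2019}. Since it is explicitly attributed to another paper, the natural "proof" here is not a reproof from scratch but a pointer to where the constants $(A,B,C)$ are extracted and verified in the present paper's own notation, namely Corollary \ref{thetaxlargeABC} in Appendix \ref{psixboundssection}. So the plan is to show how the abstract zero-free-region input, parametrized by the formal constant $R$, feeds into an explicit-formula / contour-integration estimate for $\theta(x)-x$ and produces a bound of the advertised de la Vall\'ee Poussin--Hadamard shape.

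\medskip

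\textbf{Approach.} I would proceed as follows. First, recall the explicit formula $\theta(x) = x - \sum_{\rho} \frac{x^\rho}{\rho} + (\text{lower order terms from trivial zeros and the pole structure of } \zeta'/\zeta)$, or rather its truncated/smoothed version, since $\theta$ itself is only of bounded variation. The cleanest route is to bound $\psi(x)-x$ first via a truncated Perron formula for $-\zeta'/\zeta$, shifting the contour to the boundary of the zero-free region $\Re(s) \ge 1 - \frac{1}{R\log|\Im(s)|}$, and then transfer from $\psi$ to $\theta$ using the bounds for $\psi(x)-\theta(x)$ of size $\sqrt{x}$ established in Section \ref{psithetaresults} (Corollary \ref{psi-theta:ExplicitCor}), which contribute only to lower-order terms absorbed into the main estimate. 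Second, on the shifted contour one estimates $|x^s/s|$ and the size of $\zeta'/\zeta$; optimizing the truncation height $T$ against the contour shift $1 - \sigma \asymp \frac{1}{R\log T}$ produces the characteristic $\exp(-C\sqrt{\log x / R})$ saving, while the polynomial factor $(\log x / R)^B$ comes from the number of zeros counted near the critical line (density estimates, e.g.\ those of \cite{KLN}) and from powers of $\log T$ picked up in the bounds for $\zeta'/\zeta$. Third, I would collect the numerical constants: the zero-free region constant $R = 5.5666305\ldots$ (or whichever value is used), the zero-counting/density constants, and the error bounds from the transition $\psi \to \theta$, and package them into explicit $A$, $B$, $C$ valid for $x \ge e^{x_0}$ with $x_0 \ge 1000$. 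This last bookkeeping is precisely what Corollary \ref{thetaxlargeABC} does.

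\medskip

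\textbf{Key steps in order.} (1) State the truncated Perron formula for $\psi(x)$ with an explicit error term in terms of the truncation height $T$. (2) Shift the line of integration to $\Re(s) = 1 - \frac{1}{R\log T}$, crossing the pole at $s=1$ (giving the main term $x$) and using that $\zeta(s) \ne 0$ in this region. (3) Bound the horizontal and vertical contributions using standard explicit bounds for $|\zeta'/\zeta(s)|$ near the $1$-line together with a zero-density input to control the sum over zeros. (4) Choose $T = T(x)$ optimally, i.e.\ $\log T \asymp \sqrt{R\log x}$, to balance the terms and obtain $|\psi(x)-x| \ll x(\log x)^B \exp(-C\sqrt{\log x/R})$. (5) Invoke Corollary \ref{psi-theta:ExplicitCor} (or equivalently the bounds of Section \ref{psithetaresults}) to replace $\psi$ by $\theta$ at the cost of an $O(\sqrt{x}\,)$ term, which is dominated by the right-hand side for $x$ large (in particular $x \ge e^{1000}$). (6) Track all constants to produce the explicit triple $(A,B,C)$.

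\medskip

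\textbf{Main obstacle.} Since the result itself is quoted verbatim from \cite{PT2019}, there is no genuine mathematical obstacle to reproving it; the real work — and the only part that belongs in this paper — is the \emph{explicit constant propagation}: converting the formal constant $R$ and the density estimates of \cite{KLN} into numerically clean values of $A$, $B$, $C$ that remain valid uniformly for all $x \ge e^{x_0}$, and checking that the $\psi \to \theta$ transition error is genuinely negligible in the stated range. I expect this bookkeeping, carried out in Corollary \ref{thetaxlargeABC} of Appendix \ref{psixboundssection}, to be where all the care is needed; the structural outline above is routine. Accordingly, the proof of the theorem as stated consists of citing \cite[Theorem 1]{PT2019} directly and recording in Corollary \ref{thetaxlargeABC} the specific admissible values of $(A,B,C)$ used in Lemma \ref{lemma1} to obtain the constants $A_k(b)$ via \eqref{Akdefn}.
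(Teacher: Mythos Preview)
Your proposal is essentially correct: the paper does not prove Theorem \ref{thetaxlarge} but cites it from \cite[Theorem 1]{PT2019}, and the explicit constants $(A,B,C)$ are recorded in Appendix \ref{psixboundssection} (equations \eqref{newA}--\eqref{newBC} and Corollary \ref{thetaxlargeABC}), exactly as you say.

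One point of comparison worth flagging: your sketch of the underlying argument proceeds by shifting the contour in Perron's formula to the boundary of the zero-free region and bounding $\zeta'/\zeta$ there. The method actually used in \cite{PT2019}, and reproduced in Appendix \ref{psixboundssection}, is slightly different in mechanics: one starts from Dudek's explicit Perron formula \eqref{eq0} for $\psi$, keeps the sum over zeros, and splits it at $\beta = 1-\delta$. The contribution from $\beta < 1-\delta$ is handled trivially by \eqref{eq2}, while the dangerous zeros with $\beta \ge 1-\delta$ are treated by Pintz's device of slicing $[H,T]$ into geometric subintervals and inserting the explicit zero-density bound \eqref{ZD-KLN} on each slice, together with the zero-free region to bound $x^{\beta-1}$. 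Both routes lead to the same shape $x(\log x/R)^B \exp(-C\sqrt{\log x/R})$ after optimizing in $T$, but the sum-over-zeros/zero-density route is what produces the specific $B = \tfrac{5}{2}-\sigma$, $C = \tfrac{16\sigma}{3}-\tfrac{10}{3}$ in \eqref{newBC}; a pure contour-shift without density input would give a worse exponent. Your step (5), the $\psi \to \theta$ transition via Corollary \ref{psi-theta:ExplicitCor}, matches the paper's Corollary \ref{thetaxlargeABC} exactly. (Minor: the value of $R$ used is $5.573412$ from \cite{TrudMoss}, not $5.5666\ldots$.)
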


Using this we obtain the following corollary. 
\begin{corollary}\label{Cor:Ak}
Let $k >0$, $x_0 \ge 1000$,  and  $b \ge \max(\log x_0, \log(4R (\frac{B+k}{C} )^2) ) $. 
% $b \ge \max \left( 4R\left(\frac14+k\right)^2, R  \max (8.36, \frac{8}{R })^2 \right)$. 
Then 
\[
     |\theta (x) - x| \le \frac{\mathcal{A}_k(b) x}{(\log x)^k} \qquad  \text{for all } x \ge e^{b}
\]
where 
\begin{equation}
   \label{AkDusart}
    \mathcal{A}_k(b) = \frac{A}{R^B} \cdot b^{B +k} \cdot \exp \left(- C\sqrt{\frac{b}{R}}\right)
 \end{equation}
 and $R = 5.573412$.
Values for $\mathcal{A}_k(b)$ for $1 \le k \le 5$ are displayed in Table \ref{Aktable}.
\end{corollary}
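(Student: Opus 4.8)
The plan is to obtain Corollary \ref{Cor:Ak} as a direct specialization of Lemma \ref{lemma1}, feeding in the particular shape of the de la Vall\'ee Poussin--Hadamard bound provided by Theorem \ref{thetaxlarge}. First I would rewrite the Platt--Trudgian estimate $|\theta(x)-x| < A x (\tfrac{\log x}{R})^B \exp(-C(\tfrac{\log x}{R})^{1/2})$ in the normalized form \eqref{equ:1} of Lemma \ref{lemma1}. Expanding $(\tfrac{\log x}{R})^B = R^{-B}(\log x)^B$ and $(\tfrac{\log x}{R})^{1/2} = R^{-1/2}(\log x)^{1/2}$, this is exactly \eqref{equ:1} with the identifications
\[
 c_1 = \frac{A}{R^B}, \qquad c_2 = B, \qquad c_3 = \frac{C}{\sqrt{R}}, \qquad c_4 = e^{x_0}.
\]
So the hypothesis of Lemma \ref{lemma1} holds for every $x \ge e^{x_0}$, with $x_0 \ge 1000$ as in Theorem \ref{thetaxlarge}.

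Next I would check that the lower bound imposed on $b$ in Corollary \ref{Cor:Ak} matches (or exceeds) the lower bound required by Lemma \ref{lemma1} under these substitutions. Lemma \ref{lemma1} asks for $b \ge \max(\log c_4, \log(4(c_2+k)^2/c_3^2))$. Here $\log c_4 = x_0$, but the corollary writes $\log x_0$ in place of $x_0$; I would note that since $x_0 \ge 1000$ we have $x_0 \ge \log x_0$ trivially, so one should read the corollary's hypothesis as ensuring $b \ge \log x_0$, and in the intended application $b$ will in any case be taken $\ge x_0$ (the condition as literally stated should be $b\ge \log c_4 = x_0$; I will state it that way or remark on the discrepancy). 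For the second term, $4(c_2+k)^2/c_3^2 = 4(B+k)^2/(C^2/R) = 4R(B+k)^2/C^2 = 4R\bigl(\tfrac{B+k}{C}\bigr)^2$, which is precisely the expression appearing inside the logarithm in the corollary's hypothesis on $b$. Hence the conditions align.

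With the hypotheses verified, Lemma \ref{lemma1} gives, for all $x \ge e^b$,
\[
 |\theta(x)-x| \le \frac{A_k(b)\,x}{(\log x)^k}, \qquad A_k(b) = c_1 b^{c_2+k} e^{-c_3\sqrt{b}}.
\]
Substituting back $c_1 = A/R^B$, $c_2 = B$, $c_3 = C/\sqrt{R}$ yields
\[
 A_k(b) = \frac{A}{R^B}\, b^{B+k}\, \exp\!\left(-\frac{C}{\sqrt{R}}\sqrt{b}\right) = \frac{A}{R^B}\, b^{B+k}\, \exp\!\left(-C\sqrt{\tfrac{b}{R}}\right) = \mathcal{A}_k(b),
\]
which is exactly \eqref{AkDusart}. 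The value $R = 5.573412$ is the admissible zero-free-region constant invoked by Theorem \ref{thetaxlarge} (via Corollary \ref{thetaxlargeABC}), and the tabulated values of $\mathcal{A}_k(b)$ for $1 \le k \le 5$ are then a routine numerical evaluation once the explicit triple $(A,B,C)$ from \cite{PT2019} is inserted.

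There is no real obstacle here; the content is entirely in Lemma \ref{lemma1} and in Theorem \ref{thetaxlarge}, and the corollary is bookkeeping. The only point requiring a moment of care is the translation of the $(\tfrac{\log x}{R})$-normalization into the plain $(\log x)$-normalization of Lemma \ref{lemma1}, and the matching of the two lower-bound conditions on $b$ (in particular making sure the $\log x_0$ versus $x_0$ phrasing does not cause trouble, which it does not since the effective constraint will be $b \ge x_0 \ge 1000$). I would therefore present the proof as: apply Lemma \ref{lemma1} with $c_1 = A/R^B$, $c_2 = B$, $c_3 = C/\sqrt R$, $c_4 = e^{x_0}$ coming from Theorem \ref{thetaxlarge}; verify the $b$-condition reduces to the stated one; and read off $\mathcal{A}_k(b)$ from $A_k(b)$.
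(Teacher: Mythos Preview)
Your proposal is correct and follows exactly the same route as the paper: apply Lemma \ref{lemma1} with $c_1 = A/R^B$, $c_2 = B$, $c_3 = C/\sqrt{R}$, and $c_4 = e^{x_0}$ coming from Theorem \ref{thetaxlarge}, then check that $4(c_2+k)^2/c_3^2 = 4R\bigl(\tfrac{B+k}{C}\bigr)^2$. You are also right to flag the $\log x_0$ versus $x_0$ discrepancy in the hypothesis on $b$; the paper's own proof writes $c_4 = x_0$ (which would make $\log c_4 = \log x_0$ match the statement), but since Theorem \ref{thetaxlarge} gives the bound for $x \ge e^{x_0}$ one really has $c_4 = e^{x_0}$ and hence needs $b \ge x_0$, exactly as you noted.
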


\begin{proof}
 We apply Lemma \ref{lemma1} with $R=5.573412$ (using \cite{TrudMoss}), and values from Theorem \ref{thetaxlarge}, namely
 $c_1=\frac{A}{R^B}$, $c_2=B$, $c_3=\frac{C}{\sqrt{R}}$, and $c_4= x_0$.
We complete the proof by noticing% the following regarding the condition for $b$:
 \[
   \frac{4(c_2+k)^2}{c_3^2} =4R \Big(\frac{B+k}{C} \Big)^2. 
 \]

%  We apply Lemma \ref{lemma1} with the $c_1, c_2, c_3, c_4$ values from Theorem \ref{thetaxlarge}, $K = b$, and Mossinghoff and Trudgian's $R=5.573412$.
%  \begin{center}
%   \[
%    c_1 = \sqrt{\frac{8}{\pi R^{\frac12}}}, \quad c_2 = \frac14, \quad c_3 = \frac{1}{\sqrt{R}}, \quad R = 5.573412.
%   \]
%  \end{center}
%  which are values from Corollary \ref{DusCor1.2}.
%  Thus we obtain $\mathcal{A}_k(b)$.
%  as given by \eqref{Ak}.
\end{proof}
% \ \\ 
% {\color{blue} This table updated June 18, 2018 by Sam}\\
%%%%%%%%%%%%%%%%%%%%%%%%%%%%%%%%%%%%%%%%%%%%%%%%%%%%%%%%%%%%%%%%%%%%%%
\subsection{Upper and lower bounds for $\theta(x)$ in the middle range $e^{J} \le x < e^{K}$} \label{section:xmiddle}
In this range we combine bounds for $\psi(x)-\theta(x)$ established in 
Corollary \ref{psi-theta:ExplicitCor} with the current best 
known bounds for $\psi(x)$ as derived in Appendix \ref{psixboundssection} to produce a bound for $\theta(x)$.
We begin with a general result. 
 %and from B\"uthe \cite{But}
% . The next lemma gives a general result which takes a bound for $\psi(x)-\theta(x)$ and bounds for $\psi(x)$
 
\begin{lemma}\label{lemma2}
Let $k=1,\ldots,5$. Assume there exist a positive integer $n$, real numbers $a_{\ell} \ge 0$ for every $\ell \in \{1, 2, \ldots, n \}$, 
 and $x_0 > 0$ such that 
 \begin{equation}
  \label{psithetadiff}
  \psi (x) - \theta (x) \le \sum_{\ell=1}^n a_{\ell} x^{\frac{1}{\ell+1}} \qquad \text{ for all } x \ge x_0.
 \end{equation}
 Let $b' > b \ge 2k$, $e^b \le x_0$, and assume that there exists $\varepsilon(b)>0$ such that
  \begin{equation}
  \label{psixdiff}
  |\psi (x) - x| \le \varepsilon(b)x \qquad \text{for all }x \ge e^{b}.
 \end{equation}
%  where $2k \le b < \tilde{b}$. 
% where $b_j$ denotes the $j$-th entry of column 1 of  Table \ref{Plapsixvals1}
%or Table \ref{Wedpsixvals}. 
Then we have 
 \begin{equation}
  \label{thetakBbd}
   |\theta(x)-x| \le   \frac{B_k x}{(\log x)^k} \qquad  \text{for all }x \in [e^{b}, e^{b'}]
 \end{equation}
where 
\begin{equation}
  \label{defn:B}
  B_k = B_k(b,b') =
  \max_{x \in [e^b, e^{b'}]}  
   \Big(  \sum_{\ell=1}^n a_{\ell} (\log x)^k x^{-\frac{\ell}{\ell+1}} +  \varepsilon(b) (\log x)^k  \Big).
%   b^{k} 
% \sum_{\ell=1}^n a_{\ell} \exp\left(-\frac{\ell b}{\ell+1}\right) + \varepsilon(b) \tilde{b}^k.
%  \sum_{\ell=1}^n a_{\ell} e^{-\frac{\ell \cdot b}{\ell+1}} + \varepsilon \cdot (\tilde{b})^k.
\end{equation}
Note that 
\begin{equation}
  \label{defn:tildeB}
  B_k \le \widetilde{B}_k = \widetilde{B}_k(b,b') =  b^{k} 
 \sum_{\ell=1}^n a_{\ell} \exp\left(-\frac{\ell b}{\ell+1}\right) + \varepsilon(b) (b')^k.
\end{equation}
% (ii)  Let $p,q \in \N$ exist such that $b_j > 0$ is ordered for $j \in \{ p, p+1, \cdots, q\}$ such that 
% \begin{equation}
%  |\psi (x) - x| < \varepsilon_j \cdot x \qquad  \text{for all }x \in [e^{b_j}, e^{b_{j+1}}]. \label{psixdiff}
% \end{equation}
% Then we have
%  Let $k,l,n,p,q \in \N$, for $j \in \{l, l+1, \cdots n\}$, $a_j \ge 0$, for $i \in \{ p, p+1, \cdots, q, q+1\}$ $b_i > 0$ such that 
%  $y < z \Rightarrow b_y < b_z$, and $\frac{k(l+1)}{l} \le b_p$. Then, if
%  \begin{equation}
%   \psi (x) - \theta (x) < \sum_{j=l}^n a_j x^{\frac{1}{j+1}}, \label{psithetadiff}
%  \end{equation}
% and
% \begin{equation}
%  |\psi (x) - x| < \varepsilon_i \cdot x \qquad  \text{for all }x \in [e^{b_i}, e^{b_{i+1}}], \label{psixdiff}
% \end{equation}
%\begin{equation}
% |\theta (x) - x| \le \frac{ \mathcal{B}_k x}{\log^k x}  \qquad  \text{for all }x \in [e^{b_p}, e^{b_{q+1}}]
%\end{equation}
%where
%\begin{equation}
% \mathcal{B}_k = \max_{p \le j \le q}\{ B_{j,k,} \}, \qquad B_{j,k} := \left( b_{j}^{k} 
% \sum_{\ell=1}^n a_{\ell} e^{-\frac{\ell \cdot b_j}{\ell+1}} + \varepsilon_j \cdot b_{j+1}^k \right).
%\end{equation}
 \end{lemma}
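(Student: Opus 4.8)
The plan is to combine the two given inputs --- a bound for $|\psi(x)-x|$ and a bound for $\psi(x)-\theta(x)$ --- into a two-sided bound for $\theta(x)-x$, then divide by $x/(\log x)^{k}$ and take a maximum over the interval to read off $B_k$; the secondary estimate $B_k\le\widetilde B_k$ is then a short calculus argument in which the hypothesis $b\ge 2k$ is exactly what is needed.

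First I would record the elementary inequality $\theta(x)\le\psi(x)$, valid for all $x>0$ since $\psi(x)=\theta(x)+\theta(x^{1/2})+\theta(x^{1/3})+\cdots$ with every summand nonnegative. Fix $x\in[e^{b},e^{b'}]$, so that both \eqref{psithetadiff} and \eqref{psixdiff} are available at $x$. From \eqref{psixdiff} and $\theta(x)\le\psi(x)$ we get $\theta(x)-x\le\psi(x)-x\le\varepsilon(b)x$. In the other direction, writing $x-\theta(x)=(x-\psi(x))+(\psi(x)-\theta(x))$ and applying \eqref{psixdiff} and \eqref{psithetadiff} gives $x-\theta(x)\le\varepsilon(b)x+\sum_{\ell=1}^{n}a_{\ell}x^{1/(\ell+1)}$. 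Hence
\[
 |\theta(x)-x|\le\varepsilon(b)x+\sum_{\ell=1}^{n}a_{\ell}x^{\frac{1}{\ell+1}}
 \qquad\text{for all }x\in[e^{b},e^{b'}].
\]
Multiplying by $(\log x)^{k}/x$ and using $x^{1/(\ell+1)}/x=x^{-\ell/(\ell+1)}$, the right-hand side becomes $\varepsilon(b)(\log x)^{k}+\sum_{\ell=1}^{n}a_{\ell}(\log x)^{k}x^{-\ell/(\ell+1)}$, which by \eqref{defn:B} is at most $B_k$. Rearranging yields \eqref{thetakBbd}.

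For the bound $B_k\le\widetilde B_k$ I would bound the maximum of the sum by the sum of the maxima over $x\in[e^{b},e^{b'}]$. The term $\varepsilon(b)(\log x)^{k}$ is increasing in $x$, hence at most $\varepsilon(b)(b')^{k}$. For the $\ell$-th term, substitute $t=\log x\in[b,b']$, turning it into $a_{\ell}\,t^{k}e^{-\frac{\ell}{\ell+1}t}$; the derivative of $t\mapsto t^{k}e^{-\frac{\ell}{\ell+1}t}$ equals $t^{k-1}e^{-\frac{\ell}{\ell+1}t}\bigl(k-\tfrac{\ell}{\ell+1}t\bigr)$, which is nonpositive as soon as $t\ge\frac{k(\ell+1)}{\ell}$. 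Since $\frac{k(\ell+1)}{\ell}\le 2k$ for every integer $\ell\ge 1$ and we assumed $b\ge 2k$, this function is non-increasing on $[b,b']$, so its maximum there is $b^{k}e^{-\frac{\ell b}{\ell+1}}$. Summing over $\ell$ and adding the $\varepsilon(b)(b')^{k}$ term gives \eqref{defn:tildeB}.

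The argument has no serious obstacle; the one point requiring care --- and the reason the hypothesis reads $b\ge 2k$ rather than something $\ell$-dependent --- is that a single threshold must make all $n$ functions $t^{k}e^{-\frac{\ell}{\ell+1}t}$ non-increasing simultaneously, the binding case being $\ell=1$, where the critical point $\frac{k(\ell+1)}{\ell}$ is exactly $2k$.
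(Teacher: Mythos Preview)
Your proof is correct and follows essentially the same route as the paper's: the paper writes the single line $|\theta(x)-x|\le (\psi(x)-\theta(x))+|\psi(x)-x|$ via the triangle inequality and nonnegativity of $\psi-\theta$, whereas you split into the two one-sided estimates, but the resulting bound and the subsequent calculus for $\widetilde B_k$ (monotonicity of $t^k e^{-\frac{\ell}{\ell+1}t}$ for $t\ge 2k$, with $\ell=1$ binding) are identical.
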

 \noindent {\bf Remark}.  Note that the value for $B$ is  slightly smaller than $\tilde{B}$. 
 However, at times we make use of the weaker value given by \eqref{defn:tildeB}.
\begin{proof} By the triangle inequality and the non-negativity of $\psi(x)-\theta(x)$, we have 
 \begin{align*}
  |\theta (x) - x| 
%  &= |\theta (x) - \psi (x) + \psi (x) - x| \\
%  &\le |\psi(x) - \theta (x)| + |\psi (x) - x| \\
  & \le  \psi (x) - \theta (x) + |\psi(x) - x|.
 \end{align*}
 Bounding these terms by (\ref{psithetadiff}) and (\ref{psixdiff}), we have for $x \ge e^b$,
%  for  $x \in [e^{b}, e^{\tilde{b}}]$,
 \begin{align*}
  |\theta(x) - x| &\le
   \frac{x}{(\log x)^k} \Big(  \sum_{\ell=1}^n a_{\ell} (\log x)^k x^{-\frac{\ell}{\ell+1}} +  \varepsilon(b) (\log x)^k  \Big).
%   =\frac{x}{\log^k x} ( r(x)+ \varepsilon(b) \log^k x )
%  \\
%  &= \frac{x}{\log^k x} \left( \sum_{j=l}^n a_j \frac{\log^k x}{x^{\frac{j}{j+1}}} + \varepsilon_i \log^k x \right)
 \end{align*}
%  where $r(x) = \log^k x  \sum_{\ell=1}^n a_{\ell}  x^{-\frac{\ell}{\ell+1}}$.
This immediately implies \eqref{thetakBbd} holds with \eqref{defn:B}.  
Observe that since $x\ge e^b > e^{2k} \ge e^{\frac{k(\ell+1)}{\ell}}$, then each $a_\ell (\log x)^k x^{-\frac{\ell}{\ell + 1}}$ decreases with $x$. 
On the other hand, $\varepsilon(b) (\log x)^k$ increases with $x$ and thus we have the inequality 
\eqref{defn:tildeB}.
\end{proof}
\begin{corollary}\label{Cor:Bk}
%(i) 
Let $k \in \{ 1,\ldots,5 \}$, and let $b$ and $b'$ be any consecutive entries of column 1 of Table \ref{Wedpsixvals} such that $b < b'$. i.e. 
we assume that there exists $\varepsilon(b)>0$ 
such that 
\[
 |\psi(x) - x| \le \varepsilon(b)x \qquad \text{for all }x \in [ e^{b} , e^{b'} ] .
\]
Thus 
\begin{equation}
 \label{B:ExpSubinterval}
 |\theta(x)-x| \le  \frac{B_k(b,b') x}{(\log x)^k} \qquad   \text{for all }x \in [e^{b}, e^{b'}],
\end{equation}
where
\begin{equation}
 \label{Bbbprime2}
 B_k(b,b') = a_1(b) b^k e^{-\frac{b}{2}} + a_2(b) b^k e^{-\frac{2b}{3}}+  (b')^k \varepsilon(b),
\end{equation}
% \begin{equation}
%  \label{Bbbprime2}
%  B_k(b,b') = \max_{ b \le b \le b_{j+1} } \left\{a_1(b_j) b^k e^{-\frac{b}{2}} + a_2(b_j) b^k e^{-\frac{2b}{3}}+ \varepsilon(b') b^k \right\},
% \end{equation}
%\footnote{ def of $B_k(b,b',n)$ used to be $B_{j,k}$ which was $a_1(b_j) b_j^k e^{-\frac{b_j}{2}} + a_2(b_j) b_j^k e^{-\frac{2b_j}{3}}+ \varepsilon(b_j) b_{j+1}^k$ }
%
and $a_1,a_2$ are defined in \cref{psi-theta:ExplicitCor}. 
% , and the values for the $\widetilde B_{j,k}$ are displayed in Table \ref{BkiTableWedeniwski} which correspond to Wedeniwski's H_0 value. \\
% \ref{EtaTable1Buthe}, \ref{EtaTable2Buthe}, \ref{EtaTable3Allysa}, and \ref{EtaTable4Allysa}.  \\
\\ In addition, let $b_0$ be any entry in column 1 of \cref{BkiTableWedeniwski}. Then, 
% (ii) Let $k=1,\ldots,5 $, and let $b_j$ denote the $j$-th entry of column 1 of Table \ref{Wedpsixvals}%{BkMaxTableWedeniwski}
% , i.e we assume that there exists $\mathcal{B}_{j,k} > 0$ such that \eqref{Bbbprime2} is occuring.
% Thus
\begin{equation}
 \label{bound:mathcalB}
 |\theta (x) - x| \le \frac{\mathcal{B}_k(b_0) x}{(\log x)^k}  \qquad \text{for all }x \in [e^{b_0}, e^K]
 %\text{ or }  x \in [e^{b_j}, e^{b_{13\,900}}]
\end{equation}
% and there exists $\widetilde{\mathcal{B}}_{j,k} > 0$ such that 
% \[
%  |\theta (x) - x| \le \frac{\widetilde{\mathcal{B}}_{j,k} x}{\log^k x}  \qquad  \text{ for }  x \in [e^{b_j}, e^{13\,900}]
% \]
where $K$ is the last entry in Column $1$ of \cref{BkiTableWedeniwski}, and
\begin{equation}
\label{MathcalBbbprime2}
\mathcal{B}_k(b_0) = \max_{b,b' \atop b_0 \le b < b'}   B_k(b,b'). 
\end{equation}
Values for $B_k(b,b')$ and $\mathcal{B}_k(b_0)$ are respectively displayed in Tables \ref{BkiTableWedeniwski} and \ref{BkMaxTableWedeniwski}.
% \ref{B_kTable3Allysa} and \ref{B_kTable4Allysa}.  
\end{corollary}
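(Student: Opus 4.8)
The plan is to derive \cref{Cor:Bk} as an essentially immediate specialization of \cref{lemma2}, feeding in the $\psi$--$\theta$ bound from \cref{psi-theta:ExplicitCor} as the hypothesis \eqref{psithetadiff} and the $\psi$-bounds tabulated in \cref{Wedpsixvals} as the hypothesis \eqref{psixdiff}. Concretely, \cref{psi-theta:ExplicitCor} gives, for every $b \ge 7$ and all $x \ge e^b$, the inequality $\psi(x)-\theta(x) < a_1(b) x^{1/2} + a_2(b) x^{1/3}$, which is exactly \eqref{psithetadiff} with $n=2$, $a_1 = a_1(b)$, $a_2 = a_2(b)$, and $x_0 = e^b$. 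Since consecutive table entries satisfy $b < b'$ and the tables only list values $b \ge J = 19\log 10 > 2k$ for $k \le 5$, the hypotheses $b' > b \ge 2k$ and $e^b \le x_0$ of \cref{lemma2} are met. Applying \cref{lemma2} on the window $[e^b, e^{b'}]$ then yields $|\theta(x)-x| \le B_k(b,b')x/(\log x)^k$ with $B_k(b,b')$ given by the general formula \eqref{defn:B}; substituting $a_1 = a_1(b)$ at the exponent $\ell=1$ (so $x^{-\ell/(\ell+1)} = x^{-1/2}$) and $a_2 = a_2(b)$ at $\ell = 2$ (so $x^{-2/3}$) into the bound \eqref{defn:tildeB} produces exactly \eqref{Bbbprime2}: $B_k(b,b') \le a_1(b)b^k e^{-b/2} + a_2(b) b^k e^{-2b/3} + (b')^k \varepsilon(b)$. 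This establishes \eqref{B:ExpSubinterval}.

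For the second assertion, I would cover the interval $[e^{b_0}, e^K]$ by the finitely many consecutive subintervals $[e^b, e^{b'}]$ with $b_0 \le b < b'$ running over consecutive entries of column 1 of \cref{Wedpsixvals} up to $K$. On each such subinterval \eqref{B:ExpSubinterval} holds with constant $B_k(b,b')$, so on the union the single constant $\mathcal{B}_k(b_0) = \max_{b_0 \le b < b'} B_k(b,b')$ works simultaneously; this is precisely \eqref{bound:mathcalB} and \eqref{MathcalBbbprime2}. The finiteness of the maximum is automatic since only finitely many table rows lie between $b_0$ and $K$. Finally, I would note that the numerical values of $a_1(b)$, $a_2(b)$ and $\varepsilon(b)$ needed to evaluate $B_k(b,b')$ and $\mathcal{B}_k(b_0)$ are taken from \cref{psi-theta:ExplicitCor} and \cref{Wedpsixvals} respectively, and the resulting numbers populate \cref{BkiTableWedeniwski} and \cref{BkMaxTableWedeniwski}.

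The only real content beyond bookkeeping is checking that the hypotheses of \cref{lemma2} are genuinely satisfied for the table ranges in play — in particular $b \ge 2k$, which forces the individual terms $a_\ell(\log x)^k x^{-\ell/(\ell+1)}$ to be decreasing in $x$ so that \eqref{defn:tildeB} is valid, and $e^b \le x_0 = e^b$, which holds with equality. Since the tables start at $b = J = 19\log 10 \approx 43.7 > 10 \ge 2k$ for all $k \in \{1,\dots,5\}$, this is satisfied throughout. I expect the main (mild) obstacle to be purely expository: making sure the indexing convention $a_\ell \leftrightarrow x^{1/(\ell+1)}$ in \cref{lemma2} lines up with the $a_1 \leftrightarrow x^{1/2}$, $a_2 \leftrightarrow x^{1/3}$ convention of \cref{psi-theta:ExplicitCor}, and confirming that the passage from \eqref{defn:B} to \eqref{defn:tildeB} justifies replacing the maximum over $x$ by evaluation at the endpoints $b$ (for the decreasing terms) and $b'$ (for the increasing $\varepsilon(b)(\log x)^k$ term). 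Both are handled inside \cref{lemma2} already, so the proof here is a two-sentence invocation plus the covering argument for $\mathcal{B}_k$.
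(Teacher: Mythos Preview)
Your proposal is correct and follows essentially the same approach as the paper's proof: apply \cref{lemma2} with $n=2$, feeding in $a_1=a_1(b)$, $a_2=a_2(b)$ from \cref{psi-theta:ExplicitCor}, take the $\widetilde{B}_k$ bound \eqref{defn:tildeB} to obtain \eqref{Bbbprime2}, and then cover $[e^{b_0},e^K]$ by the consecutive subintervals and take the maximum to get \eqref{bound:mathcalB}. One minor factual slip: \cref{Wedpsixvals} (and hence \cref{BkiTableWedeniwski}) begins at $b=20$, not at $J=19\log 10$; your hypothesis check $b\ge 2k$ still passes since $20\ge 10$ for all $k\le 5$, so the argument is unaffected.
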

%%%%%%%%%%%%%%%%%%%%%%%%
\begin{proof} %(i) 
We apply Lemma \ref{lemma2} with $k \in \{1,2,3,4,5\}$, $b_0 = b$, and $n=2$ and obtain \eqref{Bbbprime2}. For we take
$B_k(b,b') = \widetilde{B}_k(b,b',2) $ with $a_1=a_1(b)$ and $a_2=a_2(b)$ as defined in \eqref{def-a1} and \eqref{def-a2} respectively.
\\ %(ii) 
The inequality \eqref{bound:mathcalB} follows from \eqref{B:ExpSubinterval} together with the fact that 
$[e^{b_0},e^K] = \bigcup_{b \in [b_0, K)} [e^{b},e^{b'}]$. 
%where $b_{k^*+1}=K=13\,900$.  
%Observe that by part (i) of the lemma $ |\theta(x)-x| \le  \frac{B_{i,k} x}{(\log x)^k}$ and thus we derive \eqref{bound:mathcalB}.
\end{proof}
%\begin{remark}  We can recover  a result of Axler \cite[Theorem 1.1]{Axler17_2}.
%He shows that 
% \begin{equation}
%  |\theta (x) - x| \le \frac{0.15x}{\log^3 x} \qquad  \text{for all } x \in [e^{32}, e^{5000}].
% \end{equation}
%Using our notation, Axler proves Lemma \ref{lemma2} for the following values
% \begin{align*}
%  \B_3 = 0.15; \qquad k = 3; \qquad  \text{for all }  e^{32} \le x \le  e^{5000}.
% \end{align*}
% {\bf EXPLAIN HOW TABLES GIVES THIS RESULT.}. 
% ($\B_3 = 0.1458 < 0.15$,
% as Axler announced.)
%\end{remark}
% {\commentcolor{red} MOVE TO SECTION 3}.
% Application:
% \[a_1=1+1.82348\cdot10^{-10},\ a_2 = 1.777745 \implies \]
% question: can Dusart's $a_2= 1.777745$be improved?
%%%%%%%%%%%%%%%%%%%%%%%%%%%%%%%%%%%%
%\subsection{Costa-Pereira's approach to upper and lower bounds for $\theta(x)$ in the medium range $e^{J} < x \le e^K$}
\noindent {\it Remark} 1. 
Note that $B_k(b,b')$ is essentially $\varepsilon(b)(b')^k$ and thus any refinement on the other terms only brings minor improvements.
For instance, Costa-Pereira's \cite[Theorem 1]{Cos} estimates for $\psi(x)-\theta(x)$ affects digits much further than those we display.
For every $x>0$,
\begin{equation}\label{CP1}
\psi(x) - \psi (x^{\frac12} )- \psi (x^{\frac13} )- \psi (x^{\frac15} ) \le 
\theta(x) \le \psi(x) - \psi(x^{\frac12})- \psi(x^{\frac13})- \psi(x^{\frac17}).
\end{equation}
2.  While we use the bound
\[
  B_k(b,b') \le a_1(b) b^k e^{-\frac{b}{2}} + a_2(b) b^k e^{-\frac{2b}{3}}+ \varepsilon(b) (b')^k,
\]
for any given $b$ and $k$, a more specific $\max$ in \eqref{Bbbprime2} can be computed exactly using calculus.
\subsection{Upper bounds for $\theta(x)$ in the small range $x < e^{J}$}
\label{trivial:upper}
%We take $J$ such that $\theta(x) < x$ for all $x \le e^J$. 
It is has been proven that the first sign change of $x - \theta(x)$ occurs before $1.3972 \cdot 10^{316}$ \cite[Lemma 9.4]{SaTrDe}.
In fact, B\"uthe \cite[Theorem 2, (1.7)]{But2} has shown that 
\begin{equation}
  \label{Chebbias}
        \theta(x) < x - 0.05 \sqrt{x}  \text{ for all } x \le 10^{19}.
\end{equation}
It follows that 
%\todo{are these both $J$ or $J_0$? This is correct. $J$ and $J_0$ are different. NN}
\begin{equation}
 \label{ThetaLex}
 \theta(x) - x \le M_k \ \text{for all } x \le e^{J_0}, \ \text{with } M_k = 0 \ \text{ and }  J=19\log10. 
\end{equation}
% The argument for the lower bound with $x \le e^{43}$ will be divided into two cases; we use numerical verification for the first values of $x$. For $x \le e^{20}$, a direct calculation 
% will be employed.
\subsection{Lower bounds for $\theta(x)$ in the small range $e^{J_0} \le x < e^{J}$} \label{section:xsmall}
% We now discuss the lower bound in this region. 
%%%%%%%%%%%%%%%%%%%%%%%%%%%%%%%%%%%%%%%%%%%%%%%%%%%%%%%%%%%%%%%%%%%%%%%
%%%%%%%%%%%%%%%%%%%%%%%%%%%%%%%%%%%%%%%%%%%%%%%%%%%%%%%%%%%%%%%%%%%%%%%
%%\quad \\
% \begin{lemma}\label{LemmaxCk}
%  Let $c$, $N'$, $N''> 0$, and let $k \in \N$ exist such that
%  \begin{equation}
%   \theta (x) \ge x - cx^{\frac12} \qquad  \text{for all } N' \le x \le N''.\label{cxhalf}
%  \end{equation}
%  then, for $C_k=1,\ldots,5$, we have
%  \begin{equation}
%   \theta (x) \ge x - \frac{xC_k}{\log^k x} \qquad  \text{for all }x \in (\max\{N',N_{2}\}, N''),\label{LowerboundN2}
%  \end{equation}
%  where $N_2$ is the solution to $\frac{c}{C_k} = \frac{N_2^{\frac12}}{\log^k N_2}$, satisfying $N_2 \ge e^{2k}$ and $N_2 \le N''$.
% \end{lemma}
% %37
% \begin{proof}
%  We directly bound $cx^{\frac12}$ in \eqref{cxhalf} with $\frac{xC_k}{\log^k x}$ after proving the bound is valid. To do so, we need
%  \begin{equation*}
%   \frac{xC_k}{\log^k x} \ge cx^{\frac12} \Leftrightarrow \frac{c}{C_k} \le \frac{x^{\frac12}}{\log^k x}.
%  \end{equation*}
%  Applying the first derivative test to $\frac{x^{\frac12}}{\log^k x}$, we find that it is increasing for all $x > e^{2k}$. Thus by assumption that $N_2 \ge e^{2k}$, we have
%  that
%  \begin{equation}
%   \frac{x^{\frac12}}{\log^k x} \ge \frac{c}{C_k} \qquad  \text{for all } x > N_2.
%  \end{equation}
%  Which then from \eqref{cxhalf} implies \eqref{LowerboundN2}
% \end{proof}
% %%
% 
%  The following lemma provides a $D_k$ value when given $N_{2}$, rather than $D_k$ providing a $N_{2}$ value.
We provide here an improvement to B\"uthe's \cite[Lemma 1]{But2}. 
%In this range, we bound $\theta(x)$ below through the combination of two results;
%To do so, we combine Costa Pereira's \cite[Theorem 1, Equation (1)]{Cos} about $\psi(x) - \theta(x)$ and B\"{u}the \cite{But2}'s numerical bounds of the shape
\[
  -c \le  \frac{x-\psi(x)}{\sqrt{x}} \le C.
\]
%  with $x \le e^{43}$. 
%   , and we use the Chebyshev bounds derived in Section \ref{psixboundssection}  for $\psi(x^{\frac{1}{n}})$ for $n=3,5$. 
\begin{lemma}
 \label{lemma3}
 Let $1 \le u < v$. Assume there exist $c = c_{u,v} > 0$ and $C= C_{u,v} > 0$ such that 
 \begin{equation}
 \label{equ:c-Psi-C}
  -c \le \frac{x - \psi(x)}{\sqrt{x}} \le C \qquad \text{for every }x \in [u,v].
 \end{equation}
Assume that there exists $c_0>0$ such that 
 \begin{equation}\label{defn1:c0}
  \psi(x) < c_0 x \text{ for all } x > 0.
 \end{equation}
If $u^2<v$, then 
  \begin{equation}
  \label{eqn:C-Section-Post-Lemma}
  \theta(x) \ge x - (C + 1)x^{\frac12} - c_0 x^\frac13 - c x^{\frac14} - c_0 x^\frac15 \  \text{ for all } x \in [u^2,v].
 \end{equation}
% where $c_0=1.03883$.
%  \begin{equation}
%   \label{eqn:C-Section-Post-Lemma}
%   \theta(x) \ge x - (C + 1)x^{\frac12} - \widetilde{c} x^\frac13 - c x^{\frac14} - \widetilde{c} x^\frac15 \qquad  \text{for all }x \in [u^2,v]
%  \end{equation}
%  where
%  \begin{equation}
%   \label{defn:ctilde}
%   \widetilde{c} = \widetilde{c}_{u,v} =  \max_{u^\frac15 \le t \le v^\frac13} \frac{\psi(t)}{t}.
%  \end{equation}
\end{lemma}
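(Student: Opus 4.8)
The plan is to start from the Chebyshev identity relating $\psi$ and $\theta$ on prime powers and peel off the first few terms explicitly. We have the well-known decomposition
\[
 \psi(x) = \theta(x) + \theta(x^{1/2}) + \theta(x^{1/3}) + \theta(x^{1/4}) + \theta(x^{1/5}) + \sum_{k \ge 6} \theta(x^{1/k}),
\]
so that
\[
 \theta(x) = \psi(x) - \theta(x^{1/2}) - \theta(x^{1/3}) - \theta(x^{1/4}) - \theta(x^{1/5}) - \sum_{k \ge 6} \theta(x^{1/k}).
\]
Since $x \in [u^2, v]$ we have $x^{1/2} \in [u, v^{1/2}] \subseteq [u,v]$, so the lower bound $\psi(x) \ge x - C\sqrt{x}$ from \eqref{equ:c-Psi-C} applies directly to $\psi(x)$. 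The strategy is then to bound each of the subtracted terms from above: use $\theta(y) \le \psi(y)$ everywhere, then bound $\psi$ on each piece by either \eqref{equ:c-Psi-C} (when the argument still lies in $[u,v]$) or by the crude global bound \eqref{defn1:c0}.

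First I would handle the $\theta(x^{1/2})$ term. Since $x^{1/2} \in [u,v]$, apply $\theta(x^{1/2}) \le \psi(x^{1/2}) \le x^{1/2} + c\,x^{1/4}$ using the left inequality of \eqref{equ:c-Psi-C} (i.e. $x - \psi(x) \ge -c\sqrt{x}$ rewritten as $\psi(x) \le x + c\sqrt{x}$, evaluated at $x^{1/2}$). This contributes $-(x^{1/2} + c\,x^{1/4})$ to the lower bound for $\theta(x)$, which together with the $-C\sqrt{x}$ from $\psi(x)$ gives the $-(C+1)x^{1/2} - c\,x^{1/4}$ part. Next, for $\theta(x^{1/3})$ and $\theta(x^{1/5})$ I would use $\theta(y) \le \psi(y) < c_0 y$ from \eqref{defn1:c0}, giving the $-c_0 x^{1/3}$ and $-c_0 x^{1/5}$ terms. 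The $\theta(x^{1/4})$ term can be bounded by $\psi(x^{1/4}) \le x^{1/4} + \text{(lower order)}$ or simply absorbed — but note the target inequality \eqref{eqn:C-Section-Post-Lemma} has only a $c\,x^{1/4}$ term, so I would need to check that $\theta(x^{1/4}) \le \psi(x^{1/2}) - x^{1/2}$'s slack, or more cleanly, observe $\theta(x^{1/4}) \le \theta(x^{1/3})$ for $x \ge 1$ and fold it in; alternatively one notes $\theta(x^{1/4}) \le c_0 x^{1/4} \le c\, x^{1/4}$ if the constants cooperate, or the $x^{1/4}$ and $x^{1/5}$ contributions are simply what the stated bound records. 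I would also need that the tail $\sum_{k\ge 6}\theta(x^{1/k}) \ge 0$, which is trivial since $\theta \ge 0$, so dropping it only weakens the lower bound in the right direction.

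The main obstacle — really a bookkeeping subtlety rather than a deep point — is matching the exact shape of \eqref{eqn:C-Section-Post-Lemma}: the right-hand side has coefficient $C+1$ on $x^{1/2}$, then $c_0$ on $x^{1/3}$, then $c$ on $x^{1/4}$, then $c_0$ on $x^{1/5}$, with \emph{no} $x^{1/6}$ or higher terms, and one must verify that the chosen bound for each $\theta(x^{1/k})$ term indeed fits under this template for all $x \in [u^2,v]$. The cleanest route is: $\theta(x) = \psi(x) - \theta(x^{1/2}) - \sum_{k \ge 3}\theta(x^{1/k})$, apply the sharp two-sided control to $\psi(x)$ and to $\theta(x^{1/2}) \le \psi(x^{1/2})$ since both arguments lie in $[u,v]$, and control the remaining sum $\sum_{k\ge 3}\theta(x^{1/k}) = \theta(x^{1/3}) + \theta(x^{1/4}) + \theta(x^{1/5}) + \sum_{k \ge 6}\theta(x^{1/k})$ by using $\theta(x^{1/k}) \le \psi(x^{1/k}) < c_0 x^{1/k}$ for $k=3,4,5$ and, for the $k \ge 6$ tail, using $x^{1/k} \le x^{1/6}$ plus the sparsity of prime powers (or simply $\theta(x^{1/k}) = 0$ once $x^{1/k} < 2$, i.e. for $k$ large) to absorb it into one of the displayed lower-order terms or show it is dominated. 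Since $u \ge 1$ forces $x \ge 1$ and the number of relevant $k$ is $O(\log x)$ with each term $\le c_0 x^{1/6}$, the tail is harmless; I expect the write-up simply uses \eqref{CP1}-style reasoning (Costa–Pereira) or the direct observation that the $x^{1/4}$- and $x^{1/5}$-terms in the statement already account for everything after $x^{1/3}$ once one notes $\theta(x^{1/k}) \le \theta(x^{1/5})$ for $k \ge 5$ combined with convergence. No environments are left open.
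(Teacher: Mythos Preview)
Your main line of argument via the naive decomposition $\psi(x)=\sum_{k\ge 1}\theta(x^{1/k})$ does not close, and you essentially flag the two failure points yourself without resolving them. First, the sign on the tail is backwards: since $\theta(x)=\psi(x)-\theta(x^{1/2})-\cdots-\sum_{k\ge 6}\theta(x^{1/k})$ and the tail is nonnegative, dropping it gives an \emph{upper} bound on $\theta(x)$, not a lower one; for \eqref{eqn:C-Section-Post-Lemma} you need an upper bound on the tail, which you never supply. Second, and more seriously, the naive route forces you to subtract a $\theta(x^{1/4})$ term, and the only available estimate is $\theta(x^{1/4})\le\psi(x^{1/4})<c_0 x^{1/4}$. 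But the target \eqref{eqn:C-Section-Post-Lemma} has coefficient $c$ on $x^{1/4}$, and in the applications one has $c<1<c_0=1.03883$, so the constants do not ``cooperate''. Folding $\theta(x^{1/4})$ into $\theta(x^{1/3})$ would instead double the $x^{1/3}$ coefficient. None of your suggested workarounds produces the stated inequality.

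The missing ingredient is exactly the Costa--Pereira inequality you allude to at the end. The paper's proof invokes \cite[Theorem~1]{Cos} (the left half of \eqref{CP1}),
\[
\psi(x)-\theta(x)\le \psi(x^{1/2})+\psi(x^{1/3})+\psi(x^{1/5}),
\]
which simultaneously eliminates the $\theta(x^{1/4})$ term and the entire tail. From there one bounds $\psi(x^{1/3})$ and $\psi(x^{1/5})$ by \eqref{defn1:c0}, and uses \eqref{equ:c-Psi-C} twice: once at $x\in[u^2,v]\subset[u,v]$ to get $\psi(x)\ge x-Cx^{1/2}$, and once at $x^{1/2}\in[u,v^{1/2}]\subset[u,v]$ to get $\psi(x^{1/2})\le x^{1/2}+c\,x^{1/4}$. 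The $c\,x^{1/4}$ in \eqref{eqn:C-Section-Post-Lemma} thus comes entirely from the $\psi(x^{1/2})$ estimate, not from any $x^{1/4}$-power contribution in the decomposition.
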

\begin{proof}
%  Let $c,C$ exist such that
%  \[
%   c < \frac{x - \psi(x)}{\sqrt{x}} < C \qquad  \text{for all }x \in [a,b].
%  \]
Costa-Pereira \cite[Theorem 1, Equation (1)]{Cos} proved that 
 \[
%    \label{CP}
  \psi(x) - \theta(x) \le \psi(x^{\frac12}) + \psi(x^{\frac13}) + \psi(x^{\frac15})\  \text{ for all } x >0.
%  -\theta(x) &\le -\psi(x) + \psi(x^{\frac12}) + \psi(x^{\frac13}) + \psi(x^{\frac15}) \\
%  \theta(x) &\ge \psi(x) - \psi(x^{\frac12}) - \psi(x^{\frac13}) - \psi(x^{\frac15}) 
 \]
Together with \eqref{defn1:c0}, it follows 
%  From \eqref{defn:ctilde}, we have
%  We have the following result due to Rosser and Schoenfeld \cite[Theorem 12]{RS1}.
%  \[
%   \psi(x) < 1.03883 x \text{ for all } x > 0.
%  \]
% Thus for $c_0 = 1.03883$ we have
 \[
  \psi(x) - \theta(x) \le \psi(x^{\frac12}) +  c_0 x^{\frac13} +c_0 x^{\frac15}  \text{ for all } x \in [u,v].
 \]
%  Note that  $x^{\frac13} \in [a^\frac13,b^\frac13]$ and $x^{\frac15} \in [a^\frac15,b^\frac15]$. 
%  A direct computation in PARI/GP of the maximum of $\psi(x)$ over $[a^\frac15,b^\frac13]$ yields
%  \[
%     \psi(t) \le \widetilde{c} t \qquad  \text{for all } t \in  [a^\frac15,b^\frac13].
%  \]
%  which rearranges to  
%  \[
%      \theta(x) \ge \psi(x) - \psi(x^{\frac12}) -  \widetilde{c} (x^{\frac13} + x^{\frac15}) \text{ for all } x \in [a,b].
%  \]\noindent
This may be rewritten as 
 \begin{equation}
  \label{thetainequality}
   \theta(x) \ge x + x^{\frac12} \left( \frac{\psi(x) - x}{x^{\frac12}} \right) - x^{\frac12} + x^{\frac14} \left( \frac{x^{\frac12}
    - \psi(x^{\frac12})}{x^{\frac14}} \right) - c_0 x^{\frac13} - c_0 x^{\frac15}.
 \end{equation}
% since $x^{\frac{1}{3}},x^{\frac{1}{5}} \in [u^{\frac{1}{5}},v^{\frac{1}{5}}]$. 
We conclude using \eqref{equ:c-Psi-C}: $\frac{\psi(x) - x}{x^\frac12} \ge -C$ and $\frac{x^\frac12 - \psi(x^\frac12)}{x^\frac14} \ge -c$ for every $x \in [u^2, v]$.
% Thus we have \eqref{eqn:C-Section-Post-Lemma} .
%  \[
%   \theta(x) \ge x - (1 + C)x^\frac12 - c x^\frac14 - \widetilde{c} (x^\frac13 + x^\frac15).
%  \]
%  for $x \in [a^2,b]$.
\end{proof}
Rosser and Schoenfeld \cite[Theorem 12]{RS1} proved \eqref{defn1:c0} with
 \begin{equation}\label{defn:c0}
  c_0= 1.03883.
 \end{equation}
Note that this value cannot be improved since the bound \eqref{defn1:c0}
is achieved for $x=113$. We shall use this value throughout the article. 
%Change here.  Wording. 
%This is the value we use in this paper as it is optimal (the bound is attained at $x=113$). \todo{This sentence is a bit clumsy, I read it a few times. Maybe better to say ``This $c_0$ is optimal (and thus used in this paper) since this bound is achieved at $x=113$.''  Or something...}
Recently, B\"uthe has proven many bounds like \eqref{equ:c-Psi-C}. From \cite[Equation (6.2), Table 1]{But2}, we have: 
 \begin{center}
  \begin{tabular}{|*{4}{c}|}
   \hline
   $u$ & $v$ & $c$ & $C$ \\ \hline
   $100$ & $5 \cdot 10^{10}$ & $0.8$ & $0.81$ \\
   $100$ & $32 \cdot 10^{12}$ & $0.88$ & $0.86$ \\
   $100$ & $10^{19}$ & $0.94$ & $0.94$ \\ \hline
  \end{tabular}
 \end{center}
In the following corollary, we restrict $x$ to $[e^b,v]$, a subset of $[u^2, v]$. 
% Thus we can compute $\widetilde{c}$ over a shorter range:
% \[
%  \widetilde c_{e^b,v} = \max_{e^\frac{b}{10} \le t \le v^\frac13} \frac{\psi(t)}{t}.
% \]
% {\color{red} Previous version had $e^{\frac{b}{5}}$ instead of  $e^{\frac{b}{10}}$. Fix values in table below.}
% While $\widetilde c_{e^b,v}$ only needs to be computed over $[e^\frac{b}{10},v^\frac13]$, computations show that values of $\widetilde c_{e^b,v}$ remain unchanged 
% through computation up to $10^{19}$. Thus we have:
% %    {\color{red} Define the endpoints by non-integers?}
%  \begin{center}
%   \begin{longtable}{|c|*{10}{|c}|}
%    \caption{Values of $\widetilde c_{e^b,v}$ valid for $e^b \le x \le 10^{19}$}
%    \label{tab:TildeC}
%    \\
%    \hline
%    $b$ & $20$ & $24$ & $27$ & $29$ & $31$ & $33$ & $37$ & $38$ & $40$ & $42$ \\ \hline
%    \vphantom{$\big)$}$\widetilde c_{e^b,v}$ & $1.03883$ & $ 1.03591$ & $ 1.02728$ & $ 1.02116$ & $ 1.01802$ & $ 1.01363$ & $ 1.01196$ & $ 1.00990$ & $ 1.00661$ & $ 1.00649$ \\
%    \hline
%   \end{longtable}
%  \end{center}
\begin{corollary}\label{Cor:Ck}
 Let $(v,c,C) \in \{(5 \cdot 10^{10}, 0.8, 0.81), (32 \cdot 10^{12}, 0.88, 0.86), (10^{19}, 0.94, 0.94) \}$. Let $k \ge 0$ and let $b$ satisfy 
 $\max(10^4, e^{2k}) \le e^{b} \le v$. Then %there exists $\mathcal{C}_{b,k}$ such that 
 \begin{equation}
  \label{eqn:C-Section-Post-Corollary}
  \theta(x) \ge x - \frac{\mathcal{C}_{b,k}x}{(\log x)^k} \qquad  \text{for all } x \in [e^b,v]
 \end{equation}
 where 
 \begin{equation}
  \label{defn:mathcalCbk}
  \mathcal{C}_{b,k} = b^k ( (C+1)e^{-b/2} + c_0e^{-2b/3}+ c e^{-3b/4}+ c_0 e^{-4b/5}  ),
 \end{equation}
and where $c_0$ is defined in \eqref{defn:c0}. 
%  where $\widetilde c_{e^b,v}$ is defined by \eqref{defn:ctilde} with specific values given in Table \ref{tab:TildeC}. 
Values of $\mathcal{C}_{b,k}$ can be found in Table \ref{CkValues}.
\end{corollary}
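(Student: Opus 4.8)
The plan is to deduce this directly from Lemma \ref{lemma3} by specializing $u=100$ and then controlling the four elementary functions of $x$ that appear on the interval $[e^b,v]$.

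First I would invoke Lemma \ref{lemma3} with $u=100$ and $(v,c,C)$ equal to one of the three listed triples. Its hypotheses are met: B\"uthe's table (from \cite[Equation (6.2), Table 1]{But2}) supplies \eqref{equ:c-Psi-C} on $[100,v]$ for exactly these triples; the bound \eqref{defn1:c0} holds with $c_0=1.03883$ by Rosser and Schoenfeld \cite[Theorem 12]{RS1} (this is \eqref{defn:c0}); and $u^2=10^4<v$ in every case. Lemma \ref{lemma3} then yields
\[
 \theta(x) \ge x - (C+1)x^{1/2} - c_0 x^{1/3} - c x^{1/4} - c_0 x^{1/5} \qquad \text{for all } x \in [10^4,v].
\]
Since $\max(10^4,e^{2k}) \le e^b \le v$, the interval $[e^b,v]$ is a nonempty subinterval of $[10^4,v]$, so this inequality holds throughout $[e^b,v]$.

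Next I would factor out $x/(\log x)^k$, writing $x^{1/2}=x(\log x)^{-k}\cdot(\log x)^k x^{-1/2}$ and similarly for the powers $1/3,1/4,1/5$, to get
\[
 \theta(x) \ge x - \frac{x}{(\log x)^k}\Big( (C+1)(\log x)^k x^{-1/2} + c_0(\log x)^k x^{-2/3} + c(\log x)^k x^{-3/4} + c_0(\log x)^k x^{-4/5} \Big).
\]
It then remains to show the parenthesized expression is at most $\mathcal{C}_{b,k}$ as in \eqref{defn:mathcalCbk}, i.e. that each summand is bounded by its value at $x=e^b$. This is the only mildly technical step: for $\rho \in \{1/2,2/3,3/4,4/5\}$ the function $x \mapsto (\log x)^k x^{-\rho}$ has derivative $x^{-\rho-1}(\log x)^{k-1}(k-\rho\log x)$, which is negative once $\log x > k/\rho$; since $\rho \ge 1/2$ we have $k/\rho \le 2k \le b$, so each of the four functions is decreasing on $[e^b,v]$. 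Evaluating at $x=e^b$ gives $(\log x)^k x^{-\rho} \le b^k e^{-\rho b}$, and summing the four contributions with their constants $C+1,\ c_0,\ c,\ c_0$ reproduces exactly $\mathcal{C}_{b,k}$, proving \eqref{eqn:C-Section-Post-Corollary}.

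I expect no genuine obstacle here. The single point requiring a moment's care is that the corollary's hypothesis $e^b \ge e^{2k}$ is precisely what forces all four terms to be simultaneously decreasing on the range of interest, the binding exponent being $\rho=1/2$; and the case $k=0$ is covered trivially, since then $e^{2k}=1$ and each $x^{-\rho}$ is decreasing outright.
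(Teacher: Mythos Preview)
Your proof is correct and follows essentially the same approach as the paper: apply Lemma \ref{lemma3} (the paper takes $u=e^{b/2}$ directly, you take $u=100$ and then restrict to $[e^b,v]$, which is equivalent), factor out $x/(\log x)^k$, and use the monotonicity of $(\log x)^k x^{-\rho}$ for $\rho\in\{1/2,2/3,3/4,4/5\}$ on $[e^b,v]$ under the hypothesis $b\ge 2k$ to evaluate the maximum at $x=e^b$.
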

\begin{proof}
We apply \eqref{eqn:C-Section-Post-Lemma} with $u=e^{\frac{b}{2}} $:
%for all $x \in [e^b,v]$, we have
 \[
   \theta(x) \ge x-(C +1)x^{\frac12} - cx^{\frac14}  - c_0 x^{\frac13} - c_0 x^{\frac15} \ \text{for all}\ x \in [e^b,v].
 \]
% In order to obtain \eqref{eqn:C-Section-Post-Corollary}, we see that 
We now set 
%prove
% $$(C +1)x^{\frac12} + c x^{\frac14}  +c_0 x^{\frac13} + c_0 x^{\frac15} \le \frac{\mathcal{C}_{b,k}x}{(\log x)^k}
%   \text{ for } 
% x \in [e^b,v].
% $$
% This is equivalent to finding the maximum of
 \begin{equation}
  \label{eqn:C-Section-prebound}
   \mathcal{C}_{b,k} = \max_{x \in [e^b,v]  } \Big\{ 
  (C+1)\frac{(\log x)^k}{x^\frac12} + c_0\frac{(\log x)^k}{x^\frac23} + c\frac{(\log x)^k}{x^\frac34} + c_0\frac{(\log x)^k}{x^\frac45}
  \Big\}. 
 \end{equation}
We find that this equals the expression in \eqref{defn:mathcalCbk} by observing that 
%  \[
%  %   \label{phibound}
%     \phi(x) \le \mathcal{C}_{b,k} \text{ for all } x \in [e^b,v],
%  \]
%  where 
%  \[
%    \phi(x) = (C_b+1)\frac{\log^k x}{x^\frac12} - c_b\frac{\log ^k x}{x^\frac34} + \widetilde c_{e^b,v}\frac{\log ^k x}{x^\frac23}+ \widetilde c_{e^b,v}\frac{\log^k x}{x^\frac45}.
%  \]
for  $a \in \{ \frac12, \frac23, \frac34, \frac45 \}$, $\frac{(\log x)^k}{ x^{a}}$ is decreasing for  $x \ge e^b$ as long as $e^{b} \ge e^{k/a}$. 
This last inequality leads to the condition $b \ge 2k$. 
%Thus \eqref{eqn:C-Section-prebound} is non-increasing on 
% $\max \{e^{2k},e^{3k/2},e^{4k/3},e^{5k/4} \} = e^{2k}$. Since $b \ge 2k$, 
% %we have that $\phi(x)$ is decreasing on $x \in [e^b, \infty)$. Since
% \eqref{eqn:C-Section-prebound} takes its maximum at $x = e^b$, giving \eqref{defn:mathcalCbk}.
%%  is decreasing on $[e^b,\infty)$, when we consider the range $x \in [e^b,e^{b+1}]$, we have $\phi(x) \le \phi(e^b)$. Thus we may take
%%  $\mathcal{C}_{b,k} = \phi(e^b)$, valid for $x \in [e^b,e^{b+1}]$ as desired.
%%  Notice
%%  \begin{equation*}
%%   \frac{(C_j +1)x^{\frac12} + c_jx^{\frac14}  -d_j(x^{\frac13}+x^{\frac15})}{D_{j,k}} = \frac{x}{e^j} \Bigl( \frac{(C_j+1)x^{-1/2}-c_jx^{-3/4}-d_j(x^{-2/3}+x^{-4/5})}{(C_j+1)(e^j)^{-1/2}-c_j(e^j)^{-3/4}+d_j( (e^j)^{-2/3}+(e^j)^{-4/5} )} \Bigr)
%%  \end{equation*}
%%  Let $f(x) = \frac{x}{\log ^k x}$. By calculus we can see that $f(x)$ is increasing for $x \ge e^k$. Thus if $e^j \ge e^k$, then 
%%  $f(x) \ge f(e^j)$. Notice $f(e^j)$ is equal to the left hand side of the inequality we wish to prove, thus we arrive at the desired result. 
%% 
%%  Thus that it suffices to prove $r(x)x^{\frac12} \le \frac{D_{j,k} x}{\log^k x}$ for $x \in [e^j,e^{j+1}]$.  This is equivalent to showing 
%%  that $\frac{r}{D_{j,k}} \le f(x)$ for $x \in [e^j,e^{j+1}]$ where $f(x) =  \frac{x^{\frac12}}{\log^k x}$.  Note that $f$ has a critical point
%%  $x=e^{2k}$ and is increasing if $x \ge e^{2k}$.    Thus if $e^j \ge e^{2k}$, then $f(x) \ge f(e^j)=r(x)/D_{j,k}$, by \eqref{Dk}.
\end{proof}
\subsection{Lower bounds for $\theta(x)$ for $x < e^{J_0}$}\label{sec:numerical}
The following lemma gives a condition to obtain a lower bound 
%provides numerical calculations for a lower bound 
for $\theta(x)$ for the first values of $x$. 
\begin{lemma}\label{lem:numerical}
 Let $k=1,\ldots,5$. Let $0 < a < b$ such that $a > e^{k+1}$. Let
%  \[
%   \label{x0condition}
%    x_0 > e^{k+1}.
%  \]
%   Let  
%  \[
%   d_k(n) := \frac{\log^k(p_{n}) \cdot (p_{n} - \theta(p_{n-1}))}{p_{n}}
%  \]
 $p_n$ denote the $n$-th prime, with $p_{n_0}$ and $p_{n_1}$ being the smallest primes greater than $a$ and $b$ respectively. 
Let 
\begin{equation}
\label{defn:mathcalD}
\mathcal{D}_k(a,b) = \max_{n_0 \le n \le n_1} \frac{(\log p_{n})^k \cdot (p_{n} - \theta(p_{n-1}))}{p_{n}} .
\end{equation}
If 
\begin{equation}
\label{Dkcondition}
%   \label{Gkcondition}
    \mathcal{D}_k(a,b) < (k+1)^{k+1},
 \end{equation}
then 
 \begin{equation}
  \label{bound:mathcalD}
  \theta(x) \ge x- \frac{\mathcal{D}_k(a,b) x}{(\log x)^k}\ \text{when}\ a \le x \le b.
 \end{equation}
\end{lemma}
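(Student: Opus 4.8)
The plan is to establish the inequality \eqref{bound:mathcalD} by reducing the continuous statement over all real $x \in [a,b]$ to the finite check \eqref{defn:mathcalD} over primes, exploiting the step-function structure of $\theta(x)$. First I would observe that $\theta(x)$ is constant on each interval $[p_{n-1}, p_n)$ between consecutive primes (here $p_{n_0-1} \le a$ since $p_{n_0}$ is the smallest prime exceeding $a$), so it suffices to verify the bound at the right endpoints of such intervals, i.e. just before each prime. Concretely, for $x \in [p_{n-1}, p_n)$ with $a \le x \le b$, we have $\theta(x) = \theta(p_{n-1})$, and the function $x \mapsto x - \tfrac{\mathcal{D}_k(a,b)\, x}{(\log x)^k}$ on the right-hand side of \eqref{bound:mathcalD} is what we must dominate; the worst case within the interval occurs at $x \uparrow p_n$.

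The key step is then to show that $h(x) := x\bigl(1 - \tfrac{\mathcal D}{(\log x)^k}\bigr)$ is \emph{increasing} on $[a,b]$ when $\mathcal D := \mathcal{D}_k(a,b) < (k+1)^{k+1}$ and $a > e^{k+1}$. Differentiating, $h'(x) = 1 - \mathcal D\bigl(\tfrac{1}{(\log x)^k} - \tfrac{k}{(\log x)^{k+1}}\bigr) = 1 - \tfrac{\mathcal D}{(\log x)^k}\bigl(1 - \tfrac{k}{\log x}\bigr)$. Since $\log x > k+1$ on $[a,b]$, the quantity $1 - \tfrac{k}{\log x}$ lies in $(0,1)$, so $h'(x) > 1 - \tfrac{\mathcal D}{(\log x)^k} \cdot 1$, and one checks the standard calculus fact that $t \mapsto t(1-k/t)/((t)^k)$-type expression $\tfrac{1}{(\log x)^k}(1-\tfrac{k}{\log x})$ is maximized... more directly: setting $t = \log x > k+1$, we want $\tfrac{\mathcal D(t-k)}{t^{k+1}} < 1$, i.e. $\mathcal D < \tfrac{t^{k+1}}{t-k}$ for all $t > k+1$. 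The function $t \mapsto \tfrac{t^{k+1}}{t-k}$ has its minimum on $(k,\infty)$ at $t = k+1$ (a quick derivative computation gives critical point $t=k+1$), where it equals $\tfrac{(k+1)^{k+1}}{1} = (k+1)^{k+1}$. Hence the hypothesis $\mathcal D < (k+1)^{k+1}$ exactly guarantees $h'(x) > 0$ throughout, so $h$ is strictly increasing on $[a,b]$.

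Combining these: for $x \in [p_{n-1}, p_n) \cap [a,b]$ with $n_0 \le n \le n_1$, monotonicity of $h$ gives $h(x) \le h(p_n) = p_n - \tfrac{\mathcal D\, p_n}{(\log p_n)^k}$, while by the definition \eqref{defn:mathcalD} we have $\mathcal D \ge \tfrac{(\log p_n)^k(p_n - \theta(p_{n-1}))}{p_n}$, i.e. $p_n - \tfrac{\mathcal D p_n}{(\log p_n)^k} \le \theta(p_{n-1}) = \theta(x)$. Therefore $h(x) \le \theta(x)$, which is \eqref{bound:mathcalD}. The edge cases to handle are the leftmost interval containing $a$ (where $x$ starts at $a \ge p_{n_0-1}$, still fine since $\theta$ is constant on $[p_{n_0-1},p_{n_0})$) and the rightmost one containing $b$ (covered since $p_{n_1} > b$ by definition, so $n=n_1$ suffices).

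The main obstacle is not deep but is the monotonicity analysis of $h$ and the optimization $\min_{t>k} t^{k+1}/(t-k) = (k+1)^{k+1}$: it must be done carefully so that the threshold $(k+1)^{k+1}$ in condition \eqref{Dkcondition} emerges exactly, and one must be sure the inequality $a > e^{k+1}$ (equivalently $\log x > k+1$ on $[a,b]$) is precisely what is needed to stay on the branch where the minimum is attained. Everything else is the routine observation that a step function needs only be tested at its jump points.
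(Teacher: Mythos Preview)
Your proposal is correct and follows essentially the same route as the paper: reduce to intervals $[p_{n-1},p_n)$ on which $\theta$ is constant, then show that $\varphi(x)=x\bigl(1-c/(\log x)^k\bigr)$ is increasing whenever $c<(k+1)^{k+1}$, so that $\theta(x)=\varphi(p_n)\ge\varphi(x)$ on each such interval. The only cosmetic difference is that the paper locates the minimum of $\varphi'$ via the second derivative (finding $\varphi'(e^{k+1})=1-c/(k+1)^{k+1}$), whereas you recast it as minimizing $t^{k+1}/(t-k)$ over $t>k$; your initial crude bound $h'(x)>1-\mathcal D/(\log x)^k$ is indeed too weak (it would need $\mathcal D<(k+1)^k$, not $(k+1)^{k+1}$), but your ``more directly'' correction lands on exactly the right computation.
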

% \begin{lemma}\label{lem:numerical}
%  Let $k$ be a non-negative integer.  Let $0 < X_0 < y_0$ and assume that $X_0 > e^{k+1}$. Let
% %  \[
% %   \label{x0condition}
% %    x_0 > e^{k+1}.
% %  \]
% %   Let  
%  \[
%   d_k(n) := \frac{\log^k(p_{n}) \cdot (p_{n} - \theta(p_{n-1}))}{p_{n}}
%  \]
%  where $p_n$ denotes the $n$-th prime.
%  Let $p_{n_0}$ be the smallest prime $\ge X_0$, and let $p_{n_1}$ be the smallest prime $> y_0$, and 
%  \[
%   \mathcal{D}_k = \mathcal{D}_k(X_0,y_0) = \max_{n_0 \le n \le n_1} d_k(n). 
%  \]
%  Assume that 
%  \[
% %   \label{Gkcondition}
%     \mathcal{D}_k < (k+1)^{k+1}. 
%  \]
%  Then we have 
%  \[
%   \theta(x) \ge x- \frac{\mathcal{D}_k x}{(\log x)^k} \text{ for all } x_0 \le x \le y_0.
%  \]
% \end{lemma}
%
\begin{proof}
 Fix $n_0 \le n \le n_1$ and set
 \begin{equation}
  \label{defn:littleD}
  D_k(n) = \frac{(\log p_n)^k (p_n - \theta(p_{n-1}))}{p_n}.
 \end{equation}
 Let $x\in [p_{n-1} , p_n)$ and observe that
 \[
  \theta(x) = \theta(p_{n-1}) = p_n \left( 1 - \frac{D_k(n)}{(\log p_n)^k} \right),
 \]
 and that the function $\varphi(x)=x(1-c/(\log x)^k)$ increases with $x$, as long  as $c <(k+1)^{k+1}$.  
 This is since $\varphi'(x)= 1+ \frac{c}{(\log x)^{k+1}}(k-\log x)$ and $\varphi''(x) =  \frac{ck}{x (\log x)^{k+2}}(\log x - (k+1)) $.
It follows that $\varphi'$ has a minimum at $x=e^{k+1}$.  Therefore  $\varphi$ is increasing since
\[
  \varphi'(x) \ge \varphi'(e^{k+1}) = 1-\frac{c}{(k+1)^{k+1}} >0,
\]
from the assumption $c < (k+1)^{k+1}$.
Thus \eqref{Dkcondition} ensures that
 \begin{equation}
  \label{thetaprimeint}
  \theta(x) \ge x \left( 1 - \frac{D_k(n)}{(\log x)^k} \right) \ \text{for all } x \in [p_{n-1}, p_n) .
 \end{equation}
% since $  D_k(n) \le \mathcal{D}_k(a,b) < (k+1)^{k+1}$ by .  
% From our definitions of $p_{n_0}$ and $p_{n_1}$ 
It follows that \eqref{bound:mathcalD} holds since 
$
  [a,b] \subset \bigcup_{n_0 \le n \le n_1} [p_{n-1}, p_n)
 $
and $\mathcal{D}_k(a,b) = \max_{n_0 \le n \le n_1}    D_k(n)$.
% Using these facts along with \eqref{thetaprimeint},  we deduce \eqref{bound:mathcalD}.
\end{proof}
\noindent
{\bf Methodology:}
%\footnote{unclear: to fix!}
We use this lemma to obtain a numerical lower bound for $\theta(x)$ on $[1,7.0 \times 10^{11}]$.
We first subdivide into the intervals $I_n=[(n-1)\cdot 10^{10},n  \cdot 10^{10} ]$
with $n$ ranging from $1$ to $70$. 
We subdivide each $I_n$ further into 100 subintervals, each of length $10^8$.
We apply the Lemma \ref{lem:numerical} to each of these subintervals,
recording the corresponding $\mathcal{D}_k$ value.  
For each $n$, we take for $D_k(n)$ the largest value among all $\mathcal{D}_k$'s arising from the 100 subintervals.  
Values for $\mathcal{D}_k$ for selected ranges in $[1,7.0 \times 10^{11}]$ are recorded in Table \ref{Table-Dk}.
%%%%%%%%%%%
\subsection{The case $k = 0$.}\label{section:k=0}
%Both the upper and lower bounds when $k=0$ are obtained in a different manner than when $k \ge 1 $. 
The upper bound is a direct result of  Theorem \ref{psixmainthm} and the partial verification of $\theta(x) < x$.
\begin{lemma}\label{k0upper-k0implower}
Let $b > 0$.
Assume
\begin{equation}
\theta(x) < x \ \text{ for all }  x \le e^b,
\end{equation}
and that there exists $\varepsilon(b)>0$ such that 
\begin{equation}
|\psi(x)-x| \le \varepsilon(b)x  \ \text{ for all }  x \ge e^b .
\end{equation}
Then we have
%  that $\psi(x) - x < \varepsilon(b)x$ is computable. Then we have
 \begin{equation}
 \label{thetaleconstantx}
  \theta(x) \le (1 + \varepsilon(b))x, \ \text{ for all } x>0.
 \end{equation}
%for all $x > 0$.
%  , where $\varepsilon(b) > 0$ is calculated from Theorem \ref{psixmainthm}.
%\end{lemma}
%
%\begin{proof}
% By assumption, we have that $\theta(x) < x$ for $0 < x \le e^b$ and that $\psi(x) - x < \varepsilon(b)x$ for $x \ge e^b$.
%  Bounding $\psi(x)$ by $\theta(x)$ yields \eqref{thetaleconstantx} for $x \ge e^b$.
%\end{proof}
%
%The lower bound again uses bounds for $\psi(x)$ in conjunction with \cite[Theorem 1]{Cos}, a comparison of $\psi(x) - \theta(x)$.
%%
%\begin{lemma}\label{k0implower}
%Let $b > 0$. Assume there exists $\varepsilon(b) > 0$ such that $|\psi(x) - x| \le \varepsilon(b)x$, for $x \ge e^b$. Then for $x \ge e^b$ we have
%  Let $x \ge e^b$ such that $|\psi(x) - x| < \varepsilon(b)x$ is known or computable. Then for $x \ge e^b$ we have
In addition
\begin{equation}
  \label{thetageconstantx}
  \left(1 - \varepsilon(b) - c_0 (e^{-\frac{b}{2}} + e^{-\frac{2b}{3}} + e^{-\frac{4b}{5}}) \right)x \le \theta(x), \ \text{ for all } x\ge e^b.
 \end{equation}
%  where $\varepsilon_\theta(b) = \varepsilon(b) + c_0 (e^{-\frac{b}{2}} + e^{-\frac{2b}{3}} + e^{-\frac{4b}{5}})$
%where  $c_0= 1.03883$. 
\end{lemma}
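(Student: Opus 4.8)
The plan is to prove the two displayed inequalities \eqref{thetaleconstantx} and \eqref{thetageconstantx} separately, in each case combining the hypothesis $|\psi(x)-x|\le\varepsilon(b)x$ on $[e^b,\infty)$ with two elementary ingredients already available: the trivial comparison $\theta(x)\le\psi(x)$, and the left-hand inequality of Costa--Pereira's estimate \eqref{CP1}, namely $\psi(x)-\theta(x)\le\psi(x^{1/2})+\psi(x^{1/3})+\psi(x^{1/5})$ for all $x>0$, together with Rosser and Schoenfeld's bound $\psi(x)<c_0x$ for all $x>0$ with $c_0=1.03883$ (see \eqref{defn:c0}).

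For the upper bound \eqref{thetaleconstantx} I would split the positive reals at $e^b$. On $(0,e^b]$ the hypothesis $\theta(x)<x$ together with $\varepsilon(b)>0$ gives $\theta(x)<x\le(1+\varepsilon(b))x$ immediately. On $[e^b,\infty)$ one has $\theta(x)\le\psi(x)\le x+\varepsilon(b)x=(1+\varepsilon(b))x$ by the assumed bound on $\psi$. The two ranges overlap at $e^b$ and together cover $(0,\infty)$, proving \eqref{thetaleconstantx}.

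For the lower bound \eqref{thetageconstantx} I would restrict to $x\ge e^b$ and write $\theta(x)=\psi(x)-\bigl(\psi(x)-\theta(x)\bigr)$. The hypothesis on $\psi$ gives $\psi(x)\ge(1-\varepsilon(b))x$; the left-hand inequality of \eqref{CP1} followed by $\psi(t)<c_0t$ applied at $t=x^{1/2},x^{1/3},x^{1/5}$ gives $\psi(x)-\theta(x)\le c_0\bigl(x^{1/2}+x^{1/3}+x^{1/5}\bigr)$. Finally each power of $x$ is absorbed into a constant multiple of $x$: for $x\ge e^b$ one has $x^{1/k}=x\cdot x^{-(k-1)/k}\le x\,e^{-b(k-1)/k}$, so the three terms contribute, respectively, the factors $e^{-b/2}$, $e^{-2b/3}$, $e^{-4b/5}$. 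Collecting everything yields $\theta(x)\ge\bigl(1-\varepsilon(b)-c_0(e^{-b/2}+e^{-2b/3}+e^{-4b/5})\bigr)x$, which is exactly \eqref{thetageconstantx}.

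There is no real obstacle here: the whole argument is a short reduction once the Costa--Pereira inequality and the bound $\psi(x)<c_0x$ are in place. The only point that needs a moment's attention is making the exponents match, i.e.\ checking that the Costa--Pereira exponents $1/2,1/3,1/5$ produce precisely the decay factors $e^{-b/2},e^{-2b/3},e^{-4b/5}$ claimed in \eqref{thetageconstantx} after pulling out a factor of $x$; note that only the left-hand half of \eqref{CP1} is used, so the exponent $1/7$ appearing in its right-hand half plays no role.
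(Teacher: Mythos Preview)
Your proof is correct and follows essentially the same approach as the paper: the upper bound combines $\theta(x)\le\psi(x)$ with the $\psi$-bound on $[e^b,\infty)$ and the hypothesis $\theta(x)<x$ on $(0,e^b]$, while the lower bound uses Costa--Pereira's inequality \eqref{CP1} together with $\psi(t)<c_0 t$ and the $\psi$-bound. Your write-up is in fact slightly more explicit than the paper's, which leaves the splitting at $e^b$ and the step $x^{1/k}\le x\,e^{-b(k-1)/k}$ to the reader.
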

\begin{proof}
Inequality \eqref{thetaleconstantx} follows immediately from Theorem \ref{psixmainthm}
 and the fact that $\theta(x)\le \psi(x)$.
Now for every $x > 0$, \cite[Theorem 1]{Cos} asserts that 
%  we have $\psi(x) < 1.03883x$ from \cite[Theorem 12]{RS1} and 
 $\psi(x) - \theta(x) \le \psi(x^{\frac12}) + \psi(x^{\frac13}) + \psi(x^{\frac15})$.
Together with \eqref{defn1:c0}, we have 
%  \footnote{Improvements are possible through bounding 
%  $\psi(x^{\frac12}),\psi(x^{\frac13})$, and $\psi(x^{\frac15})$ by their respective $\varepsilon(b)$ values, but the improvements made 
%  are most often negligibly small and diminish as $b$ increases.}
$
   \psi(x) - c_0(x^{\frac12} + x^{\frac13} + x^{\frac15}) \le \theta(x),  
$
and we conclude by applying Theorem \ref{psixmainthm} to $\psi(x)$. 
% $\psi(x) \ge (1-\varepsilon(b)) x$  for $x \ge e^{b}$ and thus
%  \[
%    \left(1-\varepsilon(b) - c_0\left(x^{-\frac{1}{2}} + x^{-\frac{2}{3}} + x^{-\frac{4}{5}} \right) \right)x \le \theta(x).
%  \]
%  Since the term in the inner brackets decreases with $x$ for $x \ge e^{b}$, we arrive at \eqref{thetageconstantx}.
% %  \[
% %    \left(1- \varepsilon(b) - 1.03883\left(e^{-\frac{b}{2}} + e^{-\frac{2b}{3}} + e^{-\frac{4b}{5}}\right) \right)x \le \theta(x).
% %  \]
\end{proof}
%The remaining subsections investigate the other cases when $k\ge1$.

%%%%%%%%%%%%%%%%%%%%%%%%%%%%%%%%%%%%%%%%%%%%%%%
\subsection{Proof of Theorem \ref{MainResult:theta}}
%%%%%%%%%%%%%%%%%%%%%%%%%%%%%%%%%%%%%%%%%%%%%%%

%We give the proof of Theorem \ref{MainResult:theta}.
%\begin{proof}
%\subsubsection{The case $k=0$.}
\quad \\ If $k=0$, we apply \cref{k0upper-k0implower} with $b = \log X_1$ for the upper bound and $b = \log X_0$ for the lower bound.
We can define:
\[
m_0 = \varepsilon(\log X_0) + c_0(X_0^{-\frac12} + X_0^{-\frac23} + X_0^{-\frac45})
\ \text{and}\ 
M_0 = \varepsilon(\log X_1).
\]
For the rest of this section, we assume $k\in\{1,\ldots, 5\}$.
Throughout this proof we let $b_n$ denote the $n$-th entry of column 1 of Table \ref{Wedpsixvals}. 
Furthermore, let $K=25000$ be the largest $b$ value in \cref{Wedpsixvals}, and let 
 \begin{equation}
   \label{X0X1}
    X_0 = e^{u_0} \text{ and } 
    X_1 = e^{u_1}.
\end{equation}
 The proof consists of four cases.   We shall divide up the intervals $[X_i,\infty)$  for $i=0,1$ into 
 various subintervals.  On each of the subintervals we shall apply a combination of 
 \cref{Cor:Ak},  Corollary \ref{Cor:Bk}, Corollary \ref{Cor:Ck}, and  Lemma \ref{lem:numerical}.
 By combining together our various bounds we shall establish the required bounds \eqref{thetalb} and \eqref{thetaub}.
If $X_0,X_1 \ge e^K$, then only \cref{Cor:Ak} is considered.
\subsubsection{Case 1: $X_0, X_1 \ge e^{K}$.}% (bounds of $\theta(x)$ for large values of $x$).}
By Corollary \ref{Cor:Ak}, we can take 
\[  
m_k = \mathcal{A}_k(u_0)\ \text{and}\ M_k = \mathcal{A}_k(u_1). 
\]
For the second case, we consider $e^J \le X_i \le e^K$. Here we consider \cref{Cor:Ak} and \cref{Cor:Bk}. For all following cases, let $B_{i,k} = B_k(b,b')$ where $b$ is the $i$-th entry in column 1 of \cref{BkiTableWedeniwski}. Validity for all $x \ge X_0$ or $x \ge X_1$ can be established by applying \cref{Cor:Bk} over consecutive intervals, then applying \cref{Cor:Ak} to bound for all $x$ above the end of the intervals. Thus we now have the following case:
\subsubsection{Case 2: $e^{J} \le X_0, X_1 < e^{K}$.}
Combining \cref{Cor:Ak} and \cref{Cor:Bk}, we may take 
\[
                 m_k = \max( \mathcal{A}_k(b_{{\ell}+1}) , \max_{n_0 \le i \le {\ell}}( B_{ i ,k}) )
   \ \text{and}\ M_k = \max( \mathcal{A}_k(b_{{\ell}+1}) , \max_{n_1 \le i \le {\ell}}( B_{ i, k}) )
\]
where $n_0$ and $n_1$ are the greatest natural numbers such that $b_{n_0} \le u_0$ and $b_{n_1} \le u_1$
and $\ell \ge n_0$ or $\ell \ge n_1$ is chosen to minimize the values of $m_k$ and $M_k$.  
\begin{remark}
 Note that whenever possible, the value of ${\ell}$ is chosen such that $\mathcal{A}_k(b_{{\ell}+1}) \le \max_{n_0 \le i \le {\ell}}( B_{ i ,k})$.
 %Near the end of \cref{BkiTableWedeniwski}, increasing the value of $l$ is impossible as the there are no higher rows in \cref{BkiTableWedeniwski}, thus the $M_k$ and $m_k$ is taken from $\mathcal{A}_k(K)$.
\end{remark}

For the last two cases, we denote $j^*$ the row of Table \ref{BkiTableWedeniwski} where $b_{j^*}=J=19 \log 10$.
\subsubsection{Case 3: $e^{J_0} \le X_i \le e^{J}$.}\label{Case3} %Lower bounds of $\theta(x)$ for small values of $x$ 
From Corollary  \ref{Cor:Ak}, Corollary \ref{Cor:Bk}, \eqref{ThetaLex}, and Corollary \ref{Cor:Ck}, we can take
\[
                 m_k = \max( \mathcal{A}_k(b_{\ell+1}) , \max_{j* \le i \le \ell}( B_{ i ,k}) , \mathcal{C}_{ \lfloor u_0 \rfloor,k} ) 
   \ \text{and}\ M_k = \max( \mathcal{A}_k(b_{\ell+1}) , \max_{j* \le i \le \ell}( B_{ i ,k}) )
\]
where $\ell \ge n_0$ or $\ell \ge n_1$ is chosen to minimize the values of $m_k$ and $M_k$. 
\subsubsection{Case 4: $X_i < e^{J_0}$.}\label{Case4}%Lower bounds of $\theta(x)$ for small values of $x$:  
By applying Lemma \ref{lem:numerical} we obtain 
\[
   x \left( 1- \frac{\mathcal{D}_k(X_0,e^{J_0})}{(\log x)^k} \right) \le \theta(x)
   \text{ when }  X_0 \le x \le e^{J_0}.  
\]
% We subdivide 
%\todo{subdivide?} 
% $[X_0,e^{J_0})$ into (as small as possible) sub-intervals $[a,b)$, and we calculate each $\mathcal{D}_k(a,b)$ (sample of values are listed in Table \ref{Table-Dk}).
% The value for $\mathcal{D}_k(X_0,e^{J_0})$ is then determined as the maximum over all these $\mathcal{D}_k(a,b)$. 
% 
% \ref{Table-Dk}we must take the maximum of the values of the $kth$ column
% between the lines of the desired $X_0$ and $e^{J_0}$.   
We combine this with \cref{Cor:Ak}, \cref{lemma2}, \cref{Cor:Ck}, and \eqref{ThetaLex}: 
\[
                  m_k = \max( \mathcal{A}_k(b_{\ell+1}) , \max_{j* \le i \le \ell}( B_{ i ,k}) , \mathcal{C}_{ J_0,k}, \mathcal{D}_k(X_0,e^{J_0}) ),
    \ \text{and}\ M_k = \max( \mathcal{A}_k(b_{\ell+1}) , \max_{j* \le i \le \ell}( B_{ i ,k}) )
\]
where $\ell \ge n_0$ or $\ell \ge n_1$ is chosen to minimize the values of $m_k$ and $M_k$. 

\subsection{Computational examples.}
We now give examples of how to apply Theorem \ref{MainResult:theta} in specific cases.  First, we   are able 
 to improve the second part of  \cite[Theorem 1]{Axler17_2}.
\begin{corollary}
% \cite[Theorem 1]{Axler17_2}
 For every $x \ge 19\, 035\, 709\, 163$, we have
 \begin{equation}
  \label{AxlerLowerBound}
  x\left( 1 - \frac{0.15}{(\log x)^3} \right) < \theta(x),
 \end{equation}
 and for every $x > 1$ we have
 \begin{equation}
 \label{AxlerUpperBound}
  \theta(x) < x\left( 1 + \frac{0.024334}{(\log x)^3} \right).
 \end{equation}
\end{corollary}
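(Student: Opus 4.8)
The plan is to derive both inequalities as direct specializations of Theorem \ref{MainResult:theta} in the case $k=3$, using the four-case structure laid out in the proof of that theorem together with the values recorded in the tables.

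For the upper bound \eqref{AxlerUpperBound}: I would apply the $k=3$ upper-bound assertion of Theorem \ref{MainResult:theta} with $X_1 = 1$ (or more precisely the smallest admissible value). Since $\theta(x) < x$ is known for all $x \le e^{J} = 10^{19}$ by B\"uthe \cite[Theorem 2, (1.7)]{But2}, this forces $M_3 = 0$ on that whole range; the only binding contribution comes from the ``middle'' and ``large'' ranges $x \ge e^{J}$, handled by Corollary \ref{Cor:Bk} (the $B_3(b,b')$ values from Table \ref{BkiTableWedeniwski}) and Corollary \ref{Cor:Ak} (the $\mathcal{A}_3(u_1)$ value). Taking the maximum of $\mathcal{A}_3(b_{\ell+1})$ and $\max_{j^* \le i \le \ell} B_{i,3}$ over the optimal cutoff $\ell$ yields $M_3 = 0.024334$, which gives \eqref{AxlerUpperBound} for all $x > 1$. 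I would simply cite the relevant table rows rather than recompute the maxima.

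For the lower bound \eqref{AxlerLowerBound}: here $X_0 = 19\,035\,709\,163 < e^{J_0} = 7 \cdot 10^{11}$, so we are in Case 4 of the proof of Theorem \ref{MainResult:theta}. The constant $m_3$ is then the maximum of four quantities: $\mathcal{D}_3(X_0, e^{J_0})$ from the direct computation (Lemma \ref{lem:numerical} and Table \ref{Table-Dk}), $\mathcal{C}_{J_0,3}$ from Corollary \ref{Cor:Ck}, $\max_{j^* \le i \le \ell} B_{i,3}$ from Corollary \ref{Cor:Bk}, and $\mathcal{A}_3(b_{\ell+1})$ from Corollary \ref{Cor:Ak}. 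The key numerical point is that the dominant term is $\mathcal{D}_3(X_0,e^{J_0}) = 0.15$, attained (as noted in the discussion after Table \ref{tablek3}) at the prime $p_{841\,508\,302} = 19\,035\,709\,163$ — one must check that $0.15 < (k+1)^{k+1} = 256$ so that the monotonicity hypothesis \eqref{Dkcondition} of Lemma \ref{lem:numerical} is satisfied, and that the other three quantities are all $\le 0.15$. This gives \eqref{AxlerLowerBound} for all $x \ge 19\,035\,709\,163$.

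The main obstacle is not conceptual but bookkeeping: one must verify that the specific choice of intermediate cutoff $\ell$ (equivalently, the row of Table \ref{BkiTableWedeniwski} at which one switches from the $B_k$ bounds to the $\mathcal{A}_k$ bound) indeed makes the ``middle'' and ``large'' range contributions drop below the target constants $0.15$ and $0.024334$, and that the direct-computation value $\mathcal{D}_3$ over $[X_0, 7\cdot 10^{11}]$ is exactly $0.15$ with the claimed maximizing prime. All of this is already tabulated in the paper, so the proof reduces to assembling these four inequalities via the $\max$ formulas of Cases 1--4 and citing Tables \ref{BkiTableWedeniwski}, \ref{BkMaxTableWedeniwski}, \ref{CkValues}, \ref{Table-Dk}, and \ref{Aktable}.
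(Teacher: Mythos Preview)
Your proposal is correct and follows essentially the same approach as the paper: both apply Theorem \ref{MainResult:theta} with $k=3$, using Case 4 (since $X_0 = 19\,035\,709\,163 < e^{J_0}$) to compute $m_3$ as the maximum of $\mathcal{A}_3$, the $B_{i,3}$'s, $\mathcal{C}_{J_0,3}$, and $\mathcal{D}_3(X_0,e^{J_0})$, and the Case 3/4 formula for $M_3$ as the maximum of $\mathcal{A}_3$ and the $B_{i,3}$'s. The paper makes the specific cutoff $\ell$ explicit (namely $b_{\ell+1}=3400$) and records the four numerical values $\mathcal{A}_3(3400)=2.1719\cdot 10^{-2}$, $\max B_{i,3}=B_{J,3}=2.4334\cdot 10^{-2}$, $\mathcal{C}_{27,3}=5.0536\cdot 10^{-2}$, $\mathcal{D}_3=0.15$, confirming that $m_3=0.15$ and $M_3=2.4334\cdot 10^{-2}$; your outline matches this, with the added nicety of explicitly noting the check $0.15 < 4^4 = 256$ for \eqref{Dkcondition}.
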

Observe that \eqref{AxlerLowerBound} is the same as equation (1.4) in \cite{Axler17_2}
and  \eqref{AxlerUpperBound} improves the constant in (1.5) of \cite{Axler17_2} from $0.15$ to $0.024334$.
\begin{proof}
%To establish \eqref{AxlerLowerBound} and \eqref{AxlerUpperBound}, 
We apply \cref{MainResult:theta} with $k = 3$, $X_0 = 19\, 035\, 709\, 163$, and $X_1 = 1$. 
Since $X_0 \in [1,e^{J_0}=e^{27}]$, we proceed as described in Section \ref{Case4} and obtain %which by the proof of \cref{MainResult:theta}, places us in Case 4. Thus by \eqref{mk4} we have 
\[
 m_3 = \max(\mathcal{A}_3(3\,400), \max_{27\le b_i \le 3400}( B_{i*,3} ),\mathcal{C}_{27,3},\mathcal{D}_3(19\, 035\, 709\, 163,e^{27})).
\]
%To prove \eqref{AxlerUpperBound}, we apply \cref{MainResult:theta} with $k = 3$ and $X_1 = 1$. 
Since $X_1 \le e^J = 10^{19}$, we proceed as in Section \ref{Case3} and obtain %which by the proof of \cref{MainResult:theta} places us in Case 3. Thus by \eqref{Mk3}, we have
\[
 M_3 = \max(\mathcal{A}_3(3\,400),\max_{27 \le b_i \le 3400}( B_{b_i,3} ) ).
\]
Since
$\mathcal{A}_3(3\,400) = 2.1719 \cdot 10^{-2}$, 
$\max_{27\le b_i \le 3400}( B_{b_i,3}) = B_{J,3} = 2.4334 \cdot 10^{-2}$, 
$\mathcal{C}_{27,3} = 5.0536 \cdot10^{-2}$, and  
$\mathcal{D}_3(19\, 035\, 709\, 163,e^{27})=0.15$, then 
\[m_3 = 0.15\ \text{and } \ M_3 = 2.4334 \cdot 10^{-2}.\]
\end{proof}

In our next examples we provide valid proofs of Theorem 4.2 of \cite{Dus4}. 
The proof of the main theorem in \cite{Dus4} is incorrect as it applies an incorrect result of Ramar\'{e} \cite{Ram}. 
The stated leading constant in Theorem 1.1 of \cite{Ram} is off by a factor of over 100. The results given here are optimal, verifiable through computation.
\begin{corollary}\cite[Theorem 4.2]{Dus4} We have
 \begin{equation}
  |\vartheta(x) - x| < M_k \frac{x}{\log^k x} \quad \text{for }x \ge X_0
 \end{equation}
with 
\begin{center}
\begin{tabular}{|l|lllllll|}
 \Xhline{4\arrayrulewidth}
 $k$ & $0$ & $1$ & $1$ & $2$ & $2$ & $2$ & $2$ \\
 $M_k$ & $1$ & $1.2323$ & $0.001$ & $3.965$ & $0.2$ & $0.05$ & $0.01$ \\
 $X_0$ & $1$ & $2$ & $908\ 994\ 923$ & $2$ & $3\ 594 \ 641$ & $112\ 568\ 683$ & $7\ 713\ 113\ 853$ \\
 \hline
\end{tabular}
\end{center}
and
\begin{center}
\begin{tabular}{|l|llllll|}
 \Xhline{4\arrayrulewidth}
 $k$ & $3$ & $3$ & $3$ & $3$ & $3$ & $4$ \\
 $M_k$ & $20.83$ & $10$ & $1$ & $0.78$ & $0.5$ & $151.3$ \\
 $X_0$ & $2$ & $32\ 321$ & $89\ 967\ 803$ & $158\ 822\ 621$ & $767\ 135\ 587$ & $2$ \\
 \hline
\end{tabular}
\end{center}
\end{corollary}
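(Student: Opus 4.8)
The plan is to read each row of the two tables as an instance of \cref{MainResult:theta} with $X_0=X_1$ set equal to the listed value and with the quoted $M_k$ taken to be $\max(m_k,M_k)$ in the notation of that theorem, so that the two-sided bound $|\theta(x)-x|<M_k x/(\log x)^k$ follows by combining \eqref{mklowerbound} and \eqref{Mkupperbound}. Every $X_0$ appearing is at most $7\,713\,113\,853<e^{J_0}=7\cdot10^{11}$, so each instance will fall under Case~4 of the proof of \cref{MainResult:theta}; the lone entry $k=0$, $X_0=1$, $M_0=1$ I would instead settle directly from $0\le\theta(x)\le\psi(x)\le c_0 x<2x$. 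Consequently the only quantities that enter are $\mathcal{A}_k(b_{\ell+1})$, the finitely many $B_{i,k}$, $\mathcal{C}_{J_0,k}$, and $\mathcal{D}_k(X_0,e^{J_0})$, which I would read off Tables~\ref{Aktable}, \ref{BkiTableWedeniwski}, \ref{CkValues}, and \ref{Table-Dk}.

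For the overestimate I would observe that each $X_0<10^{19}$, so \eqref{Chebbias} gives $\theta(x)<x$ and hence $\theta(x)-x<0\le M_k x/(\log x)^k$ throughout $[X_0,10^{19}]$, while for $x\ge10^{19}$ the bounds of \cref{Cor:Bk} and \cref{Cor:Ak} apply with constants far below every $M_k$ in the tables. Thus the overestimate never constrains $M_k$, and the value of $M_k$ is forced entirely by the underestimate.

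For the underestimate the binding term is $\mathcal{D}_k(X_0,e^{J_0})$, the finite maximum of $(\log p_n)^k(p_n-\theta(p_{n-1}))/p_n$ over primes $p_n\in(X_0,e^{J_0}]$ furnished by \cref{lem:numerical}. I would verify, row by row, that this maximum equals the listed $M_k$, that it exceeds $\mathcal{C}_{J_0,k}$, $\max_i B_{i,k}$ and $\mathcal{A}_k(b_{\ell+1})$, and that the hypothesis $\mathcal{D}_k(X_0,e^{J_0})<(k+1)^{k+1}$ of \cref{lem:numerical} holds (for instance $5^5=3125>151.3$ when $k=4$). One technical wrinkle: \cref{lem:numerical} is stated with $X_0>e^{k+1}$, which fails for the rows with $X_0=2$; for those I would first dispose of $x\in[2,e^{k+1}]$, on which $x(1-M_k/(\log x)^k)\le\theta(x)$ is immediate once $(\log x)^k\le M_k$ and on the remaining bounded window only finitely many primes occur and the inequality can be checked prime by prime, and then apply \cref{lem:numerical} on $[e^{k+1},e^{J_0}]$.

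The substance of the argument is therefore the numerical verification that, for each row, $\mathcal{D}_k(X_0,e^{J_0})$ equals the displayed $M_k$ and dominates the analytic-range constants $\mathcal{A}_k$, $\mathcal{B}_k$, $\mathcal{C}_k$. I expect the main obstacle to be pinning down the location of the maximum of $D_k(n)$ --- for instance that when $k=4$ it occurs at the prime $1423$ with value $151.2235\ldots$ and that no prime up to $7\cdot10^{11}$ produces a larger $D_k(n)$ --- which rests on the direct computation of $\theta$ over $[1,7\cdot10^{11}]$ described in the Methodology paragraph above and recorded in Table~\ref{Table-Dk}. Everything else, namely matching each $X_0$ to its Case, reading off the tabulated constants, and taking maxima, is routine bookkeeping.
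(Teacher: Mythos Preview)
Your proposal is correct and matches the paper's own proof: both reduce each row to Case~4 of \cref{MainResult:theta}, dispose of the upper bound via $\theta(x)<x$ for $x<10^{19}$, and identify the numerical quantity $\mathcal{D}_k(X_0,e^{J_0})$ (i.e.\ direct computation of $\theta$ at primes) as the binding constraint, with Tables~\ref{CkValues}, \ref{BkiTableWedeniwski}, \ref{Aktable} handling the analytic ranges. The paper splits the direct computation into a bespoke ``numerical confirmation'' on $[X_0,10^j)$ followed by the precomputed Table~\ref{Table-Dk} on $[10^j,7\cdot10^{11})$, whereas you phrase it as a single $\mathcal{D}_k(X_0,e^{J_0})$, but this is only bookkeeping; one minor slip is that you should write $\mathcal{D}_k(X_0,e^{J_0})\le M_k$ rather than ``equals'', since Dusart's constants are rounded up.
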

\begin{proof} 
\ \\
%First, observe that the  results with $x_k \le 2$ immediately follow from  Tables \ref{Aktable}, \ref{BkMaxTableWedeniwski}, \ref{CkValues}, and \ref{Table-Dk} below.
%\todo[inline,color=red!30]{Perhaps we could condense many of these, as the methodology is the same with different numerical confirmation, which can't be fully listed anyways.}
\begin{itemize}
\item Observe that the  results with $x_k \le 2$ immediately follow from  Tables \ref{Aktable}, \ref{BkMaxTableWedeniwski}, \ref{CkValues}, and \ref{Table-Dk} below.

%\item For $m_2,M_2=0.05$, the bound fails at $x=122\ 467\ 591$. The result becomes valid again with either $m_2,M_2=0.05062$, or $x_2=122\ 568\ 683$. \\
%% Loading GeneralAxler.pari and running Gbackwards(2,0.05,2*10^8) reconfirms this result.
%
%\item For $m_2,M_2=0.01$, the bound fails at $x=7\ 713\ 117\ 761$. The result becomes valid again with either $m_2,M_2=0.010039$, or $x_2=7\ 713\ 133\ 853$. \\
%% Loading GeneralAxler.pari and running Gbackwards(2,0.01,8*10^9) reconfirms this result.
\item For $m_1,M_1=0.001$, numerical confirmation gives validity for $x \in [908\ 994\ 923, 10^{10})$. \cref{Table-Dk} then gives validity for $x \in [10^{10},7\cdot 10^{11})$, which \cref{CkValues} extends to $10^{19}$. From here, combining \cref{BkiTableWedeniwski} and \cref{Aktable} gives validity for all $x \ge 10^{19}$. 

\item For $m_2,M_2=0.2$, numerical confirmation gives validity for $x \in [3\ 594\ 641, 10^{8})$. \cref{Table-Dk} then gives validity for $x \in [10^{8},7\cdot 10^{11})$, which \cref{CkValues} extends to $10^{19}$. From here, combining \cref{BkiTableWedeniwski} and \cref{Aktable} gives validity for all $x \ge 10^{19}$. 

\item For $m_3,M_3=10$, numerical confirmation gives validity for $x \in [32\ 321,10^{8})$. \cref{Table-Dk} then gives validity for $x \in [10^{8},7\cdot 10^{11})$, which \cref{CkValues} extends to $10^{19}$. From here, combining \cref{BkiTableWedeniwski} and \cref{Aktable} gives validity for all $x \ge 10^{19}$. 

\item For $m_3,M_3=1$, numerical confirmation gives validity for $x \in [89\ 967\ 803,10^{8})$. \cref{Table-Dk} then gives validity for $x \in [10^{8},7\cdot 10^{11})$, which \cref{CkValues} extends to $10^{19}$. From here, combining \cref{BkiTableWedeniwski} and \cref{Aktable} gives validity for all $x \ge 10^{19}$. 

\item For $m_3,M_3=0.78$, numerical confirmation gives validity for $x \in [158\ 882\ 621,10^{10})$. \cref{Table-Dk} then gives validity for $x \in [10^{10},7\cdot 10^{11})$, which \cref{CkValues} extends to $10^{19}$. From here, combining \cref{BkiTableWedeniwski} and \cref{Aktable} gives validity for all $x \ge 10^{19}$. 

\item For $m_3,M_3=0.5$, numerical confirmation gives validity for $x \in [767\ 135\ 587,10^{10})$. \cref{Table-Dk} then gives validity for $x \in [10^{10},7\cdot 10^{11})$, which \cref{CkValues} extends to $10^{19}$. From here, combining \cref{BkiTableWedeniwski} and \cref{Aktable} gives validity for all $x \ge 10^{19}$. 
\end{itemize}
\end{proof}
\noindent {\it Acknowledgements}.   
This research was supported by NSERC Discovery grants of authors H.K. and N.N..
 S.B. was supported by NSERC USRA grants in Summers 2017, 2018, and 2020,   
K.W. was supported by an NSERC USRA grant in Summer 2017, and A.L. was supported by a York University graduate fellowship. Thank-you to Alia Hamieh and Peng-Jie Wong for helping with the supervision of S.B. and K.W. during part of this project. 
Thank-you to Andrew Fiori and Josh Swidinsky for their independent verification of the calculations in Appendix B. 

\ 
\pagebreak
\appendix
\section{Sharper bounds for $\psi(x)$: $|\psi(x) - x| < \varepsilon x$ where $\varepsilon$ is computable for $x \ge x_0$ fixed}
\label{psixboundssection}
%\\
%He also proved that 
%$$
%  ax \le \theta(x) \le bx \text{ for all } x > 0. 
%$$
%The current best published result for an explicit bound on $\theta(x)$ for $x > 0$ is due to Rosser and Schoenfeld \cite{Sch}:
%\begin{equation}
% 0 < \theta(x) < 1.001093x
%\end{equation}
In this section we explain how to bound  $E(x) :=(\psi(x)-x)/x$. 
%\[
%% \label{E(x)defn}
%E(x) = \frac{|\psi(x)-x|}{x}.  
%\]
A classic explicit formula that relates prime numbers
  to the  non-trivial zeros of $\zeta$ is given by \cite[$\S 17$, $(1)$]{Dav}:
\begin{equation}
%  \label{explicit}
  \label{classical} 
 \psi(x)=x-\sum_{\rho}\frac{x^{\rho}}{\rho}-\log2\pi-\frac12\log(1-x^{-2}), 
\end{equation}
when $x$ is not a prime power. 
The sum over the zeros is not absolutely convergent, hence it is difficult to directly use this formula to bound $E(x)$.
Rosser \cite{Ros}, \cite{Ros2} introduced an averaging technique to gives bounds for $\psi(x)$.  The averaging produces a formula 
like \eqref{classical}, but with an absolutely convergent sum over zeros of $\zeta$. He later joined forces with Schoenfeld \cite{RS2} and they streamlined and improved his arguments.  
%Some of the main tools used in the work of Rosser and Schoenfeld
%were: a numerical verification of the Riemann hypothesis (RH), an explicit zero-free region for the Riemann zeta function, 
%an explicit version of the number of zeros $N(T)$ of the Riemann zeta function, explicit evaluation of sums of zeros,
%and a smoothing argument for prime sums. 
Since their last article in 1975 improvements to their work have been based on the following:
\begin{enumerate}
\item[1.] An improved verification of the partial Riemann hypothesis $\text{RH}(H)$: 
namely all the non-trivial zeros $\varrho = \beta+i\gamma$ of the Riemann zeta function which have imaginary part $|\gamma|\le H$ lie on the 1/2-line.
\item[2.] An improved zero-free region for $\zeta(s)$: 
the zeta function has no zeros in the complex region 
$\{ \sigma + it \in \C : \sigma \ge 1- \frac{1}{R\log|t|}, |t| \ge 2 \}$.
\item[3.] A better smooth weight: 
$|\psi(x) -\psi_{\varphi}(x)|$ is small, where $\psi_{\varphi}(x)$ is a smooth variant of $\psi(x)$.
\item[4.] An explicit zero-density result: 
for $\sigma$ and $T$ fixed, we define 
\begin{equation}
  \label{NsT}
  N(\s,T) = \# \{ \varrho=\beta+i\gamma \ | \ \beta \ge \sigma \text{ and } 
  0 \le \gamma \le T \},
\end{equation}
the number of non-trivial zeros $\varrho = \beta+i \gamma$ of zeta with $\beta \ge \sigma$
and imaginary part $\gamma $ between $0$ and $T$.  
We have an explicit bound of the form 
$ N(\sigma,T) \le c(\sigma) T^{a(\sigma)}(\log T)^{b(\sigma)}$,  
where $a, b$ and $c$ are certain functions of $\sigma$.
\end{enumerate}
%Dusart \cite{Dus} obtained improvements by inserting new partial verifications of the Riemann hypothesis into their arguments and a better zero-free region constant.  
%For instance, in 1962, Rosser and Schoenfeld \cite{RS1} used the verification of RH to height $H_0=e^{9.99}=21807.299\ldots$ and zero-free region constant $R=17.51\ldots$,
%while in 1975 Rosser and Schoenfeld \cite{RS2} used the values $H_0=1894438.51\ldots$ and $R=9.6459 \ldots$ whereas in 2016
%Dusart used $H_0 =2\,445\,999\,556\,030.34$ and $R=5.573412$.  
In recent years there has been a lot of activity on bounding $E(x)$
(see \cite{But}, \cite{But2}, \cite{Dus5}, \cite{Dus3}, \cite{Dus4}, \cite{FaKa}, \cite{PT2019}). 
In 2015 the first theoretical improvements to Rosser and Schoenfeld's method were provided by  Faber and Kadiri  \cite{FaKa}.  They introduced the idea of smoothing  to the problem and it was demonstrated that the averaging technique used in \cite{Ros} and \cite{RS2} could be interpreted as a  particular case of smoothing.
In addition, the use of an explicit zero-density result was first applied in \cite{FaKa}. 
Faber and Kadiri's smooth functions gave better results for $x\ge x_0$ for any $x_0 \le e^{4\,000}$ (as compared with the Rosser and Schoenfeld method used in \cite{Dus3}). 
In $2016$, B\"{u}the \cite{But} used a different smoothing: he introduced the Logan function which puts more weight  on the first zeros for which RH has been verified. This method did not appeal to a zero-free region or zero-density and it worked better than \cite{FaKa} for $e^{50} \le x_0 \le e^{3\,000}$. 
In addition, for $x \le 10^{19}$ B\"{u}the \cite[Theorem 2]{But2}  gave an explicit numerical bound for 
$(\psi(x)-x)/\sqrt{x}$.
In 2018 Dusart \cite{Dus4} used the Faber-Kadiri method along with a recent zero-density result of Ramar\'{e} \cite{Ram}. 
It provided new bounds for $\psi(x)$, $\theta(x)$, and other prime counting sums \footnote{Unfortunately the main theorem of \cite{Ram} 
is incorrect and thus bounds claimed in \cite{Dus4} are likely affected, in particular  
Theorems 3.2, 3.3 and Table 1 for bounds for $\psi(x)$, and consequently Theorem 4.2 and Table 2 for $\theta(x)$.
In addition this unfortunately affects the main theorem \cite[Theorem 1.1]{Axler17_2}.}.
Recently,  Platt and Trudgian employed Perron's formula along with the zero-density result
 of \cite{KLN} to give the best results in the range $x \ge X_0 :=e^{2314}$.    
 Currently, the results of \cite{But2}, \cite{But}, and \cite{PT2019} provide the best explicit bounds for $E(x)$ in the ranges $[1,10^{19}]$, $[10^{19}, X_0]$, and $[X_0,\infty)$ respectively. 
 In this section, we apply the techniques of these articles to bound $E(x)$.  
 
 The table below displays some of the historical improvements concerning the zeros of zeta that have been applied in obtaining sharper bounds for $\psi(x), \theta(x)$, and $\pi(x)$.
\begin{center}
\begin{table}[h] \caption{}
\label{hist}
\begin{tabular}{|l|l|l|l|l|} 
\hline
 Author & $H$ & $R$ %& $[a(\s),b(\s)]$ 
 & $\varepsilon(100)$ & $\varepsilon(6\,000)$
 \\ \hline
 Rosser \& Schoenfeld (1962) \cite{RS1} & $e^{9.99}\simeq21\,807$ \cite{Lehm} & $17.52$ \cite{RS1} %& $[1,1]$
 & $ 9.97\cdot10^{-4}$ & $4.92\cdot10^{-6}$\\
 Rosser \& Schoenfeld (1975) \cite{RS2} & $1\,894\,438$ \cite{RS2} & $9.65$ \cite{Ste} \cite{RS2}  %& $[1,1]$ 
 &  $ 1.70\cdot10^{-5}$ & $3.67\cdot10^{-9}$\\
 Dusart (1999) \cite{Dus} & $545\,439\,823$ \cite{vdL} & $9.65 $ \cite{Ste} \cite{RS2} %& $[1,1]$ 
 &  $ 9.00\cdot10^{-8}$ & $2.41\cdot10^{-9}$\\ 
 Faber \& Kadiri (2015) \cite{FaKa} \cite{FaKaCorr} & $2\,445\,999\,556\,030$ \cite{Wed} & $5.70 $ \cite{Kad} & %$[1,0]$ & 
 $ 2.42\cdot10^{-11}$ & $9.68\cdot10^{-14}$\\ 
Dusart (2016) \cite{Dus3} & $2\,445\,999\,556\,030$ \cite{Wed} & $5.58 $ \cite{TrudMoss}  & %$[1,1]$ & 
 & $6.77\cdot10^{-14}$\\  
B\"uthe (2016) \cite{But} & - & - %& - 
& $ 2.46\cdot10^{-12}$ & \\ 
 \hline
\end{tabular}
\end{table}
\end{center}
%In  \cite{KLN} some of the authors (Kadiri, Lumley, and Ng) improved upon the (corrected) zero-density from \cite{Ram}. Combining these with \cite{FaKa}'s smooth weight, we use it to establish new bounds for $\psi(x)$and this is the first main result of this article.
%%
%\begin{theorem} \label{psitheorem}
%Let $20 \le b \le 13\,900$ be a fixed positive constant. Let $x \ge e^b$. Then there exists $\varepsilon(b) >0$ such that 
%\[
%  | \psi(x)-x| \le \varepsilon(b)  x.
%\] 
%Values for $b$ and $\varepsilon(b)$ are given in Table %s \ref{Plapsixvals1}and 
% \ref{Wedpsixvals} of Appendix \ref{Section:Tables}.
%\end{theorem}
%Theorem \ref{psixmainthm} gives the definition of $\varepsilon(b)$, with the proof available in Section \ref{psixboundssection}. % {\color{red}and \cite{FaKa,FaKaCorr}?}.
\begin{remark}
In Table \ref{Wedpsixvals} we recorded the best values for $\varepsilon(b)$ that we computed using all various methods known up to today.  
%In comparison to the results listed in Table \ref{hist}, 
We found that \cite{But} works best for all $b< 2400$ at which point \cite{PT2019}  works best. 
For instance, it gives $\varepsilon(6\,000) = 6.45\cdot10^{-16}$, while using B\"uthe's theorem gives $1.91\cdot10^{-12}$.
%\[\varepsilon_{100} = 2.46\cdot10^{-12} \text{ and }\varepsilon_{6\,000} = 6.45\cdot10^{-16}.\]
\end{remark}
%we have the following Corollary. 
%\begin{corollary}
%
%%For all  $x  \ge  e^{20}$, 
%%$$
%%   |\psi(x)-x| \le 5.36884 \cdot 10^{-4} x.
%%$$ 
%\end{corollary}
% This slightly improves $5.3688 \times 10^{-4}$ of \cite{FaKa}. 
%This improves the results of \cite{FaKa} \cite{FaKaCorr} \cite{Dus3} for any value $x$ and of \cite{But} for larger values of $x$. 
%Table \ref{Wedpsixvals} provides best values for $\psi(x)$ known as of today.  
%\cite{FaKaCorr}, \cite{But}, \cite{Dus3} and \cite{Dus4}. \footnote{{\color{red} Discussion about improvements here.} To do by HK}
-------------------------------------------\\

\subsection{Zeros of the Riemann zeta function}
We list here the effective results currently available for the zeros of the Riemann zeta function and which we use to obtain  Theorem \ref{psixmainthm}.
%\subsubsection{Calculation of $s_0=\sum_{0<\gamma<T_0} \frac1{\gamma}$}
%We take the values $ T_0=1\,132\,491$, from \cite{FaKa} which gives 
%$s_0 =  11.637732$.
%
\subsubsection{Partial verification of RH}
%In Extensive calculations have been undertaken to test RH to ever increasing heights, with Gourdon having checked 1013 zeros [6] using an algorithm first described by Odlyzko and Scho?nhage [14]
% We use the latest computations of Platt \cite{Pla0} \cite{Pla} concerning the verification of RH
%:\begin{theorem}\label{RH}
% and thus take $H_0=3.061\cdot10^{10}$.
%If $\zeta(s)=0$ at $0\le\Re(s)\le1$ and $0\le \Im(s) \le H$, then $\Re(s)=\frac12$. \end{theorem}
% Table  \ref{Plapsixvals1}
%  presents values of $\varepsilon(b_0)$ computed for this value of $H_0 $.
% Prior to the work of Platt, Gourdon \cite{Gou} and Wedeniwski \cite{Wed} announced a verification up to $H_0 =2\,445\,999\,556\,030$.
In this article, we use the verification up to 
\begin{equation}
  \label{H}
H =2\,445\,999\,556\,030
\end{equation} announced by Gourdon \cite{Gou} and Wedeniwski \cite{Wed}.  In \cite{Pla-zeta}
the value of $H=3.061\cdot10^{10}$ was established and this was considered the most rigorous verification of RH
and a number of articles preferred to use this value instead of the value in \eqref{H}.
After this paper was submitted, an article of Platt and Trudgian \cite{PT2020}  appeared which now verifies that 
$H=3000175332800$ is valid.   This work makes use of the more rigorous techniques as 
in \cite{Pla-zeta} and thus confirms the earlier calculations in  \cite{Gou} and \cite{Wed}.
However, the focus of this paper is about the method of computation, and as 
Theorem \ref{psixmainthm} was calibrated so that the verification height for RH is a parameter,  we can obtain results regardless of which height is used.  Work in progress, will establish new bounds for $\psi(x)$ making use of the new values of $H$ from \cite{PT2020}.  Moreover, Andrew Fiori and Habiba Kadiri have developed a webpage which provides bounds
for $\psi(x)$ and $\theta(x)$ given any set of parameters $H$, $R$. 
 % we also give a version of Theorem \ref{psixmainthm} based on this value (see Table \ref{Wedpsixvals}).
 % Theorem \ref{psixmainthm} was calibrated so that the verification height for RH is a parameter, thus we can obtain results %regardless of which height is used.

\subsubsection{Explicit zero-free region}
We use the following type of  zero-free region for $\zeta(s)$: \\
Assume there exists $R \ge 1$ such that $\zeta(\s+it)$ does not vanish when
 \begin{equation}
  \label{zfr2} \s \ge 1-\frac1{R   \log |t|},\ \text{ for every }\  |t|\ge2.
 \end{equation}
We use this with $R =5.573412$ as established in \cite[Theorem 1]{TrudMoss}.
%
%\subsubsection{Explicit bound for the number of zeros}
%For the estimate of $N(T)$, we use 
%%$a_1=0.111$, $a_2=0.275$ and $a_3=2.450$ due to 
%%Trudgian \cite{Trudgian2}.
%due to Rosser \cite{Ros} $a_1=0.137,a_2=0.443,a_3=1.588$ from \cite{Ros}.
%Trudgian \cite{Trudgian} improves these $a_i$: $a_1=0.111$, $a_2=0.275$ and $a_3=2.450$. The numerical results displayed in this paper are valid with either one.
%were tabulated twice, once with Rosser's values and again with Trudgian's. The displayed $\varepsilon(b_0)$'s are valid with either.
%%
%From \cite[Theorem 1]{TrudMoss} we have the zero-free region:
%\begin{theorem} \label{ZFR}
%Let $R_0 =5.573412$. 
%Then there are no zeros of $\zeta(s)$ in the region
%\[\Re s \ge 1-\frac1{R_0 \log|\Im s|}\text{ and } |\Im s|\ge2.\]
%\end{theorem}
%%
%Let $T\ge 2$ and $N(T)$ be the number of non-trivial zeros $\varrho =\beta+i\gamma$ in the region $0 \le \gamma \le T$ and $0\le\beta\le1$.
%%
%In $1941$, Rosser \cite[Theorem 19]{R_0 } proved  
%\begin{theorem}\label{Rosser41}
%Let $T\ge 2$, 
%\[
%P(T)  = \frac T{2\pi}\log \frac T{2\pi} - \frac T{2\pi} + \frac{7}{8} ,\ 
%R( T) = a_1 \log T + a_2 \log \log T + a_3,
%\]  
%% 
%and $\displaystyle{a_1 = 0.137}$, $\displaystyle{a_2 =0.443}$, $\displaystyle{a_3 = 1.588}$.
%Then
%\[ |N(T) - P(T) | \le  R( T).\]
%\end{theorem}
%%
%\end{itemize}
\subsubsection{Explicit zero-density for zeta }
We recall that $N(\s,T)$ is the number of non-trivial zeros in the region $\s \le \Re(s) \le 1$ and $0\le \Im(s) \le T$.
In \cite[Theorem 1.1]{KLN} the following explicit upper bounds for $N(\s,T)$ were established.
%\begin{theorem}\cite[Theorem 1.1]{KLN} \label{thm-NsigT}
%Let $H_0$ satisfy \eqref{hypothese-RHver}. Let  $k \ge \frac{10^9}{H_0 },d>0, H\in [1002,H_0 )$ be fixed. Let $\s > \frac12 +\frac{d}{\log H_0 }$.\\
%Then there exist $\mathcal{C}_1 ,\mathcal{C}_2 >0$ such that, for any $T\ge H_0 $,
%\begin{align*}
%& %\label{KLN-NsigT1}
%N(\s,T)
%\le \frac{(T-H)(\log T)}{2\pi d}
%\log
%\Big( 1 
%+\frac{  \mathcal{C}_1
%(\log (kT))^{2\s}  (\log T)^{4(1-\s)} T^{\frac83(1-\s)}
% }{T-H}  \Big)
%+ \frac{ \mathcal{C}_2  }{2\pi d} (\log T)^2 ,
% \\%\text{ and}\ 
%& %\label{KLN-NsigT2}
%N(\s,T)
%\le 
% \frac{ \mathcal{C}_1}{2\pi d}
%(\log (kT))^{2\s}  (\log T)^{5-4\s } T^{\frac83(1-\s)}
%+ \frac{ \mathcal{C}_2  }{2\pi d} (\log T)^2 ,
% \end{align*}
%where $\mathcal{C}_1$ and $\mathcal{C}_2$ are defined in \cite[Lemma 4.14]{KLN}. 
% %Tables \ref{g2asymptotic} and \ref{g1optatH0}.
%\end{theorem}
\begin{theorem} \cite[Theorem 1.1]{KLN}  \label{thm-NsigT}
Let  $\frac{10^9}{H} \le k \le 1,d>0, Y\in [1002,H)$, $\a>0$, $\del\ge1$, $\eta_0=0.23622 \ldots$, $1+\eta_0\le \mu \le 1+\eta$,
and $\eta\in(\eta_0,\tfrac12)$ be fixed. Let $\s > \frac12 +\frac{d}{\log H}$.\\
Then there exist $\mathcal{C}_1 ,\mathcal{C}_2 >0$ such that, for any $T\ge H$,
\begin{equation}
\begin{split}
 \label{KLN-NsigT1}
N(\s,T)
& \le \frac{(T-Y)(\log T)}{2\pi d}
\log
\Big( 1 
+\frac{  \mathcal{C}_1
(\log (kT))^{2\s}  (\log T)^{4(1-\s)} T^{\frac83(1-\s)}
 }{T-Y}  \Big)
+ \frac{ \mathcal{C}_2  }{2\pi d} (\log T)^2 , \\
N(\s,T)
& \le 
 \frac{ \mathcal{C}_1}{2\pi d}
(\log (kT))^{2\s}  (\log T)^{5-4\s } T^{\frac83(1-\s)}
+ \frac{ \mathcal{C}_2  }{2\pi d} (\log T)^2
 %\text{ and}\ 
\end{split}
\end{equation}
where $\mathcal{C}_1= \mathcal{C}_1(\alpha, d,\del, k, Y, \s)$ and $\mathcal{C}_2=\mathcal{C}_2 (d, \eta, k, Y, \mu,\s)$ are defined in  \cite[Lemma 4.14]{KLN}. 
%Since $\log(1+x) \le x$ for $x \ge 0$, \eqref{KLN-NsigT1} implies
%\begin{equation}
%\label{KLN-NsigT2}
%N(\s,T)
%\le 
% \frac{ \mathcal{C}_1}{2\pi d}
%(\log (kT))^{2\s}  (\log T)^{5-4\s } T^{\frac83(1-\s)}
%+ \frac{ \mathcal{C}_2  }{2\pi d} (\log T)^2.
% \end{equation}
\end{theorem}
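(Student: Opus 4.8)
The plan is to establish this log-free zero-density estimate in the classical way --- Littlewood's lemma applied to $\zeta(s)$ times a mollifier --- while keeping every constant explicit; the detailed bookkeeping, and hence the precise values of $\mathcal{C}_1,\mathcal{C}_2$, is the content of \cite[Lemma 4.14]{KLN}. First I would reduce to zeros with ordinate in $(H,T]$: since the partial verification of RH up to height $H$ is assumed and $\sigma>\tfrac12$, no non-trivial zero with $\beta\ge\sigma$ has ordinate $\le H$. Then I would introduce a Dirichlet polynomial $M(s)=\sum_{n\le Z}\mu(n)n^{-s}$ with $Z\asymp T^{1/3}$ (its exact, possibly smoothed, shape being governed by the auxiliary parameters $\alpha,\delta,\mu,\eta$), chosen so that $f(s):=\zeta(s)M(s)$ is holomorphic in the region $\Im(s)\ge Y$ (recall $Y\ge1002$), vanishes at every non-trivial zero of $\zeta$, and is close to $1$ for $\Re(s)$ bounded away from $\tfrac12$.

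Next I would apply Littlewood's lemma to $f$ on the rectangle with vertical sides $\Re(s)=\sigma_0:=\sigma-\tfrac{d}{\log T}$ and $\Re(s)=\sigma_1$ (a fixed abscissa $>1$) and horizontal sides at heights $Y$ and $T$. The hypothesis $\sigma>\tfrac12+\tfrac{d}{\log H}\ge\tfrac12+\tfrac{d}{\log T}$ forces $\sigma_0>\tfrac12$, and each zero counted by $N(\sigma,T)$ lies in this rectangle and contributes at least $\sigma-\sigma_0=\tfrac{d}{\log T}$ to the Littlewood sum $\sum_\rho(\Re\rho-\sigma_0)$, so that
\[
N(\sigma,T)\le\frac{\log T}{2\pi d}\Bigl(\int_Y^T\log|f(\sigma_0+it)|\,dt-\int_Y^T\log|f(\sigma_1+it)|\,dt+E\Bigr),
\]
where $E$ collects the two horizontal-edge integrals. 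On $\Re(s)=\sigma_1$ one has $f\approx1$, so $-\int_Y^T\log|f(\sigma_1+it)|\,dt=O(1)$; on the horizontal edges the convexity estimate $\zeta(x+iT)\ll T^{O(1)}$ gives $\log|f|\ll\log T$, so $E\ll(\log T)^2$ and these contributions are absorbed into the term $\tfrac{\mathcal{C}_2}{2\pi d}(\log T)^2$.

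For the remaining left-edge integral I would use concavity of the logarithm (Jensen's inequality),
\[
\int_Y^T\log|f(\sigma_0+it)|\,dt\le\frac{T-Y}{2}\log\!\Bigl(\frac{1}{T-Y}\int_Y^T|\zeta(\sigma_0+it)M(\sigma_0+it)|^2\,dt\Bigr),
\]
so that everything reduces to an explicit mean-value bound of the form
\[
\int_Y^T|\zeta(\sigma_0+it)M(\sigma_0+it)|^2\,dt\le(T-Y)+\mathcal{C}_1(\log(kT))^{2\sigma}(\log T)^{4(1-\sigma)}T^{\frac83(1-\sigma)}.
\]
Substituting this into the Jensen bound produces the factor $\log\!\bigl(1+\tfrac{\mathcal{C}_1(\log(kT))^{2\sigma}(\log T)^{4(1-\sigma)}T^{8(1-\sigma)/3}}{T-Y}\bigr)$, hence the first inequality of \eqref{KLN-NsigT1}; the second follows immediately from $\log(1+x)\le x$ together with $(\log T)(\log T)^{4(1-\sigma)}=(\log T)^{5-4\sigma}$.

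The main obstacle is this explicit mean-value estimate. Because $\sigma_0$ may be close to $\tfrac12$, one replaces $\zeta(\sigma_0+it)$ by a Dirichlet polynomial of length $\asymp t$ through an explicit (smoothed) approximate functional equation, multiplies against $M$, and evaluates the resulting mean square of a Dirichlet polynomial of length $\asymp T^{4/3}$ by an explicit Montgomery--Vaughan mean value theorem: the diagonal gives $(T-Y)$, while the length $T^{4/3}$ and the coefficient sizes $\asymp n^{-2\sigma_0}$ give an off-diagonal term of order $T^{(4/3)\cdot2(1-\sigma)}=T^{8(1-\sigma)/3}$, the powers of $\log(kT)$ and $\log T$ coming from the mollifier coefficients and an explicit fourth-moment-type bound for $\zeta$ near the critical line. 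Carrying every error term uniformly for $T\ge H$ and optimizing the parameters $\alpha,\delta,\mu,\eta,Y$ and $k$ is the delicate part, and is precisely what \cite[Lemma 4.14]{KLN} carries out.
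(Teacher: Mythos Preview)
This theorem is not proved in the present paper --- it is quoted from \cite{KLN} and invoked only through the corollary \eqref{ZD-KLN} --- so there is no in-paper proof to compare against. Your outline is nonetheless exactly the method of \cite{KLN} (which refines \cite{Kad2}): Littlewood's lemma for $\zeta M$ on the rectangle with left edge $\sigma_0=\sigma-d/\log T$, the per-zero contribution $\ge d/\log T$ giving the prefactor $\tfrac{\log T}{2\pi d}$, the horizontal and right-edge pieces making up the $\mathcal{C}_2(\log T)^2$ term, Jensen's inequality on the left edge producing the $\log(1+\cdots)$ shape, and an explicit mean-square bound for $\zeta M$ on $\Re s=\sigma_0$ furnishing $\mathcal{C}_1$ and the exponent $\tfrac{8}{3}(1-\sigma)$; your passage from the first inequality to the second via $\log(1+x)\le x$ together with $(\log T)\cdot(\log T)^{4(1-\sigma)}=(\log T)^{5-4\sigma}$ is likewise how \cite{KLN} obtains it.
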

From this we have the following corollary.
\begin{corollary}
Let $\sigma \in [0.75,1)$.  Then there exist $c_1(\s), c_2(\s) >0$ such that 
\begin{equation}\label{ZD-KLN}
N(\sigma, T) \leq c_1(\s)  T^{8(1-\sigma) / 3} (\log T)^{5-2 \sigma}+c_2(\s)  (\log  T)^2 
\end{equation}
where $c_1,c_2$ are given in the table below:
\end{corollary}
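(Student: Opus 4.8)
The plan is to derive \eqref{ZD-KLN} directly from Theorem \ref{thm-NsigT} by specializing the parameters and simplifying the second bound in \eqref{KLN-NsigT1} into the cleaner shape stated in the corollary. The second inequality in \eqref{KLN-NsigT1} already has essentially the right form: it reads $N(\s,T) \le \frac{\mathcal{C}_1}{2\pi d}(\log(kT))^{2\s}(\log T)^{5-4\s}T^{\frac83(1-\s)} + \frac{\mathcal{C}_2}{2\pi d}(\log T)^2$. I first fix, once and for all, admissible choices of the free parameters $k, d, Y, \a, \del, \eta, \mu$ (for instance taking $k=1$, $Y$ some convenient value in $[1002,H)$, $\del=1$, and $\eta,\mu$ near $\eta_0$), and fix the verification height $H$ as in \eqref{H}. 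With these choices, $\mathcal{C}_1=\mathcal{C}_1(\a,d,\del,k,Y,\s)$ and $\mathcal{C}_2=\mathcal{C}_2(d,\eta,k,Y,\mu,\s)$ become functions of $\s$ alone, so I may set $c_2(\s) := \mathcal{C}_2/(2\pi d)$ outright.

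The one genuine manipulation is absorbing the factor $(\log(kT))^{2\s}$ and converting $(\log T)^{5-4\s}$ into $(\log T)^{5-2\s}$. With $k=1$ this is just $(\log T)^{2\s}\cdot(\log T)^{5-4\s} = (\log T)^{5-2\s}$, which is exactly the exponent in \eqref{ZD-KLN}; if one instead keeps $k<1$ one uses $\log(kT) \le \log T$ for $T \ge H \ge 1$ so the product is still bounded by $(\log T)^{5-2\s}$. Hence I may take $c_1(\s) := \mathcal{C}_1/(2\pi d)$ (with $\mathcal{C}_1$ evaluated at the fixed parameter choice), and the bound \eqref{ZD-KLN} holds for all $T \ge H$. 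The restriction to $\s \in [0.75,1)$ is compatible with the hypothesis $\s > \tfrac12 + \tfrac{d}{\log H}$ of Theorem \ref{thm-NsigT} provided $d$ is chosen small enough that $\tfrac12 + \tfrac{d}{\log H} < 0.75$, which is easy since $\log H$ is large; this pins down the allowable range of $d$.

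The only remaining point is the table of values of $c_1(\s)$ and $c_2(\s)$: for a grid of values $\s \in \{0.75, 0.76, \dots\}$ one evaluates $\mathcal{C}_1(\a,d,\del,k,Y,\s)$ and $\mathcal{C}_2(d,\eta,k,Y,\mu,\s)$ using the explicit formulas of \cite[Lemma 4.14]{KLN}, divides by $2\pi d$, and optimizes numerically over the free parameters to make the constants as small as possible at each $\s$. This is a finite computation rather than a proof step. I expect the main obstacle to be not any single inequality but the bookkeeping: tracking which parameters are free, checking that the chosen $d$ respects $\tfrac12 + \tfrac{d}{\log H} < 0.75$ uniformly, and carrying out the numerical optimization of $\mathcal{C}_1, \mathcal{C}_2$ over the admissible parameter region to produce sharp tabulated constants.
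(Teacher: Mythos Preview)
Your proposal is correct and matches the paper's (implicit) approach: the paper simply says ``From this we have the following corollary'' and records in the table caption that $c_1 = \mathcal{C}_1/(2\pi d)$ and $c_2 = \mathcal{C}_2/(2\pi d)$, which is exactly your identification; the merging of $(\log(kT))^{2\s}(\log T)^{5-4\s}$ into $(\log T)^{5-2\s}$ via $k\le 1$ is the only analytic step and you have it right. The table confirms that the parameters $\mu,\alpha,\delta,d$ are optimized numerically at each $\s$, as you anticipated.
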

\begin{small}
\begin{table}[h]
 \centering
\caption{The bound $N(\s,T)
\le c_1 T^{\frac83(1-\s)} (\log T)^{5-2\s} 
+ c_2 (\log T)^2$  \label{bnd-Nasymp}.}
\label{g2asymptotick}
\begin{tabular}{|c|c|c|c|c|c|c|}
\hline
$\s_0$ & $\mu$ &$\alpha$ &  ${\del} $ & $d$ &  $c_1=\frac{\mathcal{C}_1}{2\pi d}$ & $c_2=\frac{\mathcal{C}_2}{2\pi d}$ \\
 \hline
%$ 0.60 $&$ 0.5 $&$ 1.251 $&$ 0.288 $&$ 0.3140 $&$ 0.341$&$ 2.177 $&$ 5.663 $\\
%$ 0.65 $&$ 0.6 $&$ 1.249 $&$ 0.256 $&$ 0.3070 $&$ 0.340$&$ 2.963 $&$ 5.249 $\\
%$ 0.70 $&$ 0.8 $&$ 1.247 $&$ 0.222 $&$ 0.3040 $&$ 0.339$&$ 3.983 $&$ 4.824 $\\
$ 0.75 $&$ 1.245 $&$ 0.189 $&$ 0.3030 $&$ 0.338$&$ 5.277 $&$ 4.403 $\\
$ 0.80 $&$ 1.245 $&$ 0.160 $&$ 0.3030 $&$ 0.337$&$ 6.918 $&$ 3.997 $\\
$ 0.85 $&$ 1.245 $&$ 0.133 $&$ 0.3030 $&$ 0.336$&$ 8.975 $&$ 3.588 $\\
$ 0.86 $&$ 1.245 $&$ 0.127 $&$ 0.3030 $&$ 0.335$&$ 9.441 $&$ 3.514 $\\
$ 0.87 $&$ 1.245 $&$ 0.122 $&$ 0.3030 $&$ 0.335$&$ 9.926 $&$ 3.430 $\\
$ 0.88 $&$ 1.245 $&$ 0.116 $&$ 0.3030 $&$ 0.335$&$ 10.431 $&$ 3.346 $\\
$ 0.89 $&$ 1.245 $&$ 0.111 $&$ 0.3030 $&$ 0.335$&$ 10.955 $&$ 3.262 $\\
$ 0.90 $&$ 1.245 $&$ 0.105 $&$ 0.3030 $&$ 0.334$&$ 11.499 $&$ 3.186 $\\
$ 0.91 $&$ 1.245 $&$ 0.100 $&$ 0.3030 $&$ 0.334$&$ 12.063 $&$ 3.102 $\\
$ 0.92 $&$ 1.245 $&$ 0.095 $&$ 0.3030 $&$ 0.334$&$ 12.646 $&$ 3.017 $\\
$ 0.93 $&$ 1.245 $&$ 0.089 $&$ 0.3030 $&$ 0.333$&$ 13.250 $&$ 2.941 $\\
$ 0.94 $&$ 1.245 $&$ 0.084 $&$ 0.3030 $&$ 0.333$&$ 13.872 $&$ 2.856 $\\
$ 0.95 $&$ 1.245 $&$ 0.079 $&$ 0.3030 $&$ 0.333$&$ 14.513 $&$ 2.772 $\\
$ 0.96 $&$ 1.245 $&$ 0.074 $&$ 0.3030 $&$ 0.332$&$ 15.173 $&$ 2.694 $\\
$ 0.97 $&$ 1.245 $&$ 0.069 $&$ 0.3030 $&$ 0.332$&$ 15.850 $&$ 2.609 $\\
$ 0.98 $&$ 1.245 $&$ 0.064 $&$ 0.3030 $&$ 0.331$&$ 16.544 $&$ 2.532 $\\
$ 0.99 $&$ 1.245 $&$ 0.060 $&$ 0.3030 $&$ 0.331$&$ 17.253 $&$ 2.446 $\\
\hline
\end{tabular}
\end{table}
\end{small}

\subsection{Platt and Trudgian's bounds for $\psi(x)$:}
\subsubsection{The Prime Number Theorem  with a small constant error term}
In \cite{PT2019}, Platt and Trudgian use an explicit version of Perron's formula proven by Dudek \cite[Theorem 1.3]{Dud}: Let $x \ge e^{1000}$ and $T$ satisfies $50 < T \le x$.  Then  
\begin{equation}\label{eq0}
\frac{\psi(x)-x}{x} = \sum_{|\gamma|<T} \frac{x^{\rho-1}}{\rho} + \mathcal{O}^{*} \left(\frac{2 (\log  x)^2}{T}\right)
\end{equation}
where $A= \mathcal{O}^{*}(B)$ means $|A| \le B$.  
%{\color{red} Not sure correct.  Has to be checked. Dudek claims $e^{60}$ works.  I think $e^{1000}$ may work. }
Writing $b=\log x$, we denote 
\begin{equation}\label{def-s0}
 s_0(b, T)= \frac{2 b^2 }{T}.
\end{equation}
The sum over the zeros is then split vertically 
%\todo{we are splitting horizontally, no? It's a vertical split.NN} 
at a fixed value $1-\delta$ with $0.001\le \d \le 0.025$.
\begin{equation}\label{eq1}
 \sum_{|\gamma|<T} \frac{x^{\rho-1}}{\rho}  = \Sigma_1 + \Sigma_2, \ \text{with}\ 
\Sigma_1 = \sum_{|\gamma| \leq T \atop \beta<1-\delta} \frac{x^{\rho-1}}{\rho}, 
\Sigma_2 =\sum_{|\gamma| \leq T \atop \beta\ge 1-\delta} \frac{x^{\rho-1}}{\rho}.
\end{equation}
The first sum $\Sigma_1$ is evaluated in  \cite[Lemma 2.10]{DST} by 
\begin{equation}\label{eq2}
 \left| \Sigma_1 \right| \leq x^{-\delta} \Big(\frac{1}{2 \pi} (\log  (T/2\pi))^{2}+1.8642 \Big).
\end{equation}
We denote 
\begin{equation}\label{def-s1}
 s_1(b,\d,T) = e^{-\delta b}\Big(\frac{1}{2 \pi} (\log  (T/2\pi))^{2}+1.8642 \Big).
\end{equation}
To estimate $\Sigma_2$, an argument of  Pintz  \cite{Pintz} is employed.  The interval $[0,T]$
is split into subintervals $\Big[\frac{T}{\lambda^{k+1}}, \frac{T}{\lambda^k}\Big]$
where $\lambda >1$, $0 \le k \le K-1$, and $K=\Big\lfloor \frac{\log\frac{T}{H}}{\log \lambda}\Big\rfloor+1$.
Using the zero-free region \eqref{zfr2} to bound $\Re(\rho)$ we find 
 \begin{equation}
\left| \Sigma_2 \right|
\leq 2 \sum_{k=0}^{K-1} \frac{\lambda^{k+1}
x^{-\frac1{R\log \left(T/\lambda^{k}\right)} }
 }{T} N\left(1-\delta, \frac{T}{\lambda^{k}}\right).
\end{equation}
Inserting  \eqref{ZD-KLN} we obtain the following:
\begin{equation}\label{eq3}
 \left| \Sigma_2 \right|
\leq 2 \frac{\lambda}{T}  \sum_{k=0}^{K-1} \lambda^{k} x^{-\frac1{R\log \left(T/\lambda^{k}\right)} }
\left( c_1  \left( \frac{T}{\lambda^k} \right)^{\frac{8\delta}3} (\log(T/\lambda^{k}))^{3+2 \delta}  +c_2  (\log(T/\lambda^{k}))^{2}\right).
\end{equation}
We denote 
\begin{multline}\label{def-s2}
 s_2(b,\lambda,K,T) \\= 2 \frac{\lambda}{T}  \sum_{k=0}^{K-1} 
 \exp\left(k\log \lambda -\frac{b}{R(\log T - k\log \lambda)}\right)
\left( c_1  \left( \frac{T}{\lambda^k} \right)^{\frac{8\delta}3} (\log(T/\lambda^{k}))^{3+2 \delta}  +c_2  (\log(T/\lambda^{k}))^{2}\right).
\end{multline}
Finally, putting together \eqref{eq0}, \eqref{eq1}, \eqref{eq2}, and \eqref{eq3} gives the following result. 
\begin{theorem}\label{PlattTrudgian19+}
Let $b_1, b_2$ satisfy $1000 \le b_1 < b_2$.   Let $0.001\le \delta \le 0.025$, $\lambda>1$, $H <T < e^{b_1}$,  and $K=\Big\lfloor \frac{\log\frac{T}{H}}{\log \lambda}\Big\rfloor+1$. 
Then for all $x\in [e^{b_1},e^{b_2}]$
\begin{equation}\label{finalPT}
\left|\frac{\psi(x)-x}{x} \right| \le s_0(b_2,T) +s_1(b_1,\d,T) + s_2(b_1,\d,\lambda,K,T) ,
 \end{equation}
 where $s_0,s_1,s_2$ are respectively defined in \eqref{def-s0}, \eqref{def-s1}, and \eqref{def-s2}.
\end{theorem}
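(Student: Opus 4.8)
The plan is to assemble the four displayed ingredients \eqref{eq0}, \eqref{eq1}, \eqref{eq2}, \eqref{eq3} and then pass from the $x$-dependent estimates they provide to the claimed uniform ones by elementary monotonicity in $x$. First I would check that Dudek's explicit Perron formula \eqref{eq0} applies on the whole interval: since $x\ge e^{b_1}$ with $b_1\ge 1000$ we have $x\ge e^{1000}$, and since $H<T<e^{b_1}\le x$ with $H$ well above $50$, the constraint $50<T\le x$ holds. Thus \eqref{eq0} gives $\frac{\psi(x)-x}{x}=\sum_{|\gamma|<T}\frac{x^{\rho-1}}{\rho}+\mathcal{O}^{*}\!\big(\tfrac{2(\log x)^2}{T}\big)$. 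Because $\log x\le b_2$ on $[e^{b_1},e^{b_2}]$, the error term is bounded by $\tfrac{2b_2^2}{T}=s_0(b_2,T)$, the first summand in \eqref{finalPT}.

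Next I would split $\sum_{|\gamma|<T}\frac{x^{\rho-1}}{\rho}=\Sigma_1+\Sigma_2$ as in \eqref{eq1}, according to whether $\beta<1-\delta$ or $\beta\ge 1-\delta$. For $\Sigma_1$ I invoke \cite[Lemma 2.10]{DST}, which yields $|\Sigma_1|\le x^{-\delta}\big(\tfrac{1}{2\pi}(\log(T/2\pi))^2+1.8642\big)$; since $\delta>0$ the factor $x^{-\delta}$ is decreasing, so on $x\ge e^{b_1}$ it may be replaced by $e^{-\delta b_1}$, giving $|\Sigma_1|\le s_1(b_1,\delta,T)$, the second summand.

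The substantive step is bounding $\Sigma_2$ by Pintz's dyadic-type argument \cite{Pintz}. Since $\delta\le 0.025<\tfrac12$, every zero contributing to $\Sigma_2$ has $\beta\ge 1-\delta>\tfrac12$, hence by the partial verification of RH up to height $H$ (see \eqref{H}, \cite{Gou}, \cite{Wed}) it must satisfy $|\gamma|>H$. Consequently all zeros in $\Sigma_2$ lie in $(T/\lambda^K,T)$ for $K=\lfloor\tfrac{\log(T/H)}{\log\lambda}\rfloor+1$, because this choice forces $T/\lambda^K<H$; I would cover this range by the strips $|\gamma|\in(T/\lambda^{k+1},T/\lambda^k]$ for $0\le k\le K-1$. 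On the $k$-th strip the number of zeros with $\beta\ge 1-\delta$ and positive ordinate is at most $N(1-\delta,T/\lambda^k)$ (with a factor $2$ for the conjugate zeros), $|1/\rho|\le|\gamma|^{-1}<\lambda^{k+1}/T$, and the zero-free region \eqref{zfr2} gives $1-\beta>\tfrac{1}{R\log|\gamma|}\ge\tfrac{1}{R\log(T/\lambda^k)}$, so $x^{\beta-1}<x^{-1/(R\log(T/\lambda^k))}$ since $x>1$. Summing the strip contributions gives the displayed bound preceding \eqref{eq3}. I then insert the zero-density estimate \eqref{ZD-KLN} (i.e. Theorem \ref{thm-NsigT} in corollary form) with $\sigma=1-\delta$, which is admissible because $1-\delta\in[0.975,1)\subset[0.75,1)$, and use $\tfrac{8}{3}(1-\sigma)=\tfrac{8\delta}{3}$ and $5-2\sigma=3+2\delta$ to recover exactly \eqref{eq3}. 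Finally, as $x^{-1/(R\log(T/\lambda^k))}$ decreases in $x$, on $x\ge e^{b_1}$ each summand is at most $\exp\!\big(k\log\lambda-\tfrac{b_1}{R(\log T-k\log\lambda)}\big)$, whence $|\Sigma_2|\le s_2(b_1,\delta,\lambda,K,T)$.

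Combining the three pieces by the triangle inequality yields $\big|\tfrac{\psi(x)-x}{x}\big|\le s_0(b_2,T)+s_1(b_1,\delta,T)+s_2(b_1,\delta,\lambda,K,T)$ for all $x\in[e^{b_1},e^{b_2}]$. The only real obstacles here are bookkeeping: ensuring each auxiliary input (\eqref{eq0}, \cite[Lemma 2.10]{DST}, the zero-free region \eqref{zfr2}, and \eqref{ZD-KLN}) is used strictly within its stated range of validity, and orienting the ``worst case in $x$'' reduction correctly for each term — the Perron error wants $\log x\le b_2$, whereas the two sums over zeros want $x\ge e^{b_1}$.
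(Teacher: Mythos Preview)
Your proposal is correct and follows essentially the same approach as the paper, which simply states that the theorem follows by ``putting together \eqref{eq0}, \eqref{eq1}, \eqref{eq2}, and \eqref{eq3}.'' You have in fact spelled out more carefully than the paper the verification of hypotheses (e.g.\ that $50<T\le x$ and $x\ge e^{1000}$ hold, that $\sigma=1-\delta\in[0.75,1)$, and that RH up to $H$ forces the $\Sigma_2$-zeros above height $H$) and the monotonicity-in-$x$ reduction that replaces $\log x$ by $b_2$ in $s_0$ and $x$ by $e^{b_1}$ in $s_1,s_2$.
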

%%%
\subsubsection{The Prime Number Theorem  with an error term of the form 
\eqref{equ:1}.}
Let $x_0\ge 1000$ be fixed, and let $R$ be a constant such that Riemann zeta function does not vanish 
in the region \eqref{zfr2}. 
We define
%\begin{equation} 
%  \label{X0}
% X_0 = \sqrt{\frac{\log x_0}{R}},
%\end{equation}
\begin{align}
  k(\s,x_0) & = ( \exp( ( \tfrac{10-16 \s}{3}) ( \tfrac{\log x_0}{R} )^{\frac{1}{2}}  )   (\sqrt{ \tfrac{\log x_0}{R} })^{5-2\s}    )^{-1} , \\
 c_3(\s,x_0) & = 2 \exp ( -2 ( \tfrac{\log x_0}{R})^{\frac{1}{2}}) (\log x_0)^2  k(\s,x_0), \\
  c_4(\s,x_0) & = x_{0}^{\s-1} ( \tfrac{2 \log x_0}{\pi R} + 1.8642 ) k(\s,x_0) , \\
   c_5(\s,x_0) & =8.01\cdot c_2(\s)  \exp ( - 2 ( \tfrac{\log x_0}{R} )^{\frac{1}{2}}  )  \tfrac{\log x_0}{R} k(\s,x_0).
\end{align}
With the constants defined, we now set 
\begin{equation}
  \label{newA}
  A(\s,x_0) = 2.0025 \cdot 2^{5-2 \s} \cdot c_1(\s) + c_3(\s,x_0) + c_4(\s,x_0) + c_5(\s,x_0), 
\end{equation}
\begin{equation}
 \label{newBC}
 B = \tfrac{5}{2}-\s,  \text{ and } 
 C = \tfrac{16 \s}{3}-\tfrac{10}{3}.
\end{equation}
%\begin{equation}\label{def-A0}
%A_0 := %2 R^2 \exp\left(-\frac{4 \log X_1}{RX_1} - \left(2-2\sqrt{1-\frac{16 }{3RX_1}} \right)X_1\right) \le 
%2 R^2 \exp\left( - \frac{16}{3R} \right) ,
%\end{equation}
%%
%\begin{equation}\label{def-A1}
%A_1 :=  \left(\frac2{ \pi}  + \frac{1.8642}{X_0}\right) \exp \left( -2 \log X_0 -  \frac{16}{3R} \right)  ,
%\end{equation}
%%
%\begin{multline}\label{def-A2}
%  A_2:=  2^{4+\frac4{RX_0}}  e  c_1 X_0^{-1} \exp\left( -\frac12X_0 + \frac{16}{3R} \right) 
% + 2^{5+\frac4{RX_0}} e  c_1 
% \\+ 2^{3} e c_2 X_0^{-2} \exp\left( -\frac12X_0 - \frac{16}{3R} \right)
% + 2^{4} e c_2 X_0^{-1} \exp\left(  - \frac{16}{3R}\right) ,
%\end{multline}
%%
%\begin{equation}
%  \label{def-A}
%  A:= A_0+A_1+A_2, 
%\end{equation}
%%
%\begin{equation}
% \label{def-B}
% B:= 2+\frac{2}{RX_0},
%\end{equation}
%and
%\begin{equation}
% \label{def-C}
% C:= 2\sqrt{1-\frac{16}{3RX_0}}.
%\end{equation}
Then  we have the following result:
\begin{theorem}
Let $x_0 \ge 1000$ and let $\s \in [0.75, 1)$.  
For all $x \ge e^{x_0}$, 
\begin{equation}
 \left| \frac{\psi(x)-x}x \right| \leq A ( \tfrac{\log x}{R} )^{B} \exp ( -C ( \tfrac{\log x}{R} )^{\frac{1}{2}} )
\end{equation}
where $A,B,$ and $C$ are defined in \eqref{newA} and \eqref{newBC}. 
\end{theorem}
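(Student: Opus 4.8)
The strategy is to rerun the truncated-Perron argument of \cite{PT2019} underlying Theorem \ref{PlattTrudgian19+}, but with the truncation height $T$, the geometric ratio $\lambda$, and the number $K$ of dyadic blocks chosen as functions of $x$ rather than fixed on an interval $[e^{b_1},e^{b_2}]$. Fix $x\ge e^{x_0}$ and write $b=\log x$. Start from the explicit formula \eqref{eq0}, split the zero-sum at the vertical line $\Re(s)=\sigma$ as in \eqref{eq1} into $\Sigma_1$ (zeros with $\beta<\sigma$) and $\Sigma_2$ (zeros with $\beta\ge\sigma$), estimate $\Sigma_1$ by \eqref{eq2}, and estimate $\Sigma_2$ by Pintz's dyadic subdivision of $[0,T]$ together with the zero-free region \eqref{zfr2} (to bound $x^{\beta-1}$) and the zero-density bound \eqref{ZD-KLN} (to count the relevant zeros). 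This yields exactly the estimate $E(x)\le s_0+s_1+s_2$ of Theorem \ref{PlattTrudgian19+} with $b_1=b_2=b$, $s_0=2b^2/T$, and $s_1,s_2$ as in \eqref{def-s1}, \eqref{def-s2}.

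The decisive choices are $\delta:=1-\sigma$ (so that the splitting line $1-\delta=\sigma$ is precisely the argument of \eqref{ZD-KLN}, with constants $c_1(\sigma),c_2(\sigma)$ from the table there), $\log T:=2\sqrt{b/R}$, and $\lambda$ taken close to $1$ (equivalently, in the limit, the dyadic sum defining $s_2$ is replaced by the integral $\int_{\log H}^{\log T}(\cdots)\,du$). One records the elementary identities $\tfrac{8\delta}{3}-1=-\tfrac C2$ and $3+2\delta=2B$ with $B,C$ as in \eqref{newBC}, together with $\tfrac{b}{R\log T}=\tfrac12\sqrt{b/R}$ and $(\log T)^{2B}=2^{5-2\sigma}(b/R)^{B}$. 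For $1000\le b<R(\log H)^2/4$ one has $T\le H$, hence $\Sigma_2=0$ and $s_2=0$, so only $s_0,s_1$ occur; for larger $b$ the integral estimate for $\Sigma_2$ applies. The constraints $50<T<x$ needed for \eqref{eq0} hold automatically since $b\ge 1000$.

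One then bounds each of the four contributions — $s_0$, $s_1$, the $c_2$-part of $s_2$, and the $c_1$-part of $s_2$ — by a constant times $(\log x/R)^{B}\exp(-C\sqrt{\log x/R})$, the constants being $c_3(\sigma,x_0)$, $c_4(\sigma,x_0)$, $c_5(\sigma,x_0)$, and $2.0025\cdot 2^{5-2\sigma}c_1(\sigma)$ respectively, whose sum is $A(\sigma,x_0)$ of \eqref{newA}. The mechanism is uniform: dividing the bound on one piece by the target function $(\log x/R)^{B}\exp(-C\sqrt{\log x/R})$ produces an explicit function of $b$ which a calculus argument shows is monotone decreasing on $[x_0,\infty)$, so its supremum equals its value at $b=x_0$; the normalizing factor $k(\sigma,x_0)=\exp(C\sqrt{x_0/R})(x_0/R)^{-B}$ appearing in $c_3,c_4,c_5$ is exactly the reciprocal of the target function at the left endpoint $x=e^{x_0}$. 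For example $s_0=2b^{2}\exp(-2\sqrt{b/R})$, and since $C<2$ the quotient $2R^{B}b^{2-B}\exp\!\big(-(2-C)\sqrt{b/R}\big)$ is decreasing once $b$ is past its (small) critical point, giving the contribution $c_3(\sigma,x_0)(\log x/R)^{B}\exp(-C\sqrt{\log x/R})$; $s_1$ is treated the same way after $\log(T/2\pi)\le \log T$, and the $c_2$-part of $s_2$ produces $\exp(-\tfrac52\sqrt{b/R})$ once the $1/T$ prefactor is included, again with $\tfrac52>C$. For the $c_1$-part of $s_2$ one additionally uses, inside the (limiting) integral, the AM--GM inequality $\tfrac C2 u+\tfrac{b}{Ru}\ge\sqrt{2C}\sqrt{b/R}\ge C\sqrt{b/R}$ (valid since $C\le 2$) to absorb the surplus powers of $\log T$ into the exponential; here the factor $(\log T)^{3+2\delta}=2^{5-2\sigma}(b/R)^{B}$ is the source of the $2^{5-2\sigma}$ in the constant.

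The main obstacle is the $\Sigma_2$ estimate: controlling the Pintz dyadic sum (or its limiting integral) uniformly in $x$ while carrying all constants through the zero-density input \eqref{ZD-KLN} and the table following it, and then verifying the several monotonicity statements that permit evaluating the resulting constants at $x=e^{x_0}$. These calculus verifications are routine in spirit but do use the hypothesis $x_0\ge 1000$ in an essential way (and are most delicate when $\sigma$ is close to $1$, where the relevant critical points drift up toward $1000$). Once these are in place, adding the four bounds and checking $s_0+s_1+s_2\le A(\log x/R)^{B}\exp(-C\sqrt{\log x/R})$ with the stated $A,B,C$ is bookkeeping.
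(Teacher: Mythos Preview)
Your proposal is correct and follows essentially the same route as \cite{PT2019}, which is what the paper is invoking here (the paper itself does not supply an independent proof; it records the constants $A,B,C$ and states the result as a consequence of the Platt--Trudgian machinery already laid out in Theorem~\ref{PlattTrudgian19+}). You have correctly identified the decisive parameter choices $\delta=1-\sigma$ and $\log T=2\sqrt{b/R}$, the algebraic identities $\tfrac{8\delta}{3}-1=-\tfrac{C}{2}$ and $3+2\delta=2B$, and the role of $k(\sigma,x_0)$ as the reciprocal of the target function at the left endpoint; your attribution of the four summands of $A$ to $s_0$, $s_1$, and the $c_2$- and $c_1$-parts of $s_2$ is exactly right.

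One point deserves care. Your phrase ``$\lambda$ taken close to $1$ (equivalently, in the limit, the dyadic sum defining $s_2$ is replaced by the integral)'' is not quite literally correct: letting $\lambda\downarrow 1$ sends the number $K$ of blocks to infinity and the raw term-by-term bound via AM--GM blows up by a factor $K$. What actually happens in \cite{PT2019} (and what your final paragraph implicitly acknowledges as ``the main obstacle'') is that $\lambda$ is kept fixed and the sum $\sum_{k}e^{-(C/2)u_k-b/(Ru_k)}u_k^{3+2\delta}$ is controlled by exploiting the strict convexity of $u\mapsto \tfrac{C}{2}u+\tfrac{b}{Ru}$ away from its minimum, so that the terms decay geometrically off the peak; the constants $2.0025$ and $8.01$ encode the resulting geometric tails together with the factor $\lambda$. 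Your AM--GM inequality is the right pointwise ingredient, but on its own it only bounds each term and not the sum; one more step is needed to sum the series. Once that refinement is inserted, the rest of your argument goes through as written.
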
  
From this and Corollary \ref{psi-theta:ExplicitCor} we deduce 
% \todo[inline,color=red!30]{Notation 1}
\begin{corollary}  \label{thetaxlargeABC}
Let $x_0 \ge 1000$.  
For all $x \ge e^{x_0}$, 
\begin{equation}
  \label{thetaxbound}
 \left| \frac{\theta(x)-x}x \right| \leq A' ( \tfrac{\log x}{R} )^{B}  \exp ( -C ( \tfrac{\log x}{R} )^{\frac{1}{2}} )
\end{equation}
where  $B$ and $C$ are defined in \eqref{newBC} and 
\begin{equation}
  \label{Aprime}
  A' = A \Big( 1 + 
  \tfrac{1}{ A} ( \tfrac{R}{x_0} )^{B}  \exp \Big(C \sqrt{\tfrac{x_0}{R}} \Big)
  \Big( 
  a_1(x_0) \exp(\tfrac{-x_0}{2}) + a_2(x_0) 
  \exp(\tfrac{-2x_0}{3})\Big)
   \Big),
\end{equation}
where $a_1$ and $a_2$ are defined in Corollary 
\ref{psi-theta:ExplicitCor}.
\end{corollary}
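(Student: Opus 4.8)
The plan is to combine the triangle inequality $|\theta(x)-x|\le |\psi(x)-x|+\bigl(\psi(x)-\theta(x)\bigr)$ with the theorem just proved for $\psi$ and with Corollary~\ref{psi-theta:ExplicitCor}, and then to absorb the ``$\psi-\theta$'' contribution into the main term $x(\tfrac{\log x}{R})^{B}\exp(-C(\tfrac{\log x}{R})^{1/2})$ at the cost of enlarging the constant $A$ to $A'$.

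First I would invoke the preceding theorem, valid for $x\ge e^{x_0}$ with $x_0\ge 1000$, to get
\[
 |\psi(x)-x|\le A\Bigl(\tfrac{\log x}{R}\Bigr)^{B}\exp\Bigl(-C\bigl(\tfrac{\log x}{R}\bigr)^{1/2}\Bigr)\,x ,
\]
with $A,B,C$ as in \eqref{newA}--\eqref{newBC}. Since $x_0\ge 1000>7$, Corollary~\ref{psi-theta:ExplicitCor} applied with $b=x_0$ gives $\psi(x)-\theta(x)<a_1(x_0)x^{1/2}+a_2(x_0)x^{1/3}$ on the same range $x\ge e^{x_0}$. Writing $M(x)=x(\tfrac{\log x}{R})^{B}\exp(-C(\tfrac{\log x}{R})^{1/2})$, it then suffices to establish the pointwise bound
\[
 a_1(x_0)x^{1/2}+a_2(x_0)x^{1/3}\ \le\ \Bigl(\tfrac{R}{x_0}\Bigr)^{B}\exp\Bigl(C\sqrt{\tfrac{x_0}{R}}\Bigr)\bigl(a_1(x_0)e^{-x_0/2}+a_2(x_0)e^{-2x_0/3}\bigr)\,M(x)
\]
for all $x\ge e^{x_0}$, since adding this to the $\psi$-bound produces exactly $|\theta(x)-x|\le A'M(x)$ with $A'$ as in \eqref{Aprime}.

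For the displayed inequality I would put $L=\log x\ge x_0$ and consider
\[
 h(L)=\frac{a_1(x_0)e^{-L/2}+a_2(x_0)e^{-2L/3}}{(L/R)^{B}\exp(-C\sqrt{L/R})} ,
\]
so that $a_1(x_0)x^{1/2}+a_2(x_0)x^{1/3}=h(L)\,M(x)$ and the claim reduces to $h(L)\le h(x_0)$ for $L\ge x_0$; I would prove this by showing $h$ is decreasing on $[x_0,\infty)$. Differentiating $\log h$, the numerator contributes a convex combination of $-\tfrac12$ and $-\tfrac23$, hence at most $-\tfrac12$, while the remaining terms contribute $-B/L+\tfrac{C}{2\sqrt{RL}}$; since $B=\tfrac52-\sigma>0$ and $C=\tfrac{16\sigma}{3}-\tfrac{10}{3}<2$ for $\sigma\in[0.75,1)$ and $R=5.573412$, one gets $\tfrac{C}{2\sqrt{RL}}<\tfrac12$ whenever $L\ge x_0\ge 1000$, so $(\log h)'(L)<0$. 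Evaluating $h$ at $L=x_0$ returns precisely the constant multiplying $M(x)$ in the displayed inequality, which completes the argument.

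The main point to watch is the monotonicity of $h$: one must confirm that the genuine exponential decay $e^{-L/2}$ in the numerator dominates both the polynomial factor $(L/R)^{-B}$ and the sub-exponential factor $\exp(C\sqrt{L/R})$ coming from the denominator, uniformly in the admissible range $\sigma\in[0.75,1)$ (which governs $B$ and $C$) and in $x_0\ge 1000$. The logarithmic-derivative estimate above does this with a comfortable margin, so no delicate case analysis is needed.
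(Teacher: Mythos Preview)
Your proof is correct and follows essentially the same route as the paper: triangle inequality, the $\psi$-bound, Corollary~\ref{psi-theta:ExplicitCor}, then factoring out the main term and showing the bracketed correction is decreasing so that it may be evaluated at $x=e^{x_0}$. The only difference is that the paper merely asserts ``it may be checked the function in brackets decreases for $x\ge e^{x_0}$ with $x_0\ge 1000$,'' whereas you supply a clean logarithmic-derivative argument for this monotonicity.
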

\begin{proof}
Let $x \ge e^{x_0}$.  By writing $\theta(x)-x = \psi(x)-x + \theta(x)-\psi(x)$, applying the triangle inequality, 
and invoking Corollary 
\ref{psi-theta:ExplicitCor}, it follows that 
\begin{equation}
\begin{split}
  \left| \frac{\theta(x)-x}x \right| & \le A ( \tfrac{\log x}{R} )^{B} \exp ( -C ( \tfrac{\log x}{R} )^{\frac{1}{2}} )
  + a_1(x_0) x^{-\frac{1}{2}} +a_2(x_0) x^{-\frac{2}{3}} \\
  & \le A ( \tfrac{\log x}{R} )^{B} \exp ( -C ( \tfrac{\log x}{R} )^{\frac{1}{2}} ) \left( 1 + 
  \frac{a_1(x_0) \exp( C  \sqrt{\frac{\log x}{R}})
 }{ A \sqrt{x} \Big( \frac{\log x}{R } \Big)^{B}}
 +  \frac{a_2(x_0) \exp( C  \sqrt{\frac{\log x}{R}})
 }{ A x^{\frac{2}{3}} \Big( \frac{\log x}{R } \Big)^{B}}
   \right).  
\end{split}
\end{equation}
It may be checked the function in brackets decreases for $x \ge e^{x_0}$ with $x_0 \ge 1000$
and thus we obtain \eqref{thetaxbound} with $A'$ given by \eqref{Aprime}.
%Change here above. 
\end{proof}

\subsection{Bounding $\psi(x)$ using B\"{u}the's methods}
For the range $[0,\ e^{2313}]$, the best bounds for $|\psi(x)-x|$ are based on two arguments of B\"{u}the \cite{But},
\cite{But2}.  First, for $100 \le x \le 10^{19}$ B\"{u}the \cite[Theorem 2]{But2} developed an analytic algorithm to compute
$\psi(x)$.  Using this, he showed that 
\begin{equation}
  \label{Butbd}
  -0.94 \le \frac{x-\psi(x)}{\sqrt{x}} \le 0.94. 
\end{equation}
This yields sharp bounds for $|\psi(x)-x)|$ in the range $x \le 10^{19}$.  In \cite{But2}, B\"{u}the used a
smoothing argument similar to \cite{FaKa}. However, instead he used Logan's function, which works 
extremely well in the range  $[10^{19}, e^{2314})$.

%For lower values of $x$, a computational result of B\"{u}the gives improvements for $e^b \le x \le 10^{19}$, namely \cite[(1.5)]{But2}.
First, we give a general statement for an application of  general bounds of the type \eqref{Butbd}.
\begin{lemma}\label{PsiCompAdjusted}
 Let $B_0$, $B$, and $c$ be positive constants such that 
 \begin{equation}
  \label{GeneralXMinusPsi}
  \left| \frac{x - \psi(x)}{\sqrt{x}} \right| \le c \qquad \text{for all } B_0 < x \le B 
 \end{equation}
 is known.  Furthermore,  assume  
for every $b_0 >0$ there exists  $\varepsilon(b_0)>0$ such that 
\begin{equation}
  \label{Psibdassumption}
  |\psi(x) - x| \le \varepsilon(b_0)x \text{ for all } x \ge e^{b_0}.
\end{equation}
Let $b$ be positive such that $e^{b}  \in (B_0,B]$.  
Then, for all $x \ge e^b$ we have
 \begin{equation}
   \label{PsiButhesmallrange0}
  \left| \frac{\psi(x) - x}{x} \right| \le \max \left\{ \frac{c}{e^{\frac{b}{2}}}, \varepsilon(\log B) \right\}.
 \end{equation}
\end{lemma}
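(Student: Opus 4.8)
The plan is to split the range $x \ge e^b$ into the two pieces $[e^b, B]$ and $(B, \infty)$ and bound $|\psi(x)-x|/x$ separately on each, using hypothesis \eqref{GeneralXMinusPsi} on the first piece and hypothesis \eqref{Psibdassumption} on the second.

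On the range $e^b \le x \le B$: since we are assuming $e^b \in (B_0, B]$, every such $x$ satisfies $B_0 < x \le B$, so \eqref{GeneralXMinusPsi} applies and gives $|x - \psi(x)| \le c \sqrt{x}$. Dividing through by $x$ and using that $\sqrt{x} \ge e^{b/2}$ (because $x \ge e^b$), we obtain $\left| \frac{\psi(x)-x}{x} \right| \le \frac{c}{\sqrt{x}} \le \frac{c}{e^{b/2}}$.

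On the range $x > B$: here $x > B = e^{\log B}$ with $\log B > 0$, so \eqref{Psibdassumption} applied with $b_0 = \log B$ yields $|\psi(x) - x| \le \varepsilon(\log B)\, x$, i.e.\ $\left| \frac{\psi(x)-x}{x} \right| \le \varepsilon(\log B)$.

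Since $[e^b, \infty) = [e^b, B] \cup (B, \infty)$, combining the two bounds gives $\left| \frac{\psi(x)-x}{x} \right| \le \max\bigl\{ c\, e^{-b/2},\, \varepsilon(\log B) \bigr\}$ for all $x \ge e^b$, which is \eqref{PsiButhesmallrange0}. There is no real obstacle in this argument; the only points requiring a word of care are checking that the hypotheses genuinely apply on the stated subranges, namely that $e^b > B_0$ (which is given) so that $[e^b, B]$ lies inside the range where \eqref{GeneralXMinusPsi} holds, and that $\log B > 0$ so that the assumed bound \eqref{Psibdassumption} is available at height $B$.
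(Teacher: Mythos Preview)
Your proof is correct and follows essentially the same approach as the paper: split $[e^b,\infty)$ at $B$, apply \eqref{GeneralXMinusPsi} divided by $\sqrt{x}$ on $[e^b,B]$, and apply \eqref{Psibdassumption} with $b_0=\log B$ on $(B,\infty)$. Your extra remarks verifying that $[e^b,B]\subset(B_0,B]$ and that $\log B>0$ (which indeed follows from $b>0$ and $e^b\le B$) are welcome points of care that the paper leaves implicit.
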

\begin{proof}
 Multiplying both sides of \eqref{GeneralXMinusPsi} by $\frac{1}{\sqrt{x}}$ gives
 \[
  \left| \frac{\psi(x) - x}{x} \right| \le \frac{c}{e^\frac{b}{2}} \qquad \text{for all }e^b \le x \le B
 \]
 as $\frac{1}{\sqrt{x}} \le \frac{1}{e^{\frac{b}{2}}}$.
 Then, for $x \ge B$ we apply \eqref{Psibdassumption} with $b_0 = \log B$. 
 Combining these bounds, we derive \eqref{PsiButhesmallrange0}.
%  \[
%   \left| \frac{\psi(x) - x}{x} \right| \le \frac{c}{\sqrt{x}}.
%  \]
%  Then, as $\frac{1}{\sqrt{x}} \le \frac{1}{e^{-\frac{b}{2}}}$, we have
\end{proof}
Using \cite[(1.5)]{But2}, we have
\begin{corollary}
 Let $b$ be a positive constant such that $\log 11 < b \le 19 \log(10)$. Then we have
 \begin{equation}
   \label{PsiButhesmallrange}
  \left| \frac{\psi(x) - x}{x} \right| \le \max \left\{ \frac{0.94}{e^{\frac{b}{2}}}, \varepsilon(19\log 10) \right\} \qquad \text{for all }x \ge e^b.
 \end{equation}
Note that by Table \ref{Wedpsixvals}, we have $ \varepsilon(19\log 10)=
1.93378 \cdot 10^{-8}$. 
\end{corollary}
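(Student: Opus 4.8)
The plan is to read this corollary off as the special case of Lemma \ref{PsiCompAdjusted} that corresponds to B\"uthe's explicit evaluation of $\psi$. First I would quote \cite[(1.5)]{But2}, which gives
\[
  \left| \frac{x - \psi(x)}{\sqrt{x}} \right| \le 0.94 \qquad \text{for all } 11 < x \le 10^{19}.
\]
This is precisely hypothesis \eqref{GeneralXMinusPsi} of Lemma \ref{PsiCompAdjusted} with $B_0 = 11$, $B = 10^{19}$, and $c = 0.94$. The remaining hypothesis \eqref{Psibdassumption} — that for every $b_0 > 0$ there is a positive constant $\varepsilon(b_0)$ with $|\psi(x) - x| \le \varepsilon(b_0) x$ for all $x \ge e^{b_0}$ — is exactly the content of Theorem \ref{psixmainthm}.

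With both inputs in place, I would apply Lemma \ref{PsiCompAdjusted} to any $b$ for which $e^b \in (B_0, B] = (11, 10^{19}]$. Since $11 = e^{\log 11}$ and $10^{19} = e^{19\log 10}$, this admissible set is exactly the interval $\log 11 < b \le 19\log 10$ appearing in the statement, and the lemma's conclusion \eqref{PsiButhesmallrange0} then reads
\[
  \left| \frac{\psi(x) - x}{x} \right| \le \max \left\{ \frac{0.94}{e^{b/2}}, \ \varepsilon(\log 10^{19}) \right\} = \max \left\{ \frac{0.94}{e^{b/2}}, \ \varepsilon(19\log 10) \right\} \qquad \text{for all } x \ge e^b,
\]
which is \eqref{PsiButhesmallrange}. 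The closing remark of the statement is then just the table lookup $\varepsilon(19\log 10) = 1.93378 \cdot 10^{-8}$ from the first column of Table \ref{Wedpsixvals}.

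There is essentially no obstacle here: the argument is a direct substitution into the already-proved Lemma \ref{PsiCompAdjusted}. The only bookkeeping worth spelling out is the identification $\log B = \log 10^{19} = 19\log 10$, so that the $\varepsilon$-term produced by the lemma matches the $\varepsilon(19\log 10)$ written in the corollary, together with the translation of the range $e^b \in (B_0, B]$ into the hypothesis $\log 11 < b \le 19\log 10$. If desired, one could additionally note that taking the lower endpoint $B_0 = 11$ — rather than the $x \ge 100$ of \eqref{Butbd} — is legitimate, since \cite[(1.5)]{But2} covers $11 < x < 100$ by direct evaluation of $\psi$ on the finitely many prime powers below $100$.
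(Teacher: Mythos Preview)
Your proof is correct and follows essentially the same approach as the paper: both apply Lemma \ref{PsiCompAdjusted} with $B_0=11$, $B=10^{19}$, and $c=0.94$ taken from \cite[(1.5)]{But2}. You spell out a few more details (the verification of hypothesis \eqref{Psibdassumption} via Theorem \ref{psixmainthm}, the translation of $e^b\in(B_0,B]$ into $\log 11<b\le 19\log 10$, and the identification $\log B=19\log 10$), but these are exactly the bookkeeping implicit in the paper's two-line proof.
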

\begin{proof}
By B\"{u}the  \cite[(1.5)]{But2}, \eqref{GeneralXMinusPsi} holds with 
$B_0=11$, $B=10^{19}$, and $c=0.94$.  Thus we may apply 
Lemma \ref{PsiCompAdjusted} with $B_0=11$, $B = 10^{19}$, and $c=0.94$ from \cite[(1.5)]{But2}
to obtain \eqref{PsiButhesmallrange}.
\end{proof}
% For an intermediate range, the best bounds for $|\psi(x)-x|$ are based on an argument of B\"{u}the \cite{But}. Namely $[e^{49},\ e^{1875}]$ for Platt's value 
% for $H_0$ and $[e^{58},\ e^{2300}]$ for Wedeniwski's value for $H_0$.
% In the range $x\in[e^{b_1},e^{b_2}]$ (where $b_1=49$, $b_2=1475$ when we use Platt's value for $H_0$
% and $b_1=58$, $b_2=1800$ when we use Wedeniwski's value for $H_0$)
% the best bounds for $|\psi(x)-x|$ are based on an argument of B\"{u}the \cite{But}.
We now describe the main theorem in \cite{But}.  
Like \cite{FaKa},  this a smoothing argument.  B\"{u}the considers the  Fourier transform of Logan's function which is a sharp cut-off filter kernel described in \cite{Log88}:
\[\ell_{c,\varepsilon}(\xi)=\frac{c}{\sinh c}\frac{\sin(\sqrt{(\xi \varepsilon)^2-c^2})}{\sqrt{(\xi \varepsilon)^2-c^2}}.\]
Our computations require more values than those provided in \cite{But}, so we use his method to compute more values in these ranges.
% In \cite{But} the author only computes a few values so we fill in the gaps of his table to suit our computational needs. 
\begin{theorem}\cite[Theorem 1]{But}
 Let $0<\varepsilon <10^{-3}$, $c\ge 3$, $x_0\ge 100$ and $\a\in[0,1)$ such that the inequality
 \[B_0:=\frac{\varepsilon e^{-\varepsilon}x_0|\nu_c(\a)|}{2(\mu_{c})_{+}(\a)}>1\]
 holds. We denote the zeros of the Riemann zeta function by $\rho=\beta+i\gamma$ with $\beta,\gamma\in\R$. Then, if $\beta=\tfrac12$ holds for $0<\gamma\le\tfrac{c}{\varepsilon}$, the inequality 
 \[|\psi(x)-x|\le xe^{\varepsilon\a}(\mathcal{E}_1+\mathcal{E}_2+\mathcal{E}_3)\]
 holds for all $x\ge e^{\varepsilon \a}x_0$, where 
 \begin{align*}
 \mathcal{E}_1 &=e^{2\varepsilon}\log(e^{\varepsilon}x_0)\left[\frac{2\varepsilon|\nu_c(\a)|}{\log B_0}+\frac{2.01\varepsilon}{\sqrt{x_0}}+\frac{\log\log(2x_0^2)}{2x_0}\right]+e^{\varepsilon\a}-1,  \footnotemark \\
 \mathcal{E}_2&=0.16\frac{1+x_0^{-1}}{\sinh c}e^{0.71\sqrt{c\varepsilon}}\log(\tfrac{c}{\varepsilon}), \text{ and}\\
 \mathcal{E}_3&=\frac2{\sqrt{x_0}}\sum_{0<\gamma<\tfrac{c}{\varepsilon}}\frac{\ell_{c,\varepsilon}(\gamma)}{\gamma}+\frac2{x_0}.
 \end{align*}
 %\addtocounter{footnote}{1}
 \footnotetext{This term is written without the $e^{\varepsilon\a}-1$ in \cite{But}, during personal communication with the author A.L. discovered this error and are updating the theorem statement to reflect this.}
\end{theorem}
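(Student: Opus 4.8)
The plan is to smooth the explicit formula \eqref{classical} in the spirit of \cite{FaKa}, but with the averaging replaced by the weight attached to Logan's function. First I would fix this weight $\varphi$, characterised by the fact that the transform which appears against the zeros is (a normalisation of) the sharp-cutoff filter kernel $\ell_{c,\varepsilon}$ of \cite{Log88}: this kernel is designed to concentrate its mass near the origin in the multiplicative variable, over a window of length $O(\varepsilon)$, while its value past spectral height $c/\varepsilon$ is damped by the exponentially small factor $\tfrac{c}{\sinh c}$ — this is exactly what will produce the constants $\tfrac{1}{\sinh c}$ and $e^{0.71\sqrt{c\varepsilon}}$ in $\mathcal{E}_2$. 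I would then form the smoothed sum $\psi_{\varphi}(x)=\sum_{n}\Lambda(n)\,\varphi(\log x-\log n)$, and, by weighting \eqref{classical} against $\varphi$ (equivalently, by Mellin inversion and a contour shift), record an identity of the schematic shape
\[
 \psi_{\varphi}(x) = x - \sum_{\rho}\frac{x^{\rho}}{\rho}\,\ell_{c,\varepsilon}(\gamma) + \bigl(\text{pole at }s=0,\ \text{trivial zeros}\bigr),
\]
where $\rho=\beta+i\gamma$ runs over the non-trivial zeros and the last bracket feeds into the $\tfrac{2}{x_0}$ and $\log\log(2x_0^2)$ terms.

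Next I would split the zero sum at height $c/\varepsilon$. For $0<\gamma\le c/\varepsilon$ the partial Riemann hypothesis gives $\beta=\tfrac12$, so $|x^{\rho}/\rho|=\sqrt{x}/|\rho|$; pairing $\pm\gamma$, this part is at most $\tfrac{2}{\sqrt{x}}\sum_{0<\gamma<c/\varepsilon}\tfrac{\ell_{c,\varepsilon}(\gamma)}{\gamma}$, and replacing $\sqrt{x}$ by $\sqrt{x_0}$ (legitimate since $x\ge x_0$) gives $\mathcal{E}_3$. For $\gamma>c/\varepsilon$ the Riemann hypothesis is not available, so only $|x^{\rho}/\rho|\le x/\gamma$ is known; but there $\ell_{c,\varepsilon}(\gamma)\ll \tfrac{c}{\sinh c}\cdot\tfrac{1}{\gamma\varepsilon}$, so summing against the zero-counting density $\tfrac{1}{2\pi}\log\gamma$ this tail contributes only $\ll x\cdot\tfrac{1}{\sinh c}\log(c/\varepsilon)$, and a careful treatment of the transition region $\gamma\varepsilon\approx c$ — which is precisely Logan's tail estimate — sharpens it to $\mathcal{E}_2$. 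Dividing through by $x$ turns both of these into the stated shapes.

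Finally I would pass back from $\psi_{\varphi}$ to $\psi$. Since $\psi$ is nondecreasing and $\varphi\ge 0$ is an approximate identity of width $O(\varepsilon)$, a monotonicity sandwich bounds $\psi(x)$ above and below by $\psi_{\varphi}$ evaluated at $x$ shifted by $O(\varepsilon)$; the free parameter $\a\in[0,1)$ records where the comparison is centred, the hypothesis $B_0>1$ keeps $\log B_0$ positive in the denominator, and the bookkeeping forces the conclusion at $x\ge e^{\varepsilon\a}x_0$ with the overall factor $e^{\varepsilon\a}$. The defect picked up here is $\mathcal{E}_1$: the term $\tfrac{2\varepsilon|\nu_c(\a)|}{\log B_0}$ comes from the mass of $\varphi$ away from the origin weighed against a crude Chebyshev bound $\psi(y)\ll y$, the term $\tfrac{2.01\varepsilon}{\sqrt{x_0}}$ from the first-order cost of the shift, $\tfrac{\log\log(2x_0^2)}{2x_0}$ from the higher prime powers (i.e.\ $\psi(y)-\theta(y)$), the prefactor $e^{2\varepsilon}\log(e^{\varepsilon}x_0)$ from bounding $\psi$ crudely on the short comparison window, and the residual $e^{\varepsilon\a}-1$ — the term corrected in the footnote — is exactly the error of finally replacing the shifted argument by $x$ itself. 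Summing $\mathcal{E}_1+\mathcal{E}_2+\mathcal{E}_3$ and restoring $e^{\varepsilon\a}$ gives the inequality.

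The hardest part will be making the smoothing defect $\mathcal{E}_1$ fully explicit with usable constants: one has to control at once how sharply Logan's weight approximates a point mass in both the spatial and the spectral variable, insert unconditional Chebyshev-type bounds for $\psi(y)/y$ and for $\psi(y)-\theta(y)$ on the short window, and optimise $\a$ so that the shift error $e^{\varepsilon\a}-1$ is balanced against the gain $1/\log B_0$ — delicate precisely because $B_0$ itself depends on $\a$ through $\nu_c(\a)$ and $(\mu_c)_+(\a)$.
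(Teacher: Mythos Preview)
This paper does not prove the stated theorem: it is quoted verbatim as \cite[Theorem 1]{But} and used only as an input for computing values of $\varepsilon(b)$ in Table~\ref{Wedpsixvals}. There is therefore nothing in the paper to compare your proposal against.

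That said, your sketch is a faithful outline of B\"uthe's actual argument in \cite{But}: smooth the explicit formula with the weight whose spectral side is Logan's kernel $\ell_{c,\varepsilon}$, split the zero sum at height $c/\varepsilon$, use partial RH below and the $1/\sinh c$ decay above, and recover $\psi$ from $\psi_\varphi$ by a monotonicity sandwich indexed by $\alpha$. Your attribution of the individual pieces of $\mathcal{E}_1,\mathcal{E}_2,\mathcal{E}_3$ is broadly correct, including the origin of the corrective $e^{\varepsilon\alpha}-1$ term flagged in the footnote. A full proof would of course require carrying out B\"uthe's explicit constant-tracking (Lemmas 2--5 of \cite{But}) rather than the heuristic ``$\ll$'' estimates you indicate, but as a plan there is no gap.
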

The $\nu_c(\a)=\nu_{c,1}(\a)$ and $\mu_{c}(\a)=\mu_{c,1}(\a)$ 
%Change here. 
%\todo{I suspect all of these should be $\a$} 
where $\nu_{c,\varepsilon}(\alpha)$ and $\mu_{c,\varepsilon}(\a)$ are defined by \cite[p. 2490]{But}.
\newpage
\section{Useful Tables}\label{Section:Tables}
%%
%%%%%%%%%%%%
The tables in this appendix are subsets of longer tables that can be found in the accompanying document \cite{BKLNW}.  Several results in this article make use of
the longer tables. All the values are tabulated using gp-pari and the underlying c libraries. The reported values here are displayed using a function printf which has automatically chosen to round the values. This rounding is done differently depending on the compiler used but it effects only the last digit. 
\subsection{Table for $\psi(x)$: $|\psi(x) - x| < \varepsilon(b,b') x$ for every $e^{b} \le x \le e^{b'}$.}
Here $b'$ is the entry following $b$ in the table below. The result in the last row is valid in the interval $[e^{25000}, e^{26000}]$.
The calculations conducted here are using Wedeniwski's partial verification of the Riemann Hypothesis \cite{Wed}: $H_0=2\,445\,999\,556\,030$. 
Values for $b \in \{20\ldots 2000 \}$ are computed using the method of B\"uthe \cite{But}, and values for $b \in \{ 2500 \ldots  25000 \}$ are computed as in Theorem \ref{PlattTrudgian19+} using the method of Platt-Trudgian \cite{PT2019} .

\begin{table}[h!]
\caption{$|\psi(x)-x|<\varepsilon(b,b') x$ for every $ e^{b} \le x \le e^{b'}$.}
\label{Wedpsixvals}
\begin{minipage}{0.3\textwidth}
\begin{tabular}{|rrrrrr}
\toprule
\multicolumn{1}{c}{\phantom{a}} &
\multicolumn{1}{c}{$b,b'$ } &
\multicolumn{1}{c}{\phantom{a}} &
\multicolumn{1}{c}{\phantom{a}} &
\multicolumn{1}{c}{$\varepsilon(b,b')$ } &
\multicolumn{1}{c}{\phantom{}} \\
\midrule
\multicolumn{6}{|c|}{Computed as in \cite[Theorem 1]{But}}\\
 \midrule
&&&&&\\[-1em]
%%%%%%%%%%%%%%%%%%%%%%%%%%%%%%%%%%%%%%%%%%%%%%%%%%%%%%%%%%%%%%%%%%%%
& $ 20 $ &&& $ 4.26760\cdot 10^{-5 } $ & \\
& $ 21 $ &&& $ 2.58843\cdot 10^{-5 } $ & \\
& $ 22 $ &&& $ 1.56996\cdot 10^{-5 } $ & \\
& $ 23 $ &&& $ 9.52229\cdot 10^{-6 } $ & \\
& $ 24 $ &&& $ 5.77556\cdot 10^{-6 } $ & \\
& $ 25 $ &&& $ 3.50306\cdot 10^{-6 } $ & \\
& $ 30 $ &&& $ 2.87549\cdot 10^{-7 } $ & \\
& $ 35 $ &&& $ 2.36034\cdot 10^{-8 } $ & \\
& $ 40 $ &&& $ 1.93378\cdot 10^{-8 } $ & \\
& $ 45 $ &&& $ 1.09073\cdot 10^{-8 } $ & \\
& $ 50 $ &&& $ 1.11990\cdot 10^{-9 } $ & \\
& $ 100 $ &&& $ 2.45299\cdot 10^{-12 } $ & \\
& $ 200 $ &&& $ 2.18154\cdot 10^{-12 } $ & \\
& $ 300 $ &&& $ 2.09022\cdot 10^{-12 } $ & \\
& $ 400 $ &&& $ 2.03981\cdot 10^{-12 } $ & \\
& $ 500 $ &&& $ 1.99986\cdot 10^{-12 } $ & \\
& $ 600 $ &&& $ 1.98894\cdot 10^{-12 } $ & \\
& $ 700 $ &&& $ 1.97643\cdot 10^{-12 } $ & \\
& $ 800 $ &&& $ 1.96710\cdot 10^{-12 } $ & \\
& $ 900 $ &&& $ 1.95987\cdot 10^{-12 } $ & \\
& $ 1000 $ &&& $ 1.94751\cdot 10^{-12 } $ & \\
& $ 1500 $ &&& $ 1.93677\cdot 10^{-12 } $ & \\
& $ 2000 $ &&& $ 1.92279\cdot 10^{-12 } $ & \\
\bottomrule
\end{tabular}

\end{minipage}\hfill
\begin{minipage}{0.3\textwidth}
\begin{tabular}{|llllll|}
\toprule
\multicolumn{1}{c}{\phantom{a}} &
\multicolumn{1}{c}{$b,b'$ } &
\multicolumn{1}{c}{\phantom{a}} &
\multicolumn{1}{c}{\phantom{a}} &
\multicolumn{1}{c}{$\varepsilon(b,b')$ } &
\multicolumn{1}{c}{\phantom{}} \\
\midrule
\multicolumn{6}{c|}{Computed as in Theorem \ref{PlattTrudgian19+}}\\
 \midrule
&&&&&\\[-1em]
%&\vdots &&& \vdots &\\
& $ 2500 $ &&& $ 9.06304\cdot 10^{-13 } $ & \\
& $ 3000 $ &&& $ 4.59972\cdot 10^{-14 } $ & \\
& $ 3500 $ &&& $ 2.48641\cdot 10^{-15 } $ & \\
& $ 4000 $ &&& $ 1.42633\cdot 10^{-16 } $ & \\
& $ 4500 $ &&& $ 8.68295\cdot 10^{-18 } $ & \\
& $ 5000 $ &&& $ 5.63030\cdot 10^{-19 } $ & \\
& $ 5500 $ &&& $ 3.91348\cdot 10^{-20 } $ & \\
& $ 6000 $ &&& $ 2.94288\cdot 10^{-21 } $ & \\
& $ 6500 $ &&& $ 2.38493\cdot 10^{-22 } $ & \\
& $ 7000 $ &&& $ 2.07655\cdot 10^{-23 } $ & \\
& $ 7500 $ &&& $ 1.96150\cdot 10^{-24 } $ & \\
& $ 8000 $ &&& $ 1.97611\cdot 10^{-25 } $ & \\
& $ 8500 $ &&& $ 2.12970\cdot 10^{-26 } $ & \\
& $ 9000 $ &&& $ 2.44532\cdot 10^{-27 } $ & \\
& $ 9500 $ &&& $ 2.97001\cdot 10^{-28 } $ & \\
& $ 10000 $ &&& $ 3.78493\cdot 10^{-29 } $ & \\
& $ 10500 $ &&& $ 5.10153\cdot 10^{-30 } $ & \\
& $ 11000 $ &&& $ 7.14264\cdot 10^{-31 } $ & \\
& $ 11500 $ &&& $ 1.04329\cdot 10^{-31 } $ & \\
& $ 12000 $ &&& $ 1.59755\cdot 10^{-32 } $ & \\
& $ 12500 $ &&& $ 2.53362\cdot 10^{-33 } $ & \\
 & $ 13000$ &&& $ 4.13554\cdot 10^{-34 } $ & \\
 & $ 13500$ &&& $ 7.21538\cdot 10^{-35 } $ & \\

\bottomrule
\end{tabular}

\end{minipage}\hfill
\begin{minipage}{0.3\textwidth}
\begin{tabular}[t]{llllll|}
\toprule
\multicolumn{1}{c}{\phantom{a}} &
\multicolumn{1}{c}{$b,b'$ } &
\multicolumn{1}{c}{\phantom{a}} &
\multicolumn{1}{c}{\phantom{a}} &
\multicolumn{1}{c}{$\varepsilon(b,b')$ } &
\multicolumn{1}{c}{\phantom{}} \\
\midrule
\multicolumn{6}{c|}{Computed as in Theorem \ref{PlattTrudgian19+}}\\
 \midrule
&&&&&\\[-1em]
%&\vdots &&& \vdots &\\
% & $3*Y_0$ &&& $ 2.54230\cdot 10^{-35 } $ & \\
& $  14000$ &&& $ 1.22655\cdot 10^{-35 } $ & \\
  & $15000$ &&& $ 4.10696\cdot 10^{-37 } $ & \\
  & $16000$ &&& $ 1.51402\cdot 10^{-38 } $ & \\
 & $ 17000$ &&& $ 6.20397\cdot 10^{-40 } $ & \\
 & $ 18000$ &&& $ 2.82833\cdot 10^{-41 } $ & \\
 & $ 19000$ &&& $ 1.36785\cdot 10^{-42 } $ & \\
& $  20000$ &&& $ 7.16209\cdot 10^{-44 } $ & \\
 & $ 21000$ &&& $ 4.11842\cdot 10^{-45 } $ & \\
 & $ 22000$ &&& $ 2.43916\cdot 10^{-46 } $ & \\
 & $ 23000$ &&& $ 1.56474\cdot 10^{-47 } $ & \\
  & $24000$ &&& $ 1.07022\cdot 10^{-48 } $ & \\
 & $ 25000$ &&& $ 7.57240\cdot 10^{-50} $ & \\
 &&&&& \\&&&&& \\&&&&& \\&&&&& \\&&&&& \\&&&&& \\&&&&& \\&&&&& \\&&&&& \\&&&&& \\&&&&& \\
%\phantom{}\\
%\phantom{}\\
%\phantom{}\\
%\phantom{}\\
%\phantom{}\\
%\phantom{}\\
%\phantom{}\\
%\phantom{}\\
%\phantom{}\\
%\phantom{}\\
%\phantom{}\\
\bottomrule
%\phantom{}\\
%\phantom{}\\
%\phantom{}\\
%\phantom{}\\
%\phantom{}\\
%\phantom{}\\
%\phantom{}\\
%\phantom{}\\
%\phantom{}\\
\end{tabular}

\end{minipage}
\end{table}

\pagebreak

\subsection{Sharper bounds for $\theta(x)$: $x$ in a large range ($x \ge e^{2000}$)} \ \

{
\footnotesize
%{\color{blue} Table updated August 11, 2020}.
%{\color{blue} Uses method from paper of Platt-Trudgian (new method).}
\begin{longtable}{*{13}{c}}
% \caption{Values of $\mathcal{A}_k(b)$ as defined in \cref{Cor:Ak}: $|\theta(x) - x| < \frac{\mathcal{A}_k(b) x}{(\log x)^k }$ for $x \ge e^{b}$.}
\caption{$|\theta(x) - x| < \frac{\mathcal{A}_k(b) x}{(\log x)^k }$, for all $x \ge e^{b}$, where $\mathcal{A}_k$ is defined in Corollary \ref{Cor:Ak}.}
\label{Aktable}
\\
\hline
& $b$ && $\mathcal{A}_1(b)$ && $\mathcal{A}_2(b)$ && $\mathcal{A}_3(b)$ && $\mathcal{A}_4(b)$ && $\mathcal{A}_5(b)$ & 
\\ \hline
\endfirsthead
\multicolumn{13}{c}%
{\tablename\ \thetable{} -- continued from previous page} \\
\hline
& $b$ && $\mathcal{A}_1(b)$ && $\mathcal{A}_2(b)$ && $\mathcal{A}_3(b)$ && $\mathcal{A}_4(b)$ && $\mathcal{A}_5(b)$ & 
\\ \hline
\endhead
\hline \multicolumn{13}{r}{Continued on next page} \\ \hline
\endfoot
\hline %\hline
\endlastfoot
%%%%%%%%%%%%%%%%%%%%%%%%%%%%%%%%%%%%%%%%%%%%%%%
&&&&&&&&&&&&\\[-1em]
& $ 1000 $ && $ 1.1919 \cdot10^{-2 } $ && $ 1.1919 \cdot10^{1 } $ && $ 1.1919 \cdot10^{4 } $ && $ 1.1919 \cdot10^{7 } $ && $ 1.1919 \cdot10^{10 } $ & \\
& $ 2000 $ && $ 1.6685 \cdot10^{-6 } $ && $ 3.3369 \cdot10^{-3 } $ && $ 6.6738 \cdot10^{0 } $ && $ 1.3348 \cdot10^{4 } $ && $ 2.6696 \cdot10^{7 } $ & \\
& $ 3000 $ && $ 1.3504 \cdot10^{-9 } $ && $ 4.0512 \cdot10^{-6 } $ && $ 1.2154 \cdot10^{-2 } $ && $ 3.6460 \cdot10^{1 } $ && $ 1.0938 \cdot10^{5 } $ & \\
& $ 4000 $ && $ 2.9283 \cdot10^{-12 } $ && $ 1.1713 \cdot10^{-8 } $ && $ 4.6852 \cdot10^{-5 } $ && $ 1.8741 \cdot10^{-1 } $ && $ 7.4963 \cdot10^{2 } $ & \\
& $ 5000 $ && $ 4.8831 \cdot10^{-15 } $ && $ 2.4416 \cdot10^{-11 } $ && $ 1.2208 \cdot10^{-7 } $ && $ 6.1039 \cdot10^{-4 } $ && $ 3.0520 \cdot10^{0 } $ & \\
% & $ 5100 $ && $ 2.7505 \cdot10^{-15 } $ && $ 1.4028 \cdot10^{-11 } $ && $ 7.1540 \cdot10^{-8 } $ && $ 3.6486 \cdot10^{-4 } $ && $ 1.8608 \cdot10^{0 } $ & \\
% & $ 5200 $ && $ 1.5765 \cdot10^{-15 } $ && $ 8.1977 \cdot10^{-12 } $ && $ 4.2628 \cdot10^{-8 } $ && $ 2.2167 \cdot10^{-4 } $ && $ 1.1527 \cdot10^{0 } $ & \\
% & $ 5300 $ && $ 9.1594 \cdot10^{-16 } $ && $ 4.8545 \cdot10^{-12 } $ && $ 2.5729 \cdot10^{-8 } $ && $ 1.3637 \cdot10^{-4 } $ && $ 7.2272 \cdot10^{-1 } $ & \\
% & $ 5400 $ && $ 5.3785 \cdot10^{-16 } $ && $ 2.9044 \cdot10^{-12 } $ && $ 1.5684 \cdot10^{-8 } $ && $ 8.4692 \cdot10^{-5 } $ && $ 4.5734 \cdot10^{-1 } $ & \\
% & $ 5500 $ && $ 3.1854 \cdot10^{-16 } $ && $ 1.7520 \cdot10^{-12 } $ && $ 9.6357 \cdot10^{-9 } $ && $ 5.2997 \cdot10^{-5 } $ && $ 2.9148 \cdot10^{-1 } $ & \\
% & $ 5600 $ && $ 1.8998 \cdot10^{-16 } $ && $ 1.0639 \cdot10^{-12 } $ && $ 5.9577 \cdot10^{-9 } $ && $ 3.3363 \cdot10^{-5 } $ && $ 1.8684 \cdot10^{-1 } $ & \\
% & $ 5700 $ && $ 1.1399 \cdot10^{-16 } $ && $ 6.4969 \cdot10^{-13 } $ && $ 3.7033 \cdot10^{-9 } $ && $ 2.1109 \cdot10^{-5 } $ && $ 1.2032 \cdot10^{-1 } $ & \\
% & $ 5800 $ && $ 6.8742 \cdot10^{-17 } $ && $ 3.9871 \cdot10^{-13 } $ && $ 2.3125 \cdot10^{-9 } $ && $ 1.3413 \cdot10^{-5 } $ && $ 7.7792 \cdot10^{-2 } $ & \\
% & $ 5900 $ && $ 4.1654 \cdot10^{-17 } $ && $ 2.4576 \cdot10^{-13 } $ && $ 1.4500 \cdot10^{-9 } $ && $ 8.5548 \cdot10^{-6 } $ && $ 5.0474 \cdot10^{-2 } $ & \\
& $ 6000 $ && $ 2.5350 \cdot10^{-17 } $ && $ 1.5210 \cdot10^{-13 } $ && $ 9.1257 \cdot10^{-10 } $ && $ 5.4755 \cdot10^{-6 } $ && $ 3.2853 \cdot10^{-2 } $ & \\
% & $ 6100 $ && $ 1.5490 \cdot10^{-17 } $ && $ 9.4486 \cdot10^{-14 } $ && $ 5.7637 \cdot10^{-10 } $ && $ 3.5159 \cdot10^{-6 } $ && $ 2.1447 \cdot10^{-2 } $ & \\
% & $ 6200 $ && $ 9.5014 \cdot10^{-18 } $ && $ 5.8909 \cdot10^{-14 } $ && $ 3.6524 \cdot10^{-10 } $ && $ 2.2645 \cdot10^{-6 } $ && $ 1.4040 \cdot10^{-2 } $ & \\
% & $ 6300 $ && $ 5.8498 \cdot10^{-18 } $ && $ 3.6854 \cdot10^{-14 } $ && $ 2.3218 \cdot10^{-10 } $ && $ 1.4628 \cdot10^{-6 } $ && $ 9.2152 \cdot10^{-3 } $ & \\
% & $ 6400 $ && $ 3.6145 \cdot10^{-18 } $ && $ 2.3133 \cdot10^{-14 } $ && $ 1.4805 \cdot10^{-10 } $ && $ 9.4752 \cdot10^{-7 } $ && $ 6.0642 \cdot10^{-3 } $ & \\
% & $ 6500 $ && $ 2.2412 \cdot10^{-18 } $ && $ 1.4568 \cdot10^{-14 } $ && $ 9.4687 \cdot10^{-11 } $ && $ 6.1547 \cdot10^{-7 } $ && $ 4.0006 \cdot10^{-3 } $ & \\
% & $ 6600 $ && $ 1.3943 \cdot10^{-18 } $ && $ 9.2021 \cdot10^{-15 } $ && $ 6.0734 \cdot10^{-11 } $ && $ 4.0085 \cdot10^{-7 } $ && $ 2.6456 \cdot10^{-3 } $ & \\
% & $ 6700 $ && $ 8.7028 \cdot10^{-19 } $ && $ 5.8309 \cdot10^{-15 } $ && $ 3.9067 \cdot10^{-11 } $ && $ 2.6175 \cdot10^{-7 } $ && $ 1.7537 \cdot10^{-3 } $ & \\
% & $ 6800 $ && $ 5.4496 \cdot10^{-19 } $ && $ 3.7058 \cdot10^{-15 } $ && $ 2.5199 \cdot10^{-11 } $ && $ 1.7136 \cdot10^{-7 } $ && $ 1.1652 \cdot10^{-3 } $ & \\
% & $ 6900 $ && $ 3.4233 \cdot10^{-19 } $ && $ 2.3621 \cdot10^{-15 } $ && $ 1.6299 \cdot10^{-11 } $ && $ 1.1246 \cdot10^{-7 } $ && $ 7.7596 \cdot10^{-4 } $ & \\
& $ 7000 $ && $ 2.1571 \cdot10^{-19 } $ && $ 1.5100 \cdot10^{-15 } $ && $ 1.0570 \cdot10^{-11 } $ && $ 7.3987 \cdot10^{-8 } $ && $ 5.1791 \cdot10^{-4 } $ & \\
% & $ 7100 $ && $ 1.3633 \cdot10^{-19 } $ && $ 9.6793 \cdot10^{-16 } $ && $ 6.8723 \cdot10^{-12 } $ && $ 4.8794 \cdot10^{-8 } $ && $ 3.4644 \cdot10^{-4 } $ & \\
% & $ 7200 $ && $ 8.6417 \cdot10^{-20 } $ && $ 6.2220 \cdot10^{-16 } $ && $ 4.4799 \cdot10^{-12 } $ && $ 3.2255 \cdot10^{-8 } $ && $ 2.3224 \cdot10^{-4 } $ & \\
% & $ 7300 $ && $ 5.4938 \cdot10^{-20 } $ && $ 4.0105 \cdot10^{-16 } $ && $ 2.9277 \cdot10^{-12 } $ && $ 2.1372 \cdot10^{-8 } $ && $ 1.5602 \cdot10^{-4 } $ & \\
% & $ 7400 $ && $ 3.5025 \cdot10^{-20 } $ && $ 2.5919 \cdot10^{-16 } $ && $ 1.9180 \cdot10^{-12 } $ && $ 1.4193 \cdot10^{-8 } $ && $ 1.0503 \cdot10^{-4 } $ & \\
% & $ 7500 $ && $ 2.2393 \cdot10^{-20 } $ && $ 1.6795 \cdot10^{-16 } $ && $ 1.2596 \cdot10^{-12 } $ && $ 9.4468 \cdot10^{-9 } $ && $ 7.0851 \cdot10^{-5 } $ & \\
% & $ 7600 $ && $ 1.4356 \cdot10^{-20 } $ && $ 1.0910 \cdot10^{-16 } $ && $ 8.2915 \cdot10^{-13 } $ && $ 6.3016 \cdot10^{-9 } $ && $ 4.7892 \cdot10^{-5 } $ & \\
% & $ 7700 $ && $ 9.2275 \cdot10^{-21 } $ && $ 7.1052 \cdot10^{-17 } $ && $ 5.4710 \cdot10^{-13 } $ && $ 4.2127 \cdot10^{-9 } $ && $ 3.2438 \cdot10^{-5 } $ & \\
% & $ 7800 $ && $ 5.9471 \cdot10^{-21 } $ && $ 4.6388 \cdot10^{-17 } $ && $ 3.6183 \cdot10^{-13 } $ && $ 2.8222 \cdot10^{-9 } $ && $ 2.2014 \cdot10^{-5 } $ & \\
% & $ 7900 $ && $ 3.8429 \cdot10^{-21 } $ && $ 3.0359 \cdot10^{-17 } $ && $ 2.3984 \cdot10^{-13 } $ && $ 1.8947 \cdot10^{-9 } $ && $ 1.4968 \cdot10^{-5 } $ & \\
& $ 8000 $ && $ 2.4895 \cdot10^{-21 } $ && $ 1.9916 \cdot10^{-17 } $ && $ 1.5933 \cdot10^{-13 } $ && $ 1.2747 \cdot10^{-9 } $ && $ 1.0197 \cdot10^{-5 } $ & \\
% & $ 8100 $ && $ 1.6168 \cdot10^{-21 } $ && $ 1.3096 \cdot10^{-17 } $ && $ 1.0608 \cdot10^{-13 } $ && $ 8.5921 \cdot10^{-10 } $ && $ 6.9596 \cdot10^{-6 } $ & \\
% & $ 8200 $ && $ 1.0526 \cdot10^{-21 } $ && $ 8.6311 \cdot10^{-18 } $ && $ 7.0775 \cdot10^{-14 } $ && $ 5.8036 \cdot10^{-10 } $ && $ 4.7589 \cdot10^{-6 } $ & \\
% & $ 8300 $ && $ 6.8692 \cdot10^{-22 } $ && $ 5.7015 \cdot10^{-18 } $ && $ 4.7322 \cdot10^{-14 } $ && $ 3.9278 \cdot10^{-10 } $ && $ 3.2600 \cdot10^{-6 } $ & \\
% & $ 8400 $ && $ 4.4936 \cdot10^{-22 } $ && $ 3.7746 \cdot10^{-18 } $ && $ 3.1707 \cdot10^{-14 } $ && $ 2.6634 \cdot10^{-10 } $ && $ 2.2373 \cdot10^{-6 } $ & \\
% & $ 8500 $ && $ 2.9465 \cdot10^{-22 } $ && $ 2.5045 \cdot10^{-18 } $ && $ 2.1288 \cdot10^{-14 } $ && $ 1.8095 \cdot10^{-10 } $ && $ 1.5381 \cdot10^{-6 } $ & \\
% & $ 8600 $ && $ 1.9364 \cdot10^{-22 } $ && $ 1.6653 \cdot10^{-18 } $ && $ 1.4322 \cdot10^{-14 } $ && $ 1.2317 \cdot10^{-10 } $ && $ 1.0593 \cdot10^{-6 } $ & \\
% & $ 8700 $ && $ 1.2755 \cdot10^{-22 } $ && $ 1.1097 \cdot10^{-18 } $ && $ 9.6540 \cdot10^{-15 } $ && $ 8.3990 \cdot10^{-11 } $ && $ 7.3071 \cdot10^{-7 } $ & \\
% & $ 8800 $ && $ 8.4200 \cdot10^{-23 } $ && $ 7.4096 \cdot10^{-19 } $ && $ 6.5205 \cdot10^{-15 } $ && $ 5.7380 \cdot10^{-11 } $ && $ 5.0495 \cdot10^{-7 } $ & \\
% & $ 8900 $ && $ 5.5707 \cdot10^{-23 } $ && $ 4.9579 \cdot10^{-19 } $ && $ 4.4125 \cdot10^{-15 } $ && $ 3.9272 \cdot10^{-11 } $ && $ 3.4952 \cdot10^{-7 } $ & \\
& $ 9000 $ && $ 3.6935 \cdot10^{-23 } $ && $ 3.3241 \cdot10^{-19 } $ && $ 2.9917 \cdot10^{-15 } $ && $ 2.6926 \cdot10^{-11 } $ && $ 2.4233 \cdot10^{-7 } $ & \\
% & $ 9100 $ && $ 2.4541 \cdot10^{-23 } $ && $ 2.2332 \cdot10^{-19 } $ && $ 2.0322 \cdot10^{-15 } $ && $ 1.8493 \cdot10^{-11 } $ && $ 1.6829 \cdot10^{-7 } $ & \\
% & $ 9200 $ && $ 1.6340 \cdot10^{-23 } $ && $ 1.5032 \cdot10^{-19 } $ && $ 1.3830 \cdot10^{-15 } $ && $ 1.2724 \cdot10^{-11 } $ && $ 1.1706 \cdot10^{-7 } $ & \\
% & $ 9300 $ && $ 1.0902 \cdot10^{-23 } $ && $ 1.0139 \cdot10^{-19 } $ && $ 9.4285 \cdot10^{-16 } $ && $ 8.7685 \cdot10^{-12 } $ && $ 8.1547 \cdot10^{-8 } $ & \\
% & $ 9400 $ && $ 7.2878 \cdot10^{-24 } $ && $ 6.8506 \cdot10^{-20 } $ && $ 6.4395 \cdot10^{-16 } $ && $ 6.0532 \cdot10^{-12 } $ && $ 5.6900 \cdot10^{-8 } $ & \\
% & $ 9500 $ && $ 4.8819 \cdot10^{-24 } $ && $ 4.6378 \cdot10^{-20 } $ && $ 4.4059 \cdot10^{-16 } $ && $ 4.1856 \cdot10^{-12 } $ && $ 3.9763 \cdot10^{-8 } $ & \\
% & $ 9600 $ && $ 3.2766 \cdot10^{-24 } $ && $ 3.1456 \cdot10^{-20 } $ && $ 3.0197 \cdot10^{-16 } $ && $ 2.8990 \cdot10^{-12 } $ && $ 2.7830 \cdot10^{-8 } $ & \\
% & $ 9700 $ && $ 2.2035 \cdot10^{-24 } $ && $ 2.1374 \cdot10^{-20 } $ && $ 2.0733 \cdot10^{-16 } $ && $ 2.0111 \cdot10^{-12 } $ && $ 1.9507 \cdot10^{-8 } $ & \\
% & $ 9800 $ && $ 1.4846 \cdot10^{-24 } $ && $ 1.4549 \cdot10^{-20 } $ && $ 1.4258 \cdot10^{-16 } $ && $ 1.3973 \cdot10^{-12 } $ && $ 1.3694 \cdot10^{-8 } $ & \\
% & $ 9900 $ && $ 1.0022 \cdot10^{-24 } $ && $ 9.9212 \cdot10^{-21 } $ && $ 9.8219 \cdot10^{-17 } $ && $ 9.7237 \cdot10^{-13 } $ && $ 9.6265 \cdot10^{-9 } $ & \\
& $ 10000 $ && $ 6.7772 \cdot10^{-25 } $ && $ 6.7772 \cdot10^{-21 } $ && $ 6.7772 \cdot10^{-17 } $ && $ 6.7772 \cdot10^{-13 } $ && $ 6.7772 \cdot10^{-9 } $ & \\
% & $ 10100 $ && $ 4.5917 \cdot10^{-25 } $ && $ 4.6376 \cdot10^{-21 } $ && $ 4.6840 \cdot10^{-17 } $ && $ 4.7308 \cdot10^{-13 } $ && $ 4.7781 \cdot10^{-9 } $ & \\
% & $ 10200 $ && $ 3.1165 \cdot10^{-25 } $ && $ 3.1789 \cdot10^{-21 } $ && $ 3.2424 \cdot10^{-17 } $ && $ 3.3073 \cdot10^{-13 } $ && $ 3.3734 \cdot10^{-9 } $ & \\
% & $ 10300 $ && $ 2.1191 \cdot10^{-25 } $ && $ 2.1826 \cdot10^{-21 } $ && $ 2.2481 \cdot10^{-17 } $ && $ 2.3155 \cdot10^{-13 } $ && $ 2.3850 \cdot10^{-9 } $ & \\
% & $ 10400 $ && $ 1.4434 \cdot10^{-25 } $ && $ 1.5011 \cdot10^{-21 } $ && $ 1.5611 \cdot10^{-17 } $ && $ 1.6236 \cdot10^{-13 } $ && $ 1.6885 \cdot10^{-9 } $ & \\
% & $ 10500 $ && $ 9.8478 \cdot10^{-26 } $ && $ 1.0341 \cdot10^{-21 } $ && $ 1.0858 \cdot10^{-17 } $ && $ 1.1401 \cdot10^{-13 } $ && $ 1.1971 \cdot10^{-9 } $ & \\
% & $ 10600 $ && $ 6.7307 \cdot10^{-26 } $ && $ 7.1345 \cdot10^{-22 } $ && $ 7.5626 \cdot10^{-18 } $ && $ 8.0163 \cdot10^{-14 } $ && $ 8.4973 \cdot10^{-10 } $ & \\
% & $ 10700 $ && $ 4.6079 \cdot10^{-26 } $ && $ 4.9305 \cdot10^{-22 } $ && $ 5.2756 \cdot10^{-18 } $ && $ 5.6449 \cdot10^{-14 } $ && $ 6.0400 \cdot10^{-10 } $ & \\
% & $ 10800 $ && $ 3.1599 \cdot10^{-26 } $ && $ 3.4127 \cdot10^{-22 } $ && $ 3.6857 \cdot10^{-18 } $ && $ 3.9805 \cdot10^{-14 } $ && $ 4.2990 \cdot10^{-10 } $ & \\
% & $ 10900 $ && $ 2.1705 \cdot10^{-26 } $ && $ 2.3658 \cdot10^{-22 } $ && $ 2.5787 \cdot10^{-18 } $ && $ 2.8108 \cdot10^{-14 } $ && $ 3.0638 \cdot10^{-10 } $ & \\
& $ 11000 $ && $ 1.4933 \cdot10^{-26 } $ && $ 1.6426 \cdot10^{-22 } $ && $ 1.8068 \cdot10^{-18 } $ && $ 1.9875 \cdot10^{-14 } $ && $ 2.1862 \cdot10^{-10 } $ & \\
% & $ 11100 $ && $ 1.0290 \cdot10^{-26 } $ && $ 1.1422 \cdot10^{-22 } $ && $ 1.2678 \cdot10^{-18 } $ && $ 1.4073 \cdot10^{-14 } $ && $ 1.5620 \cdot10^{-10 } $ & \\
% & $ 11200 $ && $ 7.1014 \cdot10^{-27 } $ && $ 7.9535 \cdot10^{-23 } $ && $ 8.9079 \cdot10^{-19 } $ && $ 9.9769 \cdot10^{-15 } $ && $ 1.1175 \cdot10^{-10 } $ & \\
% & $ 11300 $ && $ 4.9087 \cdot10^{-27 } $ && $ 5.5469 \cdot10^{-23 } $ && $ 6.2680 \cdot10^{-19 } $ && $ 7.0828 \cdot10^{-15 } $ && $ 8.0035 \cdot10^{-11 } $ & \\
% & $ 11400 $ && $ 3.3983 \cdot10^{-27 } $ && $ 3.8741 \cdot10^{-23 } $ && $ 4.4164 \cdot10^{-19 } $ && $ 5.0347 \cdot10^{-15 } $ && $ 5.7396 \cdot10^{-11 } $ & \\
% & $ 11500 $ && $ 2.3562 \cdot10^{-27 } $ && $ 2.7097 \cdot10^{-23 } $ && $ 3.1161 \cdot10^{-19 } $ && $ 3.5835 \cdot10^{-15 } $ && $ 4.1210 \cdot10^{-11 } $ & \\
% & $ 11600 $ && $ 1.6361 \cdot10^{-27 } $ && $ 1.8979 \cdot10^{-23 } $ && $ 2.2016 \cdot10^{-19 } $ && $ 2.5538 \cdot10^{-15 } $ && $ 2.9624 \cdot10^{-11 } $ & \\
% & $ 11700 $ && $ 1.1378 \cdot10^{-27 } $ && $ 1.3312 \cdot10^{-23 } $ && $ 1.5575 \cdot10^{-19 } $ && $ 1.8223 \cdot10^{-15 } $ && $ 2.1321 \cdot10^{-11 } $ & \\
% & $ 11800 $ && $ 7.9236 \cdot10^{-28 } $ && $ 9.3499 \cdot10^{-24 } $ && $ 1.1033 \cdot10^{-19 } $ && $ 1.3019 \cdot10^{-15 } $ && $ 1.5363 \cdot10^{-11 } $ & \\
% & $ 11900 $ && $ 5.5262 \cdot10^{-28 } $ && $ 6.5762 \cdot10^{-24 } $ && $ 7.8256 \cdot10^{-20 } $ && $ 9.3125 \cdot10^{-16 } $ && $ 1.1082 \cdot10^{-11 } $ & \\
& $ 12000 $ && $ 3.8597 \cdot10^{-28 } $ && $ 4.6316 \cdot10^{-24 } $ && $ 5.5579 \cdot10^{-20 } $ && $ 6.6694 \cdot10^{-16 } $ && $ 8.0033 \cdot10^{-12 } $ & \\
% & $ 12100 $ && $ 2.6995 \cdot10^{-28 } $ && $ 3.2664 \cdot10^{-24 } $ && $ 3.9523 \cdot10^{-20 } $ && $ 4.7823 \cdot10^{-16 } $ && $ 5.7865 \cdot10^{-12 } $ & \\
% & $ 12200 $ && $ 1.8907 \cdot10^{-28 } $ && $ 2.3066 \cdot10^{-24 } $ && $ 2.8141 \cdot10^{-20 } $ && $ 3.4332 \cdot10^{-16 } $ && $ 4.1884 \cdot10^{-12 } $ & \\
% & $ 12300 $ && $ 1.3260 \cdot10^{-28 } $ && $ 1.6310 \cdot10^{-24 } $ && $ 2.0061 \cdot10^{-20 } $ && $ 2.4675 \cdot10^{-16 } $ && $ 3.0351 \cdot10^{-12 } $ & \\
% & $ 12400 $ && $ 9.3124 \cdot10^{-29 } $ && $ 1.1548 \cdot10^{-24 } $ && $ 1.4319 \cdot10^{-20 } $ && $ 1.7756 \cdot10^{-16 } $ && $ 2.2017 \cdot10^{-12 } $ & \\
% & $ 12500 $ && $ 6.5489 \cdot10^{-29 } $ && $ 8.1861 \cdot10^{-25 } $ && $ 1.0233 \cdot10^{-20 } $ && $ 1.2791 \cdot10^{-16 } $ && $ 1.5989 \cdot10^{-12 } $ & \\
% & $ 12600 $ && $ 4.6116 \cdot10^{-29 } $ && $ 5.8106 \cdot10^{-25 } $ && $ 7.3214 \cdot10^{-21 } $ && $ 9.2249 \cdot10^{-17 } $ && $ 1.1624 \cdot10^{-12 } $ & \\
% & $ 12700 $ && $ 3.2517 \cdot10^{-29 } $ && $ 4.1296 \cdot10^{-25 } $ && $ 5.2446 \cdot10^{-21 } $ && $ 6.6606 \cdot10^{-17 } $ && $ 8.4590 \cdot10^{-13 } $ & \\
% & $ 12800 $ && $ 2.2958 \cdot10^{-29 } $ && $ 2.9385 \cdot10^{-25 } $ && $ 3.7613 \cdot10^{-21 } $ && $ 4.8145 \cdot10^{-17 } $ && $ 6.1625 \cdot10^{-13 } $ & \\
% & $ 12900 $ && $ 1.6229 \cdot10^{-29 } $ && $ 2.0936 \cdot10^{-25 } $ && $ 2.7007 \cdot10^{-21 } $ && $ 3.4839 \cdot10^{-17 } $ && $ 4.4942 \cdot10^{-13 } $ & \\
& $ 13000 $ && $ 1.1488 \cdot10^{-29 } $ && $ 1.4934 \cdot10^{-25 } $ && $ 1.9414 \cdot10^{-21 } $ && $ 2.5238 \cdot10^{-17 } $ && $ 3.2809 \cdot10^{-13 } $ & \\
% & $ 13500 $ && $ 2.0792 \cdot10^{-30 } $ && $ 2.8070 \cdot10^{-26 } $ && $ 3.7894 \cdot10^{-22 } $ && $ 5.1157 \cdot10^{-18 } $ && $ 6.9061 \cdot10^{-14 } $ & \\
% & $ 13800 $ && $ 7.5451 \cdot10^{-31 } $ && $ 1.0413 \cdot10^{-26 } $ && $ 1.4371 \cdot10^{-22 } $ && $ 1.9833 \cdot10^{-18 } $ && $ 2.7370 \cdot10^{-14 } $ & \\
& $ 14000 $ && $ 3.8771 \cdot10^{-31 } $ && $ 5.4279 \cdot10^{-27 } $ && $ 7.5990 \cdot10^{-23 } $ && $ 1.0639 \cdot10^{-18 } $ && $ 1.4894 \cdot10^{-14 } $ & \\
& $ 15000 $ && $ 1.4652 \cdot10^{-32 } $ && $ 2.1978 \cdot10^{-28 } $ && $ 3.2967 \cdot10^{-24 } $ && $ 4.9450 \cdot10^{-20 } $ && $ 7.4175 \cdot10^{-16 } $ & \\
& $ 16000 $ && $ 6.1341 \cdot10^{-34 } $ && $ 9.8146 \cdot10^{-30 } $ && $ 1.5704 \cdot10^{-25 } $ && $ 2.5126 \cdot10^{-21 } $ && $ 4.0201 \cdot10^{-17 } $ & \\
& $ 17000 $ && $ 2.8193 \cdot10^{-35 } $ && $ 4.7928 \cdot10^{-31 } $ && $ 8.1478 \cdot10^{-27 } $ && $ 1.3852 \cdot10^{-22 } $ && $ 2.3547 \cdot10^{-18 } $ & \\
& $ 18000 $ && $ 1.4115 \cdot10^{-36 } $ && $ 2.5406 \cdot10^{-32 } $ && $ 4.5731 \cdot10^{-28 } $ && $ 8.2315 \cdot10^{-24 } $ && $ 1.4817 \cdot10^{-19 } $ & \\
& $ 19000 $ && $ 7.6449 \cdot10^{-38 } $ && $ 1.4526 \cdot10^{-33 } $ && $ 2.7599 \cdot10^{-29 } $ && $ 5.2437 \cdot10^{-25 } $ && $ 9.9630 \cdot10^{-21 } $ & \\
& $ 20000 $ && $ 4.4536 \cdot10^{-39 } $ && $ 8.9071 \cdot10^{-35 } $ && $ 1.7815 \cdot10^{-30 } $ && $ 3.5629 \cdot10^{-26 } $ && $ 7.1257 \cdot10^{-22 } $ & \\
& $ 21000 $ && $ 2.7759 \cdot10^{-40 } $ && $ 5.8293 \cdot10^{-36 } $ && $ 1.2242 \cdot10^{-31 } $ && $ 2.5708 \cdot10^{-27 } $ && $ 5.3985 \cdot10^{-23 } $ & \\
& $ 22000 $ && $ 1.8427 \cdot10^{-41 } $ && $ 4.0538 \cdot10^{-37 } $ && $ 8.9184 \cdot10^{-33 } $ && $ 1.9621 \cdot10^{-28 } $ && $ 4.3165 \cdot10^{-24 } $ & \\
& $ 23000 $ && $ 1.2974 \cdot10^{-42 } $ && $ 2.9839 \cdot10^{-38 } $ && $ 6.8629 \cdot10^{-34 } $ && $ 1.5785 \cdot10^{-29 } $ && $ 3.6305 \cdot10^{-25 } $ & \\
& $ 24000 $ && $ 9.6521 \cdot10^{-44 } $ && $ 2.3165 \cdot10^{-39 } $ && $ 5.5596 \cdot10^{-35 } $ && $ 1.3344 \cdot10^{-30 } $ && $ 3.2024 \cdot10^{-26 } $ & \\
& $ 25000 $ && $ 7.5635 \cdot10^{-45 } $ && $ 1.8909 \cdot10^{-40 } $ && $ 4.7272 \cdot10^{-36 } $ && $ 1.1818 \cdot10^{-31 } $ && $ 3.0000 \cdot10^{-27 } $ & \\
\hline
\hline
\end{longtable}
%Calculations shown in \path{Dropbox/Undergraduate-Research-Sharper-Psi-Theta-Summer2017/sharper-bounds-psi-theta-Broadbent-Wilk-Lumley-Kad-Ng/Programs_for_Sharper_bounds/June 2019/A-June-2019.pari}.
}
%\pagebreak

 \subsection{Sharper bounds for $\theta(x)$: $x$ in a middle range ($e^{20} \le x \le e^{25\,000}$)}
 We use Corollary \ref{Cor:Bk}:
% $|\theta(x)-x|<\frac{ B_{j,k}x}{(\log x)^k}$ for all $x \in [e^{b_j},e^{b_{j+1}})$, where $B_{j,k}$ is defined in \eqref{Bbbprime2}. 
   {\footnotesize
% The following table uses values of $\varepsilon(b_0)$ to bound $\left| \frac{\psi(x)-x}{x} \right|$ from \cite[Table 2]{But} and 
% \cite[Table 4]{KLN}, both of which use Wedeniwski's $H_0 = 2.445 \cdot 10^{12}$.
\begin{longtable}{rrrrrrrrrrrrrr} 
% \caption{Selected values for $ B_{j,k}$ in $|\theta(x)-x|<\frac{ B_{j,k}x}{(\log x)^k}$ calculated using the method described in \cref{Cor:Bk}, 
% Section \ref{section:xmiddle}.
%\caption{$|\theta(x)-x|<\frac{ B_k(b,b')x}{(\log x)^k}$ for all $x \in [e^b,e^{b'}$, where $B_k(b,b')$ is defined in \eqref{Bbbprime2}.}
\caption{Values for $ B_{k}(b,b')$ in $|\theta(x)-x|<\frac{  B_{k}(b,b')x}{(\log x)^k}$ calculated using the method described in Corollary \ref{Cor:Bk}.
Each $ B_{k}(b,b')$ is valid for $e^{b}\le x \le e^{b'}$. The last line is valid for $e^{24000} \le x \le e^{25000}$.}

\label{BkiTableWedeniwski}
\\ 
\hline
&&&&&&&&&&&&& \\[-1em]
\multicolumn{1}{c}{\phantom{}} &
\multicolumn{1}{r}{$b$  } &
\multicolumn{1}{c}{\phantom{a}} &
\multicolumn{1}{c}{$ B_1(b,b')$ } &
\multicolumn{1}{c}{\phantom{a}} &
\multicolumn{1}{c}{$ B_2(b,b')$ } &
\multicolumn{1}{c}{\phantom{a}} &
\multicolumn{1}{c}{$ B_3(b,b')$ } &
\multicolumn{1}{c}{\phantom{a}} &
\multicolumn{1}{c}{$ B_4(b,b')$ } &
\multicolumn{1}{c}{\phantom{a}} &
\multicolumn{1}{c}{$ B_5(b,b')$ } &
\multicolumn{1}{c}{\phantom{}} 
\\ \hline 
\endfirsthead
\multicolumn{13}{c}%
{\tablename\ \thetable{} -- continued from previous page} \\
\hline
&&&&&&&&&&&&& \\[-1em]
\multicolumn{1}{c}{\phantom{}} &
\multicolumn{1}{r}{$b$  } &
\multicolumn{1}{c}{\phantom{a}} &
\multicolumn{1}{c}{$ B_1(b,b')$ } &
\multicolumn{1}{c}{\phantom{a}} &
\multicolumn{1}{c}{$ B_2(b,b')$ } &
\multicolumn{1}{c}{\phantom{a}} &
\multicolumn{1}{c}{$ B_3(b,b')$ } &
\multicolumn{1}{c}{\phantom{a}} &
\multicolumn{1}{c}{$ B_4(b,b')$ } &
\multicolumn{1}{c}{\phantom{a}} &
\multicolumn{1}{c}{$ B_5(b,b')$ } &
\multicolumn{1}{c}{\phantom{}} 
\\ \hline 
\endhead
\hline \multicolumn{13}{r}{Continued on next page} \\ \hline
\endfoot
\hline 
\endlastfoot
&&&&&&&&&&&&& \\[-1em]
& $20 $ && $1.8077 \cdot 10^{-3} $ && $3.6154 \cdot 10^{-2} $ && $7.2309 \cdot 10^{-1} $ && $1.4462 \cdot 10^{1} $ && $2.9160 \cdot 10^{2} $ & \\
& $21 $ && $1.1458 \cdot 10^{-3} $ && $2.4062 \cdot 10^{-2} $ && $5.0530 \cdot 10^{-1} $ && $1.0611 \cdot 10^{1} $ && $2.2284 \cdot 10^{2} $ & \\
& $22 $ && $7.2527 \cdot 10^{-4} $ && $1.5956 \cdot 10^{-2} $ && $3.5103 \cdot 10^{-1} $ && $7.7226 \cdot 10^{0} $ && $1.6990 \cdot 10^{2} $ & \\
& $23 $ && $4.5848 \cdot 10^{-4} $ && $1.0545 \cdot 10^{-2} $ && $2.4254 \cdot 10^{-1} $ && $5.5783 \cdot 10^{0} $ && $1.2830 \cdot 10^{2} $ & \\
& $24 $ && $2.8945 \cdot 10^{-4} $ && $6.9468 \cdot 10^{-3} $ && $1.6672 \cdot 10^{-1} $ && $4.0013 \cdot 10^{0} $ && $9.6032 \cdot 10^{1} $ & \\
& $25 $ && $1.8251 \cdot 10^{-4} $ && $4.5626 \cdot 10^{-3} $ && $1.1407 \cdot 10^{-1} $ && $2.8516 \cdot 10^{0} $ && $7.1291 \cdot 10^{1} $ & \\
& $26 $ && $1.1493 \cdot 10^{-4} $ && $2.9882 \cdot 10^{-3} $ && $7.7694 \cdot 10^{-2} $ && $2.0200 \cdot 10^{0} $ && $5.2521 \cdot 10^{1} $ & \\
& $27 $ && $7.2293 \cdot 10^{-5} $ && $1.9519 \cdot 10^{-3} $ && $5.2702 \cdot 10^{-2} $ && $1.4229 \cdot 10^{0} $ && $3.8419 \cdot 10^{1} $ & \\
& $28 $ && $4.5421 \cdot 10^{-5} $ && $1.2718 \cdot 10^{-3} $ && $3.5610 \cdot 10^{-2} $ && $9.9708 \cdot 10^{-1} $ && $2.7918 \cdot 10^{1} $ & \\
& $29 $ && $2.8507 \cdot 10^{-5} $ && $8.2670 \cdot 10^{-4} $ && $2.3974 \cdot 10^{-2} $ && $6.9525 \cdot 10^{-1} $ && $2.0162 \cdot 10^{1} $ & \\
& $30 $ && $1.7873 \cdot 10^{-5} $ && $5.3619 \cdot 10^{-4} $ && $1.6086 \cdot 10^{-2} $ && $4.8257 \cdot 10^{-1} $ && $1.4477 \cdot 10^{1} $ & \\
& \vdots && && && && && & \\ 
& $43 $ && $8.5986 \cdot 10^{-7} $ && $3.7618 \cdot 10^{-5} $ && $1.6458 \cdot 10^{-3} $ && $7.2000 \cdot 10^{-2} $ && $3.1500 \cdot 10^{0} $ & \\
& $19 \log 10 $ && $8.6315 \cdot 10^{-7} $ && $3.7978 \cdot 10^{-5} $ && $1.6711 \cdot 10^{-3} $ && $7.3526 \cdot 10^{-2} $ && $3.2352 \cdot 10^{0} $ & \\
& $44 $ && $7.8162 \cdot 10^{-7} $ && $3.5173 \cdot 10^{-5} $ && $1.5828 \cdot 10^{-3} $ && $7.1225 \cdot 10^{-2} $ && $3.2052 \cdot 10^{0} $ & \\
& $45 $ && $5.0646 \cdot 10^{-7} $ && $2.3297 \cdot 10^{-5} $ && $1.0717 \cdot 10^{-3} $ && $4.9297 \cdot 10^{-2} $ && $2.2677 \cdot 10^{0} $ & \\
& $46 $ && $3.2935 \cdot 10^{-7} $ && $1.5479 \cdot 10^{-5} $ && $7.2752 \cdot 10^{-4} $ && $3.4194 \cdot 10^{-2} $ && $1.6071 \cdot 10^{0} $ & \\
& $47 $ && $2.1307 \cdot 10^{-7} $ && $1.0228 \cdot 10^{-5} $ && $4.9092 \cdot 10^{-4} $ && $2.3564 \cdot 10^{-2} $ && $1.1311 \cdot 10^{0} $ & \\
& \vdots && && && && && & \\ 
& $54 $ && $9.8777 \cdot 10^{-9} $ && $5.4328 \cdot 10^{-7} $ && $2.9880 \cdot 10^{-5} $ && $1.6434 \cdot 10^{-3} $ && $9.0388 \cdot 10^{-2} $ & \\
& $55 $ && $6.3417 \cdot 10^{-9} $ && $3.5514 \cdot 10^{-7} $ && $1.9888 \cdot 10^{-5} $ && $1.1137 \cdot 10^{-3} $ && $6.2367 \cdot 10^{-2} $ & \\
& $56 $ && $4.0668 \cdot 10^{-9} $ && $2.3181 \cdot 10^{-7} $ && $1.3213 \cdot 10^{-5} $ && $7.5315 \cdot 10^{-4} $ && $4.2929 \cdot 10^{-2} $ & \\
& \vdots && && && && && & \\ 
& $2275 $ && $4.4153 \cdot 10^{-9} $ && $1.0155 \cdot 10^{-5} $ && $2.3357 \cdot 10^{-2} $ && $5.3721 \cdot 10^{1} $ && $1.2356 \cdot 10^{5} $ & \\
& $2300 $ && $4.4627 \cdot 10^{-9} $ && $1.0376 \cdot 10^{-5} $ && $2.4124 \cdot 10^{-2} $ && $5.6088 \cdot 10^{1} $ && $1.3040 \cdot 10^{5} $ & \\
& $2325 $ && $4.4062 \cdot 10^{-9} $ && $1.0355 \cdot 10^{-5} $ && $2.4333 \cdot 10^{-2} $ && $5.7184 \cdot 10^{1} $ && $1.3438 \cdot 10^{5} $ & \\
& $2350 $ && $4.2245 \cdot 10^{-9} $ && $1.0033 \cdot 10^{-5} $ && $2.3829 \cdot 10^{-2} $ && $5.6593 \cdot 10^{1} $ && $1.3441 \cdot 10^{5} $ & \\
& $2375 $ && $4.0498 \cdot 10^{-9} $ && $9.7196 \cdot 10^{-6} $ && $2.3327 \cdot 10^{-2} $ && $5.5985 \cdot 10^{1} $ && $1.3436 \cdot 10^{5} $ & \\
& $2400 $ && $3.8820 \cdot 10^{-9} $ && $9.4139 \cdot 10^{-6} $ && $2.2829 \cdot 10^{-2} $ && $5.5360 \cdot 10^{1} $ && $1.3425 \cdot 10^{5} $ & \\
& \vdots && && && && && & \\ 
& $9800 $ && $8.4841 \cdot 10^{-25} $ && $8.3992 \cdot 10^{-21} $ && $8.3152 \cdot 10^{-17} $ && $8.2321 \cdot 10^{-13} $ && $8.1497 \cdot 10^{-9} $ & \\
& $9900 $ && $5.7395 \cdot 10^{-25} $ && $5.7395 \cdot 10^{-21} $ && $5.7395 \cdot 10^{-17} $ && $5.7395 \cdot 10^{-13} $ && $5.7395 \cdot 10^{-9} $ & \\
& $10000 $ && $3.8228 \cdot 10^{-25} $ && $3.8610 \cdot 10^{-21} $ && $3.8996 \cdot 10^{-17} $ && $3.9386 \cdot 10^{-13} $ && $3.9780 \cdot 10^{-9} $ & \\
% & $10100 $ && $2.5745 \cdot 10^{-25} $ && $2.6260 \cdot 10^{-21} $ && $2.6785 \cdot 10^{-17} $ && $2.7321 \cdot 10^{-13} $ && $2.7867 \cdot 10^{-9} $ & \\
% & $10200 $ && $1.7389 \cdot 10^{-25} $ && $1.7911 \cdot 10^{-21} $ && $1.8448 \cdot 10^{-17} $ && $1.9001 \cdot 10^{-13} $ && $1.9571 \cdot 10^{-9} $ & \\
% & $10300 $ && $1.1734 \cdot 10^{-25} $ && $1.2203 \cdot 10^{-21} $ && $1.2691 \cdot 10^{-17} $ && $1.3199 \cdot 10^{-13} $ && $1.3727 \cdot 10^{-9} $ & \\
% & $10400 $ && $7.9556 \cdot 10^{-26} $ && $8.3534 \cdot 10^{-22} $ && $8.7710 \cdot 10^{-18} $ && $9.2096 \cdot 10^{-14} $ && $9.6701 \cdot 10^{-10} $ & \\
% & $10500 $ && $5.4076 \cdot 10^{-26} $ && $5.7321 \cdot 10^{-22} $ && $6.0760 \cdot 10^{-18} $ && $6.4406 \cdot 10^{-14} $ && $6.8270 \cdot 10^{-10} $ & \\
% & $10600 $ && $3.6845 \cdot 10^{-26} $ && $3.9424 \cdot 10^{-22} $ && $4.2184 \cdot 10^{-18} $ && $4.5136 \cdot 10^{-14} $ && $4.8296 \cdot 10^{-10} $ & \\
% & $10700 $ && $2.5150 \cdot 10^{-26} $ && $2.7162 \cdot 10^{-22} $ && $2.9335 \cdot 10^{-18} $ && $3.1682 \cdot 10^{-14} $ && $3.4216 \cdot 10^{-10} $ & \\
% & $10800 $ && $1.7201 \cdot 10^{-26} $ && $1.8749 \cdot 10^{-22} $ && $2.0436 \cdot 10^{-18} $ && $2.2276 \cdot 10^{-14} $ && $2.4280 \cdot 10^{-10} $ & \\
% & $10900 $ && $1.1639 \cdot 10^{-26} $ && $1.2803 \cdot 10^{-22} $ && $1.4083 \cdot 10^{-18} $ && $1.5492 \cdot 10^{-14} $ && $1.7041 \cdot 10^{-10} $ & \\
% & $11000 $ && $7.9283 \cdot 10^{-27} $ && $8.8005 \cdot 10^{-23} $ && $9.7685 \cdot 10^{-19} $ && $1.0843 \cdot 10^{-14} $ && $1.2036 \cdot 10^{-10} $ & \\
& $11100 $ && $5.4156 \cdot 10^{-27} $ && $6.0654 \cdot 10^{-23} $ && $6.7933 \cdot 10^{-19} $ && $7.6085 \cdot 10^{-15} $ && $8.5215 \cdot 10^{-11} $ & \\
% & $11200 $ && $3.7120 \cdot 10^{-27} $ && $4.1945 \cdot 10^{-23} $ && $4.7398 \cdot 10^{-19} $ && $5.3560 \cdot 10^{-15} $ && $6.0522 \cdot 10^{-11} $ & \\
% & $11300 $ && $2.5509 \cdot 10^{-27} $ && $2.9080 \cdot 10^{-23} $ && $3.3151 \cdot 10^{-19} $ && $3.7792 \cdot 10^{-15} $ && $4.3083 \cdot 10^{-11} $ & \\
% & $11400 $ && $1.7569 \cdot 10^{-27} $ && $2.0205 \cdot 10^{-23} $ && $2.3235 \cdot 10^{-19} $ && $2.6721 \cdot 10^{-15} $ && $3.0729 \cdot 10^{-11} $ & \\
% & $11500 $ && $1.2102 \cdot 10^{-27} $ && $1.4039 \cdot 10^{-23} $ && $1.6285 \cdot 10^{-19} $ && $1.8890 \cdot 10^{-15} $ && $2.1913 \cdot 10^{-11} $ & \\
% & $11600 $ && $8.3444 \cdot 10^{-28} $ && $9.7630 \cdot 10^{-24} $ && $1.1423 \cdot 10^{-19} $ && $1.3365 \cdot 10^{-15} $ && $1.5637 \cdot 10^{-11} $ & \\
% & $11700 $ && $5.7692 \cdot 10^{-28} $ && $6.8076 \cdot 10^{-24} $ && $8.0330 \cdot 10^{-20} $ && $9.4789 \cdot 10^{-16} $ && $1.1185 \cdot 10^{-11} $ & \\
% & $11800 $ && $3.9987 \cdot 10^{-28} $ && $4.7584 \cdot 10^{-24} $ && $5.6625 \cdot 10^{-20} $ && $6.7384 \cdot 10^{-16} $ && $8.0187 \cdot 10^{-12} $ & \\
% & $11900 $ && $2.7776 \cdot 10^{-28} $ && $3.3331 \cdot 10^{-24} $ && $3.9997 \cdot 10^{-20} $ && $4.7996 \cdot 10^{-16} $ && $5.7595 \cdot 10^{-12} $ & \\
& $12000 $ && $1.9330 \cdot 10^{-28} $ && $2.3390 \cdot 10^{-24} $ && $2.8302 \cdot 10^{-20} $ && $3.4245 \cdot 10^{-16} $ && $4.1436 \cdot 10^{-12} $ & \\
% & $12100 $ && $1.3477 \cdot 10^{-28} $ && $1.6442 \cdot 10^{-24} $ && $2.0060 \cdot 10^{-20} $ && $2.4473 \cdot 10^{-16} $ && $2.9857 \cdot 10^{-12} $ & \\
% & $12200 $ && $9.3146 \cdot 10^{-29} $ && $1.1457 \cdot 10^{-24} $ && $1.4092 \cdot 10^{-20} $ && $1.7333 \cdot 10^{-16} $ && $2.1320 \cdot 10^{-12} $ & \\
% & $12300 $ && $6.5069 \cdot 10^{-29} $ && $8.0685 \cdot 10^{-25} $ && $1.0005 \cdot 10^{-20} $ && $1.2406 \cdot 10^{-16} $ && $1.5384 \cdot 10^{-12} $ & \\
% & $12400 $ && $4.5539 \cdot 10^{-29} $ && $5.6924 \cdot 10^{-25} $ && $7.1155 \cdot 10^{-21} $ && $8.8944 \cdot 10^{-17} $ && $1.1118 \cdot 10^{-12} $ & \\
% & $12500 $ && $3.1924 \cdot 10^{-29} $ && $4.0224 \cdot 10^{-25} $ && $5.0682 \cdot 10^{-21} $ && $6.3859 \cdot 10^{-17} $ && $8.0462 \cdot 10^{-13} $ & \\
% & $12600 $ && $2.2307 \cdot 10^{-29} $ && $2.8330 \cdot 10^{-25} $ && $3.5979 \cdot 10^{-21} $ && $4.5693 \cdot 10^{-17} $ && $5.8030 \cdot 10^{-13} $ & \\
% & $12700 $ && $1.5599 \cdot 10^{-29} $ && $1.9967 \cdot 10^{-25} $ && $2.5558 \cdot 10^{-21} $ && $3.2714 \cdot 10^{-17} $ && $4.1873 \cdot 10^{-13} $ & \\
% & $12800 $ && $1.0940 \cdot 10^{-29} $ && $1.4112 \cdot 10^{-25} $ && $1.8205 \cdot 10^{-21} $ && $2.3484 \cdot 10^{-17} $ && $3.0294 \cdot 10^{-13} $ & \\
% & $12900 $ && $7.6899 \cdot 10^{-30} $ && $9.9969 \cdot 10^{-26} $ && $1.2996 \cdot 10^{-21} $ && $1.6895 \cdot 10^{-17} $ && $2.1963 \cdot 10^{-13} $ & \\
& $13000 $ && $5.5830 \cdot 10^{-30} $ && $7.5370 \cdot 10^{-26} $ && $1.0175 \cdot 10^{-21} $ && $1.3736 \cdot 10^{-17} $ && $1.8544 \cdot 10^{-13} $ & \\
% & $13500 $ && $9.9578 \cdot 10^{-31} $ && $1.3743 \cdot 10^{-26} $ && $1.8966 \cdot 10^{-22} $ && $2.6174 \cdot 10^{-18} $ && $3.6122 \cdot 10^{-14} $ & \\
% & $13800.7464 $ && $3.5592 \cdot 10^{-31} $ && $4.9829 \cdot 10^{-27} $ && $6.9761 \cdot 10^{-23} $ && $9.7665 \cdot 10^{-19} $ && $1.3673 \cdot 10^{-14} $ & \\
& $14000 $ && $1.8398 \cdot 10^{-31} $ && $2.7597 \cdot 10^{-27} $ && $4.1396 \cdot 10^{-23} $ && $6.2094 \cdot 10^{-19} $ && $9.3141 \cdot 10^{-15} $ & \\
& $15000 $ && $6.5711 \cdot 10^{-33} $ && $1.0514 \cdot 10^{-28} $ && $1.6822 \cdot 10^{-24} $ && $2.6915 \cdot 10^{-20} $ && $4.3065 \cdot 10^{-16} $ & \\
& $16000 $ && $2.5738 \cdot 10^{-34} $ && $4.3755 \cdot 10^{-30} $ && $7.4384 \cdot 10^{-26} $ && $1.2645 \cdot 10^{-21} $ && $2.1497 \cdot 10^{-17} $ & \\
& $17000 $ && $1.1167 \cdot 10^{-35} $ && $2.0101 \cdot 10^{-31} $ && $3.6182 \cdot 10^{-27} $ && $6.5127 \cdot 10^{-23} $ && $1.1723 \cdot 10^{-18} $ & \\
& $18000 $ && $5.3738 \cdot 10^{-37} $ && $1.0210 \cdot 10^{-32} $ && $1.9400 \cdot 10^{-28} $ && $3.6859 \cdot 10^{-24} $ && $7.0032 \cdot 10^{-20} $ & \\
& $19000 $ && $2.7357 \cdot 10^{-38} $ && $5.4714 \cdot 10^{-34} $ && $1.0943 \cdot 10^{-29} $ && $2.1886 \cdot 10^{-25} $ && $4.3771 \cdot 10^{-21} $ & \\
& $20000 $ && $1.5040 \cdot 10^{-39} $ && $3.1585 \cdot 10^{-35} $ && $6.6328 \cdot 10^{-31} $ && $1.3929 \cdot 10^{-26} $ && $2.9251 \cdot 10^{-22} $ & \\
& $21000 $ && $9.0605 \cdot 10^{-41} $ && $1.9933 \cdot 10^{-36} $ && $4.3853 \cdot 10^{-32} $ && $9.6476 \cdot 10^{-28} $ && $2.1225 \cdot 10^{-23} $ & \\
& $22000 $ && $5.6101 \cdot 10^{-42} $ && $1.2903 \cdot 10^{-37} $ && $2.9677 \cdot 10^{-33} $ && $6.8258 \cdot 10^{-29} $ && $1.5699 \cdot 10^{-24} $ & \\
& $23000 $ && $3.7554 \cdot 10^{-43} $ && $9.0129 \cdot 10^{-39} $ && $2.1631 \cdot 10^{-34} $ && $5.1914 \cdot 10^{-30} $ && $1.2460 \cdot 10^{-25} $ & \\
& $24000 $ && $2.6755 \cdot 10^{-44} $ && $6.6888 \cdot 10^{-40} $ && $1.6722 \cdot 10^{-35} $ && $4.1805 \cdot 10^{-31} $ && $1.0451 \cdot 10^{-26} $ & \\
\hline
\end{longtable} 
%Calculations shown in \path{Computations-sharper-psi-theta/B-August-2018.pari}.
% The last line is valid for $e^{24\,000} \le x \le e^{25\,000}$
}

 {\footnotesize
% The following table uses Wedeniwski's $H_0 = 2.445 \cdot 10^{12}$ and was calculated using the $B_{j,k}$ values of Table \ref{BkiTableWedeniwski}, by taking the maximum $ B_{j,k}$ for each column for each $b_j$ to $13\,900$, making 
% each $\mathcal{B}_{j,k}$ value listed valid for $e^{b_j} < x < e^{b_{j+1}}$ (since the last value of Table \ref{BkiTableWedeniwski} is valid for $x \in [e^{10000}, e^{13\,900}]$).
\begin{longtable}{ccccccccccccccc}
% \caption{Selected values from the supplement for Table \ref{BkiTableWedeniwski}. All $\mathcal{B}_{j,k}$ values are valid for $x \in [e^{b_j},e^{13\,900}]$.}
%\caption{$|\theta(x)-x|<\frac{ \mathcal{B}_k(b_0)x}{(\log x)^k}$ for all $x \in [e^{b_0},e^{25000})$, where $\mathcal{B}_k(b_0)$ is defined in \eqref{MathcalBbbprime2}. }
\caption{Supplement for Table \ref{BkiTableWedeniwski}. Values for $ \mathcal{B}_{k}(b_0)$ in $|\theta(x)-x|<\frac{ \mathcal{B}_{k}(b_0)x}{(\log x)^k}$ calculated using the method described in Corollary \ref{Cor:Bk} and $\mathcal{B}_k(b_0)$ is defined in \eqref{MathcalBbbprime2}.  Each  $\mathcal{B}_{k}(b_0)$ is valid for $x \in [e^{b_0},e^{25000}]$. }

\label{BkMaxTableWedeniwski}
\\
\hline
&&&&&&&&&&&&& \\[-1em]
\phantom{a} & 
$b_0$         & 
% \phantom{a} &
% ${\mathcal{B}}_{j,0}$       &
\phantom{a} &
${\mathcal{B}}_1(b_0)$       &
\phantom{a} & 
${\mathcal{B}}_2(b_0)$        & 
\phantom{a} & 
${\mathcal{B}}_3(b_0)$        & 
\phantom{a} & 
${\mathcal{B}}_4(b_0)$       & 
\phantom{a} & 
${\mathcal{B}}_5(b_0)$      &
\phantom{a} \\
\hline
\endfirsthead
\multicolumn{13}{c}%
{\tablename\ \thetable{} -- continued from previous page} \\
\hline
&&&&&&&&&&&&& \\[-1em]
\phantom{a} & 
$b_0$         & 
% \phantom{a} &
% ${\mathcal{B}}_{j,0}$       &
\phantom{a} &
${\mathcal{B}}_1(b_0)$       &
\phantom{a} & 
${\mathcal{B}}_2(b_0)$        & 
\phantom{a} & 
${\mathcal{B}}_3(b_0)$        & 
\phantom{a} & 
${\mathcal{B}}_4(b_0)$       & 
\phantom{a} & 
${\mathcal{B}}_5(b_0)$      &
\phantom{a} \\
\hline
\endhead
\hline \multicolumn{13}{r}{Continued on next page} \\ \hline
\endfoot
\hline %\hline
\endlastfoot
&&&&&&&&&&&&& \\[-1em]
& $ 20 $ && $ 1.6844 \cdot 10^{-3 } $ && $ 3.3688 \cdot 10^{-2 } $ && $ 6.7375 \cdot 10^{-1 } $ && $ 5.7184 \cdot 10^{1 } $ && $ 1.3441 \cdot 10^{5 } $ & \\
& $ 21 $ && $ 1.0684 \cdot 10^{-3 } $ && $ 2.2435 \cdot 10^{-2 } $ && $ 4.7114 \cdot 10^{-1 } $ && $ 5.7184 \cdot 10^{1 } $ && $ 1.3441 \cdot 10^{5 } $ & \\
& $ 22 $ && $ 6.7654 \cdot 10^{-4 } $ && $ 1.4884 \cdot 10^{-2 } $ && $ 3.2746 \cdot 10^{-1 } $ && $ 5.7184 \cdot 10^{1 } $ && $ 1.3441 \cdot 10^{5 } $ & \\
& $ 23 $ && $ 4.2780 \cdot 10^{-4 } $ && $ 9.8392 \cdot 10^{-3 } $ && $ 2.2631 \cdot 10^{-1 } $ && $ 5.7184 \cdot 10^{1 } $ && $ 1.3441 \cdot 10^{5 } $ & \\
& $ 24 $ && $ 2.7011 \cdot 10^{-4 } $ && $ 6.4827 \cdot 10^{-3 } $ && $ 1.5559 \cdot 10^{-1 } $ && $ 5.7184 \cdot 10^{1 } $ && $ 1.3441 \cdot 10^{5 } $ & \\
& $ 25 $ && $ 1.7500 \cdot 10^{-4 } $ && $ 4.3750 \cdot 10^{-3 } $ && $ 1.0938 \cdot 10^{-1 } $ && $ 5.7184 \cdot 10^{1 } $ && $ 1.3441 \cdot 10^{5 } $ & \\
& $ 26 $ && $ 1.1022 \cdot 10^{-4 } $ && $ 2.8655 \cdot 10^{-3 } $ && $ 7.4503 \cdot 10^{-2 } $ && $ 5.7184 \cdot 10^{1 } $ && $ 1.3441 \cdot 10^{5 } $ & \\
& $ 27 $ && $ 6.9322 \cdot 10^{-5 } $ && $ 1.8717 \cdot 10^{-3 } $ && $ 5.0536 \cdot 10^{-2 } $ && $ 5.7184 \cdot 10^{1 } $ && $ 1.3441 \cdot 10^{5 } $ & \\
& $ 28 $ && $ 4.3555 \cdot 10^{-5 } $ && $ 1.2196 \cdot 10^{-3 } $ && $ 3.4148 \cdot 10^{-2 } $ && $ 5.7184 \cdot 10^{1 } $ && $ 1.3441 \cdot 10^{5 } $ & \\
& $ 29 $ && $ 2.7336 \cdot 10^{-5 } $ && $ 7.9272 \cdot 10^{-4 } $ && $ 2.4334 \cdot 10^{-2 } $ && $ 5.7184 \cdot 10^{1 } $ && $ 1.3441 \cdot 10^{5 } $ & \\
& $ 30 $ && $ 1.7139 \cdot 10^{-5 } $ && $ 5.1415 \cdot 10^{-4 } $ && $ 2.4334 \cdot 10^{-2 } $ && $ 5.7184 \cdot 10^{1 } $ && $ 1.3441 \cdot 10^{5 } $ & \\
&\vdots&& && && && && & \\
& $ 43 $ && $ 8.6315 \cdot 10^{-7 } $ && $ 3.7979 \cdot 10^{-5 } $ && $ 2.4334 \cdot 10^{-2 } $ && $ 5.7184 \cdot 10^{1 } $ && $ 1.3441 \cdot 10^{5 } $ & \\
& $ 43 $ && $ 8.6315 \cdot 10^{-7 } $ && $ 3.7979 \cdot 10^{-5 } $ && $ 2.4334 \cdot 10^{-2 } $ && $ 5.7184 \cdot 10^{1 } $ && $ 1.3441 \cdot 10^{5 } $ & \\
& $ 44 $ && $ 7.8163 \cdot 10^{-7 } $ && $ 3.5174 \cdot 10^{-5 } $ && $ 2.4334 \cdot 10^{-2 } $ && $ 5.7184 \cdot 10^{1 } $ && $ 1.3441 \cdot 10^{5 } $ & \\
& $ 45 $ && $ 5.0646 \cdot 10^{-7 } $ && $ 2.3298 \cdot 10^{-5 } $ && $ 2.4334 \cdot 10^{-2 } $ && $ 5.7184 \cdot 10^{1 } $ && $ 1.3441 \cdot 10^{5 } $ & \\
& $ 46 $ && $ 3.2935 \cdot 10^{-7 } $ && $ 1.5480 \cdot 10^{-5 } $ && $ 2.4334 \cdot 10^{-2 } $ && $ 5.7184 \cdot 10^{1 } $ && $ 1.3441 \cdot 10^{5 } $ & \\
& $ 47 $ && $ 2.1308 \cdot 10^{-7 } $ && $ 1.0376 \cdot 10^{-5 } $ && $ 2.4334 \cdot 10^{-2 } $ && $ 5.7184 \cdot 10^{1 } $ && $ 1.3441 \cdot 10^{5 } $ & \\
&\vdots&& && && && && & \\
& $ 54 $ && $ 9.8778 \cdot 10^{-9 } $ && $ 1.0376 \cdot 10^{-5 } $ && $ 2.4334 \cdot 10^{-2 } $ && $ 5.7184 \cdot 10^{1 } $ && $ 1.3441 \cdot 10^{5 } $ & \\
& $ 55 $ && $ 6.3417 \cdot 10^{-9 } $ && $ 1.0376 \cdot 10^{-5 } $ && $ 2.4334 \cdot 10^{-2 } $ && $ 5.7184 \cdot 10^{1 } $ && $ 1.3441 \cdot 10^{5 } $ & \\
& $ 56 $ && $ 4.4627 \cdot 10^{-9 } $ && $ 1.0376 \cdot 10^{-5 } $ && $ 2.4334 \cdot 10^{-2 } $ && $ 5.7184 \cdot 10^{1 } $ && $ 1.3441 \cdot 10^{5 } $ & \\
&\vdots&& && && && && & \\
& $ 2275 $ && $ 4.4627 \cdot 10^{-9 } $ && $ 1.0376 \cdot 10^{-5 } $ && $ 2.4334 \cdot 10^{-2 } $ && $ 5.7184 \cdot 10^{1 } $ && $ 1.3441 \cdot 10^{5 } $ & \\
& $ 2300 $ && $ 4.4627 \cdot 10^{-9 } $ && $ 1.0376 \cdot 10^{-5 } $ && $ 2.4334 \cdot 10^{-2 } $ && $ 5.7184 \cdot 10^{1 } $ && $ 1.3441 \cdot 10^{5 } $ & \\
& $ 2325 $ && $ 4.4063 \cdot 10^{-9 } $ && $ 1.0355 \cdot 10^{-5 } $ && $ 2.4334 \cdot 10^{-2 } $ && $ 5.7184 \cdot 10^{1 } $ && $ 1.3441 \cdot 10^{5 } $ & \\
& $ 2350 $ && $ 4.2245 \cdot 10^{-9 } $ && $ 1.0034 \cdot 10^{-5 } $ && $ 2.3829 \cdot 10^{-2 } $ && $ 5.6594 \cdot 10^{1 } $ && $ 1.3441 \cdot 10^{5 } $ & \\
& $ 2375 $ && $ 4.0499 \cdot 10^{-9 } $ && $ 9.7196 \cdot 10^{-6 } $ && $ 2.3328 \cdot 10^{-2 } $ && $ 5.5985 \cdot 10^{1 } $ && $ 1.3437 \cdot 10^{5 } $ & \\
& $ 2400 $ && $ 3.8821 \cdot 10^{-9 } $ && $ 9.4139 \cdot 10^{-6 } $ && $ 2.2829 \cdot 10^{-2 } $ && $ 5.5360 \cdot 10^{1 } $ && $ 1.3425 \cdot 10^{5 } $ & \\
&\vdots&& && && && && & \\
& $ 9800 $ && $ 8.4841 \cdot 10^{-25 } $ && $ 8.3993 \cdot 10^{-21 } $ && $ 8.3153 \cdot 10^{-17 } $ && $ 8.2321 \cdot 10^{-13 } $ && $ 8.1498 \cdot 10^{-9 } $ & \\
& $ 9900 $ && $ 5.7396 \cdot 10^{-25 } $ && $ 5.7396 \cdot 10^{-21 } $ && $ 5.7396 \cdot 10^{-17 } $ && $ 5.7396 \cdot 10^{-13 } $ && $ 5.7396 \cdot 10^{-9 } $ & \\
& $ 10000 $ && $ 3.8228 \cdot 10^{-25 } $ && $ 3.8610 \cdot 10^{-21 } $ && $ 3.8997 \cdot 10^{-17 } $ && $ 3.9387 \cdot 10^{-13 } $ && $ 3.9780 \cdot 10^{-9 } $ & \\
&\vdots&& && && && && & \\
% & $ 10100 $ && $ 2.5746 \cdot 10^{-25 } $ && $ 2.6260 \cdot 10^{-21 } $ && $ 2.6786 \cdot 10^{-17 } $ && $ 2.7321 \cdot 10^{-13 } $ && $ 2.7868 \cdot 10^{-9 } $ & \\
% & $ 10200 $ && $ 1.7389 \cdot 10^{-25 } $ && $ 1.7911 \cdot 10^{-21 } $ && $ 1.8448 \cdot 10^{-17 } $ && $ 1.9002 \cdot 10^{-13 } $ && $ 1.9572 \cdot 10^{-9 } $ & \\
% & $ 10300 $ && $ 1.1734 \cdot 10^{-25 } $ && $ 1.2204 \cdot 10^{-21 } $ && $ 1.2692 \cdot 10^{-17 } $ && $ 1.3199 \cdot 10^{-13 } $ && $ 1.3727 \cdot 10^{-9 } $ & \\
% & $ 10400 $ && $ 7.9556 \cdot 10^{-26 } $ && $ 8.3534 \cdot 10^{-22 } $ && $ 8.7711 \cdot 10^{-18 } $ && $ 9.2096 \cdot 10^{-14 } $ && $ 9.6701 \cdot 10^{-10 } $ & \\
% & $ 10500 $ && $ 5.4077 \cdot 10^{-26 } $ && $ 5.7321 \cdot 10^{-22 } $ && $ 6.0761 \cdot 10^{-18 } $ && $ 6.4406 \cdot 10^{-14 } $ && $ 6.8270 \cdot 10^{-10 } $ & \\
% & $ 10600 $ && $ 3.6845 \cdot 10^{-26 } $ && $ 3.9424 \cdot 10^{-22 } $ && $ 4.2184 \cdot 10^{-18 } $ && $ 4.5137 \cdot 10^{-14 } $ && $ 4.8296 \cdot 10^{-10 } $ & \\
% & $ 10700 $ && $ 2.5150 \cdot 10^{-26 } $ && $ 2.7162 \cdot 10^{-22 } $ && $ 2.9335 \cdot 10^{-18 } $ && $ 3.1682 \cdot 10^{-14 } $ && $ 3.4217 \cdot 10^{-10 } $ & \\
% & $ 10800 $ && $ 1.7201 \cdot 10^{-26 } $ && $ 1.8749 \cdot 10^{-22 } $ && $ 2.0437 \cdot 10^{-18 } $ && $ 2.2276 \cdot 10^{-14 } $ && $ 2.4281 \cdot 10^{-10 } $ & \\
% & $ 10900 $ && $ 1.1640 \cdot 10^{-26 } $ && $ 1.2803 \cdot 10^{-22 } $ && $ 1.4084 \cdot 10^{-18 } $ && $ 1.5492 \cdot 10^{-14 } $ && $ 1.7041 \cdot 10^{-10 } $ & \\
& $ 11000 $ && $ 7.9284 \cdot 10^{-27 } $ && $ 8.8005 \cdot 10^{-23 } $ && $ 9.7685 \cdot 10^{-19 } $ && $ 1.0844 \cdot 10^{-14 } $ && $ 1.2036 \cdot 10^{-10 } $ & \\
% & $ 11100 $ && $ 5.4156 \cdot 10^{-27 } $ && $ 6.0655 \cdot 10^{-23 } $ && $ 6.7933 \cdot 10^{-19 } $ && $ 7.6085 \cdot 10^{-15 } $ && $ 8.5215 \cdot 10^{-11 } $ & \\
% & $ 11200 $ && $ 3.7120 \cdot 10^{-27 } $ && $ 4.1946 \cdot 10^{-23 } $ && $ 4.7398 \cdot 10^{-19 } $ && $ 5.3560 \cdot 10^{-15 } $ && $ 6.0523 \cdot 10^{-11 } $ & \\
% & $ 11300 $ && $ 2.5509 \cdot 10^{-27 } $ && $ 2.9080 \cdot 10^{-23 } $ && $ 3.3152 \cdot 10^{-19 } $ && $ 3.7793 \cdot 10^{-15 } $ && $ 4.3084 \cdot 10^{-11 } $ & \\
% & $ 11400 $ && $ 1.7570 \cdot 10^{-27 } $ && $ 2.0205 \cdot 10^{-23 } $ && $ 2.3236 \cdot 10^{-19 } $ && $ 2.6721 \cdot 10^{-15 } $ && $ 3.0729 \cdot 10^{-11 } $ & \\
% & $ 11500 $ && $ 1.2103 \cdot 10^{-27 } $ && $ 1.4039 \cdot 10^{-23 } $ && $ 1.6285 \cdot 10^{-19 } $ && $ 1.8891 \cdot 10^{-15 } $ && $ 2.1913 \cdot 10^{-11 } $ & \\
% & $ 11600 $ && $ 8.3445 \cdot 10^{-28 } $ && $ 9.7630 \cdot 10^{-24 } $ && $ 1.1423 \cdot 10^{-19 } $ && $ 1.3365 \cdot 10^{-15 } $ && $ 1.5637 \cdot 10^{-11 } $ & \\
% & $ 11700 $ && $ 5.7692 \cdot 10^{-28 } $ && $ 6.8076 \cdot 10^{-24 } $ && $ 8.0330 \cdot 10^{-20 } $ && $ 9.4789 \cdot 10^{-16 } $ && $ 1.1186 \cdot 10^{-11 } $ & \\
% & $ 11800 $ && $ 3.9987 \cdot 10^{-28 } $ && $ 4.7584 \cdot 10^{-24 } $ && $ 5.6625 \cdot 10^{-20 } $ && $ 6.7384 \cdot 10^{-16 } $ && $ 8.0187 \cdot 10^{-12 } $ & \\
% & $ 11900 $ && $ 2.7776 \cdot 10^{-28 } $ && $ 3.3331 \cdot 10^{-24 } $ && $ 3.9997 \cdot 10^{-20 } $ && $ 4.7996 \cdot 10^{-16 } $ && $ 5.7596 \cdot 10^{-12 } $ & \\
& $ 12000 $ && $ 1.9331 \cdot 10^{-28 } $ && $ 2.3390 \cdot 10^{-24 } $ && $ 2.8302 \cdot 10^{-20 } $ && $ 3.4245 \cdot 10^{-16 } $ && $ 4.1437 \cdot 10^{-12 } $ & \\
% & $ 12100 $ && $ 1.3478 \cdot 10^{-28 } $ && $ 1.6443 \cdot 10^{-24 } $ && $ 2.0060 \cdot 10^{-20 } $ && $ 2.4473 \cdot 10^{-16 } $ && $ 2.9857 \cdot 10^{-12 } $ & \\
% & $ 12200 $ && $ 9.3147 \cdot 10^{-29 } $ && $ 1.1457 \cdot 10^{-24 } $ && $ 1.4093 \cdot 10^{-20 } $ && $ 1.7334 \cdot 10^{-16 } $ && $ 2.1320 \cdot 10^{-12 } $ & \\
% & $ 12300 $ && $ 6.5069 \cdot 10^{-29 } $ && $ 8.0686 \cdot 10^{-25 } $ && $ 1.0005 \cdot 10^{-20 } $ && $ 1.2407 \cdot 10^{-16 } $ && $ 1.5384 \cdot 10^{-12 } $ & \\
% & $ 12400 $ && $ 4.5540 \cdot 10^{-29 } $ && $ 5.6924 \cdot 10^{-25 } $ && $ 7.1155 \cdot 10^{-21 } $ && $ 8.8944 \cdot 10^{-17 } $ && $ 1.1118 \cdot 10^{-12 } $ & \\
% & $ 12500 $ && $ 3.1924 \cdot 10^{-29 } $ && $ 4.0224 \cdot 10^{-25 } $ && $ 5.0682 \cdot 10^{-21 } $ && $ 6.3860 \cdot 10^{-17 } $ && $ 8.0463 \cdot 10^{-13 } $ & \\
% & $ 12600 $ && $ 2.2307 \cdot 10^{-29 } $ && $ 2.8330 \cdot 10^{-25 } $ && $ 3.5979 \cdot 10^{-21 } $ && $ 4.5694 \cdot 10^{-17 } $ && $ 5.8031 \cdot 10^{-13 } $ & \\
% & $ 12700 $ && $ 1.5600 \cdot 10^{-29 } $ && $ 1.9967 \cdot 10^{-25 } $ && $ 2.5558 \cdot 10^{-21 } $ && $ 3.2714 \cdot 10^{-17 } $ && $ 4.1874 \cdot 10^{-13 } $ & \\
% & $ 12800 $ && $ 1.0940 \cdot 10^{-29 } $ && $ 1.4112 \cdot 10^{-25 } $ && $ 1.8205 \cdot 10^{-21 } $ && $ 2.3484 \cdot 10^{-17 } $ && $ 3.0294 \cdot 10^{-13 } $ & \\
% & $ 12900 $ && $ 7.6900 \cdot 10^{-30 } $ && $ 9.9970 \cdot 10^{-26 } $ && $ 1.2996 \cdot 10^{-21 } $ && $ 1.6895 \cdot 10^{-17 } $ && $ 2.1964 \cdot 10^{-13 } $ & \\
& $ 13000 $ && $ 5.5830 \cdot 10^{-30 } $ && $ 7.5371 \cdot 10^{-26 } $ && $ 1.0175 \cdot 10^{-21 } $ && $ 1.3737 \cdot 10^{-17 } $ && $ 1.8544 \cdot 10^{-13 } $ & \\
% & $ 13500 $ && $ 9.9578 \cdot 10^{-31 } $ && $ 1.3743 \cdot 10^{-26 } $ && $ 1.8966 \cdot 10^{-22 } $ && $ 2.6174 \cdot 10^{-18 } $ && $ 3.6123 \cdot 10^{-14 } $ & \\
% & $ 13800.7464 $ && $ 3.5593 \cdot 10^{-31 } $ && $ 4.9830 \cdot 10^{-27 } $ && $ 6.9761 \cdot 10^{-23 } $ && $ 9.7665 \cdot 10^{-19 } $ && $ 1.3674 \cdot 10^{-14 } $ & \\
& $ 14000 $ && $ 1.8399 \cdot 10^{-31 } $ && $ 2.7598 \cdot 10^{-27 } $ && $ 4.1396 \cdot 10^{-23 } $ && $ 6.2094 \cdot 10^{-19 } $ && $ 9.3141 \cdot 10^{-15 } $ & \\
& $ 15000 $ && $ 6.5712 \cdot 10^{-33 } $ && $ 1.0514 \cdot 10^{-28 } $ && $ 1.6823 \cdot 10^{-24 } $ && $ 2.6916 \cdot 10^{-20 } $ && $ 4.3065 \cdot 10^{-16 } $ & \\
& $ 16000 $ && $ 2.5739 \cdot 10^{-34 } $ && $ 4.3756 \cdot 10^{-30 } $ && $ 7.4384 \cdot 10^{-26 } $ && $ 1.2646 \cdot 10^{-21 } $ && $ 2.1497 \cdot 10^{-17 } $ & \\
& $ 17000 $ && $ 1.1168 \cdot 10^{-35 } $ && $ 2.0101 \cdot 10^{-31 } $ && $ 3.6182 \cdot 10^{-27 } $ && $ 6.5127 \cdot 10^{-23 } $ && $ 1.1723 \cdot 10^{-18 } $ & \\
& $ 18000 $ && $ 5.3739 \cdot 10^{-37 } $ && $ 1.0211 \cdot 10^{-32 } $ && $ 1.9400 \cdot 10^{-28 } $ && $ 3.6860 \cdot 10^{-24 } $ && $ 7.0033 \cdot 10^{-20 } $ & \\
& $ 19000 $ && $ 2.7357 \cdot 10^{-38 } $ && $ 5.4714 \cdot 10^{-34 } $ && $ 1.0943 \cdot 10^{-29 } $ && $ 2.1886 \cdot 10^{-25 } $ && $ 4.3772 \cdot 10^{-21 } $ & \\
& $ 20000 $ && $ 1.5041 \cdot 10^{-39 } $ && $ 3.1585 \cdot 10^{-35 } $ && $ 6.6329 \cdot 10^{-31 } $ && $ 1.3929 \cdot 10^{-26 } $ && $ 2.9251 \cdot 10^{-22 } $ & \\
& $ 21000 $ && $ 9.0606 \cdot 10^{-41 } $ && $ 1.9934 \cdot 10^{-36 } $ && $ 4.3853 \cdot 10^{-32 } $ && $ 9.6477 \cdot 10^{-28 } $ && $ 2.1225 \cdot 10^{-23 } $ & \\
& $ 22000 $ && $ 5.6101 \cdot 10^{-42 } $ && $ 1.2904 \cdot 10^{-37 } $ && $ 2.9678 \cdot 10^{-33 } $ && $ 6.8258 \cdot 10^{-29 } $ && $ 1.5700 \cdot 10^{-24 } $ & \\
& $ 23000 $ && $ 3.7554 \cdot 10^{-43 } $ && $ 9.0129 \cdot 10^{-39 } $ && $ 2.1631 \cdot 10^{-34 } $ && $ 5.1915 \cdot 10^{-30 } $ && $ 1.2460 \cdot 10^{-25 } $ & \\
& $ 24000 $ && $ 1.3804 \cdot 10^{-43 } $ && $ 3.4508 \cdot 10^{-39 } $ && $ 8.6269 \cdot 10^{-35 } $ && $ 2.1568 \cdot 10^{-30 } $ && $ 5.3919 \cdot 10^{-26 } $ & \\
& $ 25000 $ && $ 1.3804 \cdot 10^{-43 } $ && $ 3.4508 \cdot 10^{-39 } $ && $ 8.6269 \cdot 10^{-35 } $ && $ 2.1568 \cdot 10^{-30 } $ && $ 5.3919 \cdot 10^{-26 } $ & \\
\hline
\end{longtable}
%Calculations shown in \path{Computations-sharper-psi-theta/B-August-2018.pari}
}

\subsection{Lower bound for first values of $x$ ($x \le 10^{19}$)}\footnote{$10^{19} \simeq e^{43.749}$}
%%%%%%%%%%%
\subsubsection{Lower bound for first values of $x \in [e^{J_0},10^{19}]$}
\quad \\
{\footnotesize
% {\color{blue} This table updated January 14, 2019 by Kirsten}
% The following table does not depend on $H_0$.
\begin{longtable}{ccrrrrrr} 
% \caption{Values of $\mathcal{C}_{b,k}$ in $\theta(x) > x - \frac{\mathcal{C}_{b,k} x}{(\log x)^k}$ calculated using the method described in \cref{Cor:Ck}, Section \ref{section:xsmall}.
% Each $\mathcal{C}_{b,k}$ is valid for $e^b \le x \le 10^{19} \simeq e^{43.749}$.}
\caption{$\theta(x) -x >  - \frac{\mathcal{C}_{b,k} x}{(\log x)^k}$ for all $x \in [e^b, 10^{19} )$, where $\mathcal{C}_{b,k}$ is defined in \eqref{defn:mathcalCbk}.}
 \label{CkValues}
\\ 
\hline
\multicolumn{1}{c}{\phantom{}} &
\multicolumn{1}{c}{$b$  } &
% \multicolumn{1}{c}{\phantom{}} &
\multicolumn{1}{c}{$\mathcal{C}_{b,1}$ } &
% \multicolumn{1}{c}{\phantom{a}} &
\multicolumn{1}{c}{$\mathcal{C}_{b,2}$ } &
% \multicolumn{1}{c}{\phantom{a}} &
\multicolumn{1}{c}{$\mathcal{C}_{b,3}$ } &
% \multicolumn{1}{c}{\phantom{a}} &
\multicolumn{1}{c}{$\mathcal{C}_{b,4}$ } &
% \multicolumn{1}{c}{\phantom{a}} &
\multicolumn{1}{c}{$\mathcal{C}_{b,5}$ } &
\multicolumn{1}{c}{\phantom{}} 
\\ \hline 
\endfirsthead
\multicolumn{8}{c}%
{\tablename\ \thetable{} -- continued from previous page} \\
\hline
\multicolumn{1}{c}{\phantom{}} &
\multicolumn{1}{c}{$b$  } &
% \multicolumn{1}{c}{\phantom{}} &
\multicolumn{1}{c}{$\mathcal{C}_{b,1}$ } &
% \multicolumn{1}{c}{\phantom{a}} &
\multicolumn{1}{c}{$\mathcal{C}_{b,2}$ } &
% \multicolumn{1}{c}{\phantom{a}} &
\multicolumn{1}{c}{$\mathcal{C}_{b,3}$ } &
% \multicolumn{1}{c}{\phantom{a}} &
\multicolumn{1}{c}{$\mathcal{C}_{b,4}$ } &
% \multicolumn{1}{c}{\phantom{a}} &
\multicolumn{1}{c}{$\mathcal{C}_{b,5}$ } &
\multicolumn{1}{c}{\phantom{}} 
\\ \hline 
\endhead
\hline \multicolumn{8}{r}{Continued on next page} \\ \hline
\endfoot
\hline 
\endlastfoot
&&&&&&& \\[-1em]
\multicolumn{8}{c}{Calculated using $c=0.8$, $C=0.81$, each value valid up to $5\cdot10^{10}\simeq e^{24.635}$.}\\ \hline
&&&&&&& \\[-1em]
& $ 20 $ & $ 1.68440 \cdot 10^{-3} $ & $ 3.36880 \cdot 10^{-2} $ & $ 6.73750 \cdot 10^{-1} $ & $ 1.34750 \cdot 10^{1} $ & $ 2.69500 \cdot 10^{2 } $ & \\
& $ 21 $ & $ 1.06840 \cdot 10^{-3} $ & $ 2.24350 \cdot 10^{-2} $ & $ 4.71140 \cdot 10^{-1} $ & $ 9.89390 \cdot 10^{0} $ & $ 2.07780 \cdot 10^{2 } $ & \\
& $ 22 $ & $ 6.76540 \cdot 10^{-4} $ & $ 1.48840 \cdot 10^{-2} $ & $ 3.27450 \cdot 10^{-1} $ & $ 7.20380 \cdot 10^{0} $ & $ 1.58490 \cdot 10^{2 } $ & \\
& $ 23 $ & $ 4.27800 \cdot 10^{-4} $ & $ 9.83920 \cdot 10^{-3} $ & $ 2.26310 \cdot 10^{-1} $ & $ 5.20500 \cdot 10^{0} $ & $ 1.19720 \cdot 10^{2 } $ & \\
& $ 24 $ & $ 2.70120 \cdot 10^{-4} $ & $ 6.48290 \cdot 10^{-3} $ & $ 1.55590 \cdot 10^{-1} $ & $ 3.73410 \cdot 10^{0} $ & $ 8.96190 \cdot 10^{1 } $ & \\
\hline
&&&&&&& \\[-1em]
\multicolumn{8}{c}{Calculated using $c=0.88$, $C=0.86$, each value valid up to $32\cdot10^{12}\simeq e^{31.097}$.}\\ \hline
&&&&&&& \\[-1em]
& $ \log(5\cdot 10^{10}) $ & $ 2.01560 \cdot 10^{-4} $ & $ 4.96540 \cdot 10^{-3} $ & $ 1.22330 \cdot 10^{-1} $ & $ 3.01350 \cdot 10^{0} $ & $ 7.42380 \cdot 10^{1 } $ & \\
& $ 25 $ & $ 1.70330 \cdot 10^{-4}$ &$ 4.25830 \cdot 10^{-3} $ & $ 1.06460 \cdot 10^{-1} $ & $ 2.66140 \cdot 10^{0} $ & $ 6.65350 \cdot 10^{1 } $ & \\
& $ 26 $ & $ 1.10220 \cdot 10^{-4} $ & $ 2.86560 \cdot 10^{-3} $ & $ 7.45050 \cdot 10^{-2} $ & $ 1.93720 \cdot 10^{0} $ & $ 5.03650 \cdot 10^{1 } $ & \\
& $ 27 $ & $ 6.93270 \cdot 10^{-5} $ & $ 1.87190 \cdot 10^{-3} $ & $ 5.05400 \cdot 10^{-2} $ & $ 1.36460 \cdot 10^{0} $ & $ 3.68430 \cdot 10^{1 } $ & \\
& $ 28 $ & $ 4.35580 \cdot 10^{-5} $ & $ 1.21970 \cdot 10^{-3} $ & $ 3.41500 \cdot 10^{-2} $ & $ 9.56180 \cdot 10^{-1} $ & $ 2.67730 \cdot 10^{1 } $ & \\
& $ 29 $ & $ 2.73380 \cdot 10^{-5} $ & $ 7.92780 \cdot 10^{-4} $ & $ 2.29910 \cdot 10^{-2} $ & $ 6.66730 \cdot 10^{-1} $ & $ 1.93360 \cdot 10^{1 } $ & \\
& $ 30 $ & $ 1.71400 \cdot 10^{-5} $ & $ 5.14180 \cdot 10^{-4} $ & $ 1.54260 \cdot 10^{-2} $ & $ 4.62760 \cdot 10^{-1} $ & $ 1.38830 \cdot 10^{1 } $ & \\
& $ 31 $ & $ 1.07350 \cdot 10^{-5} $ & $ 3.32790 \cdot 10^{-4} $ & $ 1.03170 \cdot 10^{-2} $ & $ 3.19810 \cdot 10^{-1} $ & $ 9.91400 \cdot 10^{0 } $ & \\
\hline
&&&&&&& \\[-1em]
\multicolumn{8}{c}{Calculated using $c=C=0.94$, each value valid up to $10^{19}\simeq e^{43.749}$.}\\ \hline
&&&&&&& \\[-1em]
& $ \log(3.2\cdot10^{13}) $ & $ 1.02600 \cdot 10^{-5} $ & $ 3.19040 \cdot 10^{-4} $ & $ 9.92090 \cdot 10^{-3} $ & $ 3.08510 \cdot 10^{-1} $ & $ 9.59360 \cdot 10^{0 } $ & \\
& $ 32 $ & $ 6.71750 \cdot 10^{-6} $ & $ 2.14960 \cdot 10^{-4} $ & $ 6.87870 \cdot 10^{-3} $ & $ 2.20120 \cdot 10^{-1} $ & $ 7.04380 \cdot 10^{0 } $ & \\
& $ 33 $ & $ 4.38000 \cdot 10^{-6} $ & $ 1.44540 \cdot 10^{-4} $ & $ 4.76990 \cdot 10^{-3} $ & $ 1.57410 \cdot 10^{-1} $ & $ 5.19440 \cdot 10^{0 } $ & \\
& $ 34 $ & $ 2.73610 \cdot 10^{-6} $ & $ 9.30270 \cdot 10^{-5} $ & $ 3.16300 \cdot 10^{-3} $ & $ 1.07540 \cdot 10^{-1} $ & $ 3.65640 \cdot 10^{0 } $ & \\
& $ 35 $ & $ 1.70780 \cdot 10^{-6} $ & $ 5.97730 \cdot 10^{-5} $ & $ 2.09210 \cdot 10^{-3} $ & $ 7.32220 \cdot 10^{-2} $ & $ 2.56280 \cdot 10^{0 } $ & \\
& $ 36 $ & $ 1.06520 \cdot 10^{-6} $ & $ 3.83460 \cdot 10^{-5} $ & $ 1.38050 \cdot 10^{-3} $ & $ 4.96960 \cdot 10^{-2} $ & $ 1.78910 \cdot 10^{0 } $ & \\
& $ 37 $ & $ 6.63850 \cdot 10^{-7} $ & $ 2.45630 \cdot 10^{-5} $ & $ 9.08810 \cdot 10^{-4} $ & $ 3.36260 \cdot 10^{-2} $ & $ 1.24420 \cdot 10^{0 } $ & \\
& $ 38 $ & $ 4.13450 \cdot 10^{-7} $ & $ 1.57120 \cdot 10^{-5} $ & $ 5.97020 \cdot 10^{-4} $ & $ 2.26870 \cdot 10^{-2} $ & $ 8.62100 \cdot 10^{-1 } $ & \\
& $ 39 $ & $ 2.57330 \cdot 10^{-7} $ & $ 1.00360 \cdot 10^{-5} $ & $ 3.91400 \cdot 10^{-4} $ & $ 1.52650 \cdot 10^{-2} $ & $ 5.95320 \cdot 10^{-1 } $ & \\
& $ 40 $ & $ 1.60060 \cdot 10^{-7} $ & $ 6.40240 \cdot 10^{-6} $ & $ 2.56100 \cdot 10^{-4} $ & $ 1.02440 \cdot 10^{-2} $ & $ 4.09750 \cdot 10^{-1 } $ & \\
& $ 41 $ & $ 9.94970 \cdot 10^{-8} $ & $ 4.07940 \cdot 10^{-6} $ & $ 1.67260 \cdot 10^{-4} $ & $ 6.85740 \cdot 10^{-3} $ & $ 2.81160 \cdot 10^{-1 } $ & \\
& $ 42 $ & $ 6.18140 \cdot 10^{-8} $ & $ 2.59620 \cdot 10^{-6} $ & $ 1.09040 \cdot 10^{-4} $ & $ 4.57970 \cdot 10^{-3} $ & $ 1.92350 \cdot 10^{-1 } $ & \\
& $ 43 $ & $ 3.83820 \cdot 10^{-8} $ & $ 1.65050 \cdot 10^{-6} $ & $ 7.09680 \cdot 10^{-5} $ & $ 3.05170 \cdot 10^{-3} $ & $ 1.31220 \cdot 10^{-1 } $ & \\
\hline
\end{longtable} 
%Calculations shown in \path{Dropbox/Undergraduate-Research-Sharper-Psi-Theta-Summer2017/sharper-bounds-psi-theta-Broadbent-Wilk-Lumley-Kad-Ng/Programs_for_Sharper_bounds/June 2019/C-June-2019.pari}.
}
%%%%%%%
\subsubsection{Numerical Verification}

{\footnotesize
\begin{center}
\begin{longtable}{cc|cccccc}
\caption{$\theta(x) - x > -\frac{\mathcal{D}_{k}(a,b) x}{(\log x)^k}$ for $x \in [a,b)$, where $\mathcal{D}_k(a,b)$ is defined in \eqref{bound:mathcalD}.}
%  \caption{$\mathcal{D}_k(X_0,y_0)$ for selected ranges between $1$ and $7.0 \cdot 10^{11}$. \\
%  $\mathcal{D}_k(X_0,y_0) = \max_{n_0 \le n \le n_1} \left( \frac{(\log p_{n})^k \cdot (p_{n} - \theta(p_{n-1}))}{p_{n}} \right)$
%  and $p_n$ is the prime that provides the maximum $\mathcal{D}_k(X_0,y_0)$ in each subinterval.}
% \caption{Values for $\mathcal{D}_k(a,b)$ in 
% \\
%  $\mathcal{D}_k(a,b) = \max_{n_0 \le n \le n_1} \left( \frac{(\log p_{n})^k \cdot (p_{n} - \theta(p_{n-1}))}{p_{n}} \right)$
% \\
% calculated using the method described in \cref{lem:numerical}, \cref{sec:numerical}. We give values for selected ranges between $1$ and $7.0 \cdot 10^{11}$.}
\label{Table-Dk}
\\
\hline
&&&&&\\[-1em]
 $a$ & $b$ & $\mathcal{D}_0(a,b)$ & $\mathcal{D}_1(a,b)$ & $\mathcal{D}_2(a,b)$ & $\mathcal{D}_3(a,b)$ & $\mathcal{D}_4(a,b)$ & $\mathcal{D}_5(a,b)$ 
\\ \hline
\endfirsthead
\multicolumn{7}{c}%
{\tablename\ \thetable{} -- continued from previous page} \\
\hline
&&&&&\\[-1em]
 $a$ & $b$ & $\mathcal{D}_0(a,b)$ & $\mathcal{D}_1(a,b)$ & $\mathcal{D}_2(a,b)$ & $\mathcal{D}_3(a,b)$ & $\mathcal{D}_4(a,b)$ & $\mathcal{D}_5(a,b)$ 
\\ \hline
\endhead
\hline \multicolumn{7}{r}{Continued on next page} \\ \hline
\endfoot
\hline %\hline
\endlastfoot
$\Big.1$ & $ 1 \cdot10^{5}$ & $1.00000\cdot 10^{0}$ & $1.23228\cdot 10^{0}$ & $3.96481\cdot 10^{0}$ & $2.08282\cdot 10^{1}$ & $1.51224\cdot 10^{2}$ & $1.30475\cdot 10^{3}$ \\
$\Big. 1 \cdot10^{5}$ & $ 5 \cdot10^{5}$ & $4.73131\cdot 10^{-3}$ & $5.53160\cdot 10^{-2}$ & $6.46725\cdot 10^{-1}$ & $7.56118\cdot 10^{0}$ & $8.93458\cdot 10^{1}$ & $1.07895\cdot 10^{3}$ \\

$\Big. 5 \cdot10^{5}$ & $ 1 \cdot10^{6}$ & $1.99799\cdot 10^{-3}$ & $2.67236\cdot 10^{-2}$ & $3.57434\cdot 10^{-1}$ & $4.78076\cdot 10^{0}$ & $6.39437\cdot 10^{1}$ & $8.55260\cdot 10^{2}$ \\

$\Big.1 \cdot 10^{6}$ & $ 5 \cdot 10^{6}$ & $1.67162 \cdot 10^{-3}$ & $2.32393 \cdot 10^{-2}$ & $3.23081 \cdot 10^{-1}$ & $4.49158 \cdot 10^{0}$ & $6.24434 \cdot 10^{1}$ & $8.68209 \cdot 10^{2}$ \\

$\Big. 5 \cdot10^{6}$ & $ 1 \cdot10^{7}$ & $6.56794 \cdot 10^{-4}$ & $1.02357 \cdot 10^{-2}$ & $1.59515 \cdot 10^{-1}$ & $2.48592 \cdot 10^{0}$ & $3.87413 \cdot 10^{1}$ & $6.03754 \cdot 10^{2}$ \\

$\Big. 1 \cdot 10^{7}$ & $ 5 \cdot 10^{7}$ & $5.24943 \cdot 10^{-4}$ & $8.47248 \cdot 10^{-3}$ & $1.36744 \cdot 10^{-1}$ & $2.20703 \cdot 10^{0}$ & $3.56210 \cdot 10^{1}$ & $5.74917 \cdot 10^{2}$ \\

$\Big. 5 \cdot10^{7}$ & $ 1 \cdot10^{8}$ & $2.16306 \cdot 10^{-4}$ & $3.85492 \cdot 10^{-3}$ & $6.87010 \cdot 10^{-2}$ & $1.22436 \cdot 10^{0}$ & $2.18201 \cdot 10^{1}$ & $3.88870 \cdot 10^{2}$ \\

$\Big.1 \cdot 10^{8}$ & $ 1 \cdot 10^{9}$ & $1.48989 \cdot 10^{-4}$ & $2.74568 \cdot 10^{-3}$ & $5.06111 \cdot 10^{-2}$ & $9.4259\cdot 10^{-1}$ & $1.74450\cdot 10^{1}$ & $3.26946\cdot 10^{2}$ \\

$\Big.1 \cdot 10^{9}$ & $ 1 \cdot10^{10}$ & $ 4.57993 \cdot 10^{-5}$ & $ 9.59129 \cdot 10^{-4}$ & $ 2.00861 \cdot 10^{-2}$ & $ 4.20644 \cdot 10^{-1}$ & $ 8.80913 \cdot 10^{0}$ & $ 1.84481 \cdot 10^{2}$ \\

$\Big.1\cdot10^{10}$ & $ 2\cdot10^{10}$ & $1.63137 \cdot 10^{-5}$ &  $3.77870 \cdot 10^{-4}$ &  $8.75253\cdot 10^{-3}$ & $2.02733\cdot 10^{-1}$ & $4.69587\cdot 10^{0}$ & $1.08770\cdot 10^{2}$ \\

$\Big.19\, 035\, 709\, 163$ & $ 2\cdot10^{10}$ & $1.13110 \cdot 10^{-5}$ & $2.67726 \cdot 10^{-4}$ & $6.33697 \cdot 10^{-3}$ & $1.49993\cdot 10^{-1}$ & $3.55028\cdot 10^{0}$ & $8.40336\cdot 10^{1}$ \\
$\Big.2\cdot10^{10}$ & $ 5\cdot10^{10}$ & $ 1.03687 \cdot 10^{-5}$ & $ 2.46002 \cdot 10^{-4}$ & $ 5.83648 \cdot 10^{-3}$ & $ 1.38472 \cdot 10^{-1}$ & $ 3.28531 \cdot 10^{0}$ & $ 7.79452 \cdot 10^{1}$ \\
$\Big.5\cdot10^{10}$ & $ 10\cdot10^{10}$ & $ 7.53086 \cdot 10^{-6}$ & $ 1.85559 \cdot 10^{-4}$ & $ 4.57216 \cdot 10^{-3}$ & $ 1.12657 \cdot 10^{-1}$ & $ 2.77586 \cdot 10^{0}$ & $ 6.83969 \cdot 10^{1}$ \\
$\Big.10\cdot10^{10}$ & $ 20\cdot10^{10}$ & $ 5.26640 \cdot 10^{-6}$ & $ 1.33915 \cdot 10^{-4}$ & $ 3.40521 \cdot 10^{-3}$ & $ 8.65882 \cdot 10^{-2}$ & $ 2.20178 \cdot 10^{0}$ & $ 5.59872 \cdot 10^{1}$ \\
$\Big.20\cdot10^{10}$ & $ 30\cdot10^{10}$ & $ 3.00664 \cdot 10^{-6}$ & $ 7.86826 \cdot 10^{-5}$ & $ 2.05910 \cdot 10^{-3}$ & $ 5.38859 \cdot 10^{-2}$ & $ 1.41018 \cdot 10^{0}$ & $ 3.69038 \cdot 10^{1}$ \\
$\Big.30\cdot10^{10}$ & $ 40\cdot10^{10}$ & $ 2.41963 \cdot 10^{-6}$ & $ 6.39936 \cdot 10^{-5}$ & $ 1.69249 \cdot 10^{-3}$ & $ 4.47624 \cdot 10^{-2}$ & $ 1.18386 \cdot 10^{0}$ & $ 3.16250 \cdot 10^{1}$ \\
$\Big.40\cdot10^{10}$ & $ 50\cdot10^{10}$ & $ 2.62662 \cdot 10^{-6}$ & $ 7.01926 \cdot 10^{-5}$ & $ 1.87579 \cdot 10^{-3}$ & $ 5.01279 \cdot 10^{-2}$ & $ 1.33959 \cdot 10^{0}$ & $ 3.57987 \cdot 10^{1}$ \\
$\Big.50\cdot10^{10}$ & $ 60\cdot10^{10}$ & $ 1.89356 \cdot 10^{-6}$ & $ 5.10206 \cdot 10^{-5}$ & $ 1.37472 \cdot 10^{-3}$ & $ 3.70409 \cdot 10^{-2}$ & $ 9.98044 \cdot 10^{-1}$ & $ 2.68917 \cdot 10^{1}$ \\
$\Big.60\cdot10^{10}$ & $ 70\cdot10^{10}$ & $ 1.75478 \cdot 10^{-6}$ & $ 4.78305 \cdot 10^{-5}$ & $ 1.30373 \cdot 10^{-3}$ & $ 3.55359 \cdot 10^{-2}$ & $ 9.68610 \cdot 10^{-1}$ & $ 2.64016 \cdot 10^{1}$ \\
\end{longtable}
\end{center}
% Files: \path{Computations-sharper-psi-theta/DTableComputations/D-OneRow.pari}, \path{Computations-sharper-psi-theta/DTableComputations/D-Table.pari},
%  and the assorted \path{.out} files in\path{Computations-sharper-psi-theta/DTableComputations/ThetaAdd}.
}

\subsection{Final results for Theorem \ref{MainResult:theta}} \
\subsubsection{Values for Theorem \ref{MainResult:theta} for $k = 0$.} \

{\footnotesize
\begin{center}
\begin{table}[h!] 

\caption{$(1-m_0)x < \theta(x) < (1+M_0)x$ for all $x > X_0 = X_1$, where $m_0$ and $M_0$ are defined in \cref{section:k=0}.}
\label{ThetaEpsilon}
\begin{tabular}{|cc|ccc|}
\hline
& $\log X_0 = \log X_1$ & $M_0$ & $m_0$ & \\ \hline
&&&&\\[-1em]
& $20 $ & $ 4.2676 \cdot 10^{-5} $ & $ 9.1639 \cdot 10^{-5}$ & \\
% & $21 $ & $ 2.5884 \cdot 10^{-5 $ & $ 5.5407 \cdot 10^{-5}$ & \\
% & $22 $ & $ 1.5700 \cdot 10^{-5 $ & $ 3.3517 \cdot 10^{-5}$ & \\
% & $23 $ & $ 9.5223 \cdot 10^{-6 $ & $ 2.0285 \cdot 10^{-5}$ & \\
% & $24 $ & $ 5.7756 \cdot 10^{-6 $ & $ 1.2281 \cdot 10^{-5}$ & \\
& $25 $ & $ 3.5031 \cdot 10^{-6} $ & $ 7.4366 \cdot 10^{-6}$ & \\
% & $26 $ & $ 2.1247 \cdot 10^{-6 $ & $ 4.5046 \cdot 10^{-6}$ & \\
% & $27 $ & $ 1.2887 \cdot 10^{-6 $ & $ 2.7292 \cdot 10^{-6}$ & \\
% & $28 $ & $ 7.8164 \cdot 10^{-7 $ & $ 1.6538 \cdot 10^{-6}$ & \\
% & $29 $ & $ 4.7409 \cdot 10^{-7 $ & $ 1.0023 \cdot 10^{-6}$ & \\
& $30 $ & $ 2.8755 \cdot 10^{-7} $ & $ 6.0751 \cdot 10^{-7}$ & \\
% & $31 $ & $ 1.7441 \cdot 10^{-7 $ & $ 3.6827 \cdot 10^{-7}$ & \\
% & $32 $ & $ 1.0578 \cdot 10^{-7 $ & $ 2.2327 \cdot 10^{-7}$ & \\
% & $33 $ & $ 6.4161 \cdot 10^{-8 $ & $ 1.3537 \cdot 10^{-7}$ & \\
% & $34 $ & $ 3.8916 \cdot 10^{-8 $ & $ 8.2073 \cdot 10^{-8}$ & \\
& $35 $ & $ 2.3603 \cdot 10^{-8} $ & $ 4.9766 \cdot 10^{-8}$ & \\
% & $36 $ & $ 1.9338 \cdot 10^{-8 $ & $ 3.5199 \cdot 10^{-8}$ & \\
% & $37 $ & $ 1.9338 \cdot 10^{-8 $ & $ 2.8955 \cdot 10^{-8}$ & \\
% & $38 $ & $ 1.9338 \cdot 10^{-8 $ & $ 2.5169 \cdot 10^{-8}$ & \\
% & $39 $ & $ 1.9338 \cdot 10^{-8 $ & $ 2.2874 \cdot 10^{-8}$ & \\
& $40 $ & $ 1.9338 \cdot 10^{-8} $ & $ 2.1482 \cdot 10^{-8}$ & \\
% & $41 $ & $ 1.9338 \cdot 10^{-8 $ & $ 2.0638 \cdot 10^{-8}$ & \\
% & $42 $ & $ 1.9338 \cdot 10^{-8 $ & $ 2.0127 \cdot 10^{-8}$ & \\
% & $43 $ & $ 1.9338 \cdot 10^{-8 $ & $ 1.9816 \cdot 10^{-8}$ & \\
& $19\log 10 $ & $ 1.9338 \cdot 10^{-8} $ & $ 1.9667 \cdot 10^{-8}$ & \\
% & $44 $ & $ 1.7200 \cdot 10^{-8 $ & $ 1.7491 \cdot 10^{-8}$ & \\
& $45 $ & $ 1.0907 \cdot 10^{-8} $ & $ 1.1084 \cdot 10^{-8}$ & \\
% & $46 $ & $ 6.9451 \cdot 10^{-9 $ & $ 7.0518 \cdot 10^{-9}$ & \\
% & $47 $ & $ 4.4012 \cdot 10^{-9 $ & $ 4.4660 \cdot 10^{-9}$ & \\
% & $48 $ & $ 2.7914 \cdot 10^{-9 $ & $ 2.8307 \cdot 10^{-9}$ & \\
% & $49 $ & $ 1.7689 \cdot 10^{-9 $ & $ 1.7927 \cdot 10^{-9}$ & \\
& $50 $ & $ 1.1199 \cdot 10^{-9} $ & $ 1.1344 \cdot 10^{-9}$ & \\
% & $51 $ & $ 7.0924 \cdot 10^{-10 $ & $ 7.1800 \cdot 10^{-10}$ & \\
% & $52 $ & $ 4.4781 \cdot 10^{-10 $ & $ 4.5312 \cdot 10^{-10}$ & \\
% & $53 $ & $ 2.8280 \cdot 10^{-10 $ & $ 2.8602 \cdot 10^{-10}$ & \\
% & $54 $ & $ 1.7846 \cdot 10^{-10 $ & $ 1.8041 \cdot 10^{-10}$ & \\
% & $55 $ & $ 1.1255 \cdot 10^{-10 $ & $ 1.1374 \cdot 10^{-10}$ & \\
% & $56 $ & $ 7.0928 \cdot 10^{-11 $ & $ 7.1647 \cdot 10^{-11}$ & \\
% & $57 $ & $ 4.4655 \cdot 10^{-11 $ & $ 4.5091 \cdot 10^{-11}$ & \\
% & $58 $ & $ 2.8328 \cdot 10^{-11 $ & $ 2.8593 \cdot 10^{-11}$ & \\
% & $59 $ & $ 1.8440 \cdot 10^{-11 $ & $ 1.8601 \cdot 10^{-11}$ & \\
& $60 $ & $ 1.2215 \cdot 10^{-11} $ & $ 1.2312 \cdot 10^{-11}$ & \\
% & $65 $ & $ 3.5713 \cdot 10^{-12 $ & $ 3.5793 \cdot 10^{-12}$ & \\
& $70 $ & $ 2.7923 \cdot 10^{-12} $ & $ 2.7930 \cdot 10^{-12}$ & \\
% & $75 $ & $ 2.7036 \cdot 10^{-12 $ & $ 2.7037 \cdot 10^{-12}$ & \\
& $80 $ & $ 2.6108 \cdot 10^{-12} $ & $ 2.6108 \cdot 10^{-12}$ & \\
% & $85 $ & $ 2.5692 \cdot 10^{-12 $ & $ 2.5693 \cdot 10^{-12}$ & \\
& $90 $ & $ 2.5213 \cdot 10^{-12} $ & $ 2.5213 \cdot 10^{-12}$ & \\
% & $95 $ & $ 2.4919 \cdot 10^{-12 $ & $ 2.4920 \cdot 10^{-12}$ & \\
& $100 $ & $ 2.4530 \cdot 10^{-12} $ & $ 2.4530 \cdot 10^{-12}$ & \\
& $200 $ & $ 2.1815 \cdot 10^{-12} $ & $ 2.1816 \cdot 10^{-12}$ & \\
& $300 $ & $ 2.0902 \cdot 10^{-12} $ & $ 2.0903 \cdot 10^{-12}$ & \\
& $400 $ & $ 2.0398 \cdot 10^{-12} $ & $ 2.0399 \cdot 10^{-12}$ & \\
& $500 $ & $ 1.9999 \cdot 10^{-12} $ & $ 1.9999 \cdot 10^{-12}$ & \\
\hline
\end{tabular}
\begin{tabular}{|cc|ccc|}
\hline
& $\log X_0 = \log X_1$ & $M_0$ & $m_0$ & \\ \hline
&&&&\\[-1em]
% & $600 $ & $ 1.9889 \cdot 10^{-12} $ & $ 1.9890 \cdot 10^{-12}$ & \\
& $700 $ & $ 1.9764 \cdot 10^{-12} $ & $ 1.9765 \cdot 10^{-12}$ & \\
% & $800 $ & $ 1.9671 \cdot 10^{-12} $ & $ 1.9671 \cdot 10^{-12}$ & \\
% & $900 $ & $ 1.9599 \cdot 10^{-12} $ & $ 1.9599 \cdot 10^{-12}$ & \\
& $1000 $ & $ 1.9475 \cdot 10^{-12} $ & $ 1.9476 \cdot 10^{-12}$ & \\
% & $1500 $ & $ 1.9368 \cdot 10^{-12} $ & $ 1.9368 \cdot 10^{-12}$ & \\
% & $1600 $ & $ 1.9292 \cdot 10^{-12} $ & $ 1.9293 \cdot 10^{-12}$ & \\
% & $1700 $ & $ 1.9273 \cdot 10^{-12} $ & $ 1.9274 \cdot 10^{-12}$ & \\
% & $1725 $ & $ 1.9269 \cdot 10^{-12} $ & $ 1.9269 \cdot 10^{-12}$ & \\
% & $1750 $ & $ 1.9265 \cdot 10^{-12} $ & $ 1.9265 \cdot 10^{-12}$ & \\
% & $1775 $ & $ 1.9260 \cdot 10^{-12} $ & $ 1.9261 \cdot 10^{-12}$ & \\
% & $1800 $ & $ 1.9256 \cdot 10^{-12} $ & $ 1.9257 \cdot 10^{-12}$ & \\
% & $1825 $ & $ 1.9253 \cdot 10^{-12} $ & $ 1.9253 \cdot 10^{-12}$ & \\
% & $1850 $ & $ 1.9249 \cdot 10^{-12} $ & $ 1.9249 \cdot 10^{-12}$ & \\
% & $1875 $ & $ 1.9245 \cdot 10^{-12} $ & $ 1.9245 \cdot 10^{-12}$ & \\
% & $1900 $ & $ 1.9241 \cdot 10^{-12} $ & $ 1.9242 \cdot 10^{-12}$ & \\
% & $1925 $ & $ 1.9238 \cdot 10^{-12} $ & $ 1.9238 \cdot 10^{-12}$ & \\
% & $1950 $ & $ 1.9234 \cdot 10^{-12} $ & $ 1.9235 \cdot 10^{-12}$ & \\
% & $1975 $ & $ 1.9231 \cdot 10^{-12} $ & $ 1.9232 \cdot 10^{-12}$ & \\
& $2000 $ & $ 1.9228 \cdot 10^{-12} $ & $ 1.9228 \cdot 10^{-12}$ & \\
& $3000 $ & $ 4.5997 \cdot 10^{-14} $ & $ 4.5998 \cdot 10^{-14}$ & \\
& $4000 $ & $ 1.4263 \cdot 10^{-16} $ & $ 1.4264 \cdot 10^{-16}$ & \\
& $5000 $ & $ 5.6303 \cdot 10^{-19} $ & $ 5.6303 \cdot 10^{-19}$ & \\
% & $5100 $ & $ 3.2859 \cdot 10^{-19} $ & $ 3.2859 \cdot 10^{-19}$ & \\
% & $5200 $ & $ 1.9217 \cdot 10^{-19} $ & $ 1.9217 \cdot 10^{-19}$ & \\
% & $5300 $ & $ 1.1292 \cdot 10^{-19} $ & $ 1.1292 \cdot 10^{-19}$ & \\
% & $5400 $ & $ 6.6312 \cdot 10^{-20} $ & $ 6.6312 \cdot 10^{-20}$ & \\
% & $5500 $ & $ 3.9135 \cdot 10^{-20} $ & $ 3.9135 \cdot 10^{-20}$ & \\
% & $5600 $ & $ 2.3188 \cdot 10^{-20} $ & $ 2.3188 \cdot 10^{-20}$ & \\
% & $5700 $ & $ 1.3807 \cdot 10^{-20} $ & $ 1.3807 \cdot 10^{-20}$ & \\
% & $5800 $ & $ 8.2234 \cdot 10^{-21} $ & $ 8.2235 \cdot 10^{-21}$ & \\
% & $5900 $ & $ 4.9137 \cdot 10^{-21} $ & $ 4.9137 \cdot 10^{-21}$ & \\
% & $6000 $ & $ 2.9429 \cdot 10^{-21} $ & $ 2.9429 \cdot 10^{-21}$ & \\
% & $6100 $ & $ 1.7721 \cdot 10^{-21} $ & $ 1.7722 \cdot 10^{-21}$ & \\
% & $6200 $ & $ 1.0663 \cdot 10^{-21} $ & $ 1.0664 \cdot 10^{-21}$ & \\
% & $6300 $ & $ 6.4480 \cdot 10^{-22} $ & $ 6.4481 \cdot 10^{-22}$ & \\
% & $6400 $ & $ 3.9201 \cdot 10^{-22} $ & $ 3.9202 \cdot 10^{-22}$ & \\
% & $6500 $ & $ 2.3849 \cdot 10^{-22} $ & $ 2.3850 \cdot 10^{-22}$ & \\
% & $6600 $ & $ 1.4520 \cdot 10^{-22} $ & $ 1.4521 \cdot 10^{-22}$ & \\
% & $6700 $ & $ 8.8893 \cdot 10^{-23} $ & $ 8.8893 \cdot 10^{-23}$ & \\
% & $6800 $ & $ 5.4567 \cdot 10^{-23} $ & $ 5.4567 \cdot 10^{-23}$ & \\
% & $6900 $ & $ 3.3649 \cdot 10^{-23} $ & $ 3.3649 \cdot 10^{-23}$ & \\
& $7000 $ & $ 2.0765 \cdot 10^{-23} $ & $ 2.0766 \cdot 10^{-23}$ & \\
& $10000 $ & $ 3.7849 \cdot 10^{-29} $ & $ 3.7850 \cdot 10^{-29}$ & \\
% & $10100 $ & $ 2.5240 \cdot 10^{-29} $ & $ 2.5241 \cdot 10^{-29}$ & \\
% & $10200 $ & $ 1.6882 \cdot 10^{-29} $ & $ 1.6883 \cdot 10^{-29}$ & \\
% & $10300 $ & $ 1.1282 \cdot 10^{-29} $ & $ 1.1283 \cdot 10^{-29}$ & \\
% & $10400 $ & $ 7.5767 \cdot 10^{-30} $ & $ 7.5768 \cdot 10^{-30}$ & \\
% & $10500 $ & $ 5.1015 \cdot 10^{-30} $ & $ 5.1016 \cdot 10^{-30}$ & \\
% & $10600 $ & $ 3.4434 \cdot 10^{-30} $ & $ 3.4435 \cdot 10^{-30}$ & \\
% & $10700 $ & $ 2.3287 \cdot 10^{-30} $ & $ 2.3287 \cdot 10^{-30}$ & \\
% & $10800 $ & $ 1.5781 \cdot 10^{-30} $ & $ 1.5781 \cdot 10^{-30}$ & \\
% & $10900 $ & $ 1.0581 \cdot 10^{-30} $ & $ 1.0581 \cdot 10^{-30}$ & \\
& $11000 $ & $ 7.1426 \cdot 10^{-31} $ & $ 7.1427 \cdot 10^{-31}$ & \\
% & $11100 $ & $ 4.8353 \cdot 10^{-31} $ & $ 4.8354 \cdot 10^{-31}$ & \\
% & $11200 $ & $ 3.2849 \cdot 10^{-31} $ & $ 3.2850 \cdot 10^{-31}$ & \\
% & $11300 $ & $ 2.2376 \cdot 10^{-31} $ & $ 2.2377 \cdot 10^{-31}$ & \\
% & $11400 $ & $ 1.5278 \cdot 10^{-31} $ & $ 1.5278 \cdot 10^{-31}$ & \\
% & $11500 $ & $ 1.0433 \cdot 10^{-31} $ & $ 1.0433 \cdot 10^{-31}$ & \\
% & $11600 $ & $ 7.1320 \cdot 10^{-32} $ & $ 7.1320 \cdot 10^{-32}$ & \\
% & $11700 $ & $ 4.8891 \cdot 10^{-32} $ & $ 4.8892 \cdot 10^{-32}$ & \\
% & $11800 $ & $ 3.3602 \cdot 10^{-32} $ & $ 3.3603 \cdot 10^{-32}$ & \\
% & $11900 $ & $ 2.3146 \cdot 10^{-32} $ & $ 2.3147 \cdot 10^{-32}$ & \\
& $12000 $ & $ 1.5975 \cdot 10^{-32} $ & $ 1.5976 \cdot 10^{-32}$ & \\
% & $12100 $ & $ 1.1047 \cdot 10^{-32} $ & $ 1.1048 \cdot 10^{-32}$ & \\
% & $12200 $ & $ 7.5729 \cdot 10^{-33} $ & $ 7.5729 \cdot 10^{-33}$ & \\
% & $12300 $ & $ 5.2475 \cdot 10^{-33} $ & $ 5.2475 \cdot 10^{-33}$ & \\
% & $12400 $ & $ 3.6431 \cdot 10^{-33} $ & $ 3.6432 \cdot 10^{-33}$ & \\
% & $12500 $ & $ 2.5336 \cdot 10^{-33} $ & $ 2.5337 \cdot 10^{-33}$ & \\
% & $12600 $ & $ 1.7564 \cdot 10^{-33} $ & $ 1.7565 \cdot 10^{-33}$ & \\
% & $12700 $ & $ 1.2187 \cdot 10^{-33} $ & $ 1.2187 \cdot 10^{-33}$ & \\
% & $12800 $ & $ 8.4802 \cdot 10^{-34} $ & $ 8.4803 \cdot 10^{-34}$ & \\
% & $12900 $ & $ 5.9153 \cdot 10^{-34} $ & $ 5.9154 \cdot 10^{-34}$ & \\
& $13000 $ & $ 4.1355 \cdot 10^{-34} $ & $ 4.1356 \cdot 10^{-34}$ & \\
% & $13500 $ & $ 7.2154 \cdot 10^{-35} $ & $ 7.2154 \cdot 10^{-35}$ & \\
& $13800.7464 $ & $ 2.5423 \cdot 10^{-35} $ & $ 2.5424 \cdot 10^{-35}$ & \\
% & $14000 $ & $ 1.2265 \cdot 10^{-35} $ & $ 1.2266 \cdot 10^{-35}$ & \\
& $15000 $ & $ 4.1070 \cdot 10^{-37} $ & $ 4.1070 \cdot 10^{-37}$ & \\
% & $16000 $ & $ 1.5140 \cdot 10^{-38} $ & $ 1.5141 \cdot 10^{-38}$ & \\
& $17000 $ & $ 6.2040 \cdot 10^{-40} $ & $ 6.2040 \cdot 10^{-40}$ & \\
% & $18000 $ & $ 2.8283 \cdot 10^{-41} $ & $ 2.8284 \cdot 10^{-41}$ & \\
% & $19000 $ & $ 1.3678 \cdot 10^{-42} $ & $ 1.3679 \cdot 10^{-42}$ & \\
& $20000 $ & $ 7.1621 \cdot 10^{-44} $ & $ 7.1621 \cdot 10^{-44}$ & \\
% & $21000 $ & $ 4.1184 \cdot 10^{-45} $ & $ 4.1185 \cdot 10^{-45}$ & \\
& $22000 $ & $ 2.4392 \cdot 10^{-46} $ & $ 2.4392 \cdot 10^{-46}$ & \\
% & $23000 $ & $ 1.5647 \cdot 10^{-47} $ & $ 1.5648 \cdot 10^{-47}$ & \\
% & $24000 $ & $ 1.0702 \cdot 10^{-48} $ & $ 1.0703 \cdot 10^{-48}$ & \\
& $25000 $ & $ 7.5724 \cdot 10^{-50} $ & $ 7.5724 \cdot 10^{-50}$ & \\
\hline
\end{tabular}
\end{table}
\end{center}
% Files: \path{Computations-sharper-psi-theta/ExplicitK0.pari}.
}

 \pagebreak

\subsubsection{Values for Theorem \ref{MainResult:theta} for $k \in \{1,2,3,4,5 \}$.} \

{\footnotesize
% {\color{red!30} Updated August 11, 2020 with new CombinedABCTable.pari. Command used:CreateMasterResults(19*log(10),log(7*10^11),bList,SmallCombined) }
\begin{center}
\begin{longtable}{cc|ccccccccccc}
%  \caption{Combined results for Theorem \ref{MainResult:theta}. For the small range, $M_k$ values for $\log X_0 \le 19\log 10$ can be reduced to the $19\log 10$ row. 
%  In addition, $X_1$ can be reduced to $0$.}
\caption{For $k \in \{1,2,3,4,5\}$, 
$ -\frac{m_k x}{(\log x)^k} < \theta(x) - x$ for $x > X_0 $
and
 $ \theta(x) - x < \frac{M_k x}{(\log x)^k}$ for  $x > X_1 $
,where $m_k$ and $M_k$ are defined in Section \ref{thetalemmataproofs}. }
\label{Table:MasterResults}
 \\ \hline  
 &&&&&&&&&&&&\\[-1em]
\phantom{} & $\log X_0 = \log X_1$ && $m_1 = M_1$ && $m_2 = M_2$ && $m_3 = M_3$ && $m_4 = M_4$ && $m_5 = M_5$ & \\
\hline
\endfirsthead
\multicolumn{6}{c}%
{\tablename\ \thetable{} -- continued from previous page} \\
\hline
&&&&&&&&&&&&\\[-1em]
\phantom{} & $\log X_0 = \log X_1$ && $m_1 = M_1$ && $m_2 = M_2$ && $m_3 = M_3$ && $m_4 = M_4$ && $m_5 = M_5$ & \\
\hline 
\endhead
\hline \multicolumn{6}{r}{Continued on next page} \\ \hline
\endfoot
\hline 
\endlastfoot
 \hline 
 &&&&&&&&&&&&\\[-1em]
& $ 0 $ && $ 1.2323 \cdot10^{0 } $ && $ 3.9649 \cdot10^{0 } $ && $ 2.0829 \cdot10^{1 } $ && $ 1.5123 \cdot10^{2 } $ && $ 1.3441 \cdot10^{5 } $ & \\
& $ \log(10^5) $ && $ 5.5316 \cdot10^{-2 } $ && $ 6.4673 \cdot10^{-1 } $ && $ 7.5612 \cdot10^{0 } $ && $ 8.9346 \cdot10^{1 } $ && $ 1.3441 \cdot10^{5 } $ & \\
& $ \log(5\cdot10^{5}) $ && $ 2.6724 \cdot10^{-2 } $ && $ 3.5744 \cdot10^{-1 } $ && $ 4.7808 \cdot10^{0 } $ && $ 6.3944 \cdot10^{1 } $ && $ 1.3441 \cdot10^{5 } $ & \\
& $ \log(10^{6}) $ && $ 2.3240 \cdot10^{-2 } $ && $ 3.2309 \cdot10^{-1 } $ && $ 4.4916 \cdot10^{0 } $ && $ 6.2444 \cdot10^{1 } $ && $ 1.3441 \cdot10^{5 } $ & \\
& $ \log(5\cdot10^{6}) $ && $ 1.0236 \cdot10^{-2 } $ && $ 1.5952 \cdot10^{-1 } $ && $ 2.4860 \cdot10^{0 } $ && $ 5.7184 \cdot10^{1 } $ && $ 1.3441 \cdot10^{5 } $ & \\
& $ \log(10^{7}) $ && $ 8.4725 \cdot10^{-3 } $ && $ 1.3675 \cdot10^{-1 } $ && $ 2.2071 \cdot10^{0 } $ && $ 5.7184 \cdot10^{1 } $ && $ 1.3441 \cdot10^{5 } $ & \\
& $ \log(5\cdot10^{7}) $ && $ 3.8550 \cdot10^{-3 } $ && $ 6.8701 \cdot10^{-2 } $ && $ 1.2244 \cdot10^{0 } $ && $ 5.7184 \cdot10^{1 } $ && $ 1.3441 \cdot10^{5 } $ & \\
& $ \log(10^{8}) $ && $ 2.7457 \cdot10^{-3 } $ && $ 5.0612 \cdot10^{-2 } $ && $ 9.4259 \cdot10^{-1 } $ && $ 5.7184 \cdot10^{1 } $ && $ 1.3441 \cdot10^{5 } $ & \\
& $ \log(10^{9}) $ && $ 9.5913 \cdot10^{-4 } $ && $ 2.0087 \cdot10^{-2 } $ && $ 4.2065 \cdot10^{-1 } $ && $ 5.7184 \cdot10^{1 } $ && $ 1.3441 \cdot10^{5 } $ & \\
& $ \log(10^{10}) $ && $ 3.7787 \cdot10^{-4 } $ && $ 8.7526 \cdot10^{-3 } $ && $ 2.0274 \cdot10^{-1 } $ && $ 5.7184 \cdot10^{1 } $ && $ 1.3441 \cdot10^{5 } $ & \\
& $ \log(19035709163) $ && $ 2.6773 \cdot10^{-4 } $ && $ 6.3370 \cdot10^{-3 } $ && $ 1.5000 \cdot10^{-1 } $ && $ 5.7184 \cdot10^{1 } $ && $ 1.3441 \cdot10^{5 } $ & \\
& $ \log(2\cdot 10^{10}) $ && $ 2.4601 \cdot10^{-4 } $ && $ 5.8365 \cdot10^{-3 } $ && $ 1.3848 \cdot10^{-1 } $ && $ 5.7184 \cdot10^{1 } $ && $ 1.3441 \cdot10^{5 } $ & \\
& $ \log(5\cdot 10^{10}) $ && $ 1.8556 \cdot10^{-4 } $ && $ 4.5722 \cdot10^{-3 } $ && $ 1.1266 \cdot10^{-1 } $ && $ 5.7184 \cdot10^{1 } $ && $ 1.3441 \cdot10^{5 } $ & \\
& $ \log(10^{11}) $ && $ 1.3392 \cdot10^{-4 } $ && $ 3.4053 \cdot10^{-3 } $ && $ 8.6589 \cdot10^{-2 } $ && $ 5.7184 \cdot10^{1 } $ && $ 1.3441 \cdot10^{5 } $ & \\
& $ \log(2\cdot 10^{11}) $ && $ 7.8683 \cdot10^{-5 } $ && $ 2.0591 \cdot10^{-3 } $ && $ 5.3886 \cdot10^{-2 } $ && $ 5.7184 \cdot10^{1 } $ && $ 1.3441 \cdot10^{5 } $ & \\
& $ \log(3\cdot 10^{11}) $ && $ 7.0193 \cdot10^{-5 } $ && $ 1.8758 \cdot10^{-3 } $ && $ 5.0536 \cdot10^{-2 } $ && $ 5.7184 \cdot10^{1 } $ && $ 1.3441 \cdot10^{5 } $ & \\
& $ \log(4\cdot 10^{11}) $ && $ 7.0193 \cdot10^{-5 } $ && $ 1.8758 \cdot10^{-3 } $ && $ 5.0536 \cdot10^{-2 } $ && $ 5.7184 \cdot10^{1 } $ && $ 1.3441 \cdot10^{5 } $ & \\
& $ \log(5\cdot10^{11}) $ && $ 6.9322 \cdot10^{-5 } $ && $ 1.8717 \cdot10^{-3 } $ && $ 5.0536 \cdot10^{-2 } $ && $ 5.7184 \cdot10^{1 } $ && $ 1.3441 \cdot10^{5 } $ & \\
& $ \log(6\cdot10^{11}) $ && $ 6.9322 \cdot10^{-5 } $ && $ 1.8717 \cdot10^{-3 } $ && $ 5.0536 \cdot10^{-2 } $ && $ 5.7184 \cdot10^{1 } $ && $ 1.3441 \cdot10^{5 } $ & \\
& $ 28 $ && $ 4.3555 \cdot10^{-5 } $ && $ 1.2196 \cdot10^{-3 } $ && $ 3.4148 \cdot10^{-2 } $ && $ 5.7184 \cdot10^{1 } $ && $ 1.3441 \cdot10^{5 } $ & \\
& $ 29 $ && $ 2.7336 \cdot10^{-5 } $ && $ 7.9272 \cdot10^{-4 } $ && $ 2.4334 \cdot10^{-2 } $ && $ 5.7184 \cdot10^{1 } $ && $ 1.3441 \cdot10^{5 } $ & \\
& $ 30 $ && $ 1.7139 \cdot10^{-5 } $ && $ 5.1415 \cdot10^{-4 } $ && $ 2.4334 \cdot10^{-2 } $ && $ 5.7184 \cdot10^{1 } $ && $ 1.3441 \cdot10^{5 } $ & \\
& $ 31 $ && $ 1.0735 \cdot10^{-5 } $ && $ 3.3277 \cdot10^{-4 } $ && $ 2.4334 \cdot10^{-2 } $ && $ 5.7184 \cdot10^{1 } $ && $ 1.3441 \cdot10^{5 } $ & \\
& $ 32 $ && $ 7.0053 \cdot10^{-6 } $ && $ 2.2417 \cdot10^{-4 } $ && $ 2.4334 \cdot10^{-2 } $ && $ 5.7184 \cdot10^{1 } $ && $ 1.3441 \cdot10^{5 } $ & \\
& $ 33 $ && $ 4.3798 \cdot10^{-6 } $ && $ 1.4454 \cdot10^{-4 } $ && $ 2.4334 \cdot10^{-2 } $ && $ 5.7184 \cdot10^{1 } $ && $ 1.3441 \cdot10^{5 } $ & \\
& $ 34 $ && $ 2.7360 \cdot10^{-6 } $ && $ 9.3023 \cdot10^{-5 } $ && $ 2.4334 \cdot10^{-2 } $ && $ 5.7184 \cdot10^{1 } $ && $ 1.3441 \cdot10^{5 } $ & \\
& $ 35 $ && $ 1.7078 \cdot10^{-6 } $ && $ 5.9771 \cdot10^{-5 } $ && $ 2.4334 \cdot10^{-2 } $ && $ 5.7184 \cdot10^{1 } $ && $ 1.3441 \cdot10^{5 } $ & \\
& $ 36 $ && $ 1.0652 \cdot10^{-6 } $ && $ 3.8345 \cdot10^{-5 } $ && $ 2.4334 \cdot10^{-2 } $ && $ 5.7184 \cdot10^{1 } $ && $ 1.3441 \cdot10^{5 } $ & \\
& $ 37 $ && $ 8.6315 \cdot10^{-7 } $ && $ 3.7979 \cdot10^{-5 } $ && $ 2.4334 \cdot10^{-2 } $ && $ 5.7184 \cdot10^{1 } $ && $ 1.3441 \cdot10^{5 } $ & \\
& $ 38 $ && $ 8.6315 \cdot10^{-7 } $ && $ 3.7979 \cdot10^{-5 } $ && $ 2.4334 \cdot10^{-2 } $ && $ 5.7184 \cdot10^{1 } $ && $ 1.3441 \cdot10^{5 } $ & \\
& $ 39 $ && $ 8.6315 \cdot10^{-7 } $ && $ 3.7979 \cdot10^{-5 } $ && $ 2.4334 \cdot10^{-2 } $ && $ 5.7184 \cdot10^{1 } $ && $ 1.3441 \cdot10^{5 } $ & \\
& $ 40 $ && $ 8.6315 \cdot10^{-7 } $ && $ 3.7979 \cdot10^{-5 } $ && $ 2.4334 \cdot10^{-2 } $ && $ 5.7184 \cdot10^{1 } $ && $ 1.3441 \cdot10^{5 } $ & \\
& $ 41 $ && $ 8.6315 \cdot10^{-7 } $ && $ 3.7979 \cdot10^{-5 } $ && $ 2.4334 \cdot10^{-2 } $ && $ 5.7184 \cdot10^{1 } $ && $ 1.3441 \cdot10^{5 } $ & \\
& $ 42 $ && $ 8.6315 \cdot10^{-7 } $ && $ 3.7979 \cdot10^{-5 } $ && $ 2.4334 \cdot10^{-2 } $ && $ 5.7184 \cdot10^{1 } $ && $ 1.3441 \cdot10^{5 } $ & \\
& $ 43 $ && $ 8.6315 \cdot10^{-7 } $ && $ 3.7979 \cdot10^{-5 } $ && $ 2.4334 \cdot10^{-2 } $ && $ 5.7184 \cdot10^{1 } $ && $ 1.3441 \cdot10^{5 } $ & \\
& $ 19\log 10 $ && $ 8.6315 \cdot10^{-7 } $ && $ 3.7979 \cdot10^{-5 } $ && $ 2.4334 \cdot10^{-2 } $ && $ 5.7184 \cdot10^{1 } $ && $ 1.3441 \cdot10^{5 } $ & \\
& $ 44 $ && $ 7.8163 \cdot10^{-7 } $ && $ 3.5174 \cdot10^{-5 } $ && $ 2.4334 \cdot10^{-2 } $ && $ 5.7184 \cdot10^{1 } $ && $ 1.3441 \cdot10^{5 } $ & \\
& $ 45 $ && $ 5.0646 \cdot10^{-7 } $ && $ 2.3298 \cdot10^{-5 } $ && $ 2.4334 \cdot10^{-2 } $ && $ 5.7184 \cdot10^{1 } $ && $ 1.3441 \cdot10^{5 } $ & \\
& $ 46 $ && $ 3.2935 \cdot10^{-7 } $ && $ 1.5480 \cdot10^{-5 } $ && $ 2.4334 \cdot10^{-2 } $ && $ 5.7184 \cdot10^{1 } $ && $ 1.3441 \cdot10^{5 } $ & \\
& $ 47 $ && $ 2.1308 \cdot10^{-7 } $ && $ 1.0376 \cdot10^{-5 } $ && $ 2.4334 \cdot10^{-2 } $ && $ 5.7184 \cdot10^{1 } $ && $ 1.3441 \cdot10^{5 } $ & \\
& $ 48 $ && $ 1.3791 \cdot10^{-7 } $ && $ 1.0376 \cdot10^{-5 } $ && $ 2.4334 \cdot10^{-2 } $ && $ 5.7184 \cdot10^{1 } $ && $ 1.3441 \cdot10^{5 } $ & \\
& $ 49 $ && $ 8.9140 \cdot10^{-8 } $ && $ 1.0376 \cdot10^{-5 } $ && $ 2.4334 \cdot10^{-2 } $ && $ 5.7184 \cdot10^{1 } $ && $ 1.3441 \cdot10^{5 } $ & \\
& $ 50 $ && $ 5.7545 \cdot10^{-8 } $ && $ 1.0376 \cdot10^{-5 } $ && $ 2.4334 \cdot10^{-2 } $ && $ 5.7184 \cdot10^{1 } $ && $ 1.3441 \cdot10^{5 } $ & \\
% & $ 51 $ && $ 3.7147 \cdot10^{-8 } $ && $ 1.0376 \cdot10^{-5 } $ && $ 2.4334 \cdot10^{-2 } $ && $ 5.7184 \cdot10^{1 } $ && $ 1.3441 \cdot10^{5 } $ & \\
% & $ 52 $ && $ 2.3898 \cdot10^{-8 } $ && $ 1.0376 \cdot10^{-5 } $ && $ 2.4334 \cdot10^{-2 } $ && $ 5.7184 \cdot10^{1 } $ && $ 1.3441 \cdot10^{5 } $ & \\
% & $ 53 $ && $ 1.5373 \cdot10^{-8 } $ && $ 1.0376 \cdot10^{-5 } $ && $ 2.4334 \cdot10^{-2 } $ && $ 5.7184 \cdot10^{1 } $ && $ 1.3441 \cdot10^{5 } $ & \\
% & $ 54 $ && $ 9.8778 \cdot10^{-9 } $ && $ 1.0376 \cdot10^{-5 } $ && $ 2.4334 \cdot10^{-2 } $ && $ 5.7184 \cdot10^{1 } $ && $ 1.3441 \cdot10^{5 } $ & \\
& $ 55 $ && $ 6.3417 \cdot10^{-9 } $ && $ 1.0376 \cdot10^{-5 } $ && $ 2.4334 \cdot10^{-2 } $ && $ 5.7184 \cdot10^{1 } $ && $ 1.3441 \cdot10^{5 } $ & \\
% & $ 56 $ && $ 4.4627 \cdot10^{-9 } $ && $ 1.0376 \cdot10^{-5 } $ && $ 2.4334 \cdot10^{-2 } $ && $ 5.7184 \cdot10^{1 } $ && $ 1.3441 \cdot10^{5 } $ & \\
% & $ 57 $ && $ 4.4627 \cdot10^{-9 } $ && $ 1.0376 \cdot10^{-5 } $ && $ 2.4334 \cdot10^{-2 } $ && $ 5.7184 \cdot10^{1 } $ && $ 1.3441 \cdot10^{5 } $ & \\
% & $ 58 $ && $ 4.4627 \cdot10^{-9 } $ && $ 1.0376 \cdot10^{-5 } $ && $ 2.4334 \cdot10^{-2 } $ && $ 5.7184 \cdot10^{1 } $ && $ 1.3441 \cdot10^{5 } $ & \\
% & $ 59 $ && $ 4.4627 \cdot10^{-9 } $ && $ 1.0376 \cdot10^{-5 } $ && $ 2.4334 \cdot10^{-2 } $ && $ 5.7184 \cdot10^{1 } $ && $ 1.3441 \cdot10^{5 } $ & \\
& $ 60 $ && $ 4.4627 \cdot10^{-9 } $ && $ 1.0376 \cdot10^{-5 } $ && $ 2.4334 \cdot10^{-2 } $ && $ 5.7184 \cdot10^{1 } $ && $ 1.3441 \cdot10^{5 } $ & \\
& $ 65 $ && $ 4.4627 \cdot10^{-9 } $ && $ 1.0376 \cdot10^{-5 } $ && $ 2.4334 \cdot10^{-2 } $ && $ 5.7184 \cdot10^{1 } $ && $ 1.3441 \cdot10^{5 } $ & \\
& $ 70 $ && $ 4.4627 \cdot10^{-9 } $ && $ 1.0376 \cdot10^{-5 } $ && $ 2.4334 \cdot10^{-2 } $ && $ 5.7184 \cdot10^{1 } $ && $ 1.3441 \cdot10^{5 } $ & \\
% & $ 75 $ && $ 4.4627 \cdot10^{-9 } $ && $ 1.0376 \cdot10^{-5 } $ && $ 2.4334 \cdot10^{-2 } $ && $ 5.7184 \cdot10^{1 } $ && $ 1.3441 \cdot10^{5 } $ & \\
& $ 80 $ && $ 4.4627 \cdot10^{-9 } $ && $ 1.0376 \cdot10^{-5 } $ && $ 2.4334 \cdot10^{-2 } $ && $ 5.7184 \cdot10^{1 } $ && $ 1.3441 \cdot10^{5 } $ & \\
% & $ 85 $ && $ 4.4627 \cdot10^{-9 } $ && $ 1.0376 \cdot10^{-5 } $ && $ 2.4334 \cdot10^{-2 } $ && $ 5.7184 \cdot10^{1 } $ && $ 1.3441 \cdot10^{5 } $ & \\
& $ 90 $ && $ 4.4627 \cdot10^{-9 } $ && $ 1.0376 \cdot10^{-5 } $ && $ 2.4334 \cdot10^{-2 } $ && $ 5.7184 \cdot10^{1 } $ && $ 1.3441 \cdot10^{5 } $ & \\
% & $ 95 $ && $ 4.4627 \cdot10^{-9 } $ && $ 1.0376 \cdot10^{-5 } $ && $ 2.4334 \cdot10^{-2 } $ && $ 5.7184 \cdot10^{1 } $ && $ 1.3441 \cdot10^{5 } $ & \\
& $ 100 $ && $ 4.4627 \cdot10^{-9 } $ && $ 1.0376 \cdot10^{-5 } $ && $ 2.4334 \cdot10^{-2 } $ && $ 5.7184 \cdot10^{1 } $ && $ 1.3441 \cdot10^{5 } $ & \\
& $ 200 $ && $ 4.4627 \cdot10^{-9 } $ && $ 1.0376 \cdot10^{-5 } $ && $ 2.4334 \cdot10^{-2 } $ && $ 5.7184 \cdot10^{1 } $ && $ 1.3441 \cdot10^{5 } $ & \\
& $ 300 $ && $ 4.4627 \cdot10^{-9 } $ && $ 1.0376 \cdot10^{-5 } $ && $ 2.4334 \cdot10^{-2 } $ && $ 5.7184 \cdot10^{1 } $ && $ 1.3441 \cdot10^{5 } $ & \\
& $ 400 $ && $ 4.4627 \cdot10^{-9 } $ && $ 1.0376 \cdot10^{-5 } $ && $ 2.4334 \cdot10^{-2 } $ && $ 5.7184 \cdot10^{1 } $ && $ 1.3441 \cdot10^{5 } $ & \\
& $ 500 $ && $ 4.4627 \cdot10^{-9 } $ && $ 1.0376 \cdot10^{-5 } $ && $ 2.4334 \cdot10^{-2 } $ && $ 5.7184 \cdot10^{1 } $ && $ 1.3441 \cdot10^{5 } $ & \\
& $ 600 $ && $ 4.4627 \cdot10^{-9 } $ && $ 1.0376 \cdot10^{-5 } $ && $ 2.4334 \cdot10^{-2 } $ && $ 5.7184 \cdot10^{1 } $ && $ 1.3441 \cdot10^{5 } $ & \\
& $ 700 $ && $ 4.4627 \cdot10^{-9 } $ && $ 1.0376 \cdot10^{-5 } $ && $ 2.4334 \cdot10^{-2 } $ && $ 5.7184 \cdot10^{1 } $ && $ 1.3441 \cdot10^{5 } $ & \\
& $ 800 $ && $ 4.4627 \cdot10^{-9 } $ && $ 1.0376 \cdot10^{-5 } $ && $ 2.4334 \cdot10^{-2 } $ && $ 5.7184 \cdot10^{1 } $ && $ 1.3441 \cdot10^{5 } $ & \\
& $ 900 $ && $ 4.4627 \cdot10^{-9 } $ && $ 1.0376 \cdot10^{-5 } $ && $ 2.4334 \cdot10^{-2 } $ && $ 5.7184 \cdot10^{1 } $ && $ 1.3441 \cdot10^{5 } $ & \\
& $ 1000 $ && $ 4.4627 \cdot10^{-9 } $ && $ 1.0376 \cdot10^{-5 } $ && $ 2.4334 \cdot10^{-2 } $ && $ 5.7184 \cdot10^{1 } $ && $ 1.3441 \cdot10^{5 } $ & \\
& $ 1500 $ && $ 4.4627 \cdot10^{-9 } $ && $ 1.0376 \cdot10^{-5 } $ && $ 2.4334 \cdot10^{-2 } $ && $ 5.7184 \cdot10^{1 } $ && $ 1.3441 \cdot10^{5 } $ & \\
& $ 2000 $ && $ 4.4627 \cdot10^{-9 } $ && $ 1.0376 \cdot10^{-5 } $ && $ 2.4334 \cdot10^{-2 } $ && $ 5.7184 \cdot10^{1 } $ && $ 1.3441 \cdot10^{5 } $ & \\
& $ 2500 $ && $ 2.2885 \cdot10^{-9 } $ && $ 5.7783 \cdot10^{-6 } $ && $ 1.4591 \cdot10^{-2 } $ && $ 3.6840 \cdot10^{1 } $ && $ 9.3021 \cdot10^{4 } $ & \\
& $ 3000 $ && $ 1.3915 \cdot10^{-10 } $ && $ 4.2091 \cdot10^{-7 } $ && $ 1.2733 \cdot10^{-3 } $ && $ 3.8516 \cdot10^{0 } $ && $ 1.1651 \cdot10^{4 } $ & \\
& $ 3500 $ && $ 8.7646 \cdot10^{-12 } $ && $ 3.0896 \cdot10^{-8 } $ && $ 1.0891 \cdot10^{-4 } $ && $ 3.8390 \cdot10^{-1 } $ && $ 1.3533 \cdot10^{3 } $ & \\
& $ 4000 $ && $ 5.7410 \cdot10^{-13 } $ && $ 2.3108 \cdot10^{-9 } $ && $ 9.3007 \cdot10^{-6 } $ && $ 3.7436 \cdot10^{-2 } $ && $ 1.5068 \cdot10^{2 } $ & \\
& $ 5000 $ && $ 2.8715 \cdot10^{-15 } $ && $ 1.4645 \cdot10^{-11 } $ && $ 7.4687 \cdot10^{-8 } $ && $ 3.8091 \cdot10^{-4 } $ && $ 1.9426 \cdot10^{0 } $ & \\
% & $ 5100 $ && $ 1.7087 \cdot10^{-15 } $ && $ 8.8850 \cdot10^{-12 } $ && $ 4.6202 \cdot10^{-8 } $ && $ 2.4025 \cdot10^{-4 } $ && $ 1.2493 \cdot10^{0 } $ & \\
% & $ 5200 $ && $ 1.0185 \cdot10^{-15 } $ && $ 5.3981 \cdot10^{-12 } $ && $ 2.8610 \cdot10^{-8 } $ && $ 1.5164 \cdot10^{-4 } $ && $ 8.0365 \cdot10^{-1 } $ & \\
% & $ 5300 $ && $ 6.0977 \cdot10^{-16 } $ && $ 3.2928 \cdot10^{-12 } $ && $ 1.7781 \cdot10^{-8 } $ && $ 9.6016 \cdot10^{-5 } $ && $ 5.1849 \cdot10^{-1 } $ & \\
% & $ 5400 $ && $ 3.6472 \cdot10^{-16 } $ && $ 2.0060 \cdot10^{-12 } $ && $ 1.1033 \cdot10^{-8 } $ && $ 6.0680 \cdot10^{-5 } $ && $ 3.3374 \cdot10^{-1 } $ & \\
% & $ 5500 $ && $ 2.1916 \cdot10^{-16 } $ && $ 1.2273 \cdot10^{-12 } $ && $ 6.8727 \cdot10^{-9 } $ && $ 3.8488 \cdot10^{-5 } $ && $ 2.1553 \cdot10^{-1 } $ & \\
% & $ 5600 $ && $ 1.3218 \cdot10^{-16 } $ && $ 7.5337 \cdot10^{-13 } $ && $ 4.2943 \cdot10^{-9 } $ && $ 2.4477 \cdot10^{-5 } $ && $ 1.3952 \cdot10^{-1 } $ & \\
% & $ 5700 $ && $ 8.0079 \cdot10^{-17 } $ && $ 4.6446 \cdot10^{-13 } $ && $ 2.6939 \cdot10^{-9 } $ && $ 1.5625 \cdot10^{-5 } $ && $ 9.0621 \cdot10^{-2 } $ & \\
% & $ 5800 $ && $ 4.8519 \cdot10^{-17 } $ && $ 2.8626 \cdot10^{-13 } $ && $ 1.6890 \cdot10^{-9 } $ && $ 9.9647 \cdot10^{-6 } $ && $ 5.8792 \cdot10^{-2 } $ & \\
% & $ 5900 $ && $ 2.9483 \cdot10^{-17 } $ && $ 1.7690 \cdot10^{-13 } $ && $ 1.0614 \cdot10^{-9 } $ && $ 6.3682 \cdot10^{-6 } $ && $ 3.8209 \cdot10^{-2 } $ & \\
& $ 6000 $ && $ 1.7952 \cdot10^{-17 } $ && $ 1.0951 \cdot10^{-13 } $ && $ 6.6798 \cdot10^{-10 } $ && $ 4.0747 \cdot10^{-6 } $ && $ 2.4856 \cdot10^{-2 } $ & \\
% & $ 6100 $ && $ 1.0988 \cdot10^{-17 } $ && $ 6.8120 \cdot10^{-14 } $ && $ 4.2235 \cdot10^{-10 } $ && $ 2.6186 \cdot10^{-6 } $ && $ 1.6235 \cdot10^{-2 } $ & \\
% & $ 6200 $ && $ 6.7178 \cdot10^{-18 } $ && $ 4.2322 \cdot10^{-14 } $ && $ 2.6663 \cdot10^{-10 } $ && $ 1.6798 \cdot10^{-6 } $ && $ 1.0583 \cdot10^{-2 } $ & \\
% & $ 6300 $ && $ 4.1268 \cdot10^{-18 } $ && $ 2.6412 \cdot10^{-14 } $ && $ 1.6904 \cdot10^{-10 } $ && $ 1.0818 \cdot10^{-6 } $ && $ 6.9236 \cdot10^{-3 } $ & \\
% & $ 6400 $ && $ 2.5481 \cdot10^{-18 } $ && $ 1.6563 \cdot10^{-14 } $ && $ 1.0766 \cdot10^{-10 } $ && $ 6.9977 \cdot10^{-7 } $ && $ 4.5485 \cdot10^{-3 } $ & \\
% & $ 6500 $ && $ 1.5741 \cdot10^{-18 } $ && $ 1.0389 \cdot10^{-14 } $ && $ 6.8566 \cdot10^{-11 } $ && $ 4.5254 \cdot10^{-7 } $ && $ 2.9868 \cdot10^{-3 } $ & \\
% & $ 6600 $ && $ 9.7287 \cdot10^{-19 } $ && $ 6.5182 \cdot10^{-15 } $ && $ 4.3672 \cdot10^{-11 } $ && $ 2.9261 \cdot10^{-7 } $ && $ 1.9605 \cdot10^{-3 } $ & \\
% & $ 6700 $ && $ 6.0447 \cdot10^{-19 } $ && $ 4.1104 \cdot10^{-15 } $ && $ 2.7951 \cdot10^{-11 } $ && $ 1.9007 \cdot10^{-7 } $ && $ 1.2925 \cdot10^{-3 } $ & \\
% & $ 6800 $ && $ 3.7652 \cdot10^{-19 } $ && $ 2.5980 \cdot10^{-15 } $ && $ 1.7926 \cdot10^{-11 } $ && $ 1.2369 \cdot10^{-7 } $ && $ 8.5345 \cdot10^{-4 } $ & \\
% & $ 6900 $ && $ 2.3555 \cdot10^{-19 } $ && $ 1.6488 \cdot10^{-15 } $ && $ 1.1542 \cdot10^{-11 } $ && $ 8.0791 \cdot10^{-8 } $ && $ 5.6554 \cdot10^{-4 } $ & \\
& $ 7000 $ && $ 1.4744 \cdot10^{-19 } $ && $ 1.0468 \cdot10^{-15 } $ && $ 7.4322 \cdot10^{-12 } $ && $ 5.2769 \cdot10^{-8 } $ && $ 3.7466 \cdot10^{-4 } $ & \\
% & $ 7100 $ && $ 9.2826 \cdot10^{-20 } $ && $ 6.6835 \cdot10^{-16 } $ && $ 4.8121 \cdot10^{-12 } $ && $ 3.4647 \cdot10^{-8 } $ && $ 2.4946 \cdot10^{-4 } $ & \\
% & $ 7200 $ && $ 5.8601 \cdot10^{-20 } $ && $ 4.2779 \cdot10^{-16 } $ && $ 3.1229 \cdot10^{-12 } $ && $ 2.2797 \cdot10^{-8 } $ && $ 1.6642 \cdot10^{-4 } $ & \\
% & $ 7300 $ && $ 3.7014 \cdot10^{-20 } $ && $ 2.7391 \cdot10^{-16 } $ && $ 2.0269 \cdot10^{-12 } $ && $ 1.4999 \cdot10^{-8 } $ && $ 1.1100 \cdot10^{-4 } $ & \\
% & $ 7400 $ && $ 2.3489 \cdot10^{-20 } $ && $ 1.7617 \cdot10^{-16 } $ && $ 1.3213 \cdot10^{-12 } $ && $ 9.9092 \cdot10^{-9 } $ && $ 7.4319 \cdot10^{-5 } $ & \\
% & $ 7500 $ && $ 1.4908 \cdot10^{-20 } $ && $ 1.1330 \cdot10^{-16 } $ && $ 8.6105 \cdot10^{-13 } $ && $ 6.5440 \cdot10^{-9 } $ && $ 4.9735 \cdot10^{-5 } $ & \\
% & $ 7600 $ && $ 9.4822 \cdot10^{-21 } $ && $ 7.3013 \cdot10^{-17 } $ && $ 5.6220 \cdot10^{-13 } $ && $ 4.3290 \cdot10^{-9 } $ && $ 3.3333 \cdot10^{-5 } $ & \\
% & $ 7700 $ && $ 6.0570 \cdot10^{-21 } $ && $ 4.7245 \cdot10^{-17 } $ && $ 3.6851 \cdot10^{-13 } $ && $ 2.8744 \cdot10^{-9 } $ && $ 2.2420 \cdot10^{-5 } $ & \\
% & $ 7800 $ && $ 3.8812 \cdot10^{-21 } $ && $ 3.0661 \cdot10^{-17 } $ && $ 2.4223 \cdot10^{-13 } $ && $ 1.9136 \cdot10^{-9 } $ && $ 1.5117 \cdot10^{-5 } $ & \\
% & $ 7900 $ && $ 2.4928 \cdot10^{-21 } $ && $ 1.9943 \cdot10^{-17 } $ && $ 1.5954 \cdot10^{-13 } $ && $ 1.2764 \cdot10^{-9 } $ && $ 1.0211 \cdot10^{-5 } $ & \\
& $ 8000 $ && $ 1.6007 \cdot10^{-21 } $ && $ 1.2966 \cdot10^{-17 } $ && $ 1.0502 \cdot10^{-13 } $ && $ 8.5065 \cdot10^{-10 } $ && $ 6.8903 \cdot10^{-6 } $ & \\
% & $ 8100 $ && $ 1.0327 \cdot10^{-21 } $ && $ 8.4675 \cdot10^{-18 } $ && $ 6.9433 \cdot10^{-14 } $ && $ 5.6935 \cdot10^{-10 } $ && $ 4.6687 \cdot10^{-6 } $ & \\
% & $ 8200 $ && $ 6.6829 \cdot10^{-22 } $ && $ 5.5468 \cdot10^{-18 } $ && $ 4.6038 \cdot10^{-14 } $ && $ 3.8212 \cdot10^{-10 } $ && $ 3.1716 \cdot10^{-6 } $ & \\
% & $ 8300 $ && $ 4.3257 \cdot10^{-22 } $ && $ 3.6336 \cdot10^{-18 } $ && $ 3.0523 \cdot10^{-14 } $ && $ 2.5639 \cdot10^{-10 } $ && $ 2.1537 \cdot10^{-6 } $ & \\
% & $ 8400 $ && $ 2.8105 \cdot10^{-22 } $ && $ 2.3889 \cdot10^{-18 } $ && $ 2.0306 \cdot10^{-14 } $ && $ 1.7260 \cdot10^{-10 } $ && $ 1.4671 \cdot10^{-6 } $ & \\
% & $ 8500 $ && $ 1.8316 \cdot10^{-22 } $ && $ 1.5752 \cdot10^{-18 } $ && $ 1.3547 \cdot10^{-14 } $ && $ 1.1650 \cdot10^{-10 } $ && $ 1.0019 \cdot10^{-6 } $ & \\
% & $ 8600 $ && $ 1.1981 \cdot10^{-22 } $ && $ 1.0424 \cdot10^{-18 } $ && $ 9.0682 \cdot10^{-15 } $ && $ 7.8894 \cdot10^{-11 } $ && $ 6.8638 \cdot10^{-7 } $ & \\
% & $ 8700 $ && $ 7.8221 \cdot10^{-23 } $ && $ 6.8834 \cdot10^{-19 } $ && $ 6.0574 \cdot10^{-15 } $ && $ 5.3305 \cdot10^{-11 } $ && $ 4.6909 \cdot10^{-7 } $ & \\
% & $ 8800 $ && $ 5.1585 \cdot10^{-23 } $ && $ 4.5911 \cdot10^{-19 } $ && $ 4.0861 \cdot10^{-15 } $ && $ 3.6366 \cdot10^{-11 } $ && $ 3.2366 \cdot10^{-7 } $ & \\
% & $ 8900 $ && $ 3.3883 \cdot10^{-23 } $ && $ 3.0495 \cdot10^{-19 } $ && $ 2.7445 \cdot10^{-15 } $ && $ 2.4701 \cdot10^{-11 } $ && $ 2.2231 \cdot10^{-7 } $ & \\
& $ 9000 $ && $ 2.2253 \cdot10^{-23 } $ && $ 2.0250 \cdot10^{-19 } $ && $ 1.8428 \cdot10^{-15 } $ && $ 1.6769 \cdot10^{-11 } $ && $ 1.5260 \cdot10^{-7 } $ & \\
% & $ 9100 $ && $ 1.4642 \cdot10^{-23 } $ && $ 1.3470 \cdot10^{-19 } $ && $ 1.2393 \cdot10^{-15 } $ && $ 1.1401 \cdot10^{-11 } $ && $ 1.0489 \cdot10^{-7 } $ & \\
% & $ 9200 $ && $ 9.6778 \cdot10^{-24 } $ && $ 9.0003 \cdot10^{-20 } $ && $ 8.3703 \cdot10^{-16 } $ && $ 7.7844 \cdot10^{-12 } $ && $ 7.2395 \cdot10^{-8 } $ & \\
% & $ 9300 $ && $ 6.4195 \cdot10^{-24 } $ && $ 6.0344 \cdot10^{-20 } $ && $ 5.6723 \cdot10^{-16 } $ && $ 5.3320 \cdot10^{-12 } $ && $ 5.0121 \cdot10^{-8 } $ & \\
% & $ 9400 $ && $ 4.2972 \cdot10^{-24 } $ && $ 4.0823 \cdot10^{-20 } $ && $ 3.8782 \cdot10^{-16 } $ && $ 3.6843 \cdot10^{-12 } $ && $ 3.5001 \cdot10^{-8 } $ & \\
% & $ 9500 $ && $ 2.8513 \cdot10^{-24 } $ && $ 2.7372 \cdot10^{-20 } $ && $ 2.6277 \cdot10^{-16 } $ && $ 2.5226 \cdot10^{-12 } $ && $ 2.4217 \cdot10^{-8 } $ & \\
% & $ 9600 $ && $ 1.9059 \cdot10^{-24 } $ && $ 1.8487 \cdot10^{-20 } $ && $ 1.7933 \cdot10^{-16 } $ && $ 1.7395 \cdot10^{-12 } $ && $ 1.6873 \cdot10^{-8 } $ & \\
% & $ 9700 $ && $ 1.2689 \cdot10^{-24 } $ && $ 1.2436 \cdot10^{-20 } $ && $ 1.2187 \cdot10^{-16 } $ && $ 1.1943 \cdot10^{-12 } $ && $ 1.1704 \cdot10^{-8 } $ & \\
% & $ 9800 $ && $ 8.4841 \cdot10^{-25 } $ && $ 8.3993 \cdot10^{-21 } $ && $ 8.3153 \cdot10^{-17 } $ && $ 8.2321 \cdot10^{-13 } $ && $ 8.1498 \cdot10^{-9 } $ & \\
% & $ 9900 $ && $ 5.7396 \cdot10^{-25 } $ && $ 5.7396 \cdot10^{-21 } $ && $ 5.7396 \cdot10^{-17 } $ && $ 5.7396 \cdot10^{-13 } $ && $ 5.7396 \cdot10^{-9 } $ & \\
& $ 10000 $ && $ 3.8228 \cdot10^{-25 } $ && $ 3.8610 \cdot10^{-21 } $ && $ 3.8997 \cdot10^{-17 } $ && $ 3.9387 \cdot10^{-13 } $ && $ 3.9780 \cdot10^{-9 } $ & \\
% & $ 10100 $ && $ 2.5746 \cdot10^{-25 } $ && $ 2.6260 \cdot10^{-21 } $ && $ 2.6786 \cdot10^{-17 } $ && $ 2.7321 \cdot10^{-13 } $ && $ 2.7868 \cdot10^{-9 } $ & \\
% & $ 10200 $ && $ 1.7389 \cdot10^{-25 } $ && $ 1.7911 \cdot10^{-21 } $ && $ 1.8448 \cdot10^{-17 } $ && $ 1.9002 \cdot10^{-13 } $ && $ 1.9572 \cdot10^{-9 } $ & \\
% & $ 10300 $ && $ 1.1734 \cdot10^{-25 } $ && $ 1.2204 \cdot10^{-21 } $ && $ 1.2692 \cdot10^{-17 } $ && $ 1.3199 \cdot10^{-13 } $ && $ 1.3727 \cdot10^{-9 } $ & \\
% & $ 10400 $ && $ 7.9556 \cdot10^{-26 } $ && $ 8.3534 \cdot10^{-22 } $ && $ 8.7711 \cdot10^{-18 } $ && $ 9.2096 \cdot10^{-14 } $ && $ 9.6701 \cdot10^{-10 } $ & \\
% & $ 10500 $ && $ 5.4077 \cdot10^{-26 } $ && $ 5.7321 \cdot10^{-22 } $ && $ 6.0761 \cdot10^{-18 } $ && $ 6.4406 \cdot10^{-14 } $ && $ 6.8270 \cdot10^{-10 } $ & \\
% & $ 10600 $ && $ 3.6845 \cdot10^{-26 } $ && $ 3.9424 \cdot10^{-22 } $ && $ 4.2184 \cdot10^{-18 } $ && $ 4.5137 \cdot10^{-14 } $ && $ 4.8296 \cdot10^{-10 } $ & \\
% & $ 10700 $ && $ 2.5150 \cdot10^{-26 } $ && $ 2.7162 \cdot10^{-22 } $ && $ 2.9335 \cdot10^{-18 } $ && $ 3.1682 \cdot10^{-14 } $ && $ 3.4217 \cdot10^{-10 } $ & \\
% & $ 10800 $ && $ 1.7201 \cdot10^{-26 } $ && $ 1.8749 \cdot10^{-22 } $ && $ 2.0437 \cdot10^{-18 } $ && $ 2.2276 \cdot10^{-14 } $ && $ 2.4281 \cdot10^{-10 } $ & \\
% & $ 10900 $ && $ 1.1640 \cdot10^{-26 } $ && $ 1.2803 \cdot10^{-22 } $ && $ 1.4084 \cdot10^{-18 } $ && $ 1.5492 \cdot10^{-14 } $ && $ 1.7041 \cdot10^{-10 } $ & \\
& $ 11000 $ && $ 7.9284 \cdot10^{-27 } $ && $ 8.8005 \cdot10^{-23 } $ && $ 9.7685 \cdot10^{-19 } $ && $ 1.0844 \cdot10^{-14 } $ && $ 1.2036 \cdot10^{-10 } $ & \\
% & $ 11100 $ && $ 5.4156 \cdot10^{-27 } $ && $ 6.0655 \cdot10^{-23 } $ && $ 6.7933 \cdot10^{-19 } $ && $ 7.6085 \cdot10^{-15 } $ && $ 8.5215 \cdot10^{-11 } $ & \\
% & $ 11200 $ && $ 3.7120 \cdot10^{-27 } $ && $ 4.1946 \cdot10^{-23 } $ && $ 4.7398 \cdot10^{-19 } $ && $ 5.3560 \cdot10^{-15 } $ && $ 6.0523 \cdot10^{-11 } $ & \\
% & $ 11300 $ && $ 2.5509 \cdot10^{-27 } $ && $ 2.9080 \cdot10^{-23 } $ && $ 3.3152 \cdot10^{-19 } $ && $ 3.7793 \cdot10^{-15 } $ && $ 4.3084 \cdot10^{-11 } $ & \\
% & $ 11400 $ && $ 1.7570 \cdot10^{-27 } $ && $ 2.0205 \cdot10^{-23 } $ && $ 2.3236 \cdot10^{-19 } $ && $ 2.6721 \cdot10^{-15 } $ && $ 3.0729 \cdot10^{-11 } $ & \\
% & $ 11500 $ && $ 1.2103 \cdot10^{-27 } $ && $ 1.4039 \cdot10^{-23 } $ && $ 1.6285 \cdot10^{-19 } $ && $ 1.8891 \cdot10^{-15 } $ && $ 2.1913 \cdot10^{-11 } $ & \\
% & $ 11600 $ && $ 8.3445 \cdot10^{-28 } $ && $ 9.7630 \cdot10^{-24 } $ && $ 1.1423 \cdot10^{-19 } $ && $ 1.3365 \cdot10^{-15 } $ && $ 1.5637 \cdot10^{-11 } $ & \\
% & $ 11700 $ && $ 5.7692 \cdot10^{-28 } $ && $ 6.8076 \cdot10^{-24 } $ && $ 8.0330 \cdot10^{-20 } $ && $ 9.4789 \cdot10^{-16 } $ && $ 1.1186 \cdot10^{-11 } $ & \\
% & $ 11800 $ && $ 3.9987 \cdot10^{-28 } $ && $ 4.7584 \cdot10^{-24 } $ && $ 5.6625 \cdot10^{-20 } $ && $ 6.7384 \cdot10^{-16 } $ && $ 8.0187 \cdot10^{-12 } $ & \\
% & $ 11900 $ && $ 2.7776 \cdot10^{-28 } $ && $ 3.3331 \cdot10^{-24 } $ && $ 3.9997 \cdot10^{-20 } $ && $ 4.7996 \cdot10^{-16 } $ && $ 5.7596 \cdot10^{-12 } $ & \\
& $ 12000 $ && $ 1.9331 \cdot10^{-28 } $ && $ 2.3390 \cdot10^{-24 } $ && $ 2.8302 \cdot10^{-20 } $ && $ 3.4245 \cdot10^{-16 } $ && $ 4.1437 \cdot10^{-12 } $ & \\
% & $ 12100 $ && $ 1.3478 \cdot10^{-28 } $ && $ 1.6443 \cdot10^{-24 } $ && $ 2.0060 \cdot10^{-20 } $ && $ 2.4473 \cdot10^{-16 } $ && $ 2.9857 \cdot10^{-12 } $ & \\
% & $ 12200 $ && $ 9.3147 \cdot10^{-29 } $ && $ 1.1457 \cdot10^{-24 } $ && $ 1.4093 \cdot10^{-20 } $ && $ 1.7334 \cdot10^{-16 } $ && $ 2.1320 \cdot10^{-12 } $ & \\
% & $ 12300 $ && $ 6.5069 \cdot10^{-29 } $ && $ 8.0686 \cdot10^{-25 } $ && $ 1.0005 \cdot10^{-20 } $ && $ 1.2407 \cdot10^{-16 } $ && $ 1.5384 \cdot10^{-12 } $ & \\
% & $ 12400 $ && $ 4.5540 \cdot10^{-29 } $ && $ 5.6924 \cdot10^{-25 } $ && $ 7.1155 \cdot10^{-21 } $ && $ 8.8944 \cdot10^{-17 } $ && $ 1.1118 \cdot10^{-12 } $ & \\
% & $ 12500 $ && $ 3.1924 \cdot10^{-29 } $ && $ 4.0224 \cdot10^{-25 } $ && $ 5.0682 \cdot10^{-21 } $ && $ 6.3860 \cdot10^{-17 } $ && $ 8.0463 \cdot10^{-13 } $ & \\
% & $ 12600 $ && $ 2.2307 \cdot10^{-29 } $ && $ 2.8330 \cdot10^{-25 } $ && $ 3.5979 \cdot10^{-21 } $ && $ 4.5694 \cdot10^{-17 } $ && $ 5.8031 \cdot10^{-13 } $ & \\
% & $ 12700 $ && $ 1.5600 \cdot10^{-29 } $ && $ 1.9967 \cdot10^{-25 } $ && $ 2.5558 \cdot10^{-21 } $ && $ 3.2714 \cdot10^{-17 } $ && $ 4.1874 \cdot10^{-13 } $ & \\
% & $ 12800 $ && $ 1.0940 \cdot10^{-29 } $ && $ 1.4112 \cdot10^{-25 } $ && $ 1.8205 \cdot10^{-21 } $ && $ 2.3484 \cdot10^{-17 } $ && $ 3.0294 \cdot10^{-13 } $ & \\
% & $ 12900 $ && $ 7.6900 \cdot10^{-30 } $ && $ 9.9970 \cdot10^{-26 } $ && $ 1.2996 \cdot10^{-21 } $ && $ 1.6895 \cdot10^{-17 } $ && $ 2.1964 \cdot10^{-13 } $ & \\
& $ 13000 $ && $ 5.5830 \cdot10^{-30 } $ && $ 7.5371 \cdot10^{-26 } $ && $ 1.0175 \cdot10^{-21 } $ && $ 1.3737 \cdot10^{-17 } $ && $ 1.8544 \cdot10^{-13 } $ & \\
% & $ 13500 $ && $ 9.9578 \cdot10^{-31 } $ && $ 1.3743 \cdot10^{-26 } $ && $ 1.8966 \cdot10^{-22 } $ && $ 2.6174 \cdot10^{-18 } $ && $ 3.6123 \cdot10^{-14 } $ & \\
% & $ 13800 $ && $ 3.5593 \cdot10^{-31 } $ && $ 4.9830 \cdot10^{-27 } $ && $ 6.9761 \cdot10^{-23 } $ && $ 9.7665 \cdot10^{-19 } $ && $ 1.3674 \cdot10^{-14 } $ & \\
& $ 14000 $ && $ 1.8399 \cdot10^{-31 } $ && $ 2.7598 \cdot10^{-27 } $ && $ 4.1396 \cdot10^{-23 } $ && $ 6.2094 \cdot10^{-19 } $ && $ 9.3141 \cdot10^{-15 } $ & \\
& $ 15000 $ && $ 6.5712 \cdot10^{-33 } $ && $ 1.0514 \cdot10^{-28 } $ && $ 1.6823 \cdot10^{-24 } $ && $ 2.6916 \cdot10^{-20 } $ && $ 4.3065 \cdot10^{-16 } $ & \\
& $ 16000 $ && $ 2.5739 \cdot10^{-34 } $ && $ 4.3756 \cdot10^{-30 } $ && $ 7.4384 \cdot10^{-26 } $ && $ 1.2646 \cdot10^{-21 } $ && $ 2.1497 \cdot10^{-17 } $ & \\
& $ 17000 $ && $ 1.1168 \cdot10^{-35 } $ && $ 2.0101 \cdot10^{-31 } $ && $ 3.6182 \cdot10^{-27 } $ && $ 6.5127 \cdot10^{-23 } $ && $ 1.1723 \cdot10^{-18 } $ & \\
& $ 18000 $ && $ 5.3739 \cdot10^{-37 } $ && $ 1.0211 \cdot10^{-32 } $ && $ 1.9400 \cdot10^{-28 } $ && $ 3.6860 \cdot10^{-24 } $ && $ 7.0033 \cdot10^{-20 } $ & \\
& $ 19000 $ && $ 2.7357 \cdot10^{-38 } $ && $ 5.4714 \cdot10^{-34 } $ && $ 1.0943 \cdot10^{-29 } $ && $ 2.1886 \cdot10^{-25 } $ && $ 4.3772 \cdot10^{-21 } $ & \\
& $ 20000 $ && $ 1.5041 \cdot10^{-39 } $ && $ 3.1585 \cdot10^{-35 } $ && $ 6.6329 \cdot10^{-31 } $ && $ 1.3929 \cdot10^{-26 } $ && $ 2.9251 \cdot10^{-22 } $ & \\
& $ 21000 $ && $ 9.0606 \cdot10^{-41 } $ && $ 1.9934 \cdot10^{-36 } $ && $ 4.3853 \cdot10^{-32 } $ && $ 9.6477 \cdot10^{-28 } $ && $ 2.1225 \cdot10^{-23 } $ & \\
& $ 22000 $ && $ 5.6101 \cdot10^{-42 } $ && $ 1.2904 \cdot10^{-37 } $ && $ 2.9678 \cdot10^{-33 } $ && $ 6.8258 \cdot10^{-29 } $ && $ 1.5700 \cdot10^{-24 } $ & \\
& $ 23000 $ && $ 3.7554 \cdot10^{-43 } $ && $ 9.0129 \cdot10^{-39 } $ && $ 2.1631 \cdot10^{-34 } $ && $ 5.1915 \cdot10^{-30 } $ && $ 1.2460 \cdot10^{-25 } $ & \\
& $ 24000 $ && $ 2.6756 \cdot10^{-44 } $ && $ 6.6889 \cdot10^{-40 } $ && $ 1.6723 \cdot10^{-35 } $ && $ 4.1806 \cdot10^{-31 } $ && $ 1.0452 \cdot10^{-26 } $ & \\
& $ 25000 $ && $ 7.5635 \cdot10^{-45 } $ && $ 1.8909 \cdot10^{-40 } $ && $ 4.7272 \cdot10^{-36 } $ && $ 1.1818 \cdot10^{-31 } $ && $ 2.9545 \cdot10^{-27 } $ & \\
\end{longtable}
\end{center}
%Calculations shown in \path{Dropbox/Undergraduate-Research-Sharper-Psi-Theta-Summer2017/sharper-bounds-psi-theta-Broadbent-Wilk-Lumley-Kad-Ng/Programs_for_Sharper_bounds/June_2020/CombinedABCTable-June-2019.pari}.
}


\begin{thebibliography}{99}

% \bibitem{Axler16}
% \textsc{C. Axler,} {\it New bounds for the prime counting function},  Integers 16 (2016), Paper No. A22, 15 pp. 

% \bibitem{Axler17_1}
% \textsc{C. Axler,} {\it Estimates for $\pi (x)$ for large values of $x$ and Ramanujan's prime counting inequality}, arXiv:1703.02407v2 (March 29, 2017).

\bibitem{Axler17_2}
\textsc{C. Axler,} {\it New estimates for some functions defined over primes}, Integers 18 (2018), Paper No. A52, 21 pp.

% \bibitem{BH}
% \textsc{C. Bays and R. Hudson}, 
% {\it A new bound for the smallest x with $\pi(x)>li(x)$}, Math. Comp. 69 (2000), no. 231, 1285--1296. 

\bibitem{BKLNW}
\textsc{S. Broadbent, A. Fiori, H. Kadiri, A. Lumley, N. Ng, J. Swidinsky, K. Wilk}, 
{\it Tables of values of the Chebyshev functions $\theta(x)$ and $\psi(x)$}, 37 pages.  

\bibitem{But}
\textsc{J. B\"uthe,} {\it Estimating $\pi (x)$ and related functions under partial RH assumptions},  Math. Comp. 85 (2016), no. 301, 2483--2498. 

% \bibitem{But1} %- Not called
% \textsc{J. B\"uthe,} {\it An improved analytic method for calculating $\pi (x)$},  Manuscripta Math. 151 (2016), no. 3-4, 329--352. 

\bibitem{But2}
\textsc{J. B\"uthe,} {\it An analytic method for bounding $\psi (x)$}, Math. Comp. 87 (2018), no. 312, 1991--2009.

\bibitem{Cheb1}
\textsc{P.L. Chebyshev,} {\it M\'emoire sur les nombres premiers}, J. Math. Pures Appl., 17 (1852), p. 366--390.
% \path{http://sites.mathdoc.fr/JMPA/PDF/JMPA_1852_1_17_A19_0.pdf}

%\bibitem{Cheb2}
%P.L. Chebyshev, {\it Sur la totalit\'e des nombres premiers inf\'erieurs \`a une limite donn\'ee}, J. Math. Pures Appl., 17 (1852), p. 341--365.
% \path{http://sites.mathdoc.fr/JMPA/PDF/JMPA_1852_1_17_A18_0.pdf}

\bibitem{Cos}
\textsc{N. Costa Pereira,} {\it Estimates for the Chebychev Function $\psi (x) - \theta (x)$},  Math. Comp. 44 (1985), no. 169, 211--221. 

\bibitem{Cos2}
\textsc{N. Costa Pereira,} {\it Elementary estimates for the Chebyshev functions $\psi(x)$ and for M\"obius function $m(x)$,} Acta Arith 52 (1989), 307--337.

\bibitem{Dav}
\textsc{H. Davenport},
\emph{Multiplicative Number Theory: Third Edition}, Springer, New York (2000).

\bibitem{DST}
P. Demichel, Y. Saouter, T. Trudgian, 
{\it A still sharper region where $\pi(x) - li(x)$ is positive}, 
Math. Comp. 84 (2015), no. 295, 2433?2446. 

\bibitem{Des}
\textsc{J. M. Deshouillers,} {\it Retour sur la methode de Chebyshev}, Bull. Soc. Math de France 49--50 (1977), 41--45.

\bibitem{Dud}
A. W Dudek, {\it An explicit result for primes between cubes}, Funct. et Approx., 55(2), 177--197 (2016).

\bibitem{Dus}
\textsc{P. Dusart,} {\it In\'egalit\'es explicites pour $\psi(x),\theta(x),\pi(x)$ et les nombres premiers} ({\it Explicit inequalities for $\psi(x),\theta(x),\pi(x)$ and the primes})
C. R . Math. Acad. Sci. Soc. R . Can. 21 (1999), no. 2, 53--59. 

% \bibitem{Dus1} %- Not called
% \textsc{P. Dusart,} {\it The $k^{th}$ prime is greater than $k(\log k+\log\log k-1)$ for $k\ge2$},
% Math. Comp. 68 (1999), no. 225, 411--415.

% \bibitem{Dus2}
% \textsc{P. Dusart,} {\it Sur la conjecture $\pi(x+y) \le \pi(x) + \pi(y)$} ({\it On the conjecture $\pi(x+y) \le \pi(x) + \pi(y)$}),  Acta Arith. 102 (2002), no. 4, 295--308. 

\bibitem{Dus5}
\textsc{P. Dusart,} {\it Estimates of some functions over primes without R.H.}, arXiv:1002.0442 (February 2, 2010).

\bibitem{Dus3}
\textsc{P. Dusart,} {\it Estimates of $\psi, \theta$ for large values of $x$ without the Riemann Hypothesis},
Math. Comp. 85 (2016), no. 298, 875--888.

\bibitem{Dus4}
\textsc{P. Dusart,} {\it Explicit estimates of some functions over primes}, Ramanujan J. 45 (2018), no. 1, 227--251.

\bibitem{Erdos}
\textsc{P. Erd\"os}, {\it Beweis eines Satzes von Tschebyschef}, Acta Litt. Sci. Szeged 5 (1932), 194--198.

% \bibitem{Est} %- Not called
% \textsc{F. Esteki,} {\it Approximations for some functions of primes}, M. Sc. Thesis, Univ. of Lethbridge, Lethbridge Alta. (2012).

\bibitem{FaKa}  
\textsc{L. Faber and H. Kadiri,} {\it New bounds for $\psi(x)$}, Math. Comp. 84 (2015), no. 293, 1339--1357. 

\bibitem{FaKaCorr}
\textsc{L. Faber and H. Kadiri,} {\it Corrigendum to New bounds for $\psi(x)$}, Math. Comp. 87 (2018), no. 311, 1451--1455.

\bibitem{Gou}
\textsc{X. Gourdon}, 
\emph{The $10^{13}$ First Zeros of the Riemann Zeta Function, and Zeros Computation at Very Large Height},
\url{http://numbers.computation.free.fr/Constants/Miscellaneous/ zetazeros1e13-1e24.pdf}
%

% \bibitem{Gram}%- Not called
% \textsc{J.P. Gram,} {\it Unders$\phi $geler angaaende Maengden af Primtal under en given Graense}, L. Danske Vidensk. 
% Selskabs Skrifter, Naturv. og Math. Afd., ser. 6, vol. 2 (1881 - 86), pp.183--308.

\bibitem{GrmHan}
\textsc{W. E. L. Grimson and D. Hanson}, {\it Estimates for the product of the primes not exceeding x}, Univ. Manitoba, Winnipeg, Man. (1977), 407--416.

\bibitem{Had}
\textsc{J. Hadamard,} {\it Sur la distribution des z\'eros de la fonction $\zeta(s)$ et ses cons\'equences arithm\'etiques}, Bull. Soc. Math. France 24 (1896), 199--220. 
% \path{http://fg2fy8yh7d.search.serialssolutions.com/?sid=AMS%3AMathSciNet&atitle=Sur%20la%20distribution%20des%20z%C3%A9ros%20de%20la%20fonction%20%24%5Czeta%28s%29%24%20et%20ses%20cons%C3%A9quences%20arithm%C3%A9tiques&aufirst=J.&auinit1=J&aulast=Hadamard&date=1896&epage=220&genre=article&issn=0037-9484&pages=199-220&spage=199&stitle=Bull.%20Soc.%20Math.%20France&title=Bulletin%20de%20la%20Soci%C3%A9t%C3%A9%20Math%C3%A9matique%20de%20France&volume=24}

\bibitem{Han}
\textsc{D. Hanson}, {\it On the product of primes,} Canad. Math. Bull 15 (1972), 33--37.

\bibitem{Kad}
\textsc{H. Kadiri}, 
\emph{Une R\'egion explicite sans z\'eros pour la fonction $\zeta$ de Riemann}, 
Acta Arith. 117 (2005), no. 4, 303--339.
%

\bibitem{Kad2}
\textsc{H. Kadiri,} {\it A zero density result for the Riemann zeta function}, Acta Arith. 160 (2013), no. 2, 185--200.

\bibitem{KLN}
\textsc{H. Kadiri, A. Lumley, and N. Ng,} {\it Explicit zero-density for the Riemann zeta function},
J. Math. Anal. Appl. 465 (2018), no. 1, 22--46. 

\bibitem{Koch}
\textsc{H. von Koch}, {\it Sur la distribution des nombres premiers}, Acta Math. 24 (1901), no. 1, 159--182.

% \bibitem{Kor} %- Only calling of this reference is commented out
% \textsc{N. M. Korobov}, {\it Estimates of trigonometric sums and their applications} (Russian), Uspehi Mat. Nauk 13 (1958), no. 4 (82), 185--192. 

\bibitem{Lehm}%- Not called
\textsc{D.N. Lehmer,} {\it List of prime numbers from $1$ to $10\ 006\ 721$}, Carnegie Institution of Washington, Publication No. 165, 1914.

\bibitem{Log88}
\textsc{B. F. Logan,} {\it Bounds for the tails of sharp-cutoff filter kernels}, SIAM J. Math. Anal 19 (1988), no.2, 372--376.

\bibitem{vdL}
\textsc{J. van de Lune, H. J. J. te Riele, D. T. Winter} {\it On the zeros of the Riemann zeta function in the critical strip. IV}, Math. Comp. 46 (1986), no. 174, 667--681.

% \bibitem{MasRob}%- Not called
% \textsc{J.-P. Massias, G. Robin,} {\it Bornes effectives pour certaines fonctions concernant les nombres premiers} (\textit{Effective bounds for certain functions relating to prime numbers})
%  J. Th\'eorie Nombres Bordeaux 8 (1996), no. 1, 215--242.

\bibitem{Mont}
\textsc{H.L. Montgomery,} {\em The zeta function and prime numbers}, Proceedings of the Queen's Number Theory Conference, 1979,
Queen's Univ., Kingston, Ont., 1980, 1--31.

\bibitem{TrudMoss}
\textsc{M. Mossinghoff and T. Trudgian,} \textit{Nonnegative trigonometric polynomials and a zero-free region for the Riemann zeta-function},
Journal of Number Theory 157 (2015) 329--349.

\bibitem{Pintz}
\textsc{J. Pintz}, 
\emph{On the remainder term of the prime number formula II. On a theorem of Ingham}, Acta. Arith. 37 (1980),
209-220. 

\bibitem{Pla0} 
\textsc{D.J. Platt}, 
\emph{Computing degree $1$ $L$-functions rigorously}, 
Ph.D. Thesis, University of Bristol (2011).
%
\bibitem{Pla} 
\textsc{D.J. Platt}, 
\emph{Computing $\pi(x)$ analytically}, 
Math. Comp. 84 (2015), no. 293, 1521--1535.

\bibitem{Pla-zeta} 
\textsc{D.J. Platt}, 
\emph{Isolating some non-trivial zeros of zeta}, 
 Math. Comp. 86 (2017), no. 307, 2449--2467.

\bibitem{PT2016}
\textsc{D.J. Platt, T.S. Trudgian}, {\it 
On the first sign change of $\theta(x)-x$}. 
Math. Comp. 85 (2016), no. 299, 1539--1547. 

\bibitem{PT2019}
D. Platt and T. Trudgian, {\it  The error term in the prime number theorem}, Math. Comp. 90 (2021), no. 328, 871-881. 

\bibitem{PT2020}
D. Platt and T. Trudgian, {\it  
The Riemann hypothesis is true up to $3 \cdot 10^{12}$},  to appear in  Bulletin of London Mathematical Society,  https://doi.org/10.1112/blms.12460.
%arXiv:2004.09765. 


\bibitem{Ram}
\textsc{O. Ramar\'e,} {\it An explicit density estimate for Dirichlet L-series}, Math. Comp. 85 (2016), 325--356.
 
% \bibitem{Rob}%- Not called
% \textsc{G. Robin,} {\it Estimation de la fonction de Tchebychef $\theta$ sur le k-i\`{e}me nombre permier et grandes valeurs de la fonction $\omega (n)$ nombre de diviseurs premiers de n}
% ({\it Estimation of the Chebyshef function $\theta$ on the k-th prime number and large values of the function $\omega (n)$ number of prime divisors of n})
% Acta Arith. 42 (1983), no. 4, 367--389.

\bibitem{Ros} 
\textsc{J.B. Rosser,} {\it Explicit bounds for some functions of prime numbers}, Amer. J. Math. 63 (1941), 211--232. 

\bibitem{Ros2}
\textsc{J.B. Rosser,} {\it Explicit remainder terms for some asymptotic series}, Journal of Rational Mechanics and Analysis (J. Math. Mech.), 
vol. 4 (1955), pp. 595--626

\bibitem{RS1} 
\textsc{J.B. Rosser and L. Schoenfeld,} {\it Approximate formulas for some functions of prime numbers}, Illinois. J. Math. 6 (1962), 64--94. 

\bibitem{RS2}
\textsc{J.B. Rosser and L. Schoenfeld,} {\it Sharper bounds for Chebyshev functions $\theta(x)$ and $\psi(x)$} I, Math. Comp. 29 (1975), 243--269. 

\bibitem{RubSar}
\textsc{M. Rubinstein and P. Sarnak,} {\it Chebyshev's bias}, Experiment. Math. 3 (1994), no. 3, 173--197.

\bibitem{SaTrDe}
% \textsc{Y. Saouter, T. Trudgian, and P. Demichel,} {\it A still sharper region where $\pi(x) ? \mathrm{li}(x)$ is positive},
% Math. Comp. 84 (2015), no. 295, 2433--2446.
\textsc{Y. Saouter, T. Trudgian, D. Patrick,} {\it A still sharper region where $\pi(x)-li(x)$ is positive},  
Math. Comp. 84 (2015), no. 295, 2433--2446. 

\bibitem{Sch}
\textsc{L. Schoenfeld,} {\it Sharper bounds for Chebyshev functions $\theta(x)$ and $\psi(x)$} II, Math. Comp. 30 (1976), 337--360.

\bibitem{Ste}
\textsc{S.B. Stechkin} {\it Zeros of the Riemann zeta-function}, Math. Notes 8 (1970) 706--711.\\
(Translated from Matematicheskie Zametki, Vol. 8, No. 4, pp. 419--429, October, 1970.)

\bibitem{Trudgian}
\textsc{T. Trudgian} {\it An Improved Upper Bound for the argument of the Riemann zeta function on the critical line.} Math. Comp. 81 (2012), 1053--1061. 

\bibitem{Trudgian2}
\textsc{T. Trudgian} {\it An Improved Upper Bound for the argument of the Riemann zeta function on the critical line II.} J. Number Theory 134 (2014),280--292. 

\bibitem{Tru}
\textsc{T. Trudgian,} {\it Updating the error term in the prime number theorem},  Ramanujan J. 39 (2016), no. 2, 225--234. 

\bibitem{dlVP}
\textsc{C.J. de la Vall\'ee Poussin} {\it Recherches analytiques sur la th\'eorie des nombres premiers}, Ann. Soc. Scient. Bruxelles, deuxi\'eme partie 20,
(1896), pp. 183--256 

% \path{https://archive.org/stream/annalesdelasoci05bruxgoog#page/n393/mode/2up}

% \bibitem{Vin}%- Only reference is commented out
% \textsc{I. M. Vinogradov}, {\it A new estimate for $\zeta(1+ it)$} (Russian), Izv. Akad. Nauk SSSR. Ser. Mat. 22 (1958), 161--164.

\bibitem{Wed} 
\textsc{S. Wedeniwski - ZETAGRID}, 
\emph{Computational verification of the Riemann hypothesis,}
Conference in Number Theory in Honour of Professor H.C. Williams, Alberta, Canada, May 2003.\\
\url{http://www.zetagrid.net/zeta/math/zeta.result.100billion.zeros.html}

\end{thebibliography}
\end{document}